\let\old@tocline\@tocline
\let\section@tocline\@tocline
\newcommand{\subsection@dotsep}{4.5}
\newcommand{\subsubsection@dotsep}{4.5}
	\leaders\hbox{$\m@th
		\mkern \subsection@dotsep mu\hbox{.}\mkern \subsection@dotsep mu$}\hfill
\let\subsection@tocline\@tocline
\let\@tocline\old@tocline
	\leaders\hbox{$\m@th
		\mkern \subsubsection@dotsep mu\hbox{.}\mkern \subsubsection@dotsep mu$}\hfill
\let\subsubsection@tocline\@tocline
\let\@tocline\old@tocline
\let\old@l@subsection\l@subsection
\let\old@l@subsubsection\l@subsubsection
\def\@tocwriteb#1#2#3{%
	\begingroup
	\@xp\def\csname #2@tocline\endcsname##1##2##3##4##5##6{%
		\ifnum##1>\c@tocdepth
		\else \sbox\z@{##5\let\indentlabel\@tochangmeasure##6}\fi}%
	\csname l@#2\endcsname{#1{\csname#2name\endcsname}{\@secnumber}{}}%
	\endgroup
	\addcontentsline{toc}{#2}%
	{\protect#1{\csname#2name\endcsname}{\@secnumber}{#3}}}%
\newlength{\@tocsectionindent}
\newlength{\@tocsubsectionindent}
\newlength{\@tocsubsubsectionindent}
\newlength{\@tocsectionnumwidth}
\newlength{\@tocsubsectionnumwidth}
\newlength{\@tocsubsubsectionnumwidth}
\newcommand{\settocsectionnumwidth}[1]{\setlength{\@tocsectionnumwidth}{#1}}
\newcommand{\settocsubsectionnumwidth}[1]{\setlength{\@tocsubsectionnumwidth}{#1}}
\newcommand{\settocsubsubsectionnumwidth}[1]{\setlength{\@tocsubsubsectionnumwidth}{#1}}
\newcommand{\settocsectionindent}[1]{\setlength{\@tocsectionindent}{#1}}
\newcommand{\settocsubsectionindent}[1]{\setlength{\@tocsubsectionindent}{#1}}
\newcommand{\settocsubsubsectionindent}[1]{\setlength{\@tocsubsubsectionindent}{#1}}
\renewcommand{\l@section}{\section@tocline{1}{\@tocsectionvskip}{\@tocsectionindent}{}{\@tocsectionformat}}%
\renewcommand{\l@subsection}{\subsection@tocline{2}{\@tocsubsectionvskip}{\@tocsubsectionindent}{}{\@tocsubsectionformat}}%
\renewcommand{\l@subsubsection}{\subsubsection@tocline{3}{\@tocsubsubsectionvskip}{\@tocsubsubsectionindent}{}{\@tocsubsubsectionformat}}%
\newcommand{\@tocsectionformat}{}
\newcommand{\@tocsubsectionformat}{}
\newcommand{\@tocsubsubsectionformat}{}
\def\csname toc@1format\endcsname{\@tocsectionformat}
\def\csname toc@2format\endcsname{\@tocsubsectionformat}
\def\csname toc@3format\endcsname{\@tocsubsubsectionformat}
\newcommand{\settocsectionformat}[1]{\renewcommand{\@tocsectionformat}{#1}}
\newcommand{\settocsubsectionformat}[1]{\renewcommand{\@tocsubsectionformat}{#1}}
\newcommand{\settocsubsubsectionformat}[1]{\renewcommand{\@tocsubsubsectionformat}{#1}}
\newlength{\@tocsectionvskip}
\newcommand{\settocsectionvskip}[1]{\setlength{\@tocsectionvskip}{#1}}
\newlength{\@tocsubsectionvskip}
\newcommand{\settocsubsectionvskip}[1]{\setlength{\@tocsubsectionvskip}{#1}}
\newlength{\@tocsubsubsectionvskip}
\newcommand{\settocsubsubsectionvskip}[1]{\setlength{\@tocsubsubsectionvskip}{#1}}
\patchcmd{\tocsection}{\indentlabel}{\makebox[\@tocsectionnumwidth][l]}{}{}
\patchcmd{\tocsubsection}{\indentlabel}{\makebox[\@tocsubsectionnumwidth][l]}{}{}
\patchcmd{\tocsubsubsection}{\indentlabel}{\makebox[\@tocsubsubsectionnumwidth][l]}{}{}
\newcommand{\@sectypepnumformat}{}
\renewcommand{\contentsline}[1]{%
	\expandafter\let\expandafter\@sectypepnumformat\csname @toc#1pnumformat\endcsname%
	\csname l@#1\endcsname}
\newcommand{\@tocsectionpnumformat}{}
\newcommand{\@tocsubsectionpnumformat}{}
\newcommand{\@tocsubsubsectionpnumformat}{}
\newcommand{\setsectionpnumformat}[1]{\renewcommand{\@tocsectionpnumformat}{#1}}
\newcommand{\setsubsectionpnumformat}[1]{\renewcommand{\@tocsubsectionpnumformat}{#1}}
\newcommand{\setsubsubsectionpnumformat}[1]{\renewcommand{\@tocsubsubsectionpnumformat}{#1}}
\renewcommand{\@tocpagenum}[1]{%
	\hfill {\mdseries\@sectypepnumformat #1}}
\let\oldappendix\appendix
\renewcommand{\appendix}{%
	\leavevmode\oldappendix%
	\addtocontents{toc}{%
		\protect\settowidth{\protect\@tocsectionnumwidth}{\protect\@tocsectionformat\sectionname\space}%
		\protect\addtolength{\protect\@tocsectionnumwidth}{2em}}%
}
\let\oldtableofcontents\tableofcontents
\renewcommand{\tableofcontents}{%
	\vspace*{-\linespacing}
	\oldtableofcontents}
\numberwithin{equation}{section}
\newtheorem{thm}{Theorem}[section]
\newtheorem{pp}[thm]{Proposition}
\newtheorem{lm}[thm]{Lemma}
\newtheorem{df}[thm]{Definition}
\newtheorem{co}[thm]{Corollary}
\newcommand{\R}{\mathbb{R}}
\newcommand{\rd}{\,\mathrm{d}}
\newcommand{\cQ}{\mathcal{Q}}
\newcommand{\cR}{\mathcal{R}}
\newcommand{\cI}{\mathcal{I}}
\newcommand{\cJ}{\mathcal{J}}
\newcommand{\cF}{\mathcal{F}}
\newcommand{\mS}{\mathbb{S}}
\newcommand{\T}{\mathbb{T}}
\newcommand{\N}{\mathbb{N}}
\newcommand{\Z}{\mathbb{Z}}
\newcommand{\cP}{\mathcal{P}}
\newcommand{\Id}{\operatorname{Id}}
\newcommand{\Div}{\operatorname{div}}
\newcommand{\tr}{\operatorname{tr}}
\newcommand{\supp}{\operatorname{supp}}
\newcommand{\Mod}{\operatorname{mod}}
\newcommand{\tm}{\tilde{m}}
\newcommand{\tE}{\tilde{E}}
\newcommand{\tB}{\tilde{B}}
\newcommand{\tcA}{\tilde{\mathcal{A}}}
\newcommand{\ta}{\tilde{a}}
\newcommand{\cm}{\overset{\circ}{m}}
\newcommand{\cE}{\overset{\circ}{E}}
\newcommand{\cB}{\overset{\circ}{B}}
\newcommand{\ccA}{\overset{\circ}{\mathcal{A}}}
\newcommand{\mcA}{\mathcal{A}}
\newcommand{\cA}{\overset{\circ}{A}}
\newcommand{\ca}{\overset{\circ}{a}}
\newcommand{\cta}{\overset{\circ}{\ta}}
\newcommand{\cb}{\overset{\circ}{b}}
\newcommand{\ce}{\overset{\circ}{e}}
\newcommand{\cg}{\overset{\circ}{g}}
\newcommand{\cte}{\overset{\circ}{\tilde{e}}}
\newcommand{\ctg}{\overset{\circ}{\tilde{g}}}
\newcommand{\tlm}{\tilde{\lambda}}
\newcommand{\tp}{\prime\prime}
\newcommand{\CIO}{{C^0(\cI_u;C^0(\R^3))}}
\newcommand{\CIN}{C^0(\mathcal{I}_u;C^N(\R^3))}
\newcommand{\sumu}{\sum_{u\in \Z}}
\newcommand{\sumkO}{\sum_{k\in \Z^3\setminus\left\lbrace 0\right\rbrace}}
\newcommand{\PL}{P_{\leqslant\ell^{-1}}}
\newcommand{\PG}{P_{>\ell^{-1}}}
\newcommand{\UL}{U_{\leqslant\ell^{-1}}}
\newcommand{\ULO}{U_{\leqslant\ell_1^{-1}}}
\newcommand{\PLT}{P_{\leqslant\ell_2^{-1}}}
\newcommand{\UG}{U_{>\ell^{-1}}}
\newcommand{\DTL}{D_{t,\ell}}
\newcommand{\tri}{\triangle}
\newcommand{\LL}{\lesssim \lambda_{q+1}}
\newcommand{\GL}{\gtrsim \lambda_{q+1}}
\keywords{Convex integration,
	compressible Euler-Maxwell equations,
	non-uniqueness, H{\"o}lder continuity, entropy solution}
\begin{document}
	\title[] {Non-uniqueness for the compressible Euler-Maxwell equations}
	
	\author{Shunkai Mao}
	\address{School of Mathematical Sciences, Fudan University, China.}
	\email[Shunkai Mao]{21110180056@m.fudan.edu.cn}
	
	\author{Peng Qu}
	\address{School of Mathematical Sciences $\&$ Shanghai Key Laboratory for Contemporary Applied Mathematics, Fudan University, China.}
	\email[Peng Qu]{pqu@fudan.edu.cn}
	\thanks{}

\begin{abstract}
We consider the Cauchy problem for the isentropic compressible Euler-Maxwell equations under general pressure laws in a three-dimensional periodic domain. For any smooth initial electron density away from the vacuum and smooth equilibrium-charged ion density, we could construct infinitely many $\alpha$--H{\"older} continuous entropy solutions emanating from the same initial data for $\alpha<\frac{1}{7}$. Especially, the electromagnetic field belongs to the H{\"o}lder class $C^{1,\alpha}$. Furthermore, we provide a continuous entropy solution satisfying the entropy inequality strictly. The proof relies on the convex integration scheme. Due to the constrain of the Maxwell equations, we propose a method of Mikado potential and construct new building blocks. 
\end{abstract}

\maketitle

{
\tableofcontents
}

\section{Introduction}\label{Introduction}
In this paper, we consider the isentropic compressible Euler-Maxwell system on the periodic domain $[0,T]\times\T^3$ with $\T^3=[-\pi,\pi]^3$ and $T\in(0,\infty)$.
The Euler-Maxwell system (see \cite{BDDCGT04,Chen84,RG69}) is a hydrodynamic model used in plasma physics to describe the motion of electrons under the influence of the corresponding electromagnetic fields. The Cauchy problem with initial condition can be expressed as follows:
\begin{equation}
\left\{
\begin{aligned}
    &\partial_tn+\Div(nu)=0,\\
	&\partial_t(nu)+\Div(nu\otimes u)+\nabla p(n)=-n(E+u\times B),    \\
	&\partial_tE-\nabla\times B=nu,\quad\Div\ E=h(x)-n,\\
	&\partial_tB+\nabla\times E=0,\ \ \quad\Div\ B=0,\\
	&(n,u,E,B)|_{t=0}=(n_0,u_0,E_0,B_0),
\end{aligned}
\right.\label{Euler-Maxwell System}
\end{equation} 
where $n=n(t,x)$ represents the density of the electrons, $u=u(t,x)$ is a vector field representing the macroscopic velocity of the electrons, and $p=p(n)$ is the pressure which is a function of the density $n$. We denote the electric and magnetic fields of the plasma as $E=(E_1,E_2,E_3)^{\top}$ and $B=(B_1,B_2,B_3)^{\top}$. $n(E+u\times B)$ represents the Coulomb force and the Lorentz force. The equilibrium-charged density of ions $h$ is a  stationary and positively smooth function,  $h=h(x)>0$.\par
In this paper, we consider weak solutions $(n,u,E,B) $ which are  H\"{o}lder continuous in space, for instance,
\begin{equation}
|u(x,t)-u(y,t)|\leqslant C|x-y|^\alpha,\qquad \forall x,y\in\T^3,\forall t\in[0,T],
\end{equation}
for some constant $C$ which is independent of $t$. Here $\alpha\in(0,1)$ is the H\"{o}lder index. Moreover, we consider the entropy inequality as
\begin{equation}\label{the entropy inequality}
	\partial_t\left(\frac{n|u|^2}{2}+\frac{|E|^2+|B|^2}{2}+ne(n)\right)+\Div\left(\left(\frac{n|u|^2}{2}+ne(n)+p(n)\right)u+E\times B\right)\leqslant0, 
\end{equation}
which describes the behavior of the combination of the kinetic energy density, the internal energy density, and the electromagnetic energy density. $e:\R\rightarrow\R$ denotes the specific internal energy that is related to the pressure $p(n)$ through $n^2e^\prime(n)=p(n)$. We would also introduce the pressure potential, $P(n):=ne(n)$, which satisfies $nP^\prime(n)=P(n)+p(n)$. We call weak solutions $(n,u,E,B)$ that solve \eqref{Euler-Maxwell System} and satisfy \eqref{the entropy inequality} in the sense of distribution as entropy solutions.\par
The study of entropy solutions to the compressible Euler equations is of great interest in the field of mathematical physics. Compressible Euler equations with $p^{\prime}(n)>0$ possess a weak-strong uniqueness principle, which asserts that if a classical solution to the Euler equations exists over a short time period, any entropy solution sharing the same initial conditions must coincide with it \cite{Daf05,GJK21,Wie18}. In a way, the entropy inequality gives us a way to select the desired solution, but it doesn't always work. In recent years, researchers have made significant progresses in understanding the non-uniqueness of weak solutions to the Euler equations by using the convex integration method.\par 
For the incompressible Euler equations, based on the commutator estimate, Constantin, E, and Titi gave proof for the Onsager conjecture regarding energy conservation for weak solutions of the 3D incompressible Euler equations in \cite{CET94}. Later, researchers focused on  the non-uniqueness of weak solutions and on finding weak solutions that do not conserve energy. De Lellis and Sz{\'e}kelyhidi provided clear proofs for the non-uniqueness of bounded weak solutions in $L^\infty(\R_x^n\times\R_t;\R^n)$ in \cite{DS09}, see also earlier results by Scheffer \cite{Sch93} and Shnirelman \cite{Shn97,Shn00}. In their subsequent work \cite{DS13}, they showed the existence of continuous periodic weak solutions to the 3D incompressible Euler equations that dissipate the total kinetic energy.  After that, they improved this result to the H{\"o}lder class $C^{\frac{1}{10}-}$ in \cite{DS14} and   further to the H{\"o}lder class $C^{\frac{1}{5}-}$ with Buckmaster and Isett as shown in \cite{BDIS15}. Isett \cite{Isett18} reached the Onsager exponent of $\frac{1}{3}-$. And then, Buckmaster, De Lellis, Sz{\'e}kelyhidi, and Vicol  provided a proof for dissipative case in \cite{BDSV19}. In addition, there were also many other important works, as seen in \cite{Buc15,BDS16,DS17,IO16}.\par
Meanwhile, many scholars have also made contributions to the research on the non-uniqueness of entropy solutions to compressible and incompressible Euler equations. The uniqueness, continuous dependence, and global stability of weak, entropy-admissible solutions to the Cauchy problem of the compressible Euler equations have been established for 1D cases with small BV initial data and mild assumptions, see for instance \cite{BCP00,LY99}. However, De Lellis-Sz{\'e}kelyhidi \cite{DS10} demonstrated that there exists bounded and compactly supported initial data such that neither the strong nor the weak energy inequalities can uniquely identify a weak solution to the incompressible or compressible Euler equations in $L^\infty(\R_x^n\times\R_t;\R^n)$ for $n\geqslant2$. Later, Chiodaroli-De Lellis-Kreml \cite{CDK15} showed the non-uniqueness of bounded admissible solutions to the 2D isentropic compressible Euler equations for the corresponding Lipschitz data that can form the shock. In subsequent work \cite{CVY21,CF22,CKMS21,KKMM20,MK18}, further discussion and supplementation have been made on the types of initial data which can generate infinitely many solutions. Especially, in \cite{KKMM20,MK18}, it has been proved that the Riemann problem for the isentropic Euler system  with a power law pressure in multiple space dimensions is ill-posed if the one-dimensional self-similar solution contains a shock. Moreover, Chen-Vasseur-Yu \cite{CVY21} and Chiodaroli-Feireisl \cite{CF22} provided the dense initial data that will generate infinitely many solutions. In recent years, important progresses have been made for the non-uniqueness of entropy solutions in the H\"older space. Isett \cite{Isett22} proved the case of the incompressible  Euler equations and provided examples that strictly dissipate kinetic energy. In \cite{DK22}, De Lellis-Kwon
found continuous entropy solutions belonging to the H{\"o}lder class $C^{\frac{1}{7}-}$, that satisfy the entropy inequality and strictly dissipate the total kinetic energy. The case of the compressible Euler equations was approached by Giri-Kwon in \cite{GK22}. Although some efforts have been made to find conditions that would make the solution unique, Luo-Xie-Xin \cite{LXX16} demonstrated that the non-uniqueness persists even in the presence of damping or rotation. \par
At the same time, there were many important progresses for the incompressible Navier-Stokes equations. Buckmaster and Vicol proved that weak solutions of the 3D incompressible Navier-Stokes equations are not unique within the class of
weak solutions with bounded kinetic energy by using the intermittent convex integration schemes in \cite{BV19}.  Cheskidov-Luo \cite{CL22} showed the non-uniqueness of weak solutions in the class $L^p_tL^\infty$ to the incompressible Navier-Stokes equations across any dimension $d\geqslant 2$ and for any given $p<2$. The utilization of intermittent convex integration schemes in investigating the non-uniqueness of the  Navier-Stokes equations with fractional viscosity includes \cite{BCV22,LQZZ22,LQ20,LT20}. Luo-Titi \cite{LT20} and Buckmaster-Colombo-Vicol \cite{BCV22} demonstrated the non-uniqueness of weak solutions with bounded kinetic energy for the 3D  hyper-viscous Navier-Stokes equations with the viscosity less than the Lions exponent $\frac{5}{4}$. Additionally, a recent study \cite{LQZZ22} revealed that the 3D hyper-viscous Navier-Stokes equations with the viscosity beyond the Lions exponent $\frac{5}{4}$ display sharp non-uniqueness at two endpoint spaces. In the study of the MHD equations \cite{BBV20,LZZ22,LZZ21,MY22}, the intermittent convex integration schemes have also had a profound impact.
 \par
In this paper, we would mainly focus on the model of the compressible Euler-Maxwell system. Peng conducted a study on the compressible Euler-Maxwell system on periodic domain with small initial data and proved that smooth solutions exist globally in time, and converge towards non-constant equilibrium states as time goes to infinity in \cite{Peng15}. In addition, the result of the non-isentropic Euler-Maxwell system without a temperature diffusion term was proved by Liu-Peng in \cite{LP17}.  Considering the Euler-Maxwell two-fluid system on $\mathbb{R}^3$, Guo-Ionescu-Pausader \cite{GIP16} established the global stability of a constant neutral background, wherein smooth and localized perturbations of a constant background with small irrotational amplitude result in global smooth solutions.  Later in \cite{DIP17}, global stability of
a constant neutral background for the one-fluid Euler-Maxwell model was proved by Deng-Ionescu-Pausader.  Moreover, Liu-Guo-Peng \cite{LGP19} studied the global existence and stability of smooth solutions near large
steady-states for an isentropic Euler-Maxwell system in $\R^3$. There were also many other important works, such as \cite{IL18} on the long term regularity of the one-fluid Euler-Maxwell system in $\R^3$. \par
In this paper, we will construct  infinitely many solutions in H{\"o}lder class $C^{\frac{1}{7}-}$ to the compressible Euler-Maxwell system with the same initial data, which satisfy the entropy inequality \eqref{the entropy inequality}. We define $m:=nu$ to represent the electron momentum. Then, the Cauchy problem for the compressible Euler-Maxwell system can be rewritten as:
\begin{equation}
	\left\{
	\begin{aligned}
		&\partial_tn+\Div\ m=0,\\
		&\partial_tm+\Div\left(\frac{m\otimes m}{n}\right)+\nabla p(n)=-nE-m\times B,    \\
		&\partial_tE-\nabla\times B=m,\quad\Div E=h(x)-n,\\
		&\partial_tB+\nabla\times E=0,\quad\ \Div B=0,\\
		&(n, M,E,B)|_{t=0}=(n_0,m_0,E_0,B_0).
	\end{aligned}
	\right.\label{Euler-Maxwell System 1}
\end{equation} 
The corresponding entropy inequality can be written as:
\begin{align}\label{entropy inequality 1}
	\partial_t\left(\frac{|m|^2}{2n}+\frac{|E|^2+|B|^2}{2}+ne(n)\right)+\Div\left(\frac{m}{n}\left(\frac{|m|^2}{2n}+ne(n)+p(n)\right)+E\times B\right)\leqslant0.
\end{align}
\subsection{Main results}
In this paper, we present two main theorems that imply the non-uniqueness of entropy solutions in the H{\"o}lder class $C^{\frac{1}{7}-}$ to the compressible  Euler-Maxwell equations.
\begin{thm}\label{thm 1}
	For any $0\leqslant \beta< 1/7$, initial density $n_0=n_0(x)\in C^\infty(\T^3)$, $h=h(x)\in C^\infty(\T^3)$, and pressure $p=p(n)\in C^\infty([\varepsilon_0,\infty))$, where $\int_{\mathbb{T}^3} n_0(x) \rd x=\int_{\mathbb{T}^3} h(x) \rd x$, and $\varepsilon_0$ is a positive constant such that $n_0(x)\geqslant \varepsilon_0$, we can find
	infinitely many distinct entropy solutions, $n \in C^\infty([0,T]\times \T^3)$, $m\in C^\beta([0, T]\times \T^3)$, $E\in C^{1,\beta}([0, T]\times \T^3)$ and $B\in C^{1,\beta}([0, T]\times \T^3)$, to the isentropic compressible Euler-Maxwell equations \eqref{Euler-Maxwell System 1} emanating from the same initial data and satisfying the energy equation 
	\begin{align}\label{energy equation 2}
		\partial_t \left( \frac{|m|^2}{2 n}+ \frac{|E|^2+|B|^2}{2} +  n e( n)\right) +
		\Div\left(\frac{m}{ n} \left(
		\frac{|m|^2}{2 n} +  n e( n) + p( n)\right)+E\times B\right)= 0\, 
	\end{align}
	in the distributional sense.
\end{thm}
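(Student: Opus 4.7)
The proof will proceed via a Nash-type convex integration scheme in H\"older spaces, adapted to the Euler-Maxwell system in the spirit of Giri--Kwon and De Lellis--Kwon for the compressible/incompressible Euler cases. The plan is to construct, for each $q \geq 0$, a smooth approximate tuple $(n, m_q, E_q, B_q)$ that solves the continuity equation, Amp\`ere's law, Faraday's law, and both divergence constraints of \eqref{Euler-Maxwell System 1} exactly, while the momentum equation is solved only up to a symmetric stress defect
\begin{equation*}
\partial_t m_q + \Div\!\left(\tfrac{m_q\otimes m_q}{n}\right) + \nabla p(n) + n E_q + m_q\times B_q = \Div R_q,
\end{equation*}
with $\|R_q\|_{C^0}\le \delta_{q+1}$ tending to zero at a geometric rate in the frequency parameter $\lambda_q$. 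Since the theorem asks for $n\in C^\infty$, I would fix $n$ at the outset as any smooth extension of $n_0$ staying above $\varepsilon_0$, and choose $m_0$ so that $\Div m_0 = -\partial_t n$ pointwise; all subsequent corrections must then be divergence-free so as not to disturb $n$.

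At each step, the increment $w_{q+1}:=m_{q+1}-m_q$ is built as the curl of a vector potential assembled from Mikado-type profiles; this is the new ingredient announced in the abstract as the \emph{method of Mikado potential}. The curl structure plays a double role: it automatically enforces $\Div w_{q+1}=0$, preserving the continuity equation, and it allows compatible electromagnetic corrections $(e_{q+1},b_{q+1})$, one derivative smoother than $w_{q+1}$, to be defined by solving the linear Maxwell subsystem
\begin{equation*}
\partial_t e_{q+1} - \nabla\times b_{q+1}=w_{q+1},\qquad \partial_t b_{q+1}+\nabla\times e_{q+1}=0,\qquad \Div e_{q+1}=\Div b_{q+1}=0,
\end{equation*}
yielding the announced $C^{1,\beta}$ regularity of the limiting fields $E,B$. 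The directions and amplitudes of the Mikado blocks are selected so that the low-frequency projection of $w_{q+1}\otimes w_{q+1}/n$ cancels $R_q$ via the standard geometric lemma; the high-frequency part, the transport error produced by the material derivative along $m_q/n$, the pressure-induced error, and the new Maxwell self-interaction $e_{q+1}\times b_{q+1}$ are collected into $R_{q+1}$ by repeated applications of an anti-divergence operator.

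To obtain the prescribed energy equality \eqref{energy equation 2} and simultaneously produce infinitely many distinct limits, I would fix in advance an arbitrary smooth profile $\bar e(t,x)$ compatible with the initial data and use it to prescribe the scalar amplitude of the Mikado blocks at each scale, so that distinct choices of $\bar e$ yield distinct entropy solutions sharing the same initial data. Passing to the limit $m_q\to m$ in $C^\beta$ and $E_q\to E$, $B_q\to B$ in $C^{1,\beta}$ is routine once the inductive $C^0$ and $C^1$ bounds on $R_q$ close; the resulting $(n,m,E,B)$ satisfies \eqref{Euler-Maxwell System 1} and \eqref{energy equation 2}, hence a fortiori \eqref{entropy inequality 1}.

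The main obstacle is the simultaneous reconciliation of four rigid constraints (the two divergence conditions and the two evolution equations of Maxwell) with the convex-integration cancellation of the Reynolds stress: the Mikado directions must both solve the geometric decomposition of a positive-definite matrix and fit through the curl-potential ansatz, while the quadratic self-interaction $e_{q+1}\times b_{q+1}$ is only as regular as the Mikado profiles themselves and must be kept below the threshold $\delta_{q+2}$. Balancing the frequency-amplitude exponents so that all new errors (Reynolds, field self-interaction, transport, and pressure) fit into the same inductive estimate is exactly what forces the H\"older exponent $\beta<1/7$, by the same arithmetic that appears in the compressible Euler construction.
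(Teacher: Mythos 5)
Your overall strategy---fix $n$, iterate a Reynolds defect $R_q$ in the momentum equation, build the momentum increment from an electromagnetic potential so that the corresponding fields pick up an extra factor of $\lambda_{q+1}^{-1}$, and close the scheme at $\beta<1/7$---is the right skeleton and does match the paper. But there are two substantive gaps in the proposal.

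First, the resonance issue. You assume that the momentum increment coming out of the potential ansatz is an ordinary Mikado flow, so that the standard geometric lemma lets $\langle w_{q+1}\otimes w_{q+1}/n\rangle$ cancel $R_q$. In fact, applying $\partial_t(\cdot)-\nabla\times(\cdot)$ twice to a Lagrangian-phase potential produces, to leading order, the profile
$$
a(t,x)\Bigl(1-\tfrac{(k\cdot v)^2}{|k|^2}\Bigr)(\nabla\xi)^{-1}f\,e^{i\lambda_{q+1}k\cdot\xi},
$$
not $a\,(\nabla\xi)^{-1}f\,\psi_f(\lambda_{q+1}\xi)$. The factor $1-(k\cdot v)^2/|k|^2$ depends on the Fourier mode $k$ of the Mikado profile and on the local drift velocity $v=m_\ell/n$, and it can vanish identically on a family of modes when $v$ resonates with $k$. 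After summing over $k$, the effective profile is a \emph{modified} Mikado flow $\psi^*$ whose moments $\langle(\psi^*)^2\rangle$ and $\langle(\psi^*)^3\rangle$ depend on $v$ and are not normalized. The paper must therefore construct special pipe profiles (Proposition~4.10) whose second moment stays bounded away from zero uniformly in $v$, and whose third moment either vanishes or is bounded away from zero, before the geometric lemmas can be applied. Without this, the cancellation of $R_q$ by the low-frequency part of $w_{q+1}\otimes w_{q+1}/n$ does not close.

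Second, the energy equality. You propose to prescribe a scalar energy profile $\bar e$ and encode it in the Mikado amplitudes. But \eqref{energy equation 2} is a local (distributional) identity, not a global energy budget, so a prescribed global profile cannot produce it. The paper instead augments the Reynolds flow with a second iterate, the current $\varphi_q$, measuring the local residual in the energy equation, and reduces $R_q$ and $\varphi_q$ simultaneously; this is why each direction family in the geometric construction is split into an $R$-subfamily (six directions, second moments) and a $\varphi$-subfamily (four directions, third moments) with separately chosen weights. You also omit two smaller but necessary devices: (i) a time corrector $\tm_t$ solving an ODE in $t$ alone, needed because $\int_{\T^3}(nE+m\times B)\,\rd x\ne 0$ so the perturbed momentum equation is not mean-zero and hence not in divergence form; and (ii) the bifurcation mechanism for non-uniqueness, which in the paper comes from flipping the sign of selected weights $\gamma_I$ on a time interval away from $t=0$ (Proposition~\ref{Bifurcating inductive proposition}), not from varying a prescribed energy profile. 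With these four items added, your outline would coincide with the paper's proof.
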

\begin{thm}\label{thm 2}
For any $0\leqslant \beta< 1/7$, initial density $n_0=n_0(x)\in C^\infty(\T^3)$, $h=h(x)\in C^\infty(\T^3)$, and pressure $p=p(n)\in C^\infty([\varepsilon_0,\infty))$,   where $\int_{\mathbb{T}^3} n_0(x)\rd x=\int_{\mathbb{T}^3} h(x) \rd x$, and $\varepsilon_0$ is a positive constant such that $n_0(x)\geqslant \varepsilon_0$, there is an entropy solution  $n \in C^\infty([0,T]\times \T^3)$, $m\in C^\beta([0, T]\times \T^3)$, $E\in C^{1,\beta}([0, T]\times \T^3)$ and $B\in C^{1,\beta}([0, T]\times \T^3)$, to the isentropic compressible Euler-Maxwell equations \eqref{Euler-Maxwell System 1} satisfying the entropy inequality \eqref{entropy inequality 1} strictly in the distributional sense.
\end{thm}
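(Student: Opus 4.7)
The plan is to obtain Theorem~\ref{thm 2} as a corollary of the same convex integration scheme that proves Theorem~\ref{thm 1}, but run with a strictly decreasing target energy. The key observation is that the scheme of Theorem~\ref{thm 1} produces, at each iteration stage $q$, an approximate Euler-Maxwell solution $(n_q,m_q,E_q,B_q)$ whose actual energy density is driven toward a prescribed smooth profile; in Theorem~\ref{thm 1} this profile is tuned so that the limit solution conserves energy, but nothing in the construction forces that particular choice.

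Concretely, I would first fix initial data $m_0,E_0,B_0$ compatible with the Maxwell constraints ($\Div E_0=h-n_0$ and $\Div B_0=0$), set $\mathcal{E}_0=\int_{\T^3}\bigl(\tfrac{|m_0|^2}{2n_0}+\tfrac{|E_0|^2+|B_0|^2}{2}+n_0 e(n_0)\bigr)\rd x$, and choose a smooth function $\bar{\mathcal{E}}:[0,T]\to\R$ with $\bar{\mathcal{E}}(0)=\mathcal{E}_0$, $\bar{\mathcal{E}}'\leqslant 0$ on $[0,T]$, and $\bar{\mathcal{E}}'(t)<0$ on a non-degenerate subinterval $(t_1,t_2)\subset(0,T)$. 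I would then run the Theorem~\ref{thm 1} iteration with $\bar{\mathcal{E}}$ as the target, obtaining an entropy solution $(n,m,E,B)$ in the prescribed H\"older class whose total energy equals $\bar{\mathcal{E}}(t)$ for every $t\in[0,T]$.

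To conclude strict inequality in the distributional sense, I would test \eqref{entropy inequality 1} against any non-negative $\varphi(t)\in C_c^\infty((t_1,t_2))$ with $\varphi\not\equiv 0$, regarded as a test function that is independent of $x$. Since $\T^3$ has no boundary, the flux divergence term vanishes after spatial integration, and the pairing reduces to
\[
\int_0^T \bar{\mathcal{E}}'(t)\,\varphi(t)\,\rd t,
\]
which is strictly negative by construction. Hence \eqref{entropy inequality 1} holds strictly in the sense of distributions.

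The main obstacle is, as always with prescribed-energy convex integration, that the energy-matching step must not destroy the Maxwell constraints or the H\"older bounds built into Theorem~\ref{thm 1}. Here the Mikado-potential construction plays a double role: the curl-of-potential form of the perturbations to $(E,B)$ keeps $\Div B=0$ and $\Div E=h-n$ exact along the iteration, while the amplitude of the momentum building blocks remains available as the one-parameter scalar knob needed to hit $\bar{\mathcal{E}}(t)$. Verifying that this scalar adjustment is compatible with all the quantitative estimates of the iteration (sizes of perturbations, Reynolds defect decay, H\"older exponents below $1/7$) is the delicate point where the scheme behind Theorem~\ref{thm 1} must be inspected rather than merely invoked.
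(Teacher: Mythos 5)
Your proposal is essentially the paper's own approach: the dissipative Euler--Maxwell--Reynolds flow of Definition~2.1 already carries a global energy-loss function $H(t)$ with $H(0)=0$, $H'\leqslant 0$, which is fixed once in the starting-tuple construction (Lemma~\ref{Construction of a Starting Tuple stationary}) and carried unchanged through the iteration; choosing $H'<0$ there, and letting $R_q,\varphi_q,c_q\to 0$, yields the limit energy equality with source $H'<0$, so strictness is immediate. The one imprecision in your account of the mechanism is that the freedom is not an amplitude knob on the Mikado momentum blocks --- the amplitudes $\delta_{q+1}^{1/2}$ and the weights $\gamma_I$ are set entirely by the error-decay schedule --- but rather the a priori prescribed dissipation rate $H'$ sitting in the relaxed energy equation; your target-energy-profile framing is an equivalent reformulation (with $\bar{\mathcal{E}}(t)-\mathcal{E}_0=(2\pi)^3H(t)$), and your bump-function test is a valid, if slightly roundabout, way to extract strictness when the paper simply arranges $H'<0$ on all of $[0,T]$.
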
	
The proof of our results relies on the convex integration scheme starting from De Lellis-Sz{\'e}kelyhidi. We adapt the convex integration scheme proposed by De Lellis-Kwon \cite{DK22} and Giri-Kwon \cite{GK22} to the compressible Euler-Maxwell system. In the convex integration scheme, due to the presence of a nonlinear term $\frac{(m\otimes m)}{n}$ in the Euler equations, we commonly use it to eliminate Reynolds errors.  However, since the Maxwell equations are linear, we could only try to find perturbations that strictly satisfy them in order to keep the linear Maxwell equation hold during our iteration. The first challenge we encountered when using the scheme was to solve the Maxwell equations. If we want to use Mikado flows to construct the perturbation term $\tm_q$, we need to determine the electromagnetic field caused by Mikado flows. In fact, in the case of three dimensions, the electromagnetic field satisfies the wave equations, and we can obtain the solutions to the corresponding Cauchy problems. However, the estimates for $\tE_q$ and $\tB_q$ chosen by solving wave equations, as well as for $\tm_q$, are of the same order of magnitude. $R_{q+1}$ would be so large such that the iteration cannot continue. To overcome these difficulties, we propose a new method of Mikado potential. We would use the specially chosen electromagnetic potentials to construct new building blocks $(\cm_k,\cE_k,\cB_k)$ (see Lemma \ref{New building blocks}) which satisfy the Maxwell equations and can be used to construct the perturbation. In this way, we can not only express the solutions of the Maxwell equations explicitly, but also show that we could use a special linear combination of the main part of $\cm_k$, denoted by $\cm_{p,k}$, to construct the Mikado flows as defined in \eqref{def of psi^*}. Moreover, the estimation on $E_q$ and $B_q$ is much smaller than the one on $m_q$. However, due to the constrain of the Maxwell equations and strong resonance between the electromagnetic fields may occur, we will find that for some directions, a strong electromagnetic field can only lead to a weak fluid flow. Then, the special type of Mikado flows will lose certain frequencies, that is, the terms corresponding to certain frequency will be close to zero. If we use the special Mikado potentials to construct $\tm_p$, the low-frequency components of $\frac{\tm_p\otimes\tm_p}{n}$ and $\frac{|\tm_p|\tm_p}{2n^2}$ may vanish, which will make it difficult for us to choose weights function. To solve this, we would specially choose the strength function $\psi^*$ as defined in Proposition \ref{prop of Mikado flow scalar function}, which allows us to use the low-frequency components of $\frac{\tm_p\otimes\tm_p}{n}$ and $\frac{|\tm_p|\tm_p}{2n^2}$ to reduce the Reynolds error $R_q$ and current $\varphi_q$ separately.\par 
In the main part of the article, we would first introduce the dissipative Euler-Maxwell-Reynolds flow (see Definition \ref{Euler-Maxwell-Reynolds System}) which is an approximate solution to the Euler-Maxwell system with an entropy inequality. The iterative scheme proposed in this paper aims to construct a series of dissipative Euler-Maxwell-Reynolds flows that converge to a solution of the Euler-Maxwell system. We need to construct a starting tuple $(m_0,E_0,B_0,c_0,R_0,\varphi_{0})$ and a series of perturbations $(\tm_q,\tE_q,\tB_q)$, which can be found in Section \ref{Construction of a Starting Tuple} and Section \ref{Construction of the perturbation} respectively, such that tuples $(m_q, E_q, B_q, c_q, R_q, \varphi_q)$ satisfy Proposition \ref{Inductive proposition} and Proposition \ref{Bifurcating inductive proposition}. Then, we could give proofs for our main results.\par 
Before we construct the perturbation, we would first present a specially chosen mollification process for tuples $(m_q, E_q, B_q, c_q, R_q, \varphi_q)$. $m_q$, $E_q$ and $B_q$ would be mollified with respect to time and space, and $R_q$ and $\varphi_q$ would be mollified along the flow trajectory.  Next, we will introduce the Mikado flow, the cutoff functions and the backward flow map. Subsequently, we will introduce the new building blocks $(\cm_k,\cE_k,\cB_k)$ and strength function $\psi^*$ and  use them to construct the perturbation. \par
To construct the starting tuples, we may also utilize the Mikado potential method. First, we would create the starting tuple with a stationary density. Our main idea is to use the low-frequency components of $\frac{m_0\otimes m_0}{n}$ to eliminate the low frequency components of $\nabla p+nE_0+m_0\times B_0$.   Next, by perturbing the density slightly over time, we can provide a starting tuple for the case with time-dependent density. Finally, we present an overview of the paper's structure.
\subsection{Organization of the paper}\label{Outline of the proof}
	Section \ref{Induction scheme} contains the induction scheme for constructing Euler-Maxwell-Reynolds flows and two main propositions, Proposition \ref{Inductive proposition} and Proposition \ref{Bifurcating inductive proposition}, which will be utilized in the proof of Theorems \ref{thm 1} and \ref{thm 2}. We introduce the mollification process for $(m_q, E_q, B_q, c_q, R_q, \varphi_q)$ in Section \ref{Mollification}. The construction of the perturbation is described in Section \ref{Construction of the perturbation}. The new error $R_{q+1}$ and the updated current $\varphi_{q+1}$ would be given in Section \ref{Definition of the new errors}, and the corresponding estimates are carried out over Section \ref{Estimates on the Reynolds stress} and \ref{Estimates on the new current error}. The proofs of Proposition \ref{Inductive proposition} and \ref{Bifurcating inductive proposition} occupy Section \ref{Proof of the Inductive Proposition}. In Section \ref{Construction of a Starting Tuple}, we construct starting tuples with both cases of stationary and time-dependent density. The proofs for Theorems \ref{thm 1} and \ref{thm 2} are given in Section \ref{Proof of the Theorems}. The appendix provides proofs or statements of analytical facts that were used in the proofs of the propositions in the paper.
\section{Induction scheme}\label{Induction scheme}
\begin{df}
	For a given $n=n(t,x)\in C^\infty([T_1,T_2]\times \T^3)$, $h=h(x)\in C^\infty(\T^3)$ with $n(t,x)\geqslant \varepsilon_0$ for some positive constant $\varepsilon_0$, and $\int_{\mathbb{T}^3} n(t,x) \rd x=\int_{\mathbb{T}^3} h(x) \rd x$ for all $t$, a tuple of smooth tensors $(m,E,B,c,R,\varphi)$ is a dissipative Euler-Maxwell-Reynolds flow as long as it solves the following system
	\begin{equation}
		\left\{
		\begin{aligned}
			&\partial_tn+\Div m=0,\\
			&\partial_tm+\Div\left(\frac{m\otimes m}{n}\right)+\nabla p(n)+nE+m\times B=\Div(n(R-c\Id)),    \\		    
			&\partial_tE-\nabla\times B=m,\quad \Div E=h(x)-n,\\
			&\partial_tB+\nabla\times E=0,\quad\ \Div B=0,\\
			&\partial_t\left(\frac{|m|^2}{2n}+\frac{|E|^2+|B|^2}{2}+ne(n)\right)+\Div\left(\frac{m}{n}\left(\frac{|m|^2}{2n}+ne(n)+p(n)\right)+E\times B\right)\\
			&=n\left(\partial_t+\frac{m}{n}\cdot\nabla\right)\frac{1}{2}\tr(R)+\Div((R-c\Id)m)+\Div(n\varphi)+\partial_tH,\\
		\end{aligned}
		\right.\label{Euler-Maxwell-Reynolds System}
	\end{equation} 
in the sense of distribution. Here $H=H(t,x)$ is the global energy loss, and we assume that $H(0)=0,\partial_tH\leqslant 0$	for the term dissipative.
\end{df}
Assuming that we have constructed a dissipative Euler-Maxwell-Reynolds flow $(m_q,E_q,B_q,c_q,R_q,\varphi_q)$ at step $q\in\N$, which is a distributional sense approximate solution to the Euler-Maxwell system, we aim to build a corrected Euler-Maxwell-Reynolds flow $(m_{q+1},E_{q+1},B_{q+1},c_{q+1},R_{q+1},\varphi_{q+1})$ at the $(q+1)_{th}$ step. This will allow us to obtain a sequence of approximate solutions that can converge to a solution of the Euler-Maxwell system. To achieve this, we introduce some parameters to measure the size of our approximate solutions,
\begin{equation}
    \begin{aligned}
    \lambda_q=\lceil\lambda_0^{b^q}\rceil,
    \quad\delta_q=\lambda_q^{-2\alpha},\quad c_q=\sum_{j=q+1}^{\infty}\delta_j,
    \end{aligned}\label{def of parameter}
\end{equation}
where $\lceil x\rceil$ denotes the smallest integer $\tilde{n}\geqslant x,\lambda_0>1$ is a large parameter, and $b$ will be chosen close to 1.\par
At each step, we give a correction $(\tm_{q},\tE_{q},\tB_{q})=(m_{q+1}-m_q,E_{q+1}-E_q,B_{q+1}-B_q)$ to make the error $(R_q,\varphi_q)$ get smaller which would converge to zero (in\ H\"{o}lder space) as $q$ goes to infinity. Due to truncation and smoothing, the domains of definition for the choice of the approximate solution change at each step, here we choose it at step  $q$ as $\cI^{q-1}\times\T^3=[-\tau_{q-1},T+\tau_{q-1}]\times \T^3$, where $\tau_{-1}=(\lambda_0\delta_{0}^{\frac{1}{2}})^{-1}$ and
$$
\tau_{q}=\left(C_{n}M\lambda_{q}^{\frac{1}{2}}\lambda_{q+1}^{\frac{1}{2}}\delta_{q}^{\frac{1}{4}}\delta_{q+1}^{\frac{1}{4}}\right)^{-1},\quad q\geqslant0,
$$
for some constants $C_{n}$ depending on $n$ and $M = M(n, p, h)$ depending
on $n, p, h$, see \eqref{def of mu tau} and Section \ref{Proof of the Theorems} for the detailed choice of $C_{n}$ and $M$. For convenience, we introduce the following notation in \cite{GK22}:
\begin{itemize}
	\item $\cI + \sigma$ is the concentric enlarged interval $(a-\sigma, b+\sigma)$ when $\cI = [a,b]$.
	\item Throughout the rest of the paper, we will use $\nabla F$ to denote the Jacobian matrix of the partial derivatives of the components of the vector map $F$.
	\item Furthermore, for the sake of convenience, in what follows, when we use the notation $A\lesssim_{n, p, h, N} B$ without pointing out the dependence of the implicit constant $C$, we mean $A \lesssim_{n, p, h, N} B$, where $n$ represents the density, $p$ is the pressure, and $h$ is the equilibrium-charged density of ions, all of which are fixed in the whole iterative process, and $N$ can be chosen to have $N\leqslant N_0$ for some constants $N_0$. Moreover, we use the notation $A \lesssim_{\kappa} B$ to mean $A \leqslant CB$, where $C > 0$ may depend on some fixed constants or functions $\kappa$.
	\item  For $N=0$, if we use the notation $\|D_{t,q}F\|_{N-1}$, we are not claiming any negative Sobolev estimate on $D_{t,q}F$: the reader should just consider the advective derivative estimate to be an empty statement when $N=0$.  
\end{itemize}
We assume the following inductive estimates on $(m_q,E_q,B_q,R_q,\varphi_q)$ satisfying \eqref{Euler-Maxwell-Reynolds System} with $H(0) = 0$ and $H^\prime \leqslant 0$.
\begin{align}
		&\left\|m_q\right\|_{0}\leqslant \underline{M}-\delta_{q}^{\frac{1}{2}},&&\left\|\partial_t^rm_q\right\|_{N}\leqslant M\lambda_{q}^{N+r}\delta_{q}^{\frac{1}{2}}, && 1\leqslant N+r\leqslant 2,\label{est on m_q}\\
		&\left\|E_q\right\|_{1},\left\|\partial_{t}E_q\right\|_{0 }\leqslant\underline{M}-\delta_{q}^{\frac{1}{2}},&&\left\|\partial_{t}^rE_q\right\|_{N}\leqslant M\lambda_{q}^{N+r-1}\delta_{q}^{\frac{1}{2}},&& 2\leqslant N+r\leqslant 3,\label{est on E_q}\\
		&\left\|B_q\right\|_{1},\left\|\partial_{t}B_q\right\|_{0}\leqslant\underline{M}-\delta_{q}^{\frac{1}{2}},&&\left\|\partial_{t}^rB_q\right\|_{N}\leqslant M\lambda_{q}^{N+r-1}\delta_{q}^{\frac{1}{2}},&& 2\leqslant N+r\leqslant 3,\label{est on B_q}
\end{align}
and
\begin{align}
	&\left\|R_q\right\|_{N}\leqslant \lambda_{q}^{N-3\gamma}\delta_{q+1},	&&\left\|D_{t,q}R_q\right\|_{N-1}\leqslant \lambda_{q}^{N-3\gamma}\delta_{q}^{\frac{1}{2}}\delta_{q+1},&& N=0,1,2,\label{est on error}\\
	&\left\|\varphi_q\right\|_{N}\leqslant \lambda_{q}^{N-3\gamma}\delta_{q+1}^{\frac{3}{2}}, &&\left\|D_{t,q}\varphi_q\right\|_{N-1}\leqslant \lambda_{q}^{N-3\gamma}\delta_{q}^{\frac{1}{2}}\delta_{q+1}^{\frac{3}{2}}, &&N=0,1,2,\label{est on current}
\end{align}
where $\left\|\cdot\right\|_N=\left\|\cdot\right\|_{C^0(\cI^{q-1};C^N(\T^3))},D_{t,q}=\partial_t+\frac{m_q}{n}\cdot\nabla$, $\gamma=(b-1)^2$, and $\underline{M}(n, p, h)\geqslant 1$ which will be determined in Section \ref{Construction of a Starting Tuple}. Moreover, $M(n, p, h)$ can be chosen to satisfy $M(n, p, h)>\underline{M}(n, p, h)$. We keep this assumption throughout the iteration. Under this, we give the core inductive propositions as follows: 
\begin{pp}[Inductive proposition]\label{Inductive proposition}
Let  $n=n(t,x)\in C^{\infty}([-\tau_{-1}, T+\tau_{-1}] \times \mathbb{T}^3)$, $h=h(x)\in C^{\infty}(\mathbb{T}^3)$ satisfying $n(t,x) \geqslant \varepsilon_0$ for some positive constant $\varepsilon_0$, and $ \int_{\mathbb{T}^3} n(t, x) \rd x=\int_{\mathbb{T}^3} h(x) \rd x$ for all $t$. Let $p\in C^\infty([\varepsilon_0,,\infty))$ be a function of $n$. For any $\alpha \in(0,\frac{1}{7})$, there exists constants $M = M(n, p, h) > 1$, $\bar{b}_0(\alpha) > 1$, and $\Lambda_0=\Lambda_0(\alpha, b, M, n, p, h) > 0$ such that the following property holds. Let $c_q=\sum_{j=q+1}^\infty\delta_j$, for any $b \in (1, \bar{b}_0(\alpha))$ and $\lambda_0 \geqslant \Lambda_0$, assume that $\left(m_q, E_q, B_q, c_q, R_q, \varphi_q\right)$  is a dissipative Euler-Maxwell-Reynolds flow defined on the time interval $[-\tau_{q-1}, T+{\tau_{q-1}}]$ satisfying \eqref{est on m_q}--\eqref{est on current} for an energy loss $H$ satisfying $H(0)=0$ and $H^\prime \leqslant 0$. Then, we can find a corrected dissipative Euler-Maxwell-Reynolds flow $\left(m_{q+1}, E_{q+1}, B_{q+1}, c_{q+1}, R_{q+1}, \varphi_{q+1}\right)$ which is defined on the time interval $[-\tau_q, T+{\tau_q}]$ for the same energy loss $H$, satisfies \eqref{est on m_q}--\eqref{est on current} for $q+1$ and additionally
\begin{equation}\label{Proposition of induction}
\begin{aligned}
&\sum_{0\leqslant N+r\leqslant1}\lambda_{q+1}^{-N-r}\left\|\partial_{t}^r(m_{q+1}-m_q)\right\|_{C^0\left([0, T] ; C^N\left(\mathbb{T}^3\right)\right)}\leqslant M \delta_{q+1}^{\frac{1}{2}},\\
&\sum_{0\leqslant N+r\leqslant1}\lambda_{q+1}^{-N-r}\left\|\partial_{t}^r(E_{q+1}-E_q)\right\|_{C^0\left([0, T] ; C^N\left(\mathbb{T}^3\right)\right)}\leqslant M \lambda_{q+1}^{-1}\delta_{q+1}^{\frac{1}{2}},\\
&\sum_{0\leqslant N+r\leqslant1}\lambda_{q+1}^{-N-r}\left\|\partial_{t}^r(B_{q+1}-B_q)\right\|_{C^0\left([0, T] ; C^N\left(\mathbb{T}^3\right)\right)}\leqslant M \lambda_{q+1}^{-1}\delta_{q+1}^{\frac{1}{2}}.
\end{aligned}
\end{equation}
\end{pp}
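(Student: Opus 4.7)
The plan is to execute one step of the convex integration scheme: given the dissipative Euler-Maxwell-Reynolds flow $(m_q, E_q, B_q, c_q, R_q, \varphi_q)$ at level $q$, produce a perturbation $(\tm_q, \tE_q, \tB_q)$ so that $(m_{q+1}, E_{q+1}, B_{q+1}) := (m_q + \tm_q, E_q + \tE_q, B_q + \tB_q)$ satisfies the Euler-Maxwell-Reynolds system with dramatically smaller stress and current errors. First I would carry out the mollification step of Section \ref{Mollification}: regularize $m_q, E_q, B_q$ in space-time at the length scale $\ell = \ell(\lambda_q, \lambda_{q+1})$ to produce smooth tensors $m_\ell, E_\ell, B_\ell$, and mollify $R_q, \varphi_q$ along the flow of $m_q/n$ to preserve the advective-derivative estimates, producing $R_\ell, \varphi_\ell$. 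A standard commutator estimate shows the mollification error is acceptable and that all the relevant $C^N_{t,x}$ bounds hold with constants independent of $\ell$.

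Next I would define the perturbation as a sum over Mikado directions $k$ of the form $\tm_q = \sum_k a_k(t,x)\,\cm_{k}(\lambda_{q+1} \Phi_k) + \text{correctors}$, with the companion electromagnetic pieces $\tE_q, \tB_q$ built from the corresponding $\cE_k, \cB_k$ from Lemma \ref{New building blocks}. Here $\Phi_k$ is the backward flow map along $D_{t,q}$, the amplitudes $a_k$ are built from $R_\ell$ using the geometric lemma (truncated by a cutoff enforcing $|R_\ell| \leqslant \delta_{q+1}/2$), and—crucially—the strength function $\psi^*$ of Proposition \ref{prop of Mikado flow scalar function} is introduced so that the low-frequency components of $\tm_p \otimes \tm_p / n$ cancel $R_\ell$ while the low-frequency components of $|\tm_p|^2 \tm_p / (2 n^2)$ simultaneously cancel $\varphi_\ell$. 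Because $(\cm_k, \cE_k, \cB_k)$ is engineered to solve the Maxwell system exactly, the perturbation $(\tm_q, \tE_q, \tB_q)$ satisfies the linear Maxwell equations on the nose, so $E_{q+1}, B_{q+1}$ continue to satisfy the third and fourth lines of \eqref{Euler-Maxwell-Reynolds System} with the new $n$-dependent source $m_{q+1}$ (after absorbing the density deviation into a divergence correction term in the usual way).

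Then I would compute the new Reynolds stress $R_{q+1}$ and current $\varphi_{q+1}$ as in Section \ref{Definition of the new errors}, splitting them into the standard pieces: a transport/oscillation error from $D_{t,q} \tm_q$ and the high-frequency part of $\tm_q \otimes \tm_q / n$, a Nash error, a mollification error, a Reynolds-high error from the Lorentz coupling $n \tE_q + \tm_q \times B_\ell + m_\ell \times \tB_q + \tm_q \times \tB_q$, and an entropy/current error accounting for $\partial_t H$. For each piece I would invert the divergence with the stationary-phase style operator used for Mikado flows (gaining a factor $\lambda_{q+1}^{-1}$) and estimate the amplitudes using the bounds on $a_k$, the $C^N$ bounds on $\Phi_k$, and the mollification bounds, then collect everything to verify \eqref{est on error}-\eqref{est on current} at level $q+1$ with the gain $\delta_{q+2}$. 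The bounds \eqref{Proposition of induction} and the updated estimates \eqref{est on m_q}-\eqref{est on B_q} at level $q+1$ follow directly from the explicit form of $(\tm_q, \tE_q, \tB_q)$ together with the fact that the electromagnetic building blocks $\cE_k, \cB_k$ are a factor $\lambda_{q+1}^{-1}$ smaller than $\cm_k$, which is precisely what yields the improved regularity $C^{1,\beta}$ on the electromagnetic fields.

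The main obstacle will be the last part of the stress/current estimate, specifically the Lorentz-coupling terms $n\tE_q + \tm_q \times B_\ell + m_\ell \times \tB_q$ appearing in the momentum equation and their counterparts in the energy identity. Because Maxwell's equations force a rigid relationship between $\cm_k$ and $(\cE_k, \cB_k)$, one cannot tune these terms independently; the key is that $\cE_k, \cB_k$ carry an extra $\lambda_{q+1}^{-1}$ so that after inverse-divergence these contributions fit within $\lambda_{q+1}^{-3\gamma} \delta_{q+2}$. Coupled with this is the fact that the strength function $\psi^*$ must simultaneously satisfy the cancellation requirements for both $R_q$ (at quadratic order) and $\varphi_q$ (at cubic order) while avoiding the "forbidden" frequencies generated by the Maxwell constraint; I would rely on Proposition \ref{prop of Mikado flow scalar function} here, checking that the set of admissible directions still spans the cone of symmetric positive definite matrices so that the geometric lemma applies. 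Once these checks are in place, the remaining estimates, the gluing of the new tuple on the enlarged time interval $[-\tau_q, T+\tau_q]$, and the bookkeeping of the choice of $b$ close to $1$ and $\lambda_0$ large, proceed by the now-standard parameter optimization, yielding $\bar b_0(\alpha) > 1$ for every $\alpha < 1/7$.
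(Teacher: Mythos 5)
Your proposal follows essentially the same route as the paper: mollification of $(m_q,E_q,B_q)$ in space-time and of $(R_q,\varphi_q)$ along the flow, the Mikado-potential building blocks $(\cm_k,\cE_k,\cB_k)$ of Lemma \ref{New building blocks} so that the Maxwell equations hold exactly, the strength function $\psi^*$ of Proposition \ref{prop of Mikado flow scalar function} so that the low-frequency parts of $\tm_p\otimes\tm_p/n$ and $|\tm_p|^2\tm_p/(2n^2)$ simultaneously cancel $R_\ell$ and $\varphi_\ell$, the splitting of the new Reynolds and current errors into transport, Nash, oscillation, mediation, and electromagnetic pieces, and the observation that the electromagnetic building blocks carry an extra $\lambda_{q+1}^{-1}$ so that the Lorentz-coupling terms are harmless after inverting the divergence. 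This is the structure of Sections \ref{Mollification}--\ref{Estimates on the new current error} and the proof in Section \ref{Proof of the Inductive Proposition}.

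One detail you gloss over that is not optional: the perturbation $\tm_{EB}$ produces a nonzero spatial mean in the momentum forcing (see \eqref{momentum q+1}), so the inverse divergence operator cannot be applied directly. The paper repairs this by adjoining a spatially constant time corrector $(\tm_t,\tE_t,\tB_t=0)$ solving the ODE \eqref{Eq of tE_t}, which restores the mean-zero condition while leaving the Maxwell equations and the divergence constraints intact. You wave at ``correctors'' but do not say this one is needed, and if it is omitted the stress $R_{q+1}$ is not well defined. A second small inaccuracy: you mention truncating the amplitudes with a cutoff enforcing $|R_\ell|\leqslant\delta_{q+1}/2$; the paper does not introduce such a cutoff — the inductive estimate \eqref{est on error} together with the mollification bounds already puts $\Id+\delta_{q+1}^{-1}\mathcal{M}_I$ inside the domain $S_N$ of Lemma \ref{Geometric Lemma I}, so the weights $\gamma_I$ are defined without truncation.
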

\begin{pp}[Bifurcating inductive proposition]\label{Bifurcating inductive proposition}
Let  $n=n(t,x)\in C^{\infty}([-\tau_{-1}, T+\tau_{-1}] \times \mathbb{T}^3)$, $h=h(x)\in C^{\infty}(\mathbb{T}^3)$ satisfying $n(t,x) \geqslant \varepsilon_0$ for some positive constant $\varepsilon_0$, and $ \int_{\mathbb{T}^3} n(t, x) \rd x=\int_{\mathbb{T}^3} h(x) \rd x$ for all $t$. Let $p\in C^\infty([\varepsilon_0,,\infty))$ be a function of $n$. Let the parameters $\alpha, b, \lambda_0$, constants $M$, $\bar{b}_0$ and $\Lambda_0$,  and the tuple $\left(m_q, E_q, B_q,c_q , R_q, \varphi_q\right)$ be as in the statement of Proposition \ref{Inductive proposition}. For any time interval $\cI \subset(0, T)$ which satisfies $|\cI| \geqslant 3 \tau_q$, we can produce two different tuples $\left(m_{q+1}, E_{q+1}, B_{q+1},c_{q+1}, R_{q+1}, \varphi_{q+1}\right)$ and $\left(\overline{m}_{q+1}, \overline{E}_{q+1}, \overline{B}_{q+1}, c_{q+1}, \overline{R}_{q+1}, \overline{\varphi}_{q+1}\right)$ which share the same initial data, satisfy the same conclusions of Proposition \ref{Inductive proposition}  and additionally
\begin{equation}\label{Bifurcating proposition}
\begin{aligned}
	&\left\|m_{q+1}-\overline{m}_{q+1}\right\|_{C^0\left([0, T] ; L^2\left(\mathbb{T}^3\right)\right)} \geqslant \varepsilon_0 \delta_{q+1}^{\frac{1}{2}}, && \supp_t\left(m_{q+1}-\overline{m}_{q+1}\right) \subset \cI ,\\
	&\left\|E_{q+1}-\overline{E}_{q+1}\right\|_{C^0\left([0, T] ; L^2\left(\mathbb{T}^3\right)\right)} \geqslant \varepsilon_0 \lambda_{q+1}^{-1}\delta_{q+1}^{\frac{1}{2}}, && \supp_t\left(E_{q+1}-\overline{E}_{q+1}\right) \subset \cI ,\\
	&\left\|B_{q+1}-\overline{B}_{q+1}\right\|_{C^0\left([0, T] ; L^2\left(\mathbb{T}^3\right)\right)} \geqslant \varepsilon_0 \lambda_{q+1}^{-1}\delta_{q+1}^{\frac{1}{2}}, && \supp_t\left(B_{q+1}-\overline{B}_{q+1}\right) \subset \cI .
\end{aligned}
\end{equation}
Furthermore, if we are given two tuples $\left(m_q, E_q, B_q, c_q, R_q, \varphi_q\right)$ and $\left(\overline{m}_q, \overline{E}_q,\overline{B}_q, c_q, \overline{R}_q, \overline{\varphi}_q\right)$ satisfying \eqref{est on m_q}--\eqref{est on current}, there exists some interval $\cJ \subset(0, T)$ satisfies
\begin{align}
\supp_t\left(m_q-\overline{m}_q, E_q-\overline{E}_q, B_q-\overline{B}_q, R_q-\overline{R}_q, \varphi_q-\overline{\varphi}_q\right) \subset \mathcal{J},\label{supp of diff at q step}
\end{align}
and we can exhibit two different tuples $\left(m_{q+1}, E_{q+1},B_{q+1}, c_{q+1}, R_{q+1}, \varphi_{q+1}\right)$ and $(\overline{m}_{q+1}, \overline{E}_{q+1}, \overline{B}_{q+1}$, $ c_{q+1},  \overline{R}_{q+1}, \overline{\varphi}_{q+1})$ satisfying the same conclusions of Proposition \ref{Inductive proposition}. Moreover, the support of their difference satisfies
\begin{equation}
\supp_t\left(m_{q+1}-\overline{m}_{q+1}, E_{q+1}-\overline{E}_{q+1}, B_{q+1}-\overline{B}_{q+1}, R_{q+1}-\overline{R}_{q+1}, \varphi_{q+1}-\overline{\varphi}_{q+1}\right) \subset \mathcal{J}+\left(\lambda_q \delta_q^{\frac{1}{2}}\right)^{-1}.\label{supp of diff at q+1 step}
\end{equation}
\end{pp}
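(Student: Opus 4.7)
The plan is to reuse the construction from Proposition \ref{Inductive proposition} as a black box and to exploit one free parameter in the temporal profile of the perturbation. For the first assertion I build the ``standard'' correction and then a ``twin'' correction, which are produced from the same tuple at step $q$ by two different choices of a temporal cutoff localized inside $\cI$. The second assertion is a locality statement: everything in Section \ref{Mollification} and Section \ref{Construction of the perturbation} is built by operations that are local in time (at scale $\tau_q$) and along the flow map generated by $m_q/n$, so a localized change in the input produces a localized change in the output, with only a controlled enlargement of the support.

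\textbf{Producing the two tuples.} Starting from $(m_q,E_q,B_q,c_q,R_q,\varphi_q)$ I run the construction of Proposition \ref{Inductive proposition} unchanged on the complement of $\cI$, obtaining a first tuple $(m_{q+1},E_{q+1},B_{q+1},c_{q+1},R_{q+1},\varphi_{q+1})$. Because $|\cI|\geqslant 3\tau_q$, I can choose a sub-interval $\cI'\subset\cI$ of length at least $\tau_q$ whose $\tau_q$-neighborhood lies in $\cI$, and a temporal cutoff $\eta\in C^\infty(\R)$ with $\supp\eta\subset\cI$, $\eta\equiv 1$ on $\cI'$, and $|\partial_t^k\eta|\lesssim\tau_q^{-k}$. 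The twin tuple $(\overline m_{q+1},\overline E_{q+1},\overline B_{q+1},c_{q+1},\overline R_{q+1},\overline\varphi_{q+1})$ is produced by the same construction but with the strength function $\psi^\ast$ of Proposition \ref{prop of Mikado flow scalar function} replaced by $(1-\eta)\psi^\ast$. Crucially, to preserve the Maxwell constraints I insert the cutoff at the level of the Mikado potentials of Lemma \ref{New building blocks}, so that $\Div$ and $\nabla\times$ commute with the modification and $(\overline{\tm}_q,\overline{\tE}_q,\overline{\tB}_q)$ still satisfies the linear part of Maxwell exactly. Extra time derivatives of $\eta$ cost a factor $\tau_q^{-1}\sim\lambda_q^{1/2}\lambda_{q+1}^{1/2}\delta_q^{1/4}\delta_{q+1}^{1/4}$, which by the choice of $\tau_q$ matches the native temporal differentiation scale in Proposition \ref{Inductive proposition}, so the estimates \eqref{est on m_q}--\eqref{est on current} and \eqref{Proposition of induction} at step $q+1$ go through for both tuples without loss.

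\textbf{The $L^2$ lower bound.} On $\cI'$ the two perturbations differ by exactly the ``standard'' perturbation produced in the first construction. Since this is a superposition of Mikado building blocks $(\cm_k,\cE_k,\cB_k)$ whose spatial Fourier supports sit on disjoint thickened $\lambda_{q+1}$-shells, the squared $L^2(\T^3)$ norm decomposes into a sum of the squared $L^2$ norms of the individual pieces, up to lower-order commutator and phase errors controlled by Proposition \ref{Inductive proposition}. The amplitudes in Section \ref{Construction of the perturbation} are designed so that this sum is bounded below by $c\,\delta_{q+1}^{1/2}$ for $m$ and $c\,\lambda_{q+1}^{-1}\delta_{q+1}^{1/2}$ for $E,B$ (the extra factor $\lambda_{q+1}^{-1}$ is precisely the potential gain that makes the new building blocks admissible). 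Integrating in $t\in\cI'$ and choosing $\varepsilon_0$ small relative to $c$ and $|\cI'|^{1/2}$ yields \eqref{Bifurcating proposition}, and by construction the differences are supported in $\cI$.

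\textbf{Support propagation.} For the second part I trace the dependence of $(m_{q+1},E_{q+1},B_{q+1},R_{q+1},\varphi_{q+1})$ on the input tuple. The mollification of Section \ref{Mollification} is compactly supported in time at scale $\ell_t\ll(\lambda_q\delta_q^{1/2})^{-1}$ for $m_q,E_q,B_q$ and transports along the flow of $m_q/n$ for $R_q,\varphi_q$; the backward flow map $X_q$ moves points by at most $\|m_q/n\|_0|t-s|\lesssim(\lambda_q\delta_q^{1/2})^{-1}$ on the relevant time window because of the CFL-type bound coming from $\|m_q\|_0\leqslant\underline M$ and the frequency $\lambda_q$. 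If the inputs coincide outside $\cJ$, the mollified quantities and the flow-transported errors coincide outside $\cJ+(\lambda_q\delta_q^{1/2})^{-1}$, hence so do the perturbations, the inverse-divergence-defined $R_{q+1}$, and $\varphi_{q+1}$. This gives \eqref{supp of diff at q+1 step}.

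\textbf{Main obstacle.} The delicate point is the interaction between the temporal cutoff and the Maxwell constraint: a naive multiplication of the field-level perturbation by $(1-\eta)$ destroys $\Div\tE_q=-\tilde n$ and the curl relations. Performing the cut at the level of the Mikado potentials, as indicated above, resolves this but introduces new $\partial_t\eta$ terms into the Maxwell source; these are transverse commutators that must be absorbed into the updated errors $R_{q+1}$ and $\varphi_{q+1}$ via the inverse-divergence operators used in Section \ref{Definition of the new errors}, and verifying that their size still satisfies \eqref{est on error}--\eqref{est on current} at level $q+1$ is where the bulk of the routine but careful work lies.
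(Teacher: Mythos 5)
Your proposal has a genuine gap at its core. Replacing the strength function $\psi^*$ by $(1-\eta)\psi^*$ destroys the normalization $\langle M_I^2\rangle=1$ (and $\langle M_I^3\rangle$) that drives the Reynolds stress cancellation \eqref{Eq of tm_p^2} and the current cancellation \eqref{Eq of Gamma_I phi}. Most concretely: on the sub-interval $\cI'$ where $\eta\equiv1$, your twin perturbation $(\overline{\tm}_q,\overline{\tE}_q,\overline{\tB}_q)$ vanishes identically, so $\overline m_{q+1}=m_q$, $\overline R_{q+1}\approx R_q$ there. But $\|R_q\|_0$ is of size $\lambda_q^{-3\gamma}\delta_{q+1}$, vastly larger than the required $\lambda_{q+1}^{-3\gamma}\delta_{q+2}$. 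The twin tuple therefore fails \eqref{est on error}--\eqref{est on current} at step $q+1$ on $\cI'$, and the induction cannot continue. The freedom you are looking for cannot reduce the \emph{quadratic} strength of the perturbation, because that strength is pinned to the size of $R_\ell$ by the very cancellation that makes the scheme work.

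The paper instead uses the much smaller freedom that survives this constraint: a \emph{sign flip} of the Reynolds weights. Since the relevant structural identities \eqref{Eq of Gamma_I f otimes f}--\eqref{Eq of Gamma_I R} and all cancellation conditions depend only on $\gamma_I^2$, one may replace $\gamma_I$ by $-\gamma_I$ for every $I=(u_0,\upsilon,f)\in\mathscr I_R$ associated to a single time slot $u_0$ whose cutoff $\theta_{u_0}(\tau_q^{-1}\cdot)$ is supported in $\cI$ (this is where $|\cI|\geqslant 3\tau_q$ enters). The perturbation $\tm_p$ changes sign on that slot, producing a difference of full amplitude $\sim\delta_{q+1}^{1/2}$, while $\tm_p\otimes\tm_p$ and $|\tm_p|^2\tm_p$, and hence the new errors $R_{q+1},\varphi_{q+1}$, satisfy exactly the same estimates as before. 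The $L^2$ lower bound is then read off from $|\tm_{p,\mathrm{new}}-\tm_p|^2=4\theta_{u_0}^6(\tau_q^{-1}t)\,n^2(3\delta_{q+1}-\tr R_\ell-\tr\tilde{\mathcal M})+(\text{high-frequency oscillation})$, with the oscillatory part estimated by stationary phase (Lemma \ref{est on int operator}); there is no integration in $t$, since the norm in \eqref{Bifurcating proposition} is $C^0_t L^2_x$. Your $L^2$ computation also glosses over the fact that the Mikado blocks do \emph{not} have disjoint Fourier supports (they oscillate along the deformed phases $k\cdot\xi_I$), so the decomposition you invoke is not literal orthogonality but the low/high-frequency splitting used in the paper.

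Your support-propagation argument for the second part is essentially the right one: the mollifications at scales $\ell,\ell_t$ and the flow-transport along $m_\ell/n$ enlarge supports by at most $O\bigl((\lambda_q\delta_q^{1/2})^{-1}\bigr)$, which is exactly what \eqref{supp of diff at q+1 step} asserts, and matches the paper.
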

The proofs for Theorem \ref{thm 1} and \ref{thm 2} rely on the above two key propositions. Proposition \ref{Inductive proposition} presents an iterative hypothesis that allows us to construct a sequence of approximate solutions to the Euler-Maxwell system. It states that given an initial condition $(m_0,E_0,B_0,c_0,R_0,\varphi_{0})$, there exists a sequence of tuples $(m_q,E_q,B_q,c_q,R_q,\varphi_q)$ for $q \geqslant 0$ satisfying \eqref{est on m_q}--\eqref{est on current} and \eqref{Proposition of induction}. We can thus prove that $(m_q,E_q,B_q)$ converges to $(m,E,B)$ in $C^{\beta}([0,T]\times\T^3)$, where $\beta<\frac{1}{7}$, and $(n, M,E,B)$ is an entropy solution to the isentropic compressible Euler-Maxwell equations. Moreover, Proposition \ref{Bifurcating inductive proposition} is used to construct two different sequence of tuples that converge to two different entropy solutions. This gives us a method to construct infinitely many entropy solutions.\par
In the proofs of Propositions \ref{Inductive proposition} and \ref{Bifurcating inductive proposition}, the key is to construct the perturbation term $(\tm, \tE, \tB)$ from the tuple $(m_q, E_q, B_q, R_q, \varphi_q)$. We will construct general tuples $(\cm_k, \cE_k, \cB_k)$ (see Lemma \ref{New building blocks}) that serve as the building block for the perturbation term. To address the issue of resonance, we provide Lemma \ref{prop of Mikado flow scalar function} which allows us to select specific strength functions $\psi^*$ and the weights. With this, we can explicitly obtain the perturbation term with desired estimates. Finally, we present the new Reynolds error, current, and associated estimates, and proceed with the proofs of Propositions \ref{Inductive proposition} and \ref{Bifurcating inductive proposition}.

\section{Mollification}\label{Mollification}
\subsection{Mollification process for the Euler-Maxwell-Reynolds flows}
In this part, we will show the mollification process for $ (m_q,E_q,B_q,c_q,R_q,\varphi_q) $ at the $q_{th}$ step to solve the loss of temporal and spatial derivatives. First, we introduce the parameters $\ell$ and $\ell_t$, defined by
\begin{equation}\label{def of l and l_t}
	\ell=\frac{1}{\lambda_q^{\frac{3}{4}} \lambda_{q+1}^{\frac{1}{4}}}\left(\frac{\delta_{q+1}}{\delta_q}\right)^{\frac{3}{8}}, \quad \ell_t=\frac{1}{\lambda_q^{\frac{1}{2}-3 \gamma} \lambda_{q+1}^{\frac{1}{2}} \delta_q^{\frac{1}{4}} \delta_{q+1}^{\frac{1}{4}}}.
\end{equation} 
Then, we introduce some notation in Fourier analysis described in \cite{GK22} and \cite{Gra08}. We can define the Fourier transform and its inverse of a function $f$ in Schwartz space $\mathcal{S}(\R^3)$ as
$$
\hat{f}(\xi)=\frac{1}{(2\pi)^3}\int_{\R^3}f(x)e^{-ix\cdot\xi}\rd x,\qquad\check{f}(x)=\int_{\R^3}f(\xi)e^{ix\cdot\xi}\rd\xi.
$$
Moreover, the Fourier transform can be extended to linear functionals in $\mathcal{S}^\prime(\R^3)$ which is the dual space of $\mathcal{S}(\R^3)$. Following \cite{DK22,GK22}, we can multiply the Fourier transform of $f$ by a smooth cut-off function, apply the inverse Fourier transform, and obtain a smooth function which is the standard convention for Littlewood-Paley operators. Let $\phi(\xi)$ be a radial smooth function such that $\text{supp}\phi(\xi)\subset B(0,2)$ and $\phi\equiv1$ on $\overline{B(0,1)}$. Then, for any $j\in\Z$ and distribution $f$ on $\R^3$, we can define
$$
\widehat{P_{\leqslant2^j}f}(\xi):=\phi\left(\frac{\xi}{2^j}\right)\hat{f}(\xi),\qquad\widehat{P_{>2^j}f}(\xi):=\left(1-\phi\left(\frac{\xi}{2^j}\right)\right)\hat{f}(\xi),
$$
and
$$
\widehat{P_{2^j}f}(\xi):=\left(\phi\left(\frac{\xi}{2^j}\right)-\phi\left(\frac{\xi}{2^{j-1}}\right)\right)\hat{f}(\xi).
$$
For a given number $a$, we define $P_{\leqslant a}=P_{\leqslant 2^J}$ and $P_{> a}f=f-P_{\leqslant a}f,$ where $J=\lfloor\log_2a\rfloor$ is the largest integer which satisfies $2^J\leqslant a$. If $f$ is a spatially periodic function on $[c,d]\times\T^3$, $\PL f$ can be written as the space convolution of $f$ with kernel $\check{\phi}_\ell(\cdot):=2^{3J}\check{\phi}(2^J\cdot)$, and it is also a spatially periodic function on $[c,d]\times\T^3$. More details can be found in \cite{DK22,GK22,Gra08}. \par 
Similarly, we can define the mollification in time. With a little abuse of notations, we will use the same notations as before. We define the Fourier transform and its inverse of $f$ in the Schwartz space $\mathcal{S}(\R)$ as
$$
\hat{f}(\zeta)=\frac{1}{2\pi}\int_{\R}f(t)e^{-it\cdot\zeta}\rd t,\qquad\check{f}(t)=\int_{\R}f(\zeta)e^{it\cdot\zeta}\rd\zeta.
$$
Let $\phi^t(\zeta)$ be an even smooth function such that $\text{supp}\phi^t(\zeta)\subset (-2,2),\ \phi^t\equiv1\ {\mathrm on}\ [-1,1]$, and we can define for any $j\in\Z$ and distribution $f$ in $\R$,
$$
\widehat{U_{\leqslant2^j}f}(\zeta):=\phi^t\left(\frac{\zeta}{2^j}\right)\hat{f}(\zeta),\qquad\widehat{U_{>2^j}f}(\zeta):=\left(1-\phi^t\left(\frac{\zeta}{2^j}\right)\right)\hat{f}(\zeta),
$$
and
$$
\widehat{U_{2^j}f}(\zeta):=\left(\phi^t\left(\frac{\zeta}{2^j}\right)-\phi^t\left(\frac{\zeta}{2^{j-1}}\right)\right)\hat{f}(\zeta).
$$
For a given number $a$, we define $U_{\leqslant a}=U_{\leqslant 2^J},U_{> a}f=f-U_{\leqslant a}f$ where $J=\lfloor\log_2a\rfloor$ is the largest integer which satisfies $2^J\leqslant a$. Similarly, $\UL f$ can be written as the space convolution of $f$ with kernel $\check{\phi}^t_\ell(\cdot):=2^{J}\check{\phi}^t(2^J\cdot)$. Finally, we present two inequalities that will be utilized repeatedly,
\begin{align}
	\int_{\R^3}|y|^k|\check{\phi}_\ell(y)| \rd  y=\int_{\R^3}|y|^k2^{3J}|\check{\phi}(2^Jy)| \rd  y=2^{-kJ}\int_{\R^3}|x|^k|\check{\phi}(x)| \rd  x&\lesssim_k\ell^k,\label{est on space mollification function}\\
	\int_{\R}|\tau|^k|\check{\phi}^t_\ell(\tau)| \rd  \tau=\int_{\R}|\tau|^k2^{J}|\check{\phi}^t(2^J\tau)| \rd  \tau=2^{-kJ}\int_{\R}|\tau|^k|\check{\phi}^t(\tau)| \rd  \tau&\lesssim_k\ell^k,\label{est on time mollification function}
\end{align}
where $J=\lfloor-\log_2\ell\rfloor$ is the largest integer which satisfies $2^J\leqslant \ell^{-1}$.\par
For errors $R_q$ and $\varphi_q$, we need another process of mollification similar as the ones given in \cite{DK22} and \cite{GK22}. For a function $F$, we sometimes mollify it  along the flow trajectory to get good estimates on their advective derivatives along $m_\ell/n$.
We introduce the forward flow map $\Phi(\tau,x;t)$ with drift velocity $m_\ell/n$. The flow is defined on some interval $[a,b]$ starting at the initial time $t\in[
a,b)$ and satisfies
\begin{equation}\label{eq of forward flow}
	\left\lbrace 
	\begin{aligned}
		&\partial_{\tau}\Phi(\tau,x;t)=\frac{m_\ell}{n}(\tau,\Phi(\tau,x;t)),\\
		&\Phi(t,x;t)=x.
	\end{aligned}
	\right.
\end{equation}
Next, we introduce the mollification along the trajectory as
$$(\rho_\delta\ast_\Phi F)(t,x)=\int_{\R}F(t+s,\Phi(t+s,x;t))\rho_\delta(s) \rd  s,$$
where $\rho$ is a conventional mollifier on $\R$ satisfying $\left\|\rho\right\|_{L^1(\R)}=1$, $\text{supp}\rho\subset(-1,1)$, and $\rho_\delta(s)=\delta^{-1}\rho(\delta^{-1}s) $ for any $\delta>0$. Moreover, we have 
\begin{equation}\label{Dtl molification}
	\begin{aligned}		
		\DTL (\rho_\delta\ast_\Phi F)(t,x)&=\int_{\R}(\DTL F)(t+s,\Phi(t+s,x;t))\rho_\delta(s) \rd  s\\
		&=-\int_{\R} F(t+s,\Phi(t+s,x;t))\rho^{\prime}_\delta(s) \rd  s.
	\end{aligned}
\end{equation}
Finally, we give the regularized terms as
\begin{equation}
	\begin{aligned}
		&m_\ell=\UL\PL m_q,&&B_\ell=\UL\PL B_q,&& E_\ell=\UL\PL E_q,\\
		&R_\ell=\rho_{\ell_t}\ast_\Phi \PL R_q,&& \varphi_\ell=\rho_{\ell_t}\ast_\Phi \PL\varphi_q,
	\end{aligned}
\end{equation}
which can be defined on $\cI^q+(2\ell+\ell_{t})\subset\cI^{q-1}$ by the selection of sufficiently large $\lambda_0$. Next, we provide some estimates on these regularized terms. We use the notation $\cI^q=[0,T]+\tau_{q}$ and $\cI_\ell^q=[0,T]+(\tau_{q}+\ell)$. In this section, we denote $\left\|\cdot\right\|_N=\left\|\cdot\right\|_{C(\cI_{\ell}^{q};C^N(\T^3))}$.
\begin{pp}\label{est on mollification 1}
	For any $0<\alpha<\frac{1}{3}$, there exists $1<\overline{b}_1(\alpha)<3$ such that for any $b\in(1,\overline{b}_1(\alpha))$, we can find $\Lambda_1=\Lambda_1(\alpha, b, M, n)>0$ satisfying that if $\lambda_0\geqslant\Lambda_1$, the following properties hold for $s=0,1,2$:
	\begin{align}
		&\left\|\partial_{t}^rm_{\ell}\right\|_{C(\cI_{2\ell+\ell_t}^{q};C^N(\T^3))} \lesssim_{N,r} M\ell^{1-N-r} \lambda_{q} \delta_{q}^{\frac{1}{2}} \lesssim_{N, r} \ell^{-N-r} \delta_{q}^{\frac{1}{2}},&&1\leqslant N+r,\label{est on m_l}\\
		&\left\|\partial_t^rE_{\ell}\right\|_{C(\cI_{2\ell+\ell_t}^{q};C^N(\T^3))} \lesssim_{N, r}M \ell^{2-N-r}\lambda_{q} \delta_{q}^{\frac{1}{2}}\lesssim_{N, r}\ell^{1-N-r} \delta_{q}^{\frac{1}{2}},&&2\leqslant N+r,\label{est on E_l}\\
		&\left\|\partial_t^rB_{\ell}\right\|_{C(\cI_{2\ell+\ell_t}^{q};C^N(\T^3))} \lesssim_{N, r} M\ell^{2-N-r}\lambda_{q} \delta_{q}^{\frac{1}{2}}\lesssim_{N, r}\ell^{1-N-r} \delta_{q}^{\frac{1}{2}},&&2\leqslant N+r,\label{est on B_l}\\
		&\ell_t^s\left\|\DTL ^sR_{\ell}\right\|_{N}+\delta_{q+1}^{-\frac{1}{2}}\ell_t^s\left\|\DTL ^s\varphi_{\ell}\right\|_{N} \lesssim_{n, s}\lambda_{q}^{N-3\gamma}\delta_{q+1},&& N\leqslant 2,\label{est on Dtl error 0}\\
		&\ell_t^s\left\|\DTL ^sR_{\ell}\right\|_{N}+\delta_{q+1}^{-\frac{1}{2}}\ell_t^s\left\|\DTL ^s\varphi_{\ell}\right\|_{N} \lesssim_{n, N, s}\ell^{2-N}\lambda_{q}^{2-3\gamma}\delta_{q+1},&& N> 2.\label{est on Dtl error N}
	\end{align}
\end{pp}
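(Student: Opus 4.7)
My plan is to establish \eqref{est on m_l}--\eqref{est on Dtl error N} in two blocks, reflecting the two distinct mollification strategies used in defining $m_\ell, E_\ell, B_\ell$ versus $R_\ell, \varphi_\ell$.

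For the first block, consider $m_\ell=U_{\leqslant\ell^{-1}}P_{\leqslant\ell^{-1}}m_q$; the arguments for $E_\ell$ and $B_\ell$ are identical up to a shift in the exponent of $\lambda_q$. Since the projectors commute with partial derivatives, applying Bernstein's inequality separately in time and space gives, for any $(N,r)$ and any $(N',r')$ with $N'\leqslant N$ and $r'\leqslant r$,
\[
\|\partial_t^r\partial_x^N m_\ell\|_0 \lesssim_{N,r} \ell^{-(N+r-N'-r')}\,\|\partial_t^{r'}\partial_x^{N'} m_q\|_0.
\]
Selecting $(N',r')$ with $N'+r'=\min(N+r,2)$ allows use of \eqref{est on m_q}, giving $M\ell^{-(N+r-N'-r')}\lambda_q^{N'+r'}\delta_q^{1/2}$. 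Using $\ell\lambda_q\leqslant 1$, both the $N+r\leqslant 2$ and $N+r>2$ cases collapse to $M\ell^{1-N-r}\lambda_q\delta_q^{1/2}$, and the subsequent inequality follows from $M\ell\lambda_q\leqslant 1$. The proofs of \eqref{est on E_l} and \eqref{est on B_l} are analogous, using \eqref{est on E_q} and \eqref{est on B_q} with cutoff $N'+r'=\min(N+r,3)$.

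For the second block, set $F:=P_{\leqslant\ell^{-1}}R_q$ and iterate \eqref{Dtl molification} to obtain, for $s=0,1,2$,
\[
D_{t,\ell}^s R_\ell(t,x)=(-1)^s\int_{\R}F\bigl(t+\sigma,\Phi(t+\sigma,x;t)\bigr)\,\rho^{(s)}_{\ell_t}(\sigma)\,\rd\sigma.
\]
The integration supplies $\int|\rho^{(s)}_{\ell_t}|\lesssim_s\ell_t^{-s}$, while spatial differentiation reduces by Fa\`{a} di Bruno to products of $\|\nabla^k F\|_0$ with powers of $\nabla_x^j\Phi$. Gronwall applied to \eqref{eq of forward flow}, together with the bound $\|\nabla m_\ell\|_0\lesssim M\lambda_q\delta_q^{1/2}$ from \eqref{est on m_l}, yields $\|\nabla_x^k\Phi(t+\sigma,\cdot;t)\|_0\lesssim_{n,k}1$ uniformly in $|\sigma|\leqslant\ell_t$, provided $\ell_t\|\nabla m_\ell/n\|_0\lesssim 1$. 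Combining these gives $\ell_t^s\|D_{t,\ell}^s R_\ell\|_N\lesssim_{n,N,s}\|F\|_N$. For $N\leqslant 2$, \eqref{est on error} gives $\|F\|_N\leqslant\|R_q\|_N\lesssim\lambda_q^{N-3\gamma}\delta_{q+1}$, yielding \eqref{est on Dtl error 0}; for $N>2$, Bernstein gives $\|F\|_N\lesssim\ell^{2-N}\lambda_q^{2-3\gamma}\delta_{q+1}$, yielding \eqref{est on Dtl error N}. The same argument with \eqref{est on current} in place of \eqref{est on error} treats $\varphi_\ell$.

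The chief obstacle is verifying the two smallness conditions $M\ell\lambda_q\lesssim 1$ (first block) and $\ell_t\|\nabla m_\ell/n\|_0\lesssim 1$ (flow estimates in the second block). Substituting \eqref{def of l and l_t} and the inductive bounds reduces both to inequalities of the form $\lambda_q^{-c(\alpha,b,\gamma)}\lesssim 1$, where the exponent $c$ depends algebraically on $\alpha$, $b$, and $\gamma=(b-1)^2$. Enforcing positivity of these exponents determines $\bar{b}_1(\alpha)$, and absorbing the remaining implicit constants (which depend on $M$ and $n$) determines the threshold $\Lambda_1$ on $\lambda_0$. Once these parameter choices are fixed, the remaining details reduce to routine applications of Bernstein, the chain rule for compositions with $\Phi$, and standard flow-map estimates.
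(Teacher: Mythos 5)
Your first block (Bernstein for $m_\ell$, $E_\ell$, $B_\ell$) is correct and matches the paper's argument.

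The second block contains a genuine error. You claim that $\|\nabla_x^k\Phi(t+\sigma,\cdot;t)\|_0\lesssim_{n,k}1$ for all $k$, under the sole hypothesis $\ell_t\|\nabla(m_\ell/n)\|_0\lesssim 1$. This is only true for $k=1$. For $k\geqslant 2$, the Gronwall/transport estimate gives $\|\nabla^{N+1}\Phi\|_0\lesssim\ell_t\|\nabla(m_\ell/n)\|_{C^N}$, and the inductive bounds (see \eqref{est on pt m_l / n N}) yield $\|\nabla(m_\ell/n)\|_{C^N}\lesssim M\ell^{1-N}\lambda_q^2\delta_q^{1/2}$ for $N\geqslant1$; combined with $\ell_t M\lambda_q\delta_q^{1/2}\lesssim 1$, this gives $\|\nabla^{N+1}\Phi\|_0\lesssim_{n,N}\ell^{1-N}\lambda_q$. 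In particular $\|\nabla^2\Phi\|_0\lesssim\lambda_q$, which is not $O(1)$. As a consequence, your intermediate inequality $\ell_t^s\|D_{t,\ell}^sR_\ell\|_N\lesssim\|F\|_N$ is false as a function-space statement: the Fa\`{a} di Bruno term $\|\nabla F\|_0\cdot\|\nabla^2\Phi\|_0$ is of size $\|F\|_1\cdot\lambda_q$, which need not be $\lesssim\|F\|_2$ for an arbitrary frequency-localized $F$ (nothing forces $F$ to actually oscillate at scale $\lambda_q$).

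The correct resolution, which is what the paper does, is to bypass the clean "$\lesssim\|F\|_N$" reduction and instead plug the true bounds $\|\nabla^{N+1}\Phi\|_0\lesssim\ell^{1-N}\lambda_q$ directly against the specific inductive bounds $\|\PL R_q\|_{C^k}\lesssim\lambda_q^{k-3\gamma}\delta_{q+1}$ (for $k\leqslant2$) and $\lesssim\ell^{2-k}\lambda_q^{2-3\gamma}\delta_{q+1}$ (for $k>2$). Then, for example at $N=2$, the problematic term is $\|\nabla\PL R_q\|_0\cdot\|\nabla^2\Phi\|_0\lesssim\lambda_q^{1-3\gamma}\delta_{q+1}\cdot\lambda_q=\lambda_q^{2-3\gamma}\delta_{q+1}$, which happens to match the target; the $\lambda_q$ lost from $\nabla^2\Phi$ is precisely the $\lambda_q$ saved by taking one fewer derivative of $R_q$. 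The final estimates \eqref{est on Dtl error 0}--\eqref{est on Dtl error N} do therefore come out, but only because these two losses cancel each other; your argument, as written, has not established this and relies on a bound on $\nabla^k\Phi$ that is too optimistic.
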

\begin{proof}
	First, we could find $1<\bar{b}_1(\alpha)<3$  such that for any $b\in(1,\bar{b}_1(\alpha))$, there exists $\Lambda_1=\Lambda_1(\alpha, b, M, n)$ with the following property: if $\lambda_0\geqslant\Lambda_1$,\begin{align}\label{pp of parameter 1}
	M\lambda_{q}\delta_{q}^{\frac{1}{2}}\leqslant\min\left\{\ell^{-1}\delta_{q}^{\frac{1}{2}},\frac{1}{10}\ell_{t}^{-1}\right\},\quad \tau_{q}+2\ell+\ell_{t}\leqslant\tau_{q-1}.
	\end{align}  
	Then, \eqref{est on m_l}--\eqref{est on B_l} are easy to obtain from the definition of $\PL$ and $\UL$. Moreover, $m_\ell/n$ satisfies
	\begin{equation}\label{est on pt m_l / n N}
	\begin{aligned}
	&\|m_\ell/n\|_{C(\cI_{2\ell+\ell_t}^{q};C^0(\T^3))}\lesssim_n \underline{M}(n,p,h)-\delta_{q}^{\frac{1}{2}}\lesssim_n M,\\
	&\|m_\ell/n\|_{C(\cI_{2\ell+\ell_t}^{q};C^1(\T^3))}+\|\partial_{t}(m_\ell/n)\|_{C(\cI_{2\ell+\ell_t}^{q};C^0(\T^3))}\lesssim_{n} M\lambda_{q}\delta_{q}^{\frac{1}{2}}\lesssim_n\ell^{-1}\delta_{q}^{\frac{1}{2}},\\
	&\|\partial_t^r(m_\ell/n)\|_{C(\cI_{2\ell+\ell_t}^{q};C^N(\T^3))}\lesssim_{n, N, r}M\ell^{2-N-r}\lambda_{q}^2\delta_{q}^{\frac{1}{2}}\lesssim_{n, N, r}\ell^{-N-r}\delta_{q}^{\frac{1}{2}},&& N+r\geqslant2.
	\end{aligned}
	\end{equation} 
	To get \eqref{est on Dtl error 0} and \eqref{est on Dtl error N}, we calculate for $\forall F\in  C^\infty(\cI^{q-1}\times\T^3),$
	\begin{align*}
		\nabla(\rho_{\ell_t}*_\Phi F)&=\int_{\R}\nabla F(t+s,\Phi(t+s,x;t))\nabla\Phi(t+s,x;t)\rho_{\ell_{t}}(s) \rd  s,\\
		\nabla^2(\rho_{\ell_t}*_\Phi F)&=\int_{\R} \partial_i F(t+s,\Phi(t+s,x;t))\nabla^2\Phi_i(t+s,x;t)\rho_{\ell_{t}}(s) \rd  s\\
		&\quad+\int_{\R}(\nabla\Phi(t+s,x;t))^{\top}\nabla^2F(t+s,\Phi(t+s,x;t))\nabla\Phi(t+s,x;t)\rho_{\ell_{t}}(s) \rd  s.
	\end{align*}
	By \eqref{pp of parameter 1}, \eqref{DPhi-Id}, and \eqref{PhiN}, we have
	for $N\geqslant1$, $t\in\cI_\ell^q$, and $\tau\in[-\ell_{t},\ell_{t}]$, $$
	\begin{aligned}
		&|\nabla\Phi(t+\tau,x;t)-\Id|\lesssim \ell_{t}\left\|\nabla(m_\ell/n)\right\|_{C(\cI_{\ell+\ell_t}^{q};C^0(\T^3))}\lesssim_n \frac{1}{10},\\
		&|\nabla^{N+1}\Phi(t+\tau,x;t)|\lesssim\ell_{t}\left\|\nabla(m_\ell/n)\right\|_{C(\cI_{\ell+\ell_t}^{q};C^N(\T^3))}\lesssim_{n, N} \ell_t\ell^{1-N}M\lambda_{q}^2\delta_{q}^{\frac{1}{2}}\lesssim_{n, N}\ell^{1-N}\lambda_{q}.
	\end{aligned}
	$$
	Replacing $F$ with $\PL R_q$ and $\PL \varphi_q$, and using
	\begin{align*}
		&\left\|\PL R_q\right\|_{C(\cI_{\ell+\ell_t}^{q};C^{N}(\T^3))}+\delta_{q+1}^{-\frac{1}{2}}\left\|\PL\varphi_q\right\|_{C(\cI_{\ell+\ell_t}^{q};C^{N}(\T^3))}\lesssim\lambda_q^{N-3\gamma}\delta_{q+1},&N\leqslant 2,\\
		&\left\|\PL R_q\right\|_{C(\cI_{\ell+\ell_t}^{q};C^{N}(\T^3))}+\delta_{q+1}^{-\frac{1}{2}}\left\|\PL\varphi_q\right\|_{C(\cI_{\ell+\ell_t}^{q};C^{N}(\T^3))}\lesssim_{N}\ell^{2-N}\lambda_q^{2-3\gamma}\delta_{q+1},&N> 2,
	\end{align*}
	we could immediately obtain \eqref{est on Dtl error 0} and \eqref{est on Dtl error N}.
\end{proof}
Next, we  give estimates on the error caused by mollification.
\begin{pp}
	 For any $0<\alpha<\frac{1}{3}$, let the parameters $\bar{b}_1(\alpha)$ and $\Lambda_1$ be as in the statement of Proposition \ref{est on mollification 1}. Then, for any $b\in(1,\overline{b}_1(\alpha))$ and $\lambda_0\geqslant\Lambda_1$, we have the following properties for $N+r\leqslant 2$ with $N,r\in\N$,
	\begin{align}
		&\left\|\partial_{t}^r(m_{q}-m_{\ell})\right\|_{C^0(\cI_{2\ell+\ell_t}^q;C^N(\T^3))}\lesssim M\ell^{2-N-r} \lambda_{q}^{2} \delta_{q}^{\frac{1}{2}}\lesssim \ell^{1-N-r} \lambda_{q} \delta_{q}^{\frac{1}{2}},\label{est on m_q-m_l}\\
		&\left\|\partial_{t}^r(E_{q}-E_{\ell})\right\|_{C^0(\cI_{2\ell+\ell_t}^q;C^N(\T^3))}\lesssim M\ell^{2-N-r} \lambda_{q}\delta_{q}^{\frac{1}{2}}\lesssim \ell^{1-N-r}\delta_{q}^{\frac{1}{2}},\label{est on E_q-E_l}\\			&\left\|\partial_{t}^r(B_{q}-B_{\ell})\right\|_{C^0(\cI_{2\ell+\ell_t}^q;C^N(\T^3))}\lesssim M\ell^{2-N-r} \lambda_{q}\delta_{q}^{\frac{1}{2}}\lesssim \ell^{1-N-r}\delta_{q}^{\frac{1}{2}},\label{est on B_q-B_l}\\
		&\left\|\DTL(m_{q}-m_{\ell})\right\|_{0}+\ell\left\|\partial_{t}\DTL(m_{q}-m_{\ell})\right\|_{0}+\ell\left\|\DTL(m_{q}-m_{\ell})\right\|_{1}\lesssim_{n, p, M} \ell (\lambda_{q}\delta_{q}^{\frac{1}{2}})^2,\label{est on Dtl m_q-m_l}\\
		&\left\|\DTL(E_{q}-E_{\ell})\right\|_{0}+\ell\left\|\partial_{t}\DTL(E_{q}-E_{\ell})\right\|_{0}+\ell\left\|\DTL(E_{q}-E_{\ell})\right\|_{1}\lesssim_{n, M} \ell \lambda_{q}\delta_{q}^{\frac{1}{2}},\label{est on Dtl E_q-E_l}\\		
		&\left\|\DTL(B_{q}-B_{\ell})\right\|_{0}+\ell\left\|\partial_{t}\DTL(B_{q}-B_{\ell})\right\|_{0}+\ell\left\|\DTL(B_{q}-B_{\ell})\right\|_{1}\lesssim_{n, M}\ell \lambda_{q}\delta_{q}^{\frac{1}{2}},\label{est on Dtl B_q-B_l}\\
		&\left\|R_{q}-R_{\ell}\right\|_{N}+\delta_{q+1}^{-\frac{1}{2}}\left\|\DTL\left(R_{q}-R_{\ell}\right)\right\|_{N-1} \lesssim_{n} \lambda_{q+1}^{N-\frac{1}{2}} \lambda_{q}^{\frac{1}{2}} \delta_{q}^{\frac{1}{4}} \delta_{q+1}^{\frac{3}{4}},\label{est on R_q-R_l} \\
		&\left\|\varphi_{q}-\varphi_{\ell}\right\|_{N}+\delta_{q+1}^{-\frac{1}{2}}\left\|\DTL\left(\varphi_{q}-\varphi_{\ell}\right)\right\|_{N-1}\lesssim_{n} \lambda_{q+1}^{N-\frac{1}{2}} \lambda_{q}^{\frac{1}{2}} \delta_{q}^{\frac{1}{4}} \delta_{q+1}^{\frac{5}{4}}.\label{est on phi_q-phi_l}
	\end{align}
\end{pp}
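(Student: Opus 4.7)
The plan is to establish all six families of bounds by decomposing each difference into its spatial, temporal, and (for $R_q$, $\varphi_q$) flow-directional components, controlling each via the inductive hypotheses \eqref{est on m_q}--\eqref{est on current}, and invoking the Euler--Maxwell equations themselves wherever a naive bound is short by the required factor. For the static estimates \eqref{est on m_q-m_l}--\eqref{est on B_q-B_l}, I would write
\begin{equation*}
m_q - m_\ell = (I - \PL)m_q + \PL(I - \UL)m_q,
\end{equation*}
and analogously for $E_q$ and $B_q$, then apply the standard Littlewood--Paley bounds $\|(I - \PL)f\|_N \lesssim_{s,N} \ell^{s-N}\|f\|_s$ and $\|(I - \UL)f\|_0 \lesssim_r \ell^r\|\partial_t^r f\|_0$ together with \eqref{est on m_q}--\eqref{est on B_q} at the orders $N + r \leqslant 2$ to read off the claimed bounds.

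For the Maxwell-type advective bounds \eqref{est on Dtl E_q-E_l}--\eqref{est on Dtl B_q-B_l} I would exploit the linearity of the Maxwell system: since $\UL\PL$ commutes with $\partial_t$ and $\nabla\times$, the identity $\partial_t E_q = \nabla\times B_q + m_q$ mollifies term by term to give $\partial_t(E_q - E_\ell) = \nabla\times(B_q - B_\ell) + (m_q - m_\ell)$, and the transport correction $(m_\ell/n)\cdot\nabla(E_q - E_\ell)$ is absorbed by $\|m_\ell/n\|_0\|E_q - E_\ell\|_1$ using \eqref{est on m_q-m_l}--\eqref{est on B_q-B_l}. The main obstacle is \eqref{est on Dtl m_q-m_l}, whose target $\ell(\lambda_q\delta_q^{1/2})^2$ is not reached by a direct count (which gives only $M^2\ell\lambda_q^2\delta_q^{1/2}$, short by a factor of $\delta_q^{1/2}$). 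The resolution is to substitute the momentum equation
\begin{equation*}
\partial_t m_q = -\Div\!\left(\frac{m_q\otimes m_q}{n}\right) - \nabla p(n) - nE_q - m_q\times B_q + \Div(n(R_q - c_q\Id))
\end{equation*}
into $\partial_t(m_q - m_\ell) = (I - \UL\PL)\partial_t m_q$, expand $\Div(m_q\otimes m_q/n) = (m_q/n)\cdot\nabla m_q + m_q\Div(m_q/n)$, and combine the transport piece with the $(m_\ell/n)\cdot\nabla(m_q - m_\ell)$ contribution to $D_{t,\ell}(m_q - m_\ell)$. The result expresses the left-hand side as $(I - \UL\PL)$ acting on quantities that are either quadratic in $m_q$ (each factor contributing $\lambda_q\delta_q^{1/2}$) or low-order and smooth, plus a commutator $[(m_\ell/n)\cdot\nabla,\,\UL\PL]m_q$; the operator $(I - \UL\PL)$ and the commutator each donate an extra $\ell$, yielding the desired quadratic scaling. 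The derivative refinements $\ell\|\partial_t D_{t,\ell}(m_q - m_\ell)\|_0 + \ell\|D_{t,\ell}(m_q - m_\ell)\|_1$ follow from the same mechanism using the higher-order inductive bounds.

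For \eqref{est on R_q-R_l}--\eqref{est on phi_q-phi_l} I would split
\begin{equation*}
R_q - R_\ell = (I - \PL)R_q + (I - \rho_{\ell_t}*_\Phi)\PL R_q,
\end{equation*}
controlling the spatial piece by $\ell^{2-N}\lambda_q^{2-3\gamma}\delta_{q+1}$ via \eqref{est on error}, which, given the choice \eqref{def of l and l_t} of $\ell$, is dominated by the claimed bound. For the flow-mollification piece I would Taylor expand $\PL R_q(t+s,\Phi(t+s,x;t))$ about $s = 0$: the first-order term vanishes against the symmetric kernel $\rho_{\ell_t}$, and the second-order remainder pairs an $\ell_t$-factor with $\|D_{t,q}\PL R_q\|_0 \lesssim \lambda_q^{-3\gamma}\delta_q^{1/2}\delta_{q+1}$ from \eqref{est on error}, using $D_{t,\ell} = D_{t,q} + ((m_\ell - m_q)/n)\cdot\nabla$ to switch between material derivatives. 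The advective-derivative bound is read off from \eqref{Dtl molification}, which transfers $D_{t,\ell}$ onto $\rho_{\ell_t}'$; the resulting $\ell_t^{-1}$ loss is balanced by the $\ell_t$ already saved by the order of the remainder. The $\varphi$ analogues proceed identically with $\delta_{q+1}$ replaced by $\delta_{q+1}^{3/2}$.
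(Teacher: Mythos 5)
Your treatment of \eqref{est on m_q-m_l}--\eqref{est on B_q-B_l} via the Littlewood--Paley splitting, and of \eqref{est on Dtl m_q-m_l} by substituting the momentum equation so that $D_{t,q}m_q$ supplies the missing $\delta_q^{1/2}$, matches the paper's route. The Maxwell advective bounds via $\partial_t(E_q-E_\ell)=\nabla\times(B_q-B_\ell)+(m_q-m_\ell)$ are a slightly cleaner organization of the same idea (the paper instead feeds $D_{t,q}E_q$, $D_{t,q}B_q$ into the generic formula for $\DTL(F-F_\ell)$), and both are fine. The gap is in your treatment of \eqref{est on R_q-R_l}--\eqref{est on phi_q-phi_l}.

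First, the paper does not assume $\rho$ is even, and never uses cancellation of the first-order Taylor term; it uses only $\|\PL F-\rho_{\ell_t}*_\Phi\PL F\|_0\lesssim\ell_t\|\DTL\PL F\|_0$. Your phrasing ``the first-order term vanishes against the symmetric kernel, and the second-order remainder pairs an $\ell_t$-factor'' is internally inconsistent: if the first-order term vanished, the remainder would carry $\ell_t^2$ and involve $\DTL^2\PL R_q$, for which no inductive bound exists (\eqref{est on error} only controls $D_{t,q}R_q$). Second, and more substantively, the quantity you pair with $\ell_t$, namely $\|D_{t,q}\PL R_q\|_0\lesssim\lambda_q^{-3\gamma}\delta_q^{1/2}\delta_{q+1}$, is not the dominant term. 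Switching $D_{t,q}\to D_{t,\ell}$ produces the extra piece $((m_\ell-m_q)/n)\cdot\nabla\PL R_q$, and together with the commutator $[m_\ell/n\cdot\nabla,\PL]R_q$ this contributes $\lesssim_n\delta_q^{1/2}\|\nabla R_q\|_0\lesssim\lambda_q^{1-3\gamma}\delta_q^{1/2}\delta_{q+1}$ — a full factor of $\lambda_q$ larger than what you wrote. This is in fact the term that saturates the claimed bound: $\ell_t\,\lambda_q^{1-3\gamma}\delta_q^{1/2}\delta_{q+1}=\lambda_q^{1/2}\lambda_{q+1}^{-1/2}\delta_q^{1/4}\delta_{q+1}^{3/4}$ exactly. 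So your accounting underestimates the temporal contribution by $\lambda_q$, and without correcting it the argument, as stated, does not establish the right side of \eqref{est on R_q-R_l}; the fix is simply to bound $\|\DTL\PL R_q\|_0\lesssim_n\|D_{t,q}R_q\|_0+\delta_q^{1/2}\|\nabla R_q\|_0$ and carry both terms, as the paper does. The advective-derivative part of \eqref{est on R_q-R_l} also needs a separate accounting of $\DTL\PG R_q$ (the spatial piece), which your sketch does not address.
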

\begin{proof}
	We first calculate
	$$
	\begin{aligned}
		F-\UL\PL F&=F-\PL F+\PL F-\UL\PL F=\PG F+\UG\PL F,\\
		F-\UL\PL F&=F-\UL F+\UL F-\UL\PL F=\UG F+\UL\PG F,	
	\end{aligned}
	$$ 
	and use Bernstein's inequality to get
	\begin{align*}
		&\left\|F-\PL F\right\|_{C^0(\cI_{2\ell+\ell_t}^q\times\T^3)}=\left\|\PG F\right\|_{C^0(\cI_{2\ell+\ell_t}^q\times\T^3)}\lesssim \ell^j\left\|\nabla^{j}F\right\|_{C^0(\cI_{2\ell+\ell_t}^q\times\T^3)},\\ 
		&\left\|F-\UL F\right\|_{C^0(\cI_{2\ell+\ell_t}^q\times\T^3)}=\left\|\UG F\right\|_{C^0(\cI_{2\ell+\ell_t}^q\times\T^3)}\lesssim \ell^j\left\|\partial_{t}^jF\right\|_{C^0(\cI^{q-1}\times\T^3)},
	\end{align*}
	for $\forall F\in  C^j(\cI^{q-1}\times\T^3),j\in\N$.
	Then, we have for $N,r\leqslant 2$,
	\begin{equation}
		\begin{aligned}
			\left\|F-\UL\PL F\right\|_{C^0(\cI_{2\ell+\ell_t}^q;C^N(\T^3))}&\lesssim\left\|\nabla^{N}\PG F\right\|_{C^0(\cI_{2\ell+\ell_t}^q\times\T^3)}+\ell^{-N}\left\|\UG\PL F\right\|_{C^0(\cI_{2\ell+\ell_t}^q\times\T^3)}\\
			&\lesssim\ell^{2-N}\left\|\nabla^{2}F\right\|_{C^0(\cI_{2\ell+\ell_t}^q\times\T^3)}+\ell^{2-N}\left\|\partial_{t}^2F\right\|_{C^0(\cI^{q-1}\times\T^3)},\\
			\left\|\partial_{t}^r(F-\UL\PL F)\right\|_{C^0(\cI_{2\ell+\ell_t}^q;C^0(\T^3))}&\lesssim\left\|\partial_{t}^r\UG F\right\|_{C^0(\cI_{2\ell+\ell_t}^q\times\T^3)}+\ell^{-r}\left\|\UL\PG F\right\|_{C^0(\cI_{2\ell+\ell_t}^q\times\T^3)}\\
			&\lesssim\ell^{2-r}\left\|\partial_{t}^2F\right\|_{C^0(\cI^{q-1}\times\T^3)}+\ell^{2-r}\left\|\nabla^2 F\right\|_{C^0(\cI^{q-1}\times\T^3)},\\
			\left\|\partial_{t}(F-\UL\PL F)\right\|_{C^0(\cI_{2\ell+\ell_t}^q;C^1(\T^3))}
			&\lesssim\left\|\partial_{t}\nabla F\right\|_{C^0(\cI^{q-1}\times\T^3)}.
		\end{aligned}\notag
	\end{equation}
	We can replace $F$ with $m_q,E_q,B_q$ and use \eqref{est on m_q}--\eqref{est on B_q} to get \eqref{est on m_q-m_l}--\eqref{est on B_q-B_l}. As for $R_q-R_\ell$ and $\varphi_q-\varphi_\ell$, after a simple calculation, we have the following decomposition
	\begin{equation}\label{def of F-Fl}
		\begin{aligned}
			\qquad F-\rho_{\ell_{t}} *_{\Phi} \PL F&=\left(F-\PL F\right)+\left(\PL F-\rho_{\ell_{t}} *_\Phi \PL F\right).
		\end{aligned}
	\end{equation}
	Moreover, we could calculate
	$$
	\begin{aligned}
		\left(\rho_{\ell_{t}} *_\Phi F-F\right)(t, x) 
		&=\int_{\R}(F(t+s, \Phi(t+s, x ; t))-F(t, x)) \rho_{\ell_{t}}(s) \rd s =\int_{\R} \int_{0}^{s} \DTL F(t+\tau, \Phi(t+\tau, x ; t)) \rd \tau \rho_{\ell_{t}}(s) \rd  s.
	\end{aligned}
	$$
	Immediately, we could obtain 
	\begin{align}
		\left\|F-(\rho_{\ell_{t}} *_\Phi F)\right\|_{0} &\lesssim \ell_{t}\left\|\DTL F\right\|_{C^0(\cI_{\ell+\ell_t}^{q}\times\T^3)}.\label{est on F-Fl 1}
	\end{align}
	Notice that
	\begin{align*}
		\DTL \PL F=\PL D_{t, q} F+\PL \left((m_\ell-m_q)/n\cdot\nabla\right) F+\left[m_\ell/n \cdot \nabla, \PL\right] F,
	\end{align*}
	where $[A,B]=AB-BA$ denotes the commutator of two operators. By \eqref{est on pt m_l / n N} and \eqref{est on two mollification commutator 2.1} with $\ell_1=0$, we have 
	\begin{align*}
		\left\|\DTL \PL F\right\|_{C^0(\cI_{\ell+\ell_t}^{q}\times\T^3)} 
		& \lesssim\left\|D_{t, q} F\right\|_{C^0(\cI_{\ell+\ell_t}^{q}\times\T^3)}+\left\|(m_\ell-m_q)/n\right\|_{C^0(\cI_{\ell+\ell_t}^{q}\times\T^3)}\left\|\nabla F\right\|_{C^0(\cI_{\ell+\ell_t}^{q}\times\T^3)}\\
		&\quad+\ell\left\|\nabla (m_{\ell}/n)\right\|_{C^0(\cI_{\ell+\ell_t}^{q}\times\T^3)}\left\|\nabla F\right\|_{C^0(\cI_{\ell+\ell_t}^{q}\times\T^3)}\\
		&\lesssim_{n}\left\|D_{t, q} F\right\|_{C^0(\cI_{\ell+\ell_t}^{q}\times\T^3)}+\delta_{q}^{\frac{1}{2}}\left\|\nabla F\right\|_{C^0(\cI_{\ell+\ell_t}^{q}\times\T^3)}.
	\end{align*}
	Combining it with \eqref{def of F-Fl} and
	\eqref{est on F-Fl 1}, one could have
	\begin{equation}
		\begin{aligned}
			\left\|F-(\rho_{\ell_{t}} *_{\Phi} \PL F)\right\|_{0}&\lesssim\left\|\PG F\right\|_{0}+\ell_t\left\|\DTL \PL F\right\|_{0}\lesssim_{n}\ell^2\left\|F\right\|_2+\ell_{t}\left(\left\|D_{t, q} F\right\|_{C^0(\cI_{\ell+\ell_t}^{q}\times\T^3)}+\delta_{q}^{\frac{1}{2}}\left\|\nabla F\right\|_{C^0(\cI_{\ell+\ell_t}^{q}\times\T^3)}\right).
		\end{aligned}\notag
	\end{equation}
	Then, we could apply it to $R_q$ and $\varphi_{q}$ get
	\begin{equation}
		\begin{aligned}
			\left\|R_q-R_{\ell}\right\|_{0} 
			& \lesssim_n \ell^{2}\left\|R_q\right\|_{2}+\ell_{t}\left(\left\|D_{t,q} R_q\right\|_{C^0(\cI_{\ell+\ell_t}^{q}\times\T^3)}+\delta_{q}^{\frac{1}{2}}\left\|\nabla R_q\right\|_{C^0(\cI_{\ell+\ell_t}^{q}\times\T^3)}\right) \\
			&\lesssim_{n}\left(\left(\ell \lambda_{q}\right)^{2}+\ell_{t} \lambda_{q} \delta_{q}^{\frac{1}{2}}\right) \lambda_{q}^{-3 \gamma} \delta_{q+1} \lesssim_{n} \lambda_{q}^{\frac{1}{2}} \lambda_{q+1}^{-\frac{1}{2}} \delta_{q}^{\frac{1}{4}} \delta_{q+1}^{\frac{3}{4}}, \\
			\left\|\varphi_q-\varphi_{\ell}\right\|_{0}& \lesssim_n \ell^{2}\left\|\varphi_q\right\|_{2}+\ell_{t}\left(\left\|D_{t,q} \varphi_q\right\|_{C^0(\cI_{\ell+\ell_t}^{q}\times\T^3)}+\delta_{q}^{\frac{1}{2}}\left\|\nabla \varphi_q\right\|_{C^0(\cI_{\ell+\ell_t}^{q}\times\T^3)}\right) \\
			& \lesssim_{n}\left(\left(\ell \lambda_{q}\right)^{2}+\ell_{t} \lambda_{q} \delta_{q}^{\frac{1}{2}}\right) \lambda_{q}^{-3 \gamma} \delta_{q+1}^{\frac{3}{2}} \lesssim_{n} \lambda_{q}^{\frac{1}{2}} \lambda_{q+1}^{-\frac{1}{2}} \delta_{q}^{\frac{1}{4}} \delta_{q+1}^{\frac{5}{4}}.
		\end{aligned}\label{est on R_q-R_l 1}
	\end{equation}
	Moreover, we have for $N=1,2$,
	\begin{equation}
		\begin{aligned}
			&\left\|R_{q}-R_{\ell}\right\|_{N} \leqslant\left\|R_{q}\right\|_{N}+\left\|R_{\ell}\right\|_{N} \lesssim \lambda_{q}^{N} \lambda_{q}^{-3 \gamma} \delta_{q+1} \lesssim \lambda_{q+1}^{N} \lambda_{q}^{\frac{1}{2}} \lambda_{q+1}^{-\frac{1}{2}} \delta_{q}^{\frac{1}{4}} \delta_{q+1}^{\frac{3}{4}}, \\
			&\left\|\varphi_{q}-\varphi_{\ell}\right\|_{N} \leqslant\left\|\varphi_{q}\right\|_{N}+\left\|\varphi_{\ell}\right\|_{N} \lesssim \lambda_{q}^{N} \lambda_{q}^{-3 \gamma} \delta_{q+1}^{\frac{3}{2}} \lesssim \lambda_{q+1}^{N} \lambda_{q}^{\frac{1}{2}} \lambda_{q+1}^{-\frac{1}{2}} \delta_{q}^{\frac{1}{4}} \delta_{q+1}^{\frac{5}{4}}.
		\end{aligned}\label{est on R_q-R_l 2}
	\end{equation}
	Next, we calculate the advective derivatives of $F-F_\ell$ and give two kinds of decomposition as follows:
	\begin{align*}
		\DTL\left(F-F_\ell\right)&=\DTL \PG F+\DTL \UG\PL F\\
		&=\DTL \PG F+\UG\PL D_{t,q}F+\UG\PL((m_\ell-m_q)/n\cdot\nabla)F\\
		&\quad+[m_\ell/n\cdot\nabla,\PL]\UG F+\PL[m_\ell/n\cdot\nabla,\UG] F,\\
		\DTL\left(F-F_\ell\right)&=\DTL \UG F+\DTL \UL\PG F\\
		&=\DTL \UG F+\UL\PG D_{t,q}F+\UL\PG((m_\ell-m_q)/n\cdot\nabla)F\\
		&\quad+[m_\ell/n\cdot\nabla,\UL]\PG F+\UL[m_\ell/n\cdot\nabla,\PG] F.
	\end{align*}
	By using \eqref{est on two mollification commutator 2.2} with $\ell_1=0,\ell_2=\ell$ and $\ell_1=\ell,\ell_2=0$ respectively, we derive the following estimates for $N=0,1,$
	\begin{align*}
		\left\|\DTL \PG F\right\|_{N}&\leqslant\left\|\PG D_{t, q} F\right\|_{N}+\left\|\PG((m_\ell-m_q)/n \cdot \nabla) F\right\|_{N}+\left\|\left[m_\ell/n \cdot \nabla, \PG\right] F\right\|_{N} \\
		&\lesssim\left\|\PG D_{t, q} F\right\|_{N}+\left\|((m_\ell-m_q)/n \cdot \nabla) F\right\|_{N}+\ell^{1-N}\left\|\nabla F\right\|_0\left\|\nabla(m_{\ell}/n)\right\|_0 \\
		&\lesssim_{n, M}\left\|\PG D_{t, q} F\right\|_{N}+\ell^{1-N}\lambda_{q}\delta_{q}^{\frac{1}{2}}\left\|\nabla F\right\|_0,\\
		\left\|\partial_{t}\DTL \UG F\right\|_{0}&\leqslant\left\|\partial_{t}\UG D_{t, q} F\right\|_{0}+\left\|\partial_{t}\UG((m_\ell-m_q)/n \cdot \nabla) F\right\|_{0}+\left\|\partial_{t}\left[m_\ell/n \cdot \nabla, \UG\right] F\right\|_{0} \\
		&\lesssim\left\|\partial_{t}\UG D_{t, q} F\right\|_{0}+\left\|\partial_{t}(((m_\ell-m_q)/n \cdot \nabla) F)\right\|_{C^0(\cI_{2\ell}^{q}\times\T^3)}+\left\|\nabla F\right\|_{C^0(\cI_{2\ell}^{q}\times\T^3)}\left\|\partial_{t}(m_{\ell}/n)\right\|_{C^0(\cI_{2\ell}^{q}\times\T^3)} \\
		&\lesssim_{n, M}\left\|\partial_{t}\UG D_{t, q} F\right\|_{0}+\lambda_{q}\delta_{q}^{\frac{1}{2}}\left\|\nabla F\right\|_{C^0(\cI_{2\ell}^{q}\times\T^3)}.
		\end{align*}
	Similarly, we have
	\begin{equation}
	\begin{aligned}
		\left\|\partial_{t}[m_\ell/n\cdot\nabla,\UL]\PG F\right\|_{0}&\lesssim\left\|\nabla F\right\|_{C^0(\cI_{2\ell}^{q}\times\T^3)}\left\|\partial_{t}(m_\ell/n)\right\|_{C^0(\cI_{2\ell}^{q}\times\T^3)}\lesssim_{n, M}\lambda_{q}\delta_{q}^{\frac{1}{2}}\left\|\nabla F\right\|_{C^0(\cI_{2\ell}^{q}\times\T^3)},\\
		\left\|\partial_{t}\UL[m_\ell/n\cdot\nabla,\PG] F\right\|_{0}&\lesssim\left\|\nabla F\right\|_{C^0(\cI_{2\ell}^{q}\times\T^3)}\left\|\nabla(m_\ell/n)\right\|_{C^0(\cI_{2\ell}^{q}\times\T^3)}\lesssim_{n, M}\lambda_{q}\delta_{q}^{\frac{1}{2}}\left\|\nabla F\right\|_{C^0(\cI_{2\ell}^{q}\times\T^3)},\\
		\left\|[m_\ell/n\cdot\nabla,\PL]\UG F\right\|_{N}&\lesssim\ell^{1-N}\left\|\nabla\UG F\right\|_0\left\|\nabla(m_\ell/n)\right\|_0\lesssim_{n, M}\ell^{1-N}\lambda_{q}\delta_{q}^{\frac{1}{2}}\left\|\nabla F\right\|_{C^0(\cI_{2\ell}^q\times\T^3)},\\
		\left\|\PL[m_\ell/n\cdot\nabla,\UG] F\right\|_{N}&\lesssim\ell^{1-N}\left\|\nabla F\right\|_{C^0(\cI_{2\ell}^q\times\T^3)}\left\|\partial_{t}(m_\ell/n)\right\|_{C^0(\cI_{2\ell}^q\times\T^3)}\lesssim_{n, M}\ell^{1-N}\lambda_{q}\delta_{q}^{\frac{1}{2}}\left\|\nabla F\right\|_{C^0(\cI_{2\ell}^q\times\T^3)}.
	\end{aligned}\notag
	\end{equation}
	Moreover, we could calculate
	\begin{equation}
		\begin{aligned}
			\left\|\UG\PL D_{t,q} F\right\|_{N}&\lesssim\ell^{-N}\left\|\UG D_{t,q} F\right\|_{0},\\
			\left\|\partial_{t}\UL\PG D_{t,q}F\right\|_{0}&\lesssim\ell^{-1}\left\|\PG D_{t,q} F\right\|_{C^0(\cI_{2\ell}^{q}\times\T^3)},\\
			\left\|\UG\PL((m_\ell-m_q)/n\cdot\nabla) F\right\|_{N}&\lesssim_{n}\ell^{1-N}\lambda_{q}\delta_{q}^{\frac{1}{2}}\left\|\nabla F\right\|_{C^0(\cI_{2\ell}^q\times\T^3)},\\
			\left\|\partial_{t}\UL\PG((m_\ell-m_q)/n\cdot\nabla) F\right\|_{0}&\lesssim_{n}\lambda_{q}\delta_{q}^{\frac{1}{2}}\left\|\nabla F\right\|_{C^0(\cI_{2\ell}^q\times\T^3)}.
		\end{aligned}\notag
	\end{equation}
	Combining the estimates, we have
	\begin{equation}\label{est on Dtl F-F_l}
		\begin{aligned}
			\left\|\DTL\left(F-F_\ell\right)\right\|_{N}
			&\lesssim_{n, M}\left\|\PG D_{t, q} F\right\|_{N}+\ell^{-N}\left\|\UG D_{t,q} F\right\|_{0}+\ell^{1-N}\lambda_{q}\delta_{q}^{\frac{1}{2}}\left\|\nabla F\right\|_{C^0(\cI_{2\ell}^{q}\times\T^3)},\\
			\left\|\partial_{t}\DTL\left(F-F_\ell\right)\right\|_{0}
			&\lesssim_{n, M}\left\|\partial_{t}\UG D_{t, q} F\right\|_{0}+\ell^{-1}\left\|\PG D_{t,q} F\right\|_{C^0(\cI_{2\ell}^{q}\times\T^3)}+\lambda_{q}\delta_{q}^{\frac{1}{2}}\left\|\nabla F\right\|_{C^0(\cI_{2\ell}^{q}\times\T^3)}.
		\end{aligned}
	\end{equation}
	Before we apply it to $m_q$, we need to get estimates on $\left\|\PG D_{t,q}m_q\right\|_{C^0(\cI_{2\ell}^{q};C^N(\T^3))}$ and $\left\|\partial_{t}^N\UG D_{t,q} m_q\right\|_{0}$. Recall that 
	$$
	D_{t,q}m_q+\nabla p(n)+\Div(m_q/n)m_q+nE_q+m_q\times B_q=\Div(n(R_q-c_q\Id)).
	$$
	Immediately, we have for $N=0,1,$ 
	\begin{align}
		\left\|D_{t,q}m_q\right\|_{C^0(\cI^{q-1};C^{N}(\T^3))}&\leqslant\left\|p(n)\right\|_{C^0(\cI^{q-1};C^{N+1}(\T^3))}+\left\|nR_q\right\|_{C^0(\cI^{q-1};C^{N+1}(\T^3))}+\left\|n\right\|_{C^0(\cI^{q-1};C^{N+1}(\T^3))}\notag\\
		&\quad+\left\|\Div(m_q/n)m_q\right\|_{C^0(\cI^{q-1};C^N(\T^3))}+\left\|nE_q+m_q\times B_q\right\|_{C^0(\cI^{q-1};C^N(\T^3))}\notag\\
		&\leqslant\left\|p(n)\right\|_{C^0(\cI^{q-1};C^{N+1}(\T^3))}+\left\|nR_q\right\|_{C^0(\cI^{q-1};C^{N+1}(\T^3))}+\left\|(-\partial_tn/n+m_q\cdot\nabla(n^{-1}))m_q\right\|_{C^0(\cI^{q-1};C^N(\T^3))}\notag\\
		&\quad+\left\|n\right\|_{C^0(\cI^{q-1};C^{N+1}(\T^3))}+\left\|nE_q\right\|_{C^0(\cI^{q-1};C^N(\T^3))}+\left\|m_q\times B_q\right\|_{C^0(\cI^{q-1};C^N(\T^3))}\notag\\
		&\lesssim_{n, p, M} \lambda_{q}^{N+1}\delta_{q},\label{est on Dtq m_q}\\
		\left\|\partial_{t}D_{t,q}m_q\right\|_{C^0(\cI^{q-1};C^0(\T^3))}&\leqslant\left\|\partial_{t}p(n)\right\|_{C^0(\cI^{q-1};C^1(\T^3))}+\left\|\partial_{t}\Div(nR_q)\right\|_{C^0(\cI^{q-1};C^0(\T^3))}+\left\|\partial_{t}n\right\|_{C^0(\cI^{q-1};C^1(\T^3))}\notag\\
		&\quad+\left\|\partial_{t}(\Div(m_q/n)m_q)\right\|_{C^0(\cI^{q-1};C^0(\T^3))}+\left\|\partial_{t}(nE_q)\right\|_{C^0(\cI^{q-1};C^0(\T^3))}+\left\|\partial_{t}(m_q\times B_q)\right\|_{C^0(\cI^{q-1};C^0(\T^3))}\notag\\
		&\lesssim_{n, p, M}\lambda_{q}^2\delta_{q},\notag
	\end{align}
	where we have used
	\begin{align}
		\left\|\partial_{t}\Div(nR_q)\right\|_{C^0(\cI^{q-1};C^0(\T^3))}&\leqslant\left\|\Div(D_{t,q}(nR_q))\right\|_{C^0(\cI^{q-1};C^0(\T^3))}+\left\|\Div((m_q/n\cdot\nabla)(nR_q))\right\|_{C^0(\cI^{q-1};C^0(\T^3))}\nonumber\lesssim_{n, M}(\lambda_{q}\delta_{q}^{\frac{1}{2}})^2.\nonumber
	\end{align}
	Then, it follows that
	\begin{align*}
		&\left\|\PG D_{t,q}m_q\right\|_{C^0(\cI_{2\ell}^{q};C^N(\T^3))}\lesssim\ell^{1-N}\left\|D_{t,q}m_q\right\|_{C^0(\cI_{2\ell}^{q};C^1(\T^3))}\lesssim_{n, p, M} \ell^{1-N}(\lambda_{q}\delta_{q}^{\frac{1}{2}})^2,\\
		&\left\|\partial_{t}^N\UG D_{t,q}m_q\right\|_{0}\lesssim\ell^{1-N}\left\|\partial_{t}D_{t,q}m_q\right\|_{C^0(\cI_{2\ell}^{q};C^0(\T^3))}
		\lesssim_{n, p, M}\ell^{1-N}(\lambda_{q}\delta_{q}^{\frac{1}{2}})^2.
	\end{align*}
	Combining it with \eqref{est on m_q-m_l} and \eqref{est on Dtl F-F_l}, \eqref{est on Dtl m_q-m_l} follows.
	Recall that $E_q$ and $B_q$ satisfy
	$$
	\left\lbrace
	\begin{aligned}
		&\partial_{t}E_q-\nabla\times B_q=m_q,\\
		&\partial_{t}B_q+\nabla\times E_q=0.
	\end{aligned}
	\right.
	$$ 
	We could similarly obtain 
	\begin{align*}
		\left\|D_{t,q}E_q\right\|_{C^0(\cI^{q-1};C^{1}(\T^3))}&\leqslant\left\|\nabla\times B_q+m_q+(m_q/n\cdot\nabla) E_q\right\|_{C^0(\cI^{q-1};C^{1}(\T^3))}\lesssim_{n, M}\lambda_{q}\delta_{q}^{\frac{1}{2}},\\\
		\left\|D_{t,q}B_q\right\|_{C^0(\cI^{q-1};C^{1}(\T^3))}&\leqslant\left\|\nabla\times E_q+(m_q/n\cdot\nabla) B_q\right\|_{C^0(\cI^{q-1};C^{1}(\T^3))}\lesssim_{n, M}\lambda_{q}\delta_{q}^{\frac{1}{2}},\\
		\left\|\partial_{t}D_{t,q}E_q\right\|_{C^0(\cI^{q-1};C^{0}(\T^3))}&\leqslant\left\|\partial_{t}\left(\nabla\times B_q+m_q+(m_q/n\cdot\nabla) E_q\right)\right\|_{C^0(\cI^{q-1};C^{0}(\T^3))}\lesssim_{n, M}\lambda_{q}\delta_{q}^{\frac{1}{2}},\\
		\left\|\partial_{t}D_{t,q}B_q\right\|_{C^0(\cI^{q-1};C^{0}(\T^3))}&\leqslant\left\|\partial_{t}\left(\nabla\times E_q+(m_q/n\cdot\nabla) B_q\right)\right\|_{C^0(\cI^{q-1};C^{0}(\T^3))}\lesssim_{n, M}\lambda_{q}\delta_{q}^{\frac{1}{2}}, 
	\end{align*}
	and then
	\begin{align*}
		\left\|\PG D_{t,q}E_q\right\|_{C^0(\cI_{2\ell}^{q};C^{N}(\T^3))}&\lesssim\ell^{1-N}\left\| D_{t,q}E_q\right\|_{C^0(\cI_{2\ell}^{q};C^{1}(\T^3))}\lesssim_{n, M}\ell^{1-N}\lambda_{q}\delta_{q}^{\frac{1}{2}},\\
		\left\|\PG D_{t,q}B_q\right\|_{C^0(\cI_{2\ell}^{q};C^{N}(\T^3))}&\lesssim\ell^{1-N}\left\| D_{t,q}B_q\right\|_{C^0(\cI_{2\ell}^{q};C^{1}(\T^3))}\lesssim_{n, M}\ell^{1-N}\lambda_{q}\delta_{q}^{\frac{1}{2}},\\
		\left\|\partial_{t}^N\UG D_{t,q}E_q\right\|_{0}&\lesssim\ell^{1-N}\left\| \partial_{t}D_{t,q}E_q\right\|_{C^0(\cI_{2\ell}^{q};C^{0}(\T^3))}\lesssim_{n, M}\ell^{1-N}\lambda_{q}\delta_{q}^{\frac{1}{2}},\\
		\left\|\partial_{t}^N\UG D_{t,q}B_q\right\|_{0}&\lesssim\ell^{1-N}\left\| \partial_{t}D_{t,q}B_q\right\|_{C^0(\cI_{2\ell}^{q};C^{0}(\T^3))}\lesssim_{n, M}\ell^{1-N}\lambda_{q}\delta_{q}^{\frac{1}{2}},
	\end{align*} 
	for $N=0,1$. \eqref{est on Dtl E_q-E_l} and \eqref{est on Dtl B_q-B_l} follow from \eqref{est on m_q-m_l} and \eqref{est on Dtl F-F_l}. Now, let us consider the advective derivatives of $R_q-R_\ell$ and $\varphi_q-\varphi_\ell$ when $N=1,2$. We first calculate
	\begin{equation}
		\begin{aligned}
			\left\|\DTL (R_q-R_\ell)\right\|_{N-1}&\leqslant\left\|\DTL \PG R_q\right\|_{N-1}+\left\|\DTL (\PL R_q-\rho_{\ell_{t}}*_{\Phi}\PL R_q)\right\|_{N-1},\\
			\left\|\DTL (\varphi_q-\varphi_\ell)\right\|_{N-1}&\leqslant\left\|\DTL \PG\varphi_q\right\|_{N-1}+\left\|\DTL (\PL\varphi_q-\rho_{\ell_{t}}*_{\Phi}\PL\varphi_q)\right\|_{N-1}.\\
		\end{aligned}\label{est on Dtl R_q-R_l 1}
	\end{equation}
	As for the first term, we have 
	\begin{align}
		\left\|\DTL \PG R_q\right\|_{N-1}&\leqslant\left\|D_{t,q}R_q\right\|_{N-1}+\left\|((m_\ell-m_q)/n\cdot\nabla)R_q\right\|_{N-1}+\left\|[(m_\ell/n)\cdot\nabla,\PG]R_q\right\|_{N-1}\nonumber\\
		&\lesssim\left\|D_{t,q}R_q\right\|_{N-1}+\left\|((m_\ell-m_q)/n\cdot\nabla)R_q\right\|_{N-1}+\ell^{2-N}\left\|\nabla R_q\right\|_0\left\|\nabla(m_{\ell}/n)\right\|_0\nonumber\\
		&\lesssim_{n}\lambda_{q}^{N-3\gamma}\delta_{q}^{\frac{1}{2}}\delta_{q+1}+\ell^{2-N}\lambda_{q}^{2-3\gamma}\delta_{q}^{\frac{1}{2}}\delta_{q+1}+\ell^{1-N}\lambda_{q}^{1-3\gamma}\delta_{q}^{\frac{1}{2}}\delta_{q+1}\nonumber\\
		&\lesssim_{n}\lambda_{q+1}^{N-1}\lambda_{q}^{1-3\gamma}\delta_{q}^{\frac{1}{2}}\delta_{q+1},\label{est on Dtl PG R_q}\\
		\left\|\DTL \PG\varphi_q\right\|_{N-1}&\leqslant\left\|D_{t,q}\varphi_q\right\|_{N-1}+\left\|((m_\ell-m_q)/n\cdot\nabla)\varphi_q\right\|_{N-1}+\left\|[(m_\ell/n)\cdot\nabla,\PG]\varphi_q\right\|_{N-1}\nonumber\\
		&\lesssim\left\|D_{t,q}\varphi_q\right\|_{N-1}+\left\|((m_\ell-m_q)/n\cdot\nabla)\varphi_q\right\|_{N-1}+\ell^{2-N}\left\|\nabla \varphi_q\right\|_0\left\|\nabla(m_{\ell}/n)\right\|_0\nonumber\\
		&\lesssim_{n}\lambda_{q}^{N-3\gamma}\delta_{q}^{\frac{1}{2}}\delta_{q+1}^{\frac{3}{2}}+\ell^{2-N}\lambda_{q}^{2-3\gamma}\delta_{q}^{\frac{1}{2}}\delta_{q+1}^{\frac{3}{2}}+\ell^{1-N}\lambda_{q}^{1-3\gamma}\delta_{q}^{\frac{1}{2}}\delta_{q+1}^{\frac{3}{2}}\nonumber\\
		&\lesssim_{n}\lambda_{q+1}^{N-1}\lambda_{q}^{1-3\gamma}\delta_{q}^{\frac{1}{2}}\delta_{q+1}^{\frac{3}{2}}.\label{est on Dtl PG varphi_q}
	\end{align}
	Recalling that $\left\|[(m_\ell/n)\cdot\nabla,\PG]F\right\|_{N-1}=\left\|[(m_\ell/n)\cdot\nabla,\PL]F\right\|_{N-1}$, we conclude
	\begin{align*}
		&\quad\left\|\DTL (\PL F-\rho_{\ell_{t}}*_{\Phi}\PL F)\right\|_{N-1}\\
		&\leqslant2\left\|\DTL \PL F\right\|_{C^0(\cI_{\ell+\ell_t}^{q};C^{N-1}(\T^3))}\\
		&\leqslant2\left(\left\|\PL D_{t,q}F\right\|_{C^0(\cI_{\ell+\ell_t}^{q};C^{N-1}(\T^3))}+\left\|\PL((m_q-m_\ell)/n\cdot\nabla)F\right\|_{C^0(\cI_{\ell+\ell_t}^{q};C^{N-1}(\T^3))}+\left\|[m_\ell/n\cdot\nabla,\PL]F\right\|_{C^0(\cI_{\ell+\ell_t}^{q};C^{N-1}(\T^3))}\right)\\
		&\lesssim\ell^{1-N}\left(\left\|D_{t,q}F\right\|_{C^0(\cI_{\ell+\ell_t}^{q};C^{0}(\T^3))}+\left\|((m_q-m_\ell)/n\cdot\nabla)F\right\|_{C^0(\cI_{\ell+\ell_t}^{q};C^{0}(\T^3))}+\ell\left\|\nabla F\right\|_{C^0(\cI_{\ell+\ell_t}^{q};C^{0}(\T^3))}\left\|\nabla(m_{\ell}/n)\right\|_{C^0(\cI_{\ell+\ell_t}^{q};C^{0}(\T^3))}\right)\\
		&\lesssim_{n}\ell^{1-N}\left\|D_{t,q}F\right\|_{C^0(\cI_{\ell+\ell_t}^{q};C^{0}(\T^3))}+\ell^{1-N}\delta_{q}^{\frac{1}{2}}\left\|\nabla F\right\|_{C^0(\cI_{\ell+\ell_t}^{q};C^{0}(\T^3))}.
	\end{align*}
	Applying it to $R_q$ and $\varphi_q$, and combining it with \eqref{est on Dtl R_q-R_l 1}--\eqref{est on Dtl PG varphi_q}, we could obtain
	\begin{align}
		\left\|\DTL(R_q-R_\ell)\right\|_{N-1}
		&\lesssim_{n}\ell^{1-N}\left\|D_{t,q}R_q\right\|_{C^0(\cI_{\ell+\ell_t}^{q};C^{0}(\T^3))}+\ell^{1-N}\delta_{q}^{\frac{1}{2}}\left\|\nabla R_q\right\|_{C^0(\cI_{\ell+\ell_t}^{q};C^{0}(\T^3))}+\lambda_{q+1}^{N-1}\lambda_{q}^{1-3\gamma}\delta_{q}^{\frac{1}{2}}\delta_{q+1}\notag\\
		&\lesssim_{n}\ell^{1-N}\lambda_{q}^{1-3\gamma}\delta_{q}^{\frac{1}{2}}\delta_{q+1}+\lambda_{q+1}^{N-1}\lambda_{q}^{1-3\gamma}\delta_{q}^{\frac{1}{2}}\delta_{q+1}\lesssim_{n}\lambda_{q+1}^{N} \lambda_{q}^{\frac{1}{2}} \lambda_{q+1}^{-\frac{1}{2}} \delta_{q}^{\frac{1}{4}} \delta_{q+1}^{\frac{5}{4}},\label{est on Dtl R_q-R_l}\\
		\left\|\DTL(\varphi_q- \varphi_\ell)\right\|_{N-1}
		&\lesssim_{n}\ell^{1-N}\left\|D_{t,q}\varphi_q\right\|_{C^0(\cI_{\ell+\ell_t}^{q};C^{0}(\T^3))}+\ell^{1-N}\delta_{q}^{\frac{1}{2}}\left\|\nabla \varphi_q\right\|_{C^0(\cI_{\ell+\ell_t}^{q};C^{0}(\T^3))}+\lambda_{q+1}^{N-1}\lambda_{q}^{1-3\gamma}\delta_{q}^{\frac{1}{2}}\delta_{q+1}^{\frac{3}{2}}\notag\\
		&\lesssim_{n}\ell^{1-N}\lambda_{q}^{1-3\gamma}\delta_{q}^{\frac{1}{2}}\delta_{q+1}^{\frac{3}{2}}+\lambda_{q+1}^{N-1}\lambda_{q}^{1-3\gamma}\delta_{q}^{\frac{1}{2}}\delta_{q+1}^{\frac{3}{2}}\lesssim_{n}\lambda_{q+1}^{N} \lambda_{q}^{\frac{1}{2}} \lambda_{q+1}^{-\frac{1}{2}} \delta_{q}^{\frac{1}{4}} \delta_{q+1}^{\frac{7}{4}}.\label{est on Dtl varphi_q-varphi_l}
	\end{align}
	\eqref{est on R_q-R_l} and \eqref{est on phi_q-phi_l} follow from \eqref{est on R_q-R_l 1}, \eqref{est on R_q-R_l 2}, \eqref{est on Dtl R_q-R_l}, and \eqref{est on Dtl varphi_q-varphi_l}.
\end{proof}
\subsection{Quadratic commutator}
Here we provide a quadratic commutator estimate. We apply $\UL\PL$ to the momentum equation and  obtain
$$
\partial_tm_\ell+\Div\left(\frac{m_\ell\otimes m_\ell}{n}\right)+\nabla p_\ell(n)+nE_{\ell}+m_{\ell}\times B_{\ell}=\Div( \UL\PL(nR_q-nc_q\Id))+Q(m_q,m_q),
$$
where $p_\ell=\UL\PL p,$ and $Q(m_q,m_q)$ is defined as 
\begin{equation}\label{def of Qmm}
	\begin{aligned}
		Q(m_q,m_q)&:=\Div\left(\frac{m_\ell\otimes m_\ell}{n}-\UL\PL\left(\frac{m_q\otimes m_q}{n}\right)\right)+nE_{\ell}-\UL\PL(nE_{q})\\
		&\quad\ +m_{\ell}\times B_{\ell}-\UL\PL(m_{q}\times B_{q}).	
	\end{aligned}
\end{equation}
\begin{lm}
	For any $0<\alpha<\frac{1}{3}$, let the parameters $\bar{b}_1(\alpha)$ and $\Lambda_1$ be as in the statement of Proposition \ref{est on mollification 1}. Then, for any $b\in(1,\overline{b}_1(\alpha))$, $\lambda\geqslant\Lambda_1$, and integers $N,r\in \N$ with $N+r\in [0, \overline {N}]$ for some constant $\overline N\in \N$ which is independent of $q$, $Q(m_q,m_q)$ satisfies 
	\begin{align}
		&\left\|\partial_{t}^rQ(m_q,m_q)\right\|_{N}\lesssim_{n, \overline{N}, M} \ell^{1-N-r} (\lambda_q\delta_q^\frac{1}{2})^2,\label{est on Qmm}\\
		&\left\|\partial_{t}^{r}\DTL  Q(m_q,m_q)\right\|_{C^0(\cI^q;C^N(\T^3))} \lesssim_{n, p, \overline{N}, M} \ell^{-N-r} \delta_q^{\frac{1}{2}}(\lambda_q\delta_q^\frac{1}{2})^2. \label{est on Dtl Qmm}
	\end{align}
\end{lm}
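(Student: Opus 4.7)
The plan is to decompose $Q(m_q,m_q)$ into pieces, each of which is either a bilinear self-commutator of the mollifier $\UL\PL$ or a linear commutator of $\UL\PL$ with multiplication by the fixed smooth factor $n$ (or $1/n$). Since $m_\ell=\UL\PL m_q$, $E_\ell=\UL\PL E_q$, $B_\ell=\UL\PL B_q$, the first term in \eqref{def of Qmm} splits as
\begin{equation*}
\frac{m_\ell\otimes m_\ell}{n}-\UL\PL\!\left(\frac{m_q\otimes m_q}{n}\right)=\frac{1}{n}\Bigl[(\UL\PL m_q)\otimes(\UL\PL m_q)-\UL\PL(m_q\otimes m_q)\Bigr]+\Bigl[\tfrac{1}{n}\UL\PL(m_q\otimes m_q)-\UL\PL\bigl(\tfrac{m_q\otimes m_q}{n}\bigr)\Bigr],
\end{equation*}
and similarly $m_\ell\times B_\ell-\UL\PL(m_q\times B_q)$ is a bilinear self-commutator in $(m_q,B_q)$, while $nE_\ell-\UL\PL(nE_q)$ is a linear commutator of $\UL\PL$ with multiplication by the fixed smooth factor $n$.

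For the bilinear pieces I intend to use the Constantin--E--Titi type identity
\begin{equation*}
\UL\PL(fg)-(\UL\PL f)(\UL\PL g)=r_\ell(f,g)-(\UL\PL f-f)(\UL\PL g-g),
\end{equation*}
with $r_\ell(f,g)(t,x)=\int\!\!\int\check\phi_\ell(y)\check\phi^t_\ell(s)\bigl[f(t-s,x-y)-f(t,x)\bigr]\bigl[g(t-s,x-y)-g(t,x)\bigr]\,dy\,ds$. Taylor expanding the differences to first order and using the kernel moment bounds \eqref{est on space mollification function}--\eqref{est on time mollification function}, together with the size bounds \eqref{est on m_q}--\eqref{est on B_q}, yields $\|r_\ell(f,g)\|_N\lesssim_N\ell^{2-N}(\|f\|_1+\|\partial_t f\|_0)(\|g\|_1+\|\partial_t g\|_0)$, with analogous bounds with $\partial_t^r$. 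The quadratic-remainder term $(\UL\PL f-f)(\UL\PL g-g)$ is controlled by \eqref{est on m_q-m_l}--\eqref{est on B_q-B_l} and is of higher order since $\ell\lambda_q<1$. The linear commutators with $n$ or $1/n$ are handled by the same kernel representation applied to a single function and contribute only lower-order terms. Accounting for the outer $\Div$ in the first term of \eqref{def of Qmm}, which costs one derivative $\ell^{-1}$, produces \eqref{est on Qmm}.

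For \eqref{est on Dtl Qmm} I will exploit that $\DTL$ nearly commutes with $\UL\PL$. For each bilinear commutator I plan to write
\begin{equation*}
\DTL\bigl[(\UL\PL f)(\UL\PL g)-\UL\PL(fg)\bigr]=(\UL\PL D_{t,q}f)(\UL\PL g)+(\UL\PL f)(\UL\PL D_{t,q}g)-\UL\PL\bigl((D_{t,q}f)g+f(D_{t,q}g)\bigr)+\mathcal{C},
\end{equation*}
where $\mathcal{C}$ collects the drift commutators $[\frac{m_\ell}{n}\cdot\nabla,\UL\PL]$ acting on $f,g,fg$ together with contributions from the difference $\frac{m_\ell-m_q}{n}\cdot\nabla$. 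The former are controlled by \eqref{est on two mollification commutator 2.1}--\eqref{est on two mollification commutator 2.2} and \eqref{est on pt m_l / n N}; the latter by \eqref{est on m_q-m_l}. For the main bilinear terms I then apply the same Constantin--E--Titi estimate as above, now with one factor replaced by its advective derivative. To control $D_{t,q}m_q$ I use the momentum equation and \eqref{est on Dtq m_q}; to control $D_{t,q}E_q$ and $D_{t,q}B_q$ I use the Maxwell equations $\partial_t E_q=\nabla\times B_q+m_q$ and $\partial_t B_q=-\nabla\times E_q$ together with the size bound on $m_q/n$. Each such replacement costs one extra factor of $\delta_q^{1/2}$ compared with \eqref{est on Qmm}, while the Bernstein loss goes up by one derivative, yielding the claimed $\ell^{-N-r}\delta_q^{1/2}(\lambda_q\delta_q^{1/2})^2$.

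The main technical obstacle is the systematic bookkeeping of the drift commutator $\mathcal{C}$ and of the distribution of $\partial_t^r$ across the self-commutator structure, uniformly in $N+r\leqslant\overline N$. This requires repeated application of the kernel moment identities \eqref{est on space mollification function}--\eqref{est on time mollification function} and of the mollification commutator estimates from the appendix, and the constants must depend only on $\overline N$ (not on $q$), which is the reason the lemma is stated only for $N+r\leqslant\overline N$.
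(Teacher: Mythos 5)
Your plan for \eqref{est on Qmm} is essentially the paper's: a Constantin--E--Titi bilinear commutator bound (appearing as \eqref{est on mollification commutator 0}) for the pieces coming from $m_q\otimes m_q$ and $m_q\times B_q$, the commutator estimates \eqref{est on two mollification commutator 1.1}--\eqref{est on two mollification commutator 1.2} for the pieces involving $n^{-1}$, and then the outer $\Div$ paying one derivative. That part is fine.

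For \eqref{est on Dtl Qmm}, however, your treatment of $Q_{11}=n^{-1}\Div\bigl(m_\ell\otimes m_\ell-\UL\PL(m_q\otimes m_q)\bigr)$ has a genuine gap, and it is exactly where the paper departs from the strategy you describe. Set $F=m_\ell\otimes m_\ell-\UL\PL(m_q\otimes m_q)$. Bounding $\DTL F$ via the CET substitution and then applying $\Div$ gives at best $\|\DTL(nQ_{11})\|_0\lesssim\|\DTL F\|_1\lesssim(\lambda_q\delta_q^{1/2})^2$, which misses the target $\delta_q^{1/2}(\lambda_q\delta_q^{1/2})^2$ by a factor $\delta_q^{-1/2}$. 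The main CET term with $D_{t,q}m_q$ does gain the $\delta_q^{1/2}$ as you say, but your $\mathcal C$ does not: regrouping, $\mathcal C$ contains the pure transport $\frac{m_\ell}{n}\cdot\nabla F$, and
$\Div\bigl(\tfrac{m_\ell}{n}\cdot\nabla F\bigr)=\tfrac{m_\ell}{n}\cdot\nabla(nQ_{11})+\bigl(\nabla\tfrac{m_\ell}{n}\bigr)\colon\nabla F$,
where the first term alone is of size $M\|nQ_{11}\|_1\sim M(\lambda_q\delta_q^{1/2})^2$. The reason $\DTL(nQ_{11})$ is in fact small is a cancellation against $\partial_t(nQ_{11})$, and a term-by-term bound of the CET decomposition of $\DTL F$ destroys precisely this cancellation. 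The paper avoids the issue by never decoupling the divergence from the material derivative: it uses $\Div m_q=-\partial_t n$ and the distributive identity to rewrite $nQ_{11}$ as in \eqref{def of nQ11}, i.e.\ $(m_\ell-m_q)\cdot\nabla m_\ell+[m_q\cdot\nabla,\UL\PL]m_q+(\Div m_\ell)m_\ell-\UL\PL\bigl((\Div m_q)m_q\bigr)$, and then computes $\DTL[m_q\cdot\nabla,\UL\PL]m_q$ directly from the Poisson-summation kernel representation. That representation exhibits the differences $\DTL m_q(t,x)-\DTL m_q(t-\tau,x-y)$ and $\frac{m_\ell}{n}(t,x)-\frac{m_\ell}{n}(t-\tau,x-y)$, each of which supplies an extra factor of $\ell$ relative to the naive bound, and this is what yields the missing $\delta_q^{1/2}$. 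A secondary point: \eqref{est on two mollification commutator 2.1}--\eqref{est on two mollification commutator 2.2} with $\upsilon=m_\ell/n$ are not sharp enough for your $\mathcal C$, since $m_\ell/n$ is not frequency-localized; the paper needs the refined bound \eqref{est on commutator m_l / n 2}, proved by decomposing $n^{-1}$ into low- and high-frequency pieces.
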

\begin{proof} 
	First, we rewrite $Q(m_q,m_q)$ as 
	\begin{align*}
		Q(m_q,m_q) &= \nabla \cdot \left(\frac{m_\ell \otimes m_\ell}{n} - \frac{\UL\PL (m_q\otimes m_q)}{n}\right)+ \nabla \cdot \left(\frac{\UL\PL (m_q\otimes m_q)}{n} - \UL\PL\left(\frac{m_q\otimes m_q}{n}\right)\right)\\
		&\quad+nE_{\ell}-\UL\PL(nE_{q})+m_{\ell}\times B_{\ell}-\UL\PL(m_{q}\times B_{q})\\
		&=: Q_1 + Q_2 + Q_{E} + Q_{B},
	\end{align*}
	where 
	\begin{align}
		Q_1 &:= \underbrace{n^{-1} \nabla \cdot \left(m_\ell\otimes m_\ell - \UL\PL (m_q\otimes m_q)\right)}_{=:Q_{11}} + \underbrace{\left(m_\ell\otimes m_\ell - \UL\PL (m_q\otimes m_q)\right) :\nabla (n^{-1})}_{=:Q_{12}},\label{def of Q_1}\\
		Q_2 &:= n^{-1}\UL\PL\Div (m_q\otimes m_q) - \UL\PL\left(n^{-1}\Div (m_q\otimes m_q)\right)\nonumber \\
		&\quad+ \UL\PL (m_q \otimes m_q) : \nabla (n^{-1}) - \UL\PL( m_q \otimes m_q :\nabla (n^{-1})),\label{def of Q_2}\\
		Q_{E}&:=nE_{\ell}-\UL\PL(nE_{q}),\label{def of Q_E}\\
		Q_{B}&:=m_{\ell}\times B_{\ell}-\UL\PL(m_{q}\times B_{q})\label{def of Q_B}.
	\end{align}
	Using \eqref{est on mollification commutator 0}, we could obtain
	\begin{align*}
		\left\|\partial_{t}^rQ_1\right\|_{N} &\leqslant\left\|\partial_{t}^rQ_{11}\right\|_{N} +\left\|\partial_{t}^rQ_{12}\right\|_{N}\lesssim_{n}
		\left\|\partial_{t}^r(m_\ell\otimes m_\ell - \UL\PL (m_q\otimes m_q))\right\|_{N+1}\nonumber\\
		&\lesssim_{n,\overline{N}} \ell^{1-N-r}\left(\left\|m_q\right\|_{C^0(\cI_{2\ell}^{q};C^{1}(\T^3))}^2+\left\|\partial_{t}m_q\right\|_{C^0(\cI_{2\ell}^{q};C^{0}(\T^3))}^2\right)\lesssim_{n,\overline {N},M} \ell^{1-N-r}(\lambda_q\delta_q^\frac{1}{2})^2,\\ 
		\left\|\partial_{t}^rQ_{B}\right\|_{N} &\lesssim_{n,\overline {N}} \ell^{2-N-r}\left(\left\|m_q\right\|_{C^0(\cI_{2\ell}^{q};C^{1}(\T^3))}\left\|B_q\right\|_{C^0(\cI_{2\ell}^{q};C^{1}(\T^3))}+\left\|\partial_{t}m_q\right\|_{C^0(\cI_{2\ell}^{q};C^{0}(\T^3))}\left\|\partial_{t}B_q\right\|_{C^0(\cI_{2\ell}^{q};C^{0}(\T^3))}\right)\nonumber\\
		&\lesssim_{n,\overline {N},M}\ell^{2-N-r}\lambda_{q}\delta_q^{\frac{1}{2}}.
	\end{align*}
	As for $Q_2$ and $Q_E$, we can use \eqref{est on two mollification commutator 1.1} and \eqref{est on two mollification commutator 1.2} with $\ell_1=\ell_2=\ell$ to obtain
	\begin{align*}
		\left\|\partial_{t}^rQ_2\right\|_N &\lesssim_{\overline {N}}\ell^{1-N-r}\left\|\Div(m_q\otimes m_q)\right\|_{C^0(\cI_{2\ell}^{q};C^{0}(\T^3))}\left\|\partial_{t}^{\max\left\lbrace1,r\right\rbrace }n^{-1}\right\|_{C^0(\cI_{2\ell}^{q};C^{\max\left\lbrace1,N\right\rbrace}(\T^3))}\\
		&\quad+\ell^{1-N-r}\left\|m_q\otimes m_q\right\|_{C^0(\cI_{2\ell}^{q};C^{0}(\T^3))}\left\|\partial_{t}^{\max\left\lbrace1,r\right\rbrace }n^{-1}\right\|_{C^0(\cI_{2\ell}^{q};C^{N+1}(\T^3))}\\ 
		&\lesssim_{n,\overline {N},M}
		\ell^{1-N-r} \lambda_q\delta_q^{\frac{1}{2}},\\
		\left\|\partial_{t}^rQ_{E}\right\|_N & \lesssim_{\overline {N}}\ell^{1-N-r}\left\|E_q\right\|_{C^0(\cI_{2\ell}^{q};C^{0}(\T^3))}\left\|\partial_{t}^{\max\left\lbrace1,r\right\rbrace }n^{-1}\right\|_{C^0(\cI_{2\ell}^{q};C^{\max\left\lbrace1,N\right\rbrace}(\T^3))}\\
		&\lesssim_{n,\overline {N},M}\ell^{1-N-r}.
	\end{align*}
	Combining the estimates, we conclude \eqref{est on Qmm}.\par 
	As for the advective derivative, we only need to get $\left\|\partial_{t}^r\DTL (n Q_{11})\right\|_{C^0(\cI^q;C^N(\T^3))} \lesssim \ell^{-N-r} \delta_q^\frac{1}{2}(\lambda_q\delta_q^\frac{1}{2})^2,$ which easily follows from
	\begin{align*}
		\left\|\partial_{t}^r\DTL Q_{11}\right\|_{C^0(\cI^q;C^N(\T^3))} &\lesssim_{\overline{N}}\left\|\partial_{t}^r\DTL  (n Q_{11})\right\|_{C^0(\cI^q;C^N(\T^3))}\\ &\quad+ \sum_{r_1+r_2=r}\sum_{N_1+N_2=N}\left\|\partial_{t}^{r_1}\DTL  n\right\|_{C^0(\cI^q;C^{N_1}(\T^3))}\left\|\partial_{t}^{r_2}Q_{11}\right\|_{C^0(\cI^q;C^{N_2}(\T^3))}.
	\end{align*}
	We first apply $\UL\PL$ to the relaxed momentum equation to obtain
	\begin{align*}
		\DTL  m_\ell 
		&=- \Div (m_\ell/n) m_\ell - \nabla p_\ell(n)+ \UL\PL(nE_q+m_q\times B_q)\\
		&\quad + \UL\PL
		\Div(n R_q - c_qn\Id) + Q(m_q, m_q) \\
		&= (\partial_t n_\ell/n-(m_\ell\cdot\nabla)n^{-1})m_\ell - \nabla p_\ell(n)+ \UL\PL(nE_q+m_q\times B_q) \\ &\quad+ \UL\PL
		\Div(n R_q - c_qn\Id)+ Q(m_q, m_q),
	\end{align*}
	where $n_\ell=\UL\PL n$. We conclude
	\begin{equation}\label{est on Dtl m_l N}
		\begin{aligned}
			\left\|\DTL m_\ell\right\|_{0}
			&\lesssim_{n, p}\left\|m_\ell\right\|_{0} +\left\|m_\ell\right\|_{0}^2 + 1+\left\|E_q\right\|_{C^0(\cI_{2\ell}^{q};C^{0}(\T^3))}+\left\|m_q\times B_q\right\|_{C^0(\cI_{2\ell}^{q};C^{0}(\T^3))}\\ 
			&\quad+\left\| R_q\right\|_{C^0(\cI_{2\ell}^{q};C^{1}(\T^3))} +\left\|Q(m_q, m_q)\right\|_0\\
			&\lesssim_{n, p, M} \lambda_q\delta_q,\\
			\left\|\DTL m_\ell\right\|_N
			&\lesssim_{n, p, N}\left\|m_\ell\right\|_N +\sum_{N_0+N_1=N}\left\|m_\ell\right\|_{N_0}\left\|m_\ell\right\|_{N_1}+ 1 +\ell^{1-N}\left\|nE_q\right\|_{C^0(\cI_{2\ell}^{q};C^{1}(\T^3))}\\
			&\quad+\ell^{1-N}\left\|m_q\times B_q\right\|_{C^0(\cI_{2\ell}^{q};C^{1}(\T^3))}+ \ell^{1-N}\left\|R_q\right\|_{C^0(\cI_{2\ell}^{q};C^{2}(\T^3))} +\left\|Q(m_q, m_q)\right\|_{N}\\
			&\lesssim_{n, p, N, M} \ell^{1-N}(\lambda_q\delta_q^\frac{1}{2})^2,
		\end{aligned}
	\end{equation}
	for $N\geqslant 1$, and 
	\begin{equation}\label{est on pt Dtl m_l N}
		\begin{aligned}
			\left\|\partial_{t}^r\DTL m_\ell\right\|_N
			&\lesssim_{n, p ,N,r} \ell^{1-N-r}\left\|\partial_{t}m_\ell\right\|_0 + \ell^{1-N-r}\left\|\partial_{t}m_\ell\right\|_0\left\|m_\ell\right\|_0 + 1 +\ell^{1-N-r}\left\|\partial_{t}(nE_q)\right\|_{C^0(\cI_{2\ell}^{q};C^{0}(\T^3))} \\
			&\quad+\ell^{1-N-r}\left\|\partial_{t}(m_q\times B_q)\right\|_{C^0(\cI_{2\ell}^{q};C^{0}(\T^3))}+ \ell^{1-N-r}\left\|\partial_{t}R_q\right\|_{C^0(\cI_{2\ell}^{q};C^{1}(\T^3))} +\left\|\partial_{t}^rQ(m_q, m_q)\right\|_N\\
			&\lesssim_{n, p, N, r, M} \ell^{1-N-r}(\lambda_q\delta_q^\frac{1}{2})^2,
		\end{aligned}
	\end{equation}
	for $N\geqslant0$ and $r\geqslant1$, where we have used 
	\begin{align*}
		\left\|\partial_{t}R_q\right\|_{C^0(\cI_{2\ell}^{q};C^{1}(\T^3))}\leqslant\left\|D_{t,q}R_q\right\|_{C^0(\cI_{2\ell}^{q};C^{1}(\T^3))}+\left\|(m_q/n\cdot\nabla)R_q\right\|_{C^0(\cI_{2\ell}^{q};C^{1}(\T^3))}\lesssim_{n, M}(\lambda_q\delta_q^\frac{1}{2})^2.
	\end{align*}
	Moreover, we could obtain
	\begin{align}
		\left\|\DTL  \nabla m _\ell \right\|_N \leqslant
		\left\|\nabla \DTL m_\ell\right\|_N +\left\|(\nabla (m_\ell/n)\cdot \nabla) m_\ell\right\|_N
		\lesssim_{n, p, N, M}\ell^{-N} (\lambda_q\delta_q^\frac{1}{2})^2,\label{est on Dtl nabla m_l}\\
		\left\|\DTL \partial_{t} m _\ell \right\|_N \leqslant
		\left\|\partial_{t} \DTL m_\ell\right\|_N +\left\|(\partial_{t} (m_\ell/n)\cdot \nabla) m_\ell\right\|_N
		\lesssim_{n, p, N, M}\ell^{-N} (\lambda_q\delta_q^\frac{1}{2})^2.\label{est on Dtl pt m_l}
	\end{align} 
	Recalling \eqref{est on m_q-m_l}, one can obtain for $N=0,1,$
	\begin{equation}\label{est on Dtl m_q}
		\begin{aligned}
			\left\|\DTL  m_q\right\|_{N}&\leqslant\left\|\DTL  m_\ell\right\|_{N}+\left\|\DTL (m_q-m_\ell)\right\|_{N} \lesssim_{n, p, M} \lambda_q^{N+1}\delta_q,\\
			\left\|\partial_{t}\DTL  m_q\right\|_{0}&\leqslant\left\|\partial_{t}\DTL  m_\ell\right\|_{0}+\left\|\partial_{t}\DTL (m_q-m_\ell)\right\|_{0} \lesssim_{n, p, M} \lambda_q^2\delta_q,\\
			\left\|\DTL \nabla m_q\right\|_{0}&\leqslant\left\|\nabla\DTL  m_q\right\|_{0}+\left\|(\nabla (m_\ell/n)\cdot \nabla) m_q\right\|_{0} \lesssim_{n, p, M} \lambda_q^{2}\delta_q,\\
			\left\|\DTL \partial_{t} m_q\right\|_{0}&\leqslant\left\|\partial_{t}\DTL  m_q\right\|_{0}+\left\|(\partial_{t} (m_\ell/n)\cdot \nabla) m_q\right\|_{0} \lesssim_{n, p, M} \lambda_q^{2}\delta_q.
		\end{aligned}
	\end{equation}
	Similarly, we have 
	\begin{align}
		\left\|\DTL E_q\right\|_{N}&\leqslant\left\|\nabla\times B_q\right\|_{N}+\left\|m_q\right\|_{N}+\left\|((m_\ell/n)\cdot\nabla) E_q\right\|_{N}\lesssim_{n, M}\lambda_{q}^N\delta_{q}^{\frac{1}{2}}+1,\label{est on DtlEq}\\
		\left\|\DTL B_q\right\|_{N}&\leqslant\left\|\nabla\times E_q\right\|_{N}+\left\|((m_\ell/n)\cdot\nabla) B_q\right\|_{N}\lesssim_{n, M}\lambda_{q}^N\delta_{q}^{\frac{1}{2}}+1, \label{est on DtlBq}
	\end{align}
	for $N=0,1,2$. Since $n Q_{11}$ can be decomposed into
	\begin{equation}\label{def of nQ11}
		n Q_{11}= (m_\ell-m_q)\cdot \nabla m_\ell + [m_q\cdot \nabla, \UL\PL] m_q +  (\Div m_\ell) m_\ell - \UL\PL( (\Div m_q) m_q),
	\end{equation}
	we have the following estimate
	\begin{align*}
		&\quad\left\|\partial_{t}^r\DTL  ( (\Div m_\ell) m_\ell - \UL\PL((\Div m_q)m_q))\right\|_{C^0(\cI^q;C^N(\T^3))}\\
		&=\left\|\partial_{t}^r(\DTL  (\partial_t n_\ell m_\ell) - \DTL \UL\PL ((\partial_t n) m_q ))\right\|_{C^0(\cI^q;C^N(\T^3))}\\
		&\leqslant\left\|\partial_{t}^r(\partial_t n_\ell(\DTL  m_\ell)+ (\DTL \partial_t n_\ell)m_\ell  - \UL\PL \DTL  ((\partial_t n)m_q))\right\|_{C^0(\cI^q;C^N(\T^3))} \\
		&\quad+\left\|\partial_{t}^r[m_\ell/n\cdot\nabla, \UL\PL] ((\partial_t n)m_q)\right\|_{C^0(\cI^q;C^N(\T^3))}\\
		&\lesssim_{n,\overline{N}}\left\|\partial_{t}^r\DTL  m_\ell\right\|_{C^0(\cI^q;C^N(\T^3))} + \sum_{r_0+r_1=r}\sum_{N_0+N_1 = N}\left\|\partial_{t}^{r_0}m_\ell\right\|_{C^0(\cI^q;C^{N_0}(\T^3))}\left\|\partial_{t}^{r_1}\DTL \partial_tn_\ell\right\|_{C^0(\cI^q;C^{N_1}(\T^3))}\\
		&\quad+ \ell^{-N-r}\left\|\DTL ((\partial_t n)m_q)\right\|_{C^0(\cI_\ell^q;C^0(\T^3))}+\left\|\partial_{t}^r[m_\ell/n\cdot\nabla, \UL\PL] ((\partial_t n)m_q)\right\|_{C^0(\cI^q;C^N(\T^3))}\\
		&\lesssim_{n, p, \overline{N}, M} \ell^{1-N-r}(\lambda_q\delta_q^\frac{1}{2})^2+\ell^{-N-r}\delta_{q}^{\frac{1}{2}}+\ell^{-N-r}\lambda_q\delta_q+\left\|\partial_{t}^r[m_\ell/n\cdot\nabla, \UL\PL] ((\partial_t n)m_q)\right\|_{C^0(\cI^q;C^N(\T^3))}\\
		&\lesssim_{n, p, \overline{N}, M} \ell^{-N-r}\delta_q^\frac{1}{2}(\lambda_q\delta_q^\frac{1}{2})^2+\left\|\partial_{t}^r[m_\ell/n\cdot\nabla, \UL\PL] ((\partial_t n)m_q)\right\|_{C^0(\cI^q;C^N(\T^3))}.
	\end{align*}
	Here we used \eqref{est on m_l}, \eqref{est on Dtl m_l N}, \eqref{est on pt Dtl m_l N}, and \eqref{est on Dtl m_q}. Next, we would use \eqref{est on two mollification commutator 1.1}--\eqref{est on two mollification commutator 2.3} with $\ell_1=\ell_2=\ell$ to obtain the estimates on  $\left\|\partial_{t}^r[m_\ell/n,\UL\PL]F\right\|_{C^0([c,d];C^{N}(\T^3))}$ and $\left\|\partial_{t}^r[m_\ell/n\cdot\nabla,\UL\PL]F\right\|_{C^0([c,d];C^{N}(\T^3))}$. We have for $N,r\in\N$,
	\begin{align}
		&\quad\left\|\partial_{t}^r[m_\ell/n,\UL\PL]F\right\|_{C^0([c,d];C^{N}(\T^3))}\nonumber\\
		&\leqslant\left\|\partial_{t}^r[m_\ell\UL\PL n^{-1},\UL\PL]F\right\|_{C^0([c,d];C^{N}(\T^3))}+\left\|\partial_{t}^r[m_\ell\PG n^{-1},\UL\PL]F\right\|_{C^0([c,d];C^{N}(\T^3))}\nonumber\\
		&\quad+\left\|\partial_{t}^r[m_\ell\UG\PL n^{-1},\UL\PL]F\right\|_{C^0([c,d];C^{N}(\T^3))}\nonumber\\
		&\lesssim_{N,r}\ell^{1-N-r}\left(\left\|\nabla(m_\ell/n)\right\|_{C^0([c,d]+\ell;C^{0}(\T^3))}+\left\|\partial_{t}(m_\ell/n)\right\|_{C^0([c,d]+\ell;C^{0}(\T^3))}\right)\left\| F\right\|_{C^0([c,d]+\ell;C^{0}(\T^3))}\nonumber\\
		&\quad+\ell^{1-N-r}\left\|\partial_{t}^{\max\left\lbrace1,r\right\rbrace }(m_\ell\PG n^{-1})\right\|_{C^0([c,d]+\ell;C^{\max\left\lbrace1,N \right\rbrace }(\T^3))}\left\| F\right\|_{C^0([c,d]+\ell;C^{0}(\T^3))}\nonumber\\
		&\quad+\ell^{1-N-r}\left\|\partial_{t}^{\max\left\lbrace1,r\right\rbrace }(m_\ell\UG\PL n^{-1})\right\|_{C^0([c,d]+\ell;C^{\max\left\lbrace1,N \right\rbrace }(\T^3))}\left\| F\right\|_{C^0([c,d]+\ell;C^{0}(\T^3))}\nonumber\\
		&\lesssim_{n, N, r, M}\ell^{1-N-r}\lambda_{q}\delta_{q}^{\frac{1}{2}}\left\| F\right\|_{C^0([c,d]+\ell;C^{0}(\T^3))},\label{est on commutator m_l / n 1}
	\end{align}	
	where we have used Bernstein inequality to get 
	\begin{align}
		\left\|\partial_{t}^r(m_\ell\PG n^{-1})\right\|_{C^0([c,d]+\ell;C^{N}(\T^3))}+\left\|\partial_{t}^r(m_\ell\UG\PL n^{-1})\right\|_{C^0([c,d]+\ell;C^{N}(\T^3))}\lesssim_{n,N,r} \ell^2.\label{est on m_l P n}
	\end{align}
	Replacing $F$ with $\partial_iF$, one could have
	\begin{equation}\label{est on commutator m_l / n 2}
		\begin{aligned}
			\left\|\partial_{t}^r[m_\ell/n\cdot\nabla,\UL\PL]F\right\|_{C^0([c,d];C^{N}(\T^3))}
			\lesssim_{n, N, r, M}\ell^{1-N-r}\lambda_{q}\delta_{q}^{\frac{1}{2}}\left\| \nabla F\right\|_{C^0([c,d]+\ell;C^{0}(\T^3))}.
		\end{aligned}
	\end{equation}
	Then, we could apply \eqref{est on commutator m_l / n 1} to $(\partial_{t}n)m_q$ to obtain
	\begin{align*}
		\left\|\partial_{t}^r\DTL  ((\Div m_\ell) m_\ell  - \UL\PL( (\Div m_q)m_q)\right\|_{C^0(\cI^q;C^N(\T^3))}\lesssim_{n, p, \overline{N}, M} \ell^{-N-r}\delta_q^\frac{1}{2}(\lambda_q\delta_q^\frac{1}{2})^2.
	\end{align*}
	Now, let us consider the first two terms in \eqref{def of nQ11}. We first use Poison summation formula, \begin{align*}
		\UL\PL f(t,x) = \int_{\R}\int_{\R^3} f(t-\tau,x-y) \check{\phi} _\ell(y)\check{\phi}^t_\ell(\tau)\rd y\rd \tau,
	\end{align*} 
	to rewrite the advective derivative of the commutator term as
	\begin{align*}
		&\quad \DTL [m\cdot \nabla, \UL\PL] m\\
		&=  (\partial_t + (m_\ell/n)(t,x)\cdot \nabla) \int ((m(t,x)-m(t-\tau,x-y))\cdot \nabla)m(t-\tau,x-y) \check{\phi}_\ell (y)\check{\phi}^t_\ell (\tau) \rd y\rd \tau\\
		&=  \int  ((\DTL m(t,x)-\DTL m(t-\tau,x-y)) \cdot  \nabla )m(t-\tau,x-y) \check{\phi}_\ell (y)\check{\phi}^t_\ell (\tau) \rd y\rd \tau\\
		&\quad-\int \left(\frac{m _\ell}{n}(t,x)-\frac{m _\ell}{n}(t-\tau,x-y)\right)_i\nabla_i  m_j(t-\tau,x-y)  \nabla_j m(t-\tau,x-y) \check{\phi}_\ell (y)\check{\phi}^t_\ell (\tau) \rd y\rd \tau\\
		&\quad+ \int ((m(t,x)-m(t-\tau,x-y))\cdot \DTL \nabla) m(t-\tau,x-y) \check{\phi}_\ell (y)\check{\phi}^t_\ell (\tau) \rd y\rd \tau\\
		&\quad+ \int (m(t,x)-m(t-\tau,x-y))_i\left(\frac{m _\ell}{n}(t,x)-\frac{m _\ell}{n}(t-\tau,x-y)\right)_j (\partial_{ij}m)(t-\tau,x-y) \check{\phi}_\ell (y)\check{\phi}^t_\ell (\tau) \rd y\rd \tau.
	\end{align*} 
	Based on the decomposition, we use \eqref{est on m_q}, \eqref{est on space mollification function}, \eqref{est on time mollification function}, \eqref{est on m_l}, \eqref{est on m_q-m_l}, \eqref{est on Dtl m_q-m_l}, \eqref{est on Dtl nabla m_l}--\eqref{est on Dtl m_q},  and
	\begin{align*}
		&|f(t,x)-f(t-\tau,x-y)|\leqslant\left\|\partial_{t}f\right\|_{0}|\tau|+\left\|\nabla f\right\|_{0}|y|,
	\end{align*}
	 to obtain
	\begin{align*}
		&\quad\left\|\DTL (n Q_{11})\right\|_{C^0(\cI^q;C^{0}(\T^3))}\\
		&\lesssim_{n}\left\|\DTL (m_q-m _\ell)\right\|_{C^0(\cI^q;C^{0}(\T^3))}\left\|\nabla m_\ell\right\|_{C^0(\cI^q;C^{0}(\T^3))}+\left\|m_q-m _\ell\right\|_{C^0(\cI^q;C^{0}(\T^3))}\left\|\DTL \nabla m _\ell\right\|_{C^0(\cI^q;C^{0}(\T^3))}\\ 
		&\quad+ \ell\left(\left\|\nabla \DTL  m_q\right\|_{0}+\left\|\partial_{t} \DTL  m_q\right\|_{0}\right)\left\|\nabla m_q\right\|_{0}+\ell\left(\left\|\nabla m_q\right\|_{0}+\left\|\partial_{t} m_q\right\|_{0}\right)\left\|\nabla m_q\right\|_{0}^2\\
		&\quad+ \ell \left(\left\|\nabla m_q\right\|_{0}+\left\|\partial_{t} m_q\right\|_{0}\right)\left\|\DTL  \nabla m_q\right\|_{0} + \ell^2 \left(\left\|\nabla m_q\right\|_{0}+\left\|\partial_{t} m_q\right\|_{0}\right)^2\left\|\nabla^2 m_q\right\|_{0}+ \delta_q^\frac{1}{2}(\lambda_q\delta_q^\frac{1}{2})^2\\
		&\lesssim_{n, p, M}\ell(\lambda_{q}\delta_{q}^{\frac{1}{2}})^3+\ell^2\lambda_{q}^4\delta_{q}^{\frac{3}{2}}+\ell^2(\lambda_{q}\delta_{q}^{\frac{1}{2}})^4+\delta_q^\frac{1}{2}(\lambda_q\delta_q^\frac{1}{2})^2\lesssim_{n, p, M}\delta_q^\frac{1}{2}(\lambda_q\delta_q^\frac{1}{2})^2.
	\end{align*}
	Moreover, we could use \eqref{est on m_l P n} to get for $N+r\geqslant 1,$
		\begin{align*}
			&\quad\left\|\partial_{t}^r\DTL (n Q_{11})\right\|_{C^0(\cI^q;C^{N}(\T^3))}\\ &\leqslant\left\|\partial_{t}^r(\partial_t + m_\ell \UL\PL n^{-1} \cdot \nabla)(n Q_{11})\right\|_{C^0(\cI^q;C^{N}(\T^3))} +\left\|\partial_{t}^r((m_\ell \PG n^{-1}) \cdot \nabla)(n Q_{11})\right\|_{C^0(\cI^q;C^{N}(\T^3))}\\
			&\quad+\left\|\partial_{t}^r((m_\ell \UG \PL n^{-1}) \cdot \nabla)(n Q_{11})\right\|_{C^0(\cI^q;C^{N}(\T^3))}\\
			&\lesssim_{n,\overline{N}} \ell^{-N-r}\left\|(\partial_t + m_\ell \UL\PL n^{-1} \cdot \nabla) (n Q_{11}) \right\|_{C^0(\cI^q;C^{0}(\T^3))} \\
			&\quad+ \sum_{r_0+r_1=r}\sum_{N_0+N_1=N}\left\|\partial_{t}^{r_0}(m_\ell\PG n^{-1})\right\|_{C^0(\cI^q;C^{N_0}(\T^3))}\left\| \partial_{t}^{r_1}\nabla (n Q_{11}) \right\|_{C^0(\cI^q;C^{N_1}(\T^3))}\\
			&\quad+ \sum_{r_0+r_1=r}\sum_{N_0+N_1=N}\left\|\partial_{t}^{r_0}(m_\ell\UG\PL n^{-1})\right\|_{C^0(\cI^q;C^{N_0}(\T^3))}\left\| \partial_{t}^{r_1}\nabla (n Q_{11}) \right\|_{C^0(\cI^q;C^{N_1}(\T^3))}\\
			&\lesssim_{n, p, \overline{N}, M} \ell^{-N-r} \left(\left\|\DTL  (n Q_{11}) \right\|_{C^0(\cI^q;C^{0}(\T^3))} +\left\|(m_\ell \PG n^{-1} \cdot \nabla) (n Q_{11}) \right\|_{C^0(\cI^q;C^{0}(\T^3))}\right)\\
			&\quad+\ell^{-N-r}\left\|(m_\ell \UG\PL n^{-1} \cdot \nabla) (n Q_{11}) \right\|_{C^0(\cI^q;C^{0}(\T^3))}+ \ell^{2-N-r} (\lambda_q \delta_q^\frac{1}{2})^2\\
			&\lesssim_{n, p, \overline{N}, M} \ell^{-N-r} \left(\left\|\DTL  (n Q_{11} )\right\|_0 + \delta_q^\frac{1}{2} (\lambda_q \delta_q^\frac{1}{2})^2\right) + \ell^{2-N-r} (\lambda_q \delta_q^\frac{1}{2})^2 \\
			&\lesssim_{n, p, \overline{N}, M} \ell^{-N-r} \delta_q^\frac{1}{2} (\lambda_q \delta_q^\frac{1}{2})^2,
		\end{align*}
	where we have used Bernstein's inequality to get the second inequality. \par
	In order to estimate the advective derivative of $Q_{12}$, we calculate
	\begin{align*}
		&\quad\left\|\partial_{t}^{r}\DTL  \left(m_\ell\otimes m_\ell - \UL\PL (m_q\otimes m_q)\right)\right\|_{C^0(\cI^q;C^{N}(\T^3))} \\
		&\leqslant\left\|\partial_{t}^{r}( (\DTL m_q)_\ell\otimes m_\ell +
		m_\ell \otimes(\DTL m_q)_\ell - \UL\PL \DTL  (m_q\otimes m_q))\right\|_{C^0(\cI^q;C^{N}(\T^3))}\\
		&\quad+ 2\left\|\partial_{t}^{r}(([m_\ell/n\cdot \nabla, \UL\PL] m_q) \otimes m_\ell)\right\|_{C^0(\cI^q;C^{N}(\T^3))}+\left\| \partial_{t}^{r}[m_\ell /n\cdot \nabla, \UL\PL] (m_q \otimes m_q)\right\|_{C^0(\cI^q;C^{N}(\T^3))}\\
		&\lesssim_{n, \overline{N}} \ell^{2-N-r} \left(\left\|\DTL m_q\right\|_{1}\left\|m_q\right\|_{1}+\left\|\partial_t\DTL m_q\right\|_{0}\left\|\partial_{t}m_q\right\|_{0}\right) \\
		&\quad+ \sum_{r_1+r_2=r}\sum_{N_1+N_2=N} \ell^{1-N_1-r_1} \lambda_{q}\delta_{q}^{\frac{1}{2}}\left\|m_q\right\|_1 \left\|\partial_{t}^{r_2}m_\ell\right\|_{N_2}+\ell^{1-N-r}\lambda_{q}\delta_{q}^{\frac{1}{2}}\left\| m_q \otimes m_q\right\|_{1}\\
		&\lesssim_{n, p, \overline{N}, M}\ell^{1-N-r} (\lambda_q \delta_q^\frac{1}{2})^2,
	\end{align*}
	where we have used \eqref{est on mollification commutator 0} for the first term, \eqref{est on commutator m_l / n 2} for the second and third term. Then, we have
	\begin{align*}
		\left\|\partial_{t}^r\DTL Q_{12}\right\|_{C^0(\cI^q;C^{N}(\T^3))}
		&\lesssim\left\|\partial_{t}^r(\nabla n^{-1}:  \DTL \left(m_\ell\otimes m_\ell - \PL (m\otimes m)\right))\right\|_{C^0(\cI^q;C^{N}(\T^3))}\\
		&\quad+\left\|\partial_{t}^r((m_\ell\otimes m_\ell - \PL (m\otimes m)): \DTL \nabla n^{-1})\right\|_{C^0(\cI^q;C^{N}(\T^3))}\\
		&\lesssim_{n, p, \overline{N}, M} \ell^{-N-r} \delta_q^\frac{1}{2}(\lambda_q\delta_q^\frac{1}{2})^2.
	\end{align*}	
	The method of estimating  $\DTL Q_B$ is similar,  
	\begin{align*}
		&\quad\left\|\partial_{t}^r\DTL  \left(m_\ell\otimes B_\ell - \PL (m_q\otimes B_q)\right)\right\|_{C^0(\cI^q;C^{N}(\T^3))} \\
		&\leqslant\left\|\partial_{t}^r ((\DTL m_q)_\ell\otimes B_\ell+m_\ell \otimes(\DTL B_q)_\ell - \UL\PL \DTL  (m_q\otimes B_q))\right\|_{C^0(\cI^q;C^{N}(\T^3))}\\
		&\quad+\left\|\partial_{t}^r[m_\ell/n\cdot \nabla, \UL\PL] (m_q \otimes B_q)\right\|_{C^0(\cI^q;C^{N}(\T^3))}+\left\|\partial_{t}^r(([m_\ell /n\cdot \nabla, \UL\PL] m_q) \otimes B_\ell)\right\|_{C^0(\cI^q;C^{N}(\T^3))}\\
		&\quad+\left\|\partial_{t}^r(m_\ell \otimes ([m_\ell/n\cdot \nabla, \UL\PL] B_q))\right\|_{C^0(\cI^q;C^{N}(\T^3))} \\
		&\lesssim_{n,  \overline{N}} \ell^{2-N-r} \left(\left\|\DTL m_q\right\|_{1}\left\|B_q\right\|_{1}+\left\|\partial_t\DTL m_q\right\|_{0}\left\|\partial_{t}B_q\right\|_{0}+\left\|\DTL B_q\right\|_{1}\left\|m_q\right\|_{1}+\left\|\partial_t\DTL B_q\right\|_{0}\left\|\partial_{t}m_q\right\|_{0}\right) \\
		&\quad+ \sum_{r_1+r_2=r}\sum_{N_1+N_2=N} \ell^{1-N_1-r_1} \lambda_{q}\delta_{q}^{\frac{1}{2}}\left(\left\|m_q\right\|_1 \left\|\partial_{t}^{r_2}B_\ell\right\|_{N_2}+\left\|B_q\right\|_1 \left\|\partial_{t}^{r_2}m_\ell\right\|_{N_2}\right) +\ell^{1-N-r} \lambda_{q}\delta_{q}^{\frac{1}{2}}\left\| m_q \otimes B_q\right\|_{1}\\
		&\lesssim_{n, p, \overline{N}, M}\ell^{1-N-r} (\lambda_q \delta_q^\frac{1}{2})^2.
	\end{align*}
	Next, we consider $\DTL Q_2$. It is necessary to give the estimates on $(\UL\PL f)g-\UL\PL(fg)$,
	\begin{align*}
		&\quad\left\|\partial_{t}^r\DTL  (f_\ell g - (fg)_\ell)\right\|_{C^0(\cI^q;C^{N}(\T^3))}\\ 
		&=\left\|\partial_{t}^r((\DTL f_\ell)g + f_\ell \DTL g - (\DTL (fg))_\ell - [{m_\ell}/{n} \cdot \nabla, \UL\PL](fg))\right\|_{C^0(\cI^q;C^{N}(\T^3))}\\
		&\leqslant\left\|\partial_{t}^r((\DTL f)_\ell g - ((\DTL f)g)_\ell)\right\|_{C^0(\cI^q;C^{N}(\T^3))} +\left\|\partial_{t}^r(f_\ell \DTL g  - (f\DTL g)_\ell)\right\|_{C^0(\cI^q;C^{N}(\T^3))}\\
		&\quad+\left\|\partial_{t}^r(\left[\left[{m_\ell}/{n} \cdot \nabla, \UL\PL\right],g\right]f)\right\|_{C^0(\cI^q;C^{N}(\T^3))}\\
		&\leqslant\left\|\partial_{t}^r((\DTL f)_\ell g - ((\DTL f)g)_\ell)\right\|_{C^0(\cI^q;C^{N}(\T^3))}+ \sum_{r_1+r_2=r}\sum_{N_1+N_2=N}\left\|\partial_{t}^{r_1}f_\ell\right\|_{C^0(\cI^q;C^{N_1}(\T^3))}\left\|\partial_{t}^{r_2}(\DTL g - (\DTL g)_\ell)\right\|_{C^0(\cI^q;C^{N_2}(\T^3))}   \\
		&\quad+\left\|\partial_{t}^r(f_\ell (\DTL g)_\ell-(f\DTL g)_\ell)\right\|_{C^0(\cI^q;C^{N}(\T^3))}+\left\|\partial_{t}^r\left[\left[({m_\ell}/{n}) \cdot \nabla, \UL\PL\right],g\right]f\right\|_{C^0(\cI^q;C^{N}(\T^3))}\\
		&\lesssim_{n,\overline{N}}\ell^{1-N-r}\left\|\DTL f\right\|_0\left\|\partial_{t}^{\max\left\lbrace1,r \right\rbrace }g\right\|_{\max\left\lbrace1,N \right\rbrace }+\sum_{r_1+r_2=r}\sum_{N_1+N_2=N}\ell^{-N_1-r_1}\left\|f\right\|_0\left\|\partial_{t}^{r_2}\DTL g\right\|_{N_2}\\
		&\quad+\ell^{2-N-r}\left(\left\|f\right\|_1\left\|\DTL g\right\|_{1}+\left\|\partial_{t}f\right\|_0\left\|\partial_{t}\DTL g\right\|_{0}\right)+\left\|\partial_{t}^r\left[\left[m_\ell/n \cdot \nabla, \UL\PL\right],g\right]f\right\|_{N},
	\end{align*}
	where we apply \eqref{est on two mollification commutator 1.2} to the first norm and \eqref{est on mollification commutator 0} to the third term. Recalling \eqref{est on pt m_l / n N}, we could apply \eqref{est on mollification commutator 3} to the last term due to the finite range of $N$. 
	Then, we could get the estimate for advective derivative of $Q_2$,
	\begin{align*}
		&\quad\left\|\partial_{t}^r\DTL  Q_2\right\|_{C^0(\cI^q;C^{N}(\T^3))}\\
		&\lesssim_{n,\overline{N}} \ell^{1-N-r}\left\|\DTL  \Div (m_q \otimes m_q)\right\|_0\left\|\partial_{t}^{\max\left\lbrace1,r \right\rbrace }n^{-1}\right\|_{\max\left\lbrace1,N \right\rbrace }+ \sum_{r_1+r_2=r}\sum_{N_1+N_2=N} \ell^{-N_1-r_1} \left\|\Div (m_q \otimes m_q)\right\|_0\left\|\partial_{t}^{r_2}\DTL  n^{-1}\right\|_{N_2}
		\\
		&\quad+ \ell^{2-N-r} \left(\left\|\Div (m_q \otimes m_q)\right\|_1\left\|\DTL  n^{-1}\right\|_{1}+\left\|\partial_{t}\Div (m_q \otimes m_q)\right\|_0\left\|\partial_{t}\DTL  n^{-1}\right\|_{0}\right) \\
		&\quad+\ell^{1-N-r}\delta_{q}^{\frac{1}{2}}\left\|\Div(m_q\otimes m_q)\right\|_1+\ell^{-N-r}\delta_{q}^{\frac{1}{2}}\left\|\Div(m_q\otimes m_q)\right\|_0\\
		&\quad+\ell^{1-N-r}\left\|\DTL(m_q \otimes m_q)\right\|_0\left\|\partial_{t}^{\max\left\lbrace1,r \right\rbrace }n^{-1}\right\|_{N+1}+\sum_{r_1+r_2=r}\sum_{N_1+N_2=N} \ell^{-N_1-r_1} \left\|m_q \otimes m_q\right\|_0\left\|\partial_{t}^{r_2}\DTL  \nabla n^{-1}\right\|_{N_2}
		\\
		&\quad + \ell^{2-N-r} \left(\left\|m_q \otimes m_q\right\|_1\left\|\DTL\nabla  n^{-1}\right\|_{1}+\left\|\partial_{t}(m_q \otimes m_q)\right\|_0\left\|\partial_{t}\DTL\nabla n^{-1}\right\|_{0}\right) \\
		&\quad+\ell^{1-N-r}\delta_{q}^{\frac{1}{2}}\left\|m_q\otimes m_q\right\|_1+\ell^{-N-r}\delta_{q}^{\frac{1}{2}}\left\|m_q\otimes m_q\right\|_0\\
		&\lesssim_{n, p, \overline{N}, M} \ell^{-N-r} (\ell \lambda_q^2 \delta_q + \lambda_q\delta_q) + \ell^{-N-r} \lambda_q \delta_q^\frac{1}{2}+ \ell^{2-N-r} \lambda_q^2 \delta_q^\frac{1}{2} \ell^{-1} \delta_q^\frac{1}{2}+ \ell^{1-N-r} \lambda_q^2 \delta_q\\
		&\quad+ \ell^{-N-r} \lambda_q \delta_q+ \ell^{1-N-r}\lambda_{q}\delta_{q}+ \ell^{2-N-r} \lambda_q \delta_q^\frac{1}{2} \ell^{-1} \delta_q^\frac{1}{2}+ \ell^{1-N-r} \lambda_q \delta_q+ \ell^{1-N-r} \delta_q^{\frac{1}{2}}  \\
		&\lesssim_{n, p, \overline{N}, M} \ell^{-N-r} \delta_q^\frac{1}{2} (\lambda_q \delta_q^\frac{1}{2})^2,
	\end{align*}
	where we have used \eqref{est on pt m_l / n N}, \eqref{est on Dtl m_q}, and
	$$
	\left\|\DTL  \Div (m_q \otimes m_q)\right\|_0 \leqslant 2\left\|(\DTL m_q) \otimes m_q\right\|_1 +\left\|\Div (m_\ell/n)\right\|_0\left\|m_q \otimes m_q\right\|_1 \lesssim_{n, M} \lambda_q^2 \delta_q + \ell^{-1} \delta_q^\frac{1}{2} \lambda_q \delta_q^\frac{1}{2}.
	$$
	Similarly, we could get the estimate for $\DTL Q_E$,
	\begin{align*}
		\left\|\partial_{t}^r\DTL  Q_E\right\|_{C^0(\cI^q;C^{N}(\T^3))}
		&\lesssim_{n,\overline{N}} \ell^{1-N-r}\left\|\DTL  E_q\right\|_0\left\|\partial_{t}^{\max\left\lbrace1,r \right\rbrace }n^{-1}\right\|_{\max\left\lbrace1,N \right\rbrace }+\sum_{r_1+r_2=r}\sum_{N_1+N_2=N} \ell^{-N_1-r_1} \left\|E_q\right\|_0
		\left\|\partial_{t}^{r_2}\DTL  n^{-1}\right\|_{N_2}\\
		&\quad+ \ell^{2-N-r} \left(\left\|E_q\right\|_1\left\|\DTL  n^{-1}\right\|_{1}+\left\|\partial_{t}E_q\right\|_0\left\|\partial_{t}\DTL  n^{-1}\right\|_{0}\right)+\ell^{1-N-r} \delta_q^\frac{1}{2}\left\|\nabla E_q\right\|_0  
		+ \ell^{-N-r} \delta_q^\frac{1}{2}\left\|E_q\right\|_0 \\
		&\lesssim_{n, \overline{N}, M}\ell^{1-N-r}+\ell^{-N-r}+\ell^{2-N-r}\lambda_{q}\delta_{q}^{\frac{1}{2}}+\ell^{-N-r}\delta_{q}^{\frac{1}{2}}\lesssim_{n, \overline{N}, M} \ell^{-N-r} \delta_q^\frac{1}{2} (\lambda_q \delta_q^\frac{1}{2})^2.
	\end{align*}
	 Up to now, we could obtain \eqref{est on Dtl Qmm}.
	Finally, we give the estimates on $\left\|\partial_{t}^r\DTL^sm_\ell\right\|_N$ for $s\geqslant 2$, which will be used repeatly in the estimate on mixed derivatives. Observe that
	\begin{equation}
		\begin{aligned}
			\DTL^2  m_\ell 
			&= \DTL((\partial_t n_\ell/n-(m_\ell\cdot\nabla)n^{-1})m_\ell-\nabla p_\ell(n)+ \UL\PL(nE_q+m_q\times B_q))+\DTL Q(m_q, m_q)\\
			&\quad+ \UL\PL
			\Div(D_{t,q}(n R_q - c_qn\Id))+\UL\PL
			\Div(((m_\ell-m_q)/n\cdot\nabla)(n R_q - c_qn\Id))\\
			&\quad+[m_\ell/n\cdot\nabla,\UL\PL]\Div(n R_q - c_qn\Id)+\UL\PL\left( \partial_k(m_\ell/n)_i\partial_i(n R_q - c_qn\Id)_{jk}\right).\notag
		\end{aligned}
	\end{equation}
	Then, it's easy to obtain
		\begin{align*}
		&\quad\left\|\partial_{t}^r\DTL^2 m_\ell\right\|_N\\
		&\lesssim_{n,N,r}\left\|\partial_{t}^r\DTL m_\ell\right\|_N+\sum_{r_0+r_1=r}\sum_{N_0+N_1=N}\left\|\partial_{t}^{r_0}\DTL m_\ell\right\|_{N_0}\left\|\partial_{t}^{r_1}m_\ell\right\|_{N_1}+\left\|\partial_{t}^{r}\DTL p_\ell(n)\right\|_{N}\\
		&\quad+\left\|\partial_{t}^{r+1}\PL\UL (nE_q+m_q\times B_q)\right\|_{N}+\sum_{r_0+r_1=r}\sum_{N_0+N_1=N}\left\|\partial_{t}^{r_0}m_\ell\right\|_{N_0}\left\|\partial_{t}^{r_1}\PL\UL (nE_q+m_q\times B_q)\right\|_{N_1+1}\\
		&\quad+\left\|\partial_{t}^{r}\DTL Q(m_q, m_q)\right\|_N+ \ell^{-N-r}\left\|D_{t,q}R_q\right\|_{C^0(\cI^q;C^{1}(\T^3))}+\ell^{-N-r}\left\|
		((m_\ell-m_q)/n\cdot\nabla)(n R_q - c_qn\Id)\right\|_{C^0(\cI^{q-1};C^{1}(\T^3))}\\
		&\quad+\left\|\partial_{t}^r[m_\ell/n\cdot\nabla,\UL\PL]\Div(n R_q - c_qn\Id)\right\|_N+\left\|\partial_{t}^{r}\UL\PL\left(\partial_k(m_\ell/n)_i\partial_i(n R_q - c_qn\Id)_{jk}\right)\right\|_{N}\\
		&\lesssim_{n, p, N, r,  M}\ell^{-N-r}\lambda_{q}^2\delta_{q}^{\frac{3}{2}}.\notag
		\end{align*}
	Moreover, we could calculate
	\begin{align*}
		\partial_t^r\DTL^s m_\ell=\partial_{t}^{r+1}\DTL^{s-1} m_\ell+\partial_{t}^{r}((m_\ell/n\cdot\nabla)\DTL^{s-1} m_\ell),
	\end{align*}
	and it is easy to obtain for $s\geqslant 3; N,r\geqslant 0$,
	\begin{equation}
		\begin{aligned}
			\left\|\partial_t^r\DTL^s m_\ell\right\|_N&\lesssim_{n}\left\|\partial_{t}^{r+1}\DTL^{s-1} m_\ell\right\|_N+\sum_{N_0+N_1=N}\sum_{r_0+r_1=r}\left\|\partial_{t}^{r_0}(m_\ell/n)\right\|_{N_0}\left\|\partial_{t}^{r_1}\DTL^{s-1} m_\ell\right\|_{N_1+1}\lesssim_{n, p, N, r, s, M}\ell^{2-N-r-s}\lambda_{q}^2\delta_{q}^{\frac{3}{2}}.\notag
		\end{aligned}
	\end{equation}
	Finally, we could give 
	\begin{equation}\label{est on Dtl s m_l}
		\left\|\partial_{t}^r\DTL^sm_\ell\right\|_N\lesssim_{n,p, N, r, s, M}
		\left\lbrace
		\begin{split}
			&\lambda_{q}\delta_{q},&&s=1,N=r=0,\\
			&\ell^{1-N-r}(\lambda_{q}\delta_{q}^\frac{1}{2})^2,	&&s=1,N+r\geqslant1,	\\
			&\ell^{2-N-r-s}\lambda_{q}^2\delta_{q}^\frac{3}{2},	&&s\geqslant2,N+r\geqslant0.	
		\end{split}
		\right.\qedhere
	\end{equation}
\end{proof}
\section{Construction of the perturbation}\label{Construction of the perturbation}
\subsection{Mikado flow and geometry lemmas}\label{Mikado flow and Geometry Lemmas}
We first introduce two important geometric lemmas and the Mikado flow which we will use as the bases of our building blocks. The concept of Mikado flow was first proposed by Daneri and Sz{\'e}kelyhidi and is described in detail in \cite[Lemma 2.3]{DS17}. For our purposes, we will use a version of the Mikado flow in \cite[Section 2.2]{GK22}.
\begin{df}[Mikado flow]\label{def of Mikado flow}
	Given a vector $f\in \Z^3$, we could define $U_{f,\tilde{x}}:\T^3\rightarrow\R^3$ which	satisfy
	\begin{equation}
		U_{f,\tilde{x}}=\psi_f(x-\tilde{x})f,
	\end{equation}
	where $\psi_f(x)=\Delta\Psi_f(x)\in C^{\infty}_c(\T^3)$, $\int_{\T^3}\psi_f\rd x=0$, and $f\cdot\nabla\psi_{f}=0$. Moreover, we have
	\begin{equation}\label{prop of Mikado flow }
		\begin{aligned}
			&\Div U_{f,\tilde{x}}=f\cdot\nabla \psi_{f,\tilde{x}}=0,\\
			&\Div (U_{f,\tilde{x}}\otimes U_{f,\tilde{x}})=(U_{f,\tilde{x}}\cdot\nabla) U_{f,\tilde{x}}=f\psi_{f,\tilde{x}}(f\cdot\nabla)\psi_{f,\tilde{x}}=0.
		\end{aligned}
	\end{equation}
	The Mikado flow is a stationary solution of the incompressible Euler equations on $\T^3$ with zero pressure. 
\end{df}
We can choose a finite set of vectors in $\Z^3$ and use the corresponding Mikado flows to construct a perturbation that reduces the error $R_q,\varphi_q$, which is usually achieved with the help of certain geometric lemmas. Here we recall the ones given by \cite{GK22}: Lemma 3.1 and Lemma 3.2, where $\mS$ is the subset of $\R^{3\times 3}$ consisting of all symmetric matrices, and  $|K|_\infty:=\underset{l,m}{\max}|k_{lm}|$ for $K=(k_{lm})_{l,m=1}^3\in\R^{3\times3}$.
\begin{lm}[Geometric Lemma I]\cite[Lemma 3.1]{GK22}.\label{Geometric Lemma I}
	Let $\cF=\left\lbrace f_i\right\rbrace_{i=1}^6 $ be a set of vectors in $\Z^3$ and there is a constant $C$ such that 
	\begin{align*}
		\sum_{i=1}^{6}f_i\otimes f_i=C\Id,\quad and\quad \left\lbrace f_i\otimes f_i\right\rbrace _{i=1}^6\ forms\ a\ basis\ of\ \mS.
	\end{align*}
	Then, we could find a positive constant $\mathcal{N}_0=\mathcal{N}_0(\cF)$ such that for any $N\leqslant \mathcal{N}_0$, we can find smooth functions $\left\lbrace \Gamma_{f_i}\right\rbrace_{i=1}^6\subset C^\infty(S_N;(0,\infty)), $ with domain $S_N:=\left\lbrace \Id-K\in \mS,|K|_\infty\leqslant N\right\rbrace $ satisfying  
	$$\Id-K=\sum_{i=1}^{6}\Gamma_{f_i}^2(\Id-K)(f_i\otimes f_i),\qquad\forall(\Id-K)\in S_N.$$
\end{lm}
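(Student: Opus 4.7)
The plan is to derive the functions $\Gamma_{f_i}$ by inverting a linear system and then taking square roots on an open neighborhood of $\Id$. First, since $\{f_i\otimes f_i\}_{i=1}^6$ is a basis of the $6$-dimensional space $\mS$, for every symmetric matrix $A\in\mS$ there exist unique coefficients $\gamma_{f_i}(A)\in\R$ with
\[
A=\sum_{i=1}^{6}\gamma_{f_i}(A)\,f_i\otimes f_i.
\]
The map $A\mapsto(\gamma_{f_1}(A),\dots,\gamma_{f_6}(A))$ is linear, hence smooth, as it is the coordinate map relative to this basis.

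Next, I evaluate at $A=\Id$. The hypothesis $\sum_{i=1}^{6}f_i\otimes f_i=C\Id$ forces $C\neq 0$ (otherwise the six rank-one matrices would be linearly dependent), and in fact $C>0$ because $\tr(\sum f_i\otimes f_i)=\sum|f_i|^2>0$. Thus $\gamma_{f_i}(\Id)=1/C>0$ for every $i$. Since each $\gamma_{f_i}$ is continuous on $\mS$ and strictly positive at $\Id$, there is an open neighborhood $U\subset \mS$ of $\Id$ on which $\gamma_{f_i}(A)>0$ for all $i=1,\dots,6$ simultaneously.

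I then choose $\mathcal{N}_0=\mathcal{N}_0(\cF)>0$ small enough that the closed set $S_{\mathcal{N}_0}=\{\Id-K\in\mS:|K|_\infty\leqslant\mathcal{N}_0\}$ is contained in $U$; such a choice exists because the $|\cdot|_\infty$-ball around $\Id$ shrinks to $\{\Id\}$ as $\mathcal{N}_0\to 0$. On $S_{\mathcal{N}_0}$ I define
\[
\Gamma_{f_i}(A):=\sqrt{\gamma_{f_i}(A)},\qquad A\in S_{\mathcal{N}_0}.
\]
Since $\gamma_{f_i}$ is smooth and bounded away from zero on $S_{\mathcal{N}_0}$, the square root $\Gamma_{f_i}$ belongs to $C^\infty(S_{\mathcal{N}_0};(0,\infty))$, and by construction
\[
\Id-K=\sum_{i=1}^{6}\gamma_{f_i}(\Id-K)\,f_i\otimes f_i=\sum_{i=1}^{6}\Gamma_{f_i}^2(\Id-K)\,f_i\otimes f_i
\]
for every $\Id-K\in S_{\mathcal{N}_0}$, which is the claimed identity. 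For $N\leqslant\mathcal{N}_0$ the same functions restricted to $S_N$ work.

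There is no real obstacle here: the entire argument is linear algebra plus a soft continuity/openness step. The only thing that requires a tiny bit of care is checking that the basis coefficients at $\Id$ are \emph{strictly positive} (not merely nonzero), which is immediate from the assumption $\sum_i f_i\otimes f_i=C\Id$ with $C>0$. The size of $\mathcal{N}_0$ depends on $\cF$ through the quantitative modulus of positivity of the smallest $\gamma_{f_i}$ near $\Id$, but no explicit value is needed for the statement.
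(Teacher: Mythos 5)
Your proof is correct. The paper does not include a proof of this lemma at all---it is stated as a citation to \cite[Lemma 3.1]{GK22}---and your argument (express $\Id - K$ in the basis $\{f_i\otimes f_i\}$, observe the coefficients are linear hence smooth and equal $1/C>0$ at $\Id$, shrink $\mathcal{N}_0$ so they stay positive on $S_{\mathcal{N}_0}$, then take square roots) is precisely the standard way such geometric lemmas are proved in the convex-integration literature, so there is nothing to reconcile.
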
    
\begin{lm}[Geometric Lemma II]\cite[Lemma 3.2]{GK22}.\label{Geometric Lemma II}
	Suppose that 
	\begin{align*}
		\left\lbrace f_1,f_2,f_3\right\rbrace\ \subset\ \Z^3\setminus\left\lbrace 0\right\rbrace\ is\ an\ orthogonal\ frame\ and\ f_4=-(f_1+f_2+f_3) .  
	\end{align*}	
	Then, for any $N_0>0$, there are affine functions $\left\lbrace \Gamma_{f_k}\right\rbrace_{1\leqslant k\leqslant 4}\subset C^\infty(\mathcal{B}_{N_0};[N_0,\infty)), $ with $\mathcal{B}_{N_0}:=\left\lbrace m\in\R^3:|m|\leqslant N_0\right\rbrace $ such that 
	\begin{align*}
		m=\sum_{k=1}^{4}\Gamma_{f_k}(m)f_k,\qquad\forall m\in\mathcal{B}_{N_0}.
	\end{align*}
\end{lm}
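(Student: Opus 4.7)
The plan is to exploit the orthogonality of $\{f_1,f_2,f_3\}$ to first get a unique decomposition of $m$ in these three directions, and then to shift all coefficients by a large positive constant, using the fact that $f_1+f_2+f_3+f_4=0$ means such a shift leaves the sum $\sum_k \Gamma_{f_k}(m)f_k$ unchanged.

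Concretely, for $k=1,2,3$ I would set $\alpha_k(m) := (m\cdot f_k)/|f_k|^2$, so that the orthogonality of the frame gives $m=\sum_{k=1}^3 \alpha_k(m)f_k$ uniquely; and set $\alpha_4(m):=0$. Each $\alpha_k$ is affine (in fact linear). Next, for any constant $c\in\mathbb{R}$, define
\begin{equation*}
\Gamma_{f_k}(m) := \alpha_k(m)+c,\qquad k=1,2,3,4.
\end{equation*}
Since $f_1+f_2+f_3+f_4=0$, the constant part cancels in the sum and
\begin{equation*}
\sum_{k=1}^4\Gamma_{f_k}(m)f_k = \sum_{k=1}^3 \alpha_k(m)f_k + c\sum_{k=1}^4 f_k = m,
\end{equation*}
so the identity is satisfied regardless of the choice of $c$, and each $\Gamma_{f_k}$ is affine.

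It only remains to choose $c$ so large that $\Gamma_{f_k}(m)\geqslant N_0$ for every $m\in\mathcal{B}_{N_0}$. For $k=1,2,3$, Cauchy--Schwarz gives $|\alpha_k(m)|\leqslant |m|/|f_k|\leqslant N_0/|f_k|\leqslant N_0$, using $|f_k|\geqslant 1$ since $f_k\in\mathbb{Z}^3\setminus\{0\}$; hence $\Gamma_{f_k}(m)\geqslant c-N_0$. For $k=4$, $\Gamma_{f_4}(m)=c$. Choosing $c:=2N_0$ therefore yields $\Gamma_{f_k}(m)\geqslant N_0$ for all $k$ and all $m\in\mathcal{B}_{N_0}$.

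There is no real obstacle in this lemma, since everything reduces to linear algebra once the cancellation $\sum_{k=1}^4 f_k=0$ is observed; the only thing to verify carefully is the lower bound $N_0$, which forces the particular choice $c=2N_0$ (or any larger value) and relies on the integer-lattice hypothesis $|f_k|\geqslant 1$. Any nonnegative shift is permitted, giving the claimed freedom in the construction.
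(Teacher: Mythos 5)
Your proof is correct. The paper itself does not reprove this lemma but simply cites it from Giri--Kwon, and your argument is the standard one used there: decompose $m$ uniquely along the orthogonal directions via $\alpha_k(m)=(m\cdot f_k)/|f_k|^2$, set $\alpha_4\equiv 0$, then use the cancellation $\sum_{k=1}^4 f_k=0$ to shift every coefficient by the same constant $c$ without disturbing the identity; the bound $|f_k|\geqslant 1$ for nonzero lattice vectors together with $|m|\leqslant N_0$ then makes $c=2N_0$ suffice. Nothing is missing.
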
  
\subsection{Cutoffs}  
In this part, we will give partitions of unity in space $\R^3$ and in time $\R$ similar as in \cite{DK22} and \cite{GK22}. We introduce some nonnegative smooth functions $\left\lbrace\chi_\upsilon\right\rbrace_{\upsilon\in\Z^3} $ and $\left\lbrace\theta_u\right\rbrace_{u\in\Z} $ such that
$$\sum_{\upsilon\in\Z^3}\chi_\upsilon^6(x)=1,\quad \forall x\in\R^3,\quad\sum_{u\in\Z}\theta_u^6(t)=1,\quad\forall t\in\R.$$
Here, $\chi_\upsilon(x)=\chi_0(x-2\pi\upsilon),$ where $\chi_0$ is a nonnegative smooth function supported in $Q(0,9/8\pi)$ satisfying $\chi_0=1$ on $\overline{Q(0,7/8\pi)}$, and $Q(x,r)$ denotes the cube $\left\lbrace y\in\R^3:|y-x|_\infty<r\right\rbrace $. Similarly, $\theta_u(t)=\theta_0(t-u)$ where $\theta_0\in C_c^\infty(\R)$ satisfies $\theta_0=1$ on $[1/8,7/8]$ and $\theta_0=0$ on $(-1/8,9/8)^c$. Then, we divide $\Z^3$ into 27 equivalent families $[j]$ with $j\in\Z_3^3$ via the usual equivalence relation 
$$\upsilon=(\upsilon_1,\upsilon_2,\upsilon_3)\sim\tilde{\upsilon}=(\tilde{\upsilon}_1,\tilde{\upsilon}_2,\tilde{\upsilon}_3)\Longleftrightarrow\quad \upsilon_i\equiv\tilde{\upsilon}_i\quad \Mod\ 3\quad  \mathrm{for\ all}\ i=1,2,3.$$
We will apply Lemmas \ref{Geometric Lemma I} and \ref{Geometric Lemma II} to construct 27 pairwise disjoint families $\mathcal{F}^j$ of vectors in $\Z^3$, each indexed by $j$. Each family $\mathcal{F}^j$ consists of two subfamilies $\mathcal{F}^{j,R}\bigcup\mathcal{F}^{j,\varphi}$ with cardinalities $|\mathcal{F}^{j,R}|=6$ and $|\mathcal{F}^{j,\varphi}|=4$. 
Next, we introduce a notation
$$\mathscr{I}:=\left\lbrace(u,\upsilon,f):(u,\upsilon)\in\Z\times\Z^3\ and\ f\in \mathcal{F}^{[\upsilon]}\right\rbrace .$$
For each $I=(u,\upsilon,f)\in\mathscr{I}$, we use $f_I$ and $U_{f_I}$ to represent the third component of the index and the corresponding Mikado flow. Then, we can divide $\mathscr{I}$ into $\mathscr{I}_R\bigcup\mathscr{I}_\varphi$ depending on whether $f_I\in\mathcal{F}^{[\upsilon],R}$ or $f_I\in\mathcal{F}^{[\upsilon],\varphi}$. Next, we will define the cut-off parameters $\tau_q$ and $\mu_q$ with $\tau_q^{-1}>0$ and $\mu_q^{-1}\in \Z_+$, which are given by 
\begin{equation}\label{def of mu tau}
	\mu_q^{-1}=3\left\lceil\lambda_q^{\frac{1}{2}} \lambda_{q+1}^{\frac{1}{2}} \delta_q^{\frac{1}{4}} \delta_{q+1}^{-\frac{1}{4}} / 3\right\rceil, \quad \tau_q^{-1}=40 \pi \overset{\circ}{C}_{n} M \eta^{-1} \cdot \lambda_q^{\frac{1}{2}} \lambda_{q+1}^{\frac{1}{2}} \delta_q^{\frac{1}{4}} \delta_{q+1}^{\frac{1}{4}},
\end{equation}
where $\eta$ is a constant defined in Proposition \ref{def of eta}, and $\overset{\circ}{C}_{n}$ is chosen as a constant depending on $n$ such that 
$$
\left\|\partial_{t}(m_q/n)\right\|_0+\left\|\nabla(m_q/n)\right\|_0\leqslant\overset{\circ}{C}_{n}M\lambda_{q}\delta_{q}^{\frac{1}{2}}.
$$ 
Then, we have 
\begin{align}
	\tau_q\left(\left\|\partial_{t}(m_q/n)\right\|_0+\left\|\nabla(m_q/n)\right\|_0\right)\leqslant\frac{\eta}{10\pi\lambda_{q+1}\mu_q}\label{geomoetry condition 1}
\end{align}
For the two different conditions, we will define the cut-off functions:
\begin{eqnarray}
	\theta_I(t)=\left\lbrace 
	\begin{aligned}
		&\theta_u^3(\tau_q^{-1}t),\qquad I\in\mathscr{I}_R,\\
		&\theta_u^2(\tau_q^{-1}t),\qquad I\in\mathscr{I}_\varphi,
	\end{aligned}\right.
	\qquad	
	\chi(t)=\left\lbrace 
	\begin{aligned}
		&\chi_\upsilon^3(\mu_q^{-1}x),\qquad I\in\mathscr{I}_R,\\
		&\chi_\upsilon^2(\mu_q^{-1}x),\qquad I\in\mathscr{I}_\varphi.
	\end{aligned}	\right.
\end{eqnarray}
For convenience, we denote $\mathscr{I}_{u,\upsilon,R}:=\{I=(u,\upsilon,f)\in\mathscr{I}:f\in\cF^{[\upsilon],R}\}$ and $\mathscr{I}_{u,\upsilon,\varphi}:=\{I=(u,\upsilon,f)\in\mathscr{I}:f\in\cF^{[\upsilon],\varphi}\}$. 
Moreover, for any $0<\alpha<\frac{1}{3}$, there exist $1<\bar{b}_2(\alpha)\leqslant\frac{1+\alpha}{1-\alpha}$ such that for any $b\in(1,\bar{b}_2(\alpha))$, we could find $\Lambda_2(\alpha,b,M,n)$ with the following property:if $\lambda\geqslant\Lambda_2,$
\begin{equation}
	\begin{aligned}
	 \frac{1}{\tau_{q}\lambda_{q+1}}\leqslant\delta_{q+1}^{\frac{1}{2}}\leqslant\delta_{q}^{\frac{1}{2}}\leqslant\frac{1}{\lambda_{q+1}\mu_q}.	\label{relation}
	\end{aligned}	
\end{equation}
Note that the above relationship is not essential for our proof. In order to make the estimation appear simpler and clearer, we provided the above relationship.
\subsection{Backward flow}
 Next, we will introduce the backward flow map. For a given $I=(u,\upsilon,f)\in\mathscr{I}$, we denote $\xi_I=\xi_u$ as the solution of the following transport equation:
\begin{equation}
	\left\lbrace 
	\begin{aligned}
	&\partial_t\xi_u+(m_\ell/n\cdot\nabla)\xi_u=0,\\
	&\xi_u(t_u,x)=x.	
	\end{aligned}\right.\label{backward flow}
\end{equation}
$\xi_u$ is the inverse of the periodic flow $m_\ell/n$ starting at $t_u=u\tau_q$ and satisfies
\begin{equation}
\Phi_u(t,\xi_u(t,x);t_u)=x,\label{for and back}
\end{equation}
where $\Phi_u(t,x;t_u)$ is the forward flow in \eqref{eq of forward flow} 
and we have the following estimates 
\begin{pp}\label{est on backward flow}
	For any $0<\alpha<\frac{1}{3}$, let the parameters $\bar{b}_1(\alpha)$ and $\Lambda_1$ be as in the statement of Proposition \ref{est on mollification 1} and let the parameters $\bar{b}_2(\alpha)$ and $\Lambda_2$ be as in  \eqref{relation}. We set $\bar{b}_3(\alpha)=\min\{\bar{b}_1(\alpha),\bar{b}_2(\alpha)\}$. Then, for any $b\in(1,\bar{b}_3(\alpha))$, there exists $\Lambda_3(\alpha,b,M,n)\geqslant\max\{\Lambda_1,\Lambda_2\}$ such that for any $\lambda_0\geqslant\Lambda_3$, the backward flow $\xi_I$ satisfies the following estimates for any $N\geqslant 0,$ $0\leqslant s\leqslant 2$, and $0\leqslant r\leqslant 4$ on $\cI_u:=\big[t_u-\frac{1}{2}\tau_q,t_u+\frac{3}{2}\tau_q\big]\bigcap\big[-2\tau_q,T+2\tau_q\big]$:
	\begin{align}
	&\left\|\nabla\xi_I\right\|_{\CIN}+\left\|(\nabla\xi_I)^{-1}\right\|_{\CIN}\lesssim_{n,N} \ell^{-N},\label{est on nabla xi and xi^-1} \\
	&\left\|\nabla\xi_I-\Id\right\|_\CIO\lesssim_{n} (\lambda_{q+1}\mu_q)^{-1}\leqslant\frac{1}{5}\label{est on nabla xi-Id},\\
	&\left\|\Id-(\nabla\xi_I)^{-1}\right\|_\CIO\lesssim_{n}(\lambda_{q+1}\mu_q)^{-1},\label{est on Id-nabla xi^-1}\\
	&\left\|(\nabla\xi_I)^{\top}-(\nabla\xi_I)^{-1}\right\|_\CIO\lesssim_{n}(\lambda_{q+1}\mu_q)^{-1},
	\label{est on nabla xi^T - nabla xi^-1} \\
	&\left\|\partial_{t}\xi_{I}+m_\ell/n\right\|_\CIO\lesssim_{n}(\lambda_{q+1}\mu_q)^{-1},\label{est on pt xi + m_l/n}\\
	&\left\|\partial_{t}^{r+1}\xi_{I}\right\|_{\CIN}\lesssim_{n, p, h, N}\ell^{-N-r},\label{est on pt xi}\\
	&\left\|\partial_{t}^r\nabla\xi_I\right\|_{\CIN}+\left\|\partial_{t}^r(\nabla\xi_I)^{-1}\right\|_{\CIN}\lesssim_{n, p, h, N}\ell^{-N-r},\label{est on pt nabla xi}\\	
	&\left\|\DTL ^s\nabla\xi_I\right\|_{\CIN}+\left\|\DTL ^s(\nabla\xi_I)^{-1}\right\|_{\CIN}\lesssim_{n, p,  N, M}\ell^{-N}(\lambda_{q}\delta_{q}^{\frac{1}{2}})^s,\label{est on Dtl nabla xi}\\		&\left\|\partial_{t}^{r}(\det(\nabla{\xi_I}))^{-m}\right\|_{\CIN}\lesssim_{n, p, h, N}\ell^{-N-r},\qquad \label{est on pt det nabla xi}\\
	&\left\|\DTL ^{s}(\det(\nabla{\xi_I}))^{-m}\right\|_{\CIN}\lesssim_{n, p, N, M}\ell^{-N}(\lambda_{q}\delta_{q}^{\frac{1}{2}})^s,\qquad \label{est on Dtl det nabla xi}
	\end{align}
	where $m=1,2,3.$ The implicit constants in the previous inequalities are independent of the index $I$.
\end{pp}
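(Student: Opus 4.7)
The plan is to exploit the fact that $\xi_u(t,\cdot) = \Phi_u(t,\cdot\,;t_u)^{-1}$ where $\Phi_u$ is the forward flow map from \eqref{eq of forward flow}, so that the Jacobian $J(t,x) := \nabla\Phi_u(t,x;t_u)$ solves the linear matrix ODE
\[
\partial_t J(t,x) \;=\; \nabla(m_\ell/n)\bigl(t,\Phi_u(t,x;t_u)\bigr)\,J(t,x),\qquad J(t_u,x)=\Id,
\]
along trajectories. On the time interval $|t-t_u|\leqslant 2\tau_q$, Gr\"onwall's inequality combined with the smallness condition \eqref{geomoetry condition 1} yields
\[
\|J-\Id\|_0 \;\leqslant\; e^{\tau_q\|\nabla(m_\ell/n)\|_0}-1 \;\lesssim_n\; (\lambda_{q+1}\mu_q)^{-1}\leqslant \tfrac{1}{5},
\]
from which a Neumann expansion gives $\|J^{-1}-\Id\|_0\lesssim_n(\lambda_{q+1}\mu_q)^{-1}$ and, via the triangle inequality, $\|J^{\top}-J^{-1}\|_0\lesssim_n(\lambda_{q+1}\mu_q)^{-1}$. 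Identifying $\nabla\xi_u$ with $J^{-1}$ composed with $\Phi_u$ then transfers these to \eqref{est on nabla xi-Id}--\eqref{est on nabla xi^T - nabla xi^-1} and to the $N=0$ case of \eqref{est on nabla xi and xi^-1}. Estimate \eqref{est on pt xi + m_l/n} follows immediately by rewriting the transport equation \eqref{backward flow} as $\partial_t\xi_u + m_\ell/n = (m_\ell/n)\cdot(\Id-\nabla\xi_u)$ and applying the bound just obtained.

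For the higher Sobolev bounds \eqref{est on nabla xi and xi^-1}, \eqref{est on pt xi}, and \eqref{est on pt nabla xi}, I would induct on the order $N+r$. Differentiating $\partial_t \xi_u + (m_\ell/n)\cdot\nabla\xi_u = 0$ in $x$ and $t$ produces, at order $N+r$, a transport equation along $m_\ell/n$ for $\partial_t^r\nabla^N\xi_u$ whose source is a polynomial in lower-order derivatives of $\xi_u$ multiplied by derivatives of $m_\ell/n$, each controlled by \eqref{est on m_l} and \eqref{est on pt m_l / n N} by quantities of the form $\ell^{-N_1-r_1}\delta_q^{1/2}$. Integrating along the flow over $|t-t_u|\leqslant 2\tau_q$ and using $\tau_q\delta_q^{1/2}\lesssim\ell$ (which holds by \eqref{pp of parameter 1} and \eqref{relation} once $b$ is close to $1$ and $\lambda_0$ is large) gives the required $\ell^{-N-r}$ decay. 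The bound on $(\nabla\xi_u)^{-1}$ then follows via the Neumann expansion $(\nabla\xi_u)^{-1}=\sum_{k\geqslant 0}(\Id-\nabla\xi_u)^k$, which converges in each $C^{N}$ by \eqref{est on nabla xi-Id}.

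The advective-derivative estimate \eqref{est on Dtl nabla xi} relies on the cleaner observation that \eqref{backward flow} is precisely $\DTL\xi_u=0$, so differentiating in $x$ gives $\DTL\nabla\xi_u = -\nabla\xi_u\,\nabla(m_\ell/n)$, and by induction $\DTL^s\nabla\xi_u$ is a polynomial in $\nabla\xi_u$ and in $\DTL^j\nabla(m_\ell/n)$ for $0\leqslant j\leqslant s-1$. Each factor $\|\DTL^j\nabla(m_\ell/n)\|_N$ gains one power of $\lambda_q\delta_q^{1/2}$ per advective derivative by \eqref{est on pt m_l / n N}, producing the claimed $(\lambda_q\delta_q^{1/2})^s$ factor. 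Finally, for \eqref{est on pt det nabla xi} and \eqref{est on Dtl det nabla xi}, Jacobi's formula gives $\partial_t\det J = \Div(m_\ell/n)(t,\Phi_u)\det J$, so
\[
\det\nabla\xi_u \;=\; \exp\!\Bigl(-\int_{t_u}^t\Div(m_\ell/n)(s,\Phi_u(s,\cdot\,;t_u))\,ds\Bigr)\circ \Phi_u^{-1}
\]
is bounded above and away from zero on $\cI_u$; the estimates on $(\det\nabla\xi_u)^{-m}$ and its pure-time and advective derivatives then come from the chain rule applied to $\exp$, together with the estimates on $\xi_u$ already obtained. The main obstacle throughout is the parameter bookkeeping: one must verify that each product of $\tau_q$, $\ell^{-N}$, $(\lambda_q\delta_q^{1/2})^j$, and $(\lambda_{q+1}\mu_q)^{-1}$ fits the claimed right-hand side under \eqref{pp of parameter 1}, \eqref{relation}, and \eqref{geomoetry condition 1}, which is precisely why one may choose $b\in(1,\bar{b}_3(\alpha))$ close enough to $1$ and $\lambda_0\geqslant\Lambda_3$ sufficiently large.
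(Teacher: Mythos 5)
Your overall plan follows the paper's logic closely — transport estimates and Gr\"onwall-type bounds on the time interval of length $O(\tau_q)$, the observation that $\DTL\nabla\xi_u=-\nabla\xi_u\,\nabla(m_\ell/n)$ drives \eqref{est on Dtl nabla xi}, and the relation $(\nabla\xi_I)^{-1}(t,x)=\nabla\Phi_u(t,\xi_I;t_u)$ for \eqref{est on Id-nabla xi^-1} — so I will not comment further on the parts that match. Where you differ from the paper: you estimate the forward Jacobian $J=\nabla\Phi_u$ via its linear ODE and then transfer by composition, whereas the paper applies \eqref{transport equation 1} and \eqref{transport equation N} directly to $\nabla\xi_I$; you obtain $(\nabla\xi_u)^{-1}$ from a Neumann series, while the paper differentiates $(\nabla\xi_I)^{-1}$ via $\partial_t(\nabla\xi_I)^{-1}=-(\nabla\xi_I)^{-1}(\partial_t\nabla\xi_I)(\nabla\xi_I)^{-1}$; and you use Jacobi's formula plus the exponential representation for the determinant, while the paper uses the logarithmic-derivative identity $\partial_i(\det\nabla\xi_I)^{-m}=-m(\det\nabla\xi_I)^{-m}\operatorname{tr}((\nabla\xi_I)^{-1}\partial_i\nabla\xi_I)$. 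Each of these variants is legitimate; the paper's route avoids one extra composition-with-the-flow step and one convergence argument, but they buy the same estimates.

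There is, however, one concrete bookkeeping error you should fix. You invoke the inequality $\tau_q\,\delta_q^{1/2}\lesssim\ell$ as the parameter relation that makes the time integration close. That inequality is neither what the paper uses nor is it true throughout the range $0<\alpha<\tfrac13$. Writing $\lambda_{q+1}\approx\lambda_q^b$ and $\delta_j=\lambda_j^{-2\alpha}$, one finds
\[
\frac{\tau_q\,\delta_q^{1/2}}{\ell}\;\approx\;\lambda_q^{(1/4-5\alpha/4)(1-b)},
\]
which tends to $0$ only for $\alpha<\tfrac15$; for $\tfrac15<\alpha<\tfrac13$ (and any $b>1$) it tends to $\infty$, so the claimed inequality fails. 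The correct and sharp relation, used in the paper's estimate \eqref{est on nabla xi}, is $\tau_q\,M\lambda_q\,\delta_q^{1/2}\lesssim 1$ (equivalently $\tau_q\,\|\nabla(m_\ell/n)\|_0\lesssim 1$, which is built into \eqref{geomoetry condition 1}). This holds for all $0<\alpha<1$, because $\tau_q M\lambda_q\delta_q^{1/2}\approx\lambda_q^{(1/2-\alpha/2)(1-b)}\to 0$. Your inductive argument works once you keep the source-term bounds in the tighter form $\|\partial_t^{r_0}\nabla^{N_0}(m_\ell/n)\|\lesssim\ell^{-N_0-r_0}M\lambda_q\delta_q^{1/2}$ rather than collapsing $M\lambda_q$ to $\ell^{-1}$ prematurely.
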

\begin{proof}
	First, we could find $\Lambda_3(\alpha,b,M,n)\geqslant\max\{\Lambda_1,\Lambda_2\}$ such that for any $\lambda_0\geqslant\Lambda_3$,
	\begin{align}
	\frac{1}{\lambda_{q+1}\mu_q}\leqslant \min\left\{\frac{\pi}{2\eta}, \frac{1}{10M^5}\right\}.\label{pp of Lambda_3}
	\end{align}
	Then, we have  $\tau_{q}\left\|\nabla  (m_\ell/n)\right\|_0\leqslant\frac{\eta}{10\pi\lambda_{q+1}\mu_q}\leqslant \frac{1}{20}$ and 
	\begin{equation}\label{est on nabla xi}
		\begin{aligned}
		\left\|\nabla\xi_I\right\|_{\CIN}\leqslant 1+ 2\tau_{q}C_N\left\|\nabla(m_\ell/n)\right\|_{N}\exp(2\tau_qC_N\left\|\nabla(m_\ell/n)\right\|_0)\lesssim_{n,N}1+\ell^{-N}M\tau_q\lambda_{q}\delta_{q}^{\frac{1}{2}}\lesssim_{n,N}\ell^{-N}.
		\end{aligned}
	\end{equation}
	Moreover, $\xi_I-x$ satisfies
	$$
		\left\lbrace 
		\begin{aligned}
			&\partial_t(\xi_I-x)+(m_\ell/n\cdot\nabla)(\xi_I-x)=-m_\ell/n,\\
			&\xi_I(t_u,x)=x.	
		\end{aligned}\right.
	$$
	\eqref{est on nabla xi-Id} follows from \eqref{geomoetry condition 1} and \eqref{transport equation 1},
	\begin{equation}\label{est on nabla xi-Id 2}
	\begin{aligned}
	\left\|\nabla\xi_I-\Id\right\|_\CIO\lesssim2\tau_q\left\|\nabla(m_\ell/n)\right\|_0\exp(2\tau_q\left\|\nabla(m_\ell/n)\right\|_0)\lesssim_{n}(\lambda_{q+1}\mu_q)^{-1}\leqslant \frac{1}{5}.
	\end{aligned}
\end{equation}
 	From \eqref{est on nabla xi} and \eqref{est on nabla xi-Id 2}, we can obtain \eqref{est on nabla xi and xi^-1}. Observe that
	\begin{align*}
	\partial_{t}\xi_{I}=-((m_\ell/n)\cdot\nabla)\xi_{I},\quad \partial_{t}^{r+1}\xi_{I}=-\sum_{r_0+r_1=r}(\partial_{t}^{r_0}(m_\ell/n)\cdot\nabla)\partial_{t}^{r_1}\xi_{I}.
	\end{align*}
	So we could use \eqref{est on pt m_l / n N} and \eqref{est on nabla xi-Id 2} to obtain 
	\begin{align*}
	\left\|\partial_{t}\xi_{I}+m_\ell/n\right\|_\CIO&\lesssim\left\|m_\ell/n\right\|_\CIO\left\|\Id-\nabla\xi_{I}\right\|_\CIO\lesssim_{n}(\lambda_{q+1}\mu_q)^{-1},\end{align*}
	and then
	\begin{align*}
	\left\|\partial_{t}\xi_{I}\right\|_{\CIN}&\lesssim_N\sum_{N_1+N_2=N}\left\|m_\ell/n\right\|_{N_1}\left\|\nabla\xi_{I}\right\|_{C^{0}\left(\cI_{u} ; C^{N_2}\left(\R^{3}\right)\right)}\lesssim_{n, p, h, N}\ell^{-N},&& N\geqslant 1,\\
	\left\|\partial_{t}^{r+1}\xi_{I}\right\|_{\CIN}&\lesssim_N\sum_{r_0+r_1=r}\sum_{N_0+N_1 = N}\left\|\partial_{t}^{r_0}(m_\ell/n)\right\|_{N_0}\left\|\partial_{t}^{r_1}\xi_{I}\right\|_{C^{0}\left(\cI_{u} ; C^{N_1+1}\left(\R^{3}\right)\right)}\lesssim_{n, p, h,  N}\ell^{-N-r},&& N\geqslant 0,
	\end{align*}
	for $0\leqslant r\leqslant 4$. \eqref{est on pt xi} follows from the above estimates.	Noting that
	\begin{align*}
	\partial_{t}(\nabla\xi_{I})^{-1}=-(\nabla\xi_{I})^{-1}\left(\partial_{t}\nabla\xi_{I}\right)(\nabla\xi_{I})^{-1},\quad\partial_{t}^{r+1}(\nabla\xi_{I})^{-1}=-\sum_{r_0+r_1+r_2=r}\partial_{t}^{r_0}(\nabla\xi_{I})^{-1}\partial_{t}^{r_1}\left(\partial_{t}\nabla\xi_{I}\right)\partial_{t}^{r_2}(\nabla\xi_{I})^{-1},
	\end{align*}
	and using \eqref{est on m_l}, \eqref{est on nabla xi and xi^-1}, and \eqref{est on pt xi},   we could obtain
	\begin{equation}\notag
		\begin{aligned}
		\left\|\partial_{t}(\nabla\xi_{I})^{-1}\right\|_{\CIN}&\leqslant\left\|(\nabla\xi_{I})^{-1}\left(\partial_{t}\nabla\xi_{I}\right)(\nabla\xi_{I})^{-1}\right\|_{\CIN}\lesssim_{n, p, h, N} \ell^{-N-1},
		\end{aligned}
	\end{equation}
and 
	\begin{equation}\notag
	\begin{aligned}
		&\quad\left\|\partial_{t}^{r+1}(\nabla\xi_{I})^{-1}\right\|_{\CIN}\lesssim\sum_{r_0+r_1+r_2=r}\left\|\partial_{t}^{r_0}(\nabla\xi_{I})^{-1}\partial_{t}^{r_1+1}\nabla\xi_{I}\partial_{t}^{r_2}(\nabla\xi_{I})^{-1}\right\|_{C^{0}\left(\cI_{u} ; C^{N}\left(\R^{3}\right)\right)}\lesssim_{n, p, h, N} \ell^{-N-r-1},
	\end{aligned}
\end{equation}
	for $0\leqslant r\leqslant 4$. Recalling \eqref{for and back}, we could know $(\nabla\xi_I)^{-1}(t,x)=\nabla\Phi_u(t,\xi_I(t,x);t_u),$
	and then
	$$
	\begin{aligned}
		\left\|\Id-(\nabla\xi_I)^{-1}\right\|_\CIO&=\left\|\Id-\nabla\Phi_u\right\|_\CIO\lesssim\tau_q\left\|\nabla(m_\ell/n)\right\|_0\lesssim_{n}(\lambda_{q+1}\mu_q)^{-1},\\
		\left\|\nabla\xi_I-(\nabla\xi_I)^{-1}\right\|_\CIO&\leqslant\left\|\Id-(\nabla\xi_I)^{-1}\right\|_\CIO+\left\|\Id-\nabla\xi_I\right\|_\CIO\lesssim_{n}(\lambda_{q+1}\mu_q)^{-1}.
	\end{aligned}
	$$
    \eqref{est on nabla xi^T - nabla xi^-1} follows. To get \eqref{est on Dtl nabla xi}, we use the following equalities
	\begin{equation}\label{Dtl nabla xi}
		\begin{aligned}
		&\DTL \nabla \xi_{I}=-\left(\nabla \xi_{I}\right)\nabla(m_\ell/n),&& \DTL^{2} \nabla \xi_{I}=\left(\nabla \xi_{I}\right)\left(\nabla(m_\ell/n)\right)^{2}-\left(\nabla \xi_{I}\right) \DTL \nabla(m_\ell/n),\\
		&\DTL(\nabla \xi_{I})^{-1}=\nabla(m_\ell/n)(\nabla \xi_{I})^{-1}, && \DTL^{2}(\nabla \xi_{I})^{-1}=\DTL \nabla(m_\ell/n)(\nabla \xi_{I})^{-1}+\left(\nabla(m_\ell/n)\right)^{2}(\nabla \xi_{I})^{-1}.
		\end{aligned}
	\end{equation}
	Combining it with \eqref{est on pt m_l / n N} and \eqref{est on nabla xi and xi^-1}, we have for $s=0,1,2$,
	$$
	\left\|\DTL^{s} \nabla \xi_{I}\right\|_{C^{0}\left(\cI_{u} ; C^{N}\left(\R^{3}\right)\right)} \lesssim_{n, p, N, M} \ell^{-N}\left(\lambda_{q} \delta_{q}^{\frac{1}{2}}\right)^{s},\qquad
	\left\|\DTL^{s}(\nabla \xi_{I})^{-1}\right\|_{C^{0}\left(\cI_{u} ; C^{N}\left(\R^{3}\right)\right)} \lesssim_{n, p, N, M} \ell^{-N}\left(\lambda_{q} \delta_{q}^{\frac{1}{2}}\right)^{s}.
	$$
	Finally, we give the following formulas,
	$$
	\begin{aligned}
		\partial_i(\det(\nabla{\xi_I}))^{-m}
		&=-m(\det(\nabla{\xi_I}))^{-m}\tr((\nabla\xi_I)^{-1}\partial_i(\nabla{\xi_I})),\\
		\partial_t(\det(\nabla{\xi_I}))^{-m}
		&=-m(\det(\nabla{\xi_I}))^{-m}\tr((\nabla\xi_I)^{-1}\partial_t(\nabla{\xi_I})),\\
		\DTL (\det(\nabla{\xi_I}))^{-m}&=-m(\det(\nabla{\xi_I}))^{-m}\tr((\nabla\xi_I)^{-1}\DTL (\nabla{\xi_I})),
	\end{aligned}
	$$
	from which \eqref{est on pt det nabla xi} and \eqref{est on Dtl det nabla xi} follow.
\end{proof}
\subsection{Estimates on mixed derivatives}\label{Estimates on mixed derivatives}
Here, we give estimates on mixed derivatives which will be used in estimates on the perturbation. We denote $\left\|\cdot\right\|_N=\left\|\cdot\right\|_{C^0(\cI_\ell^{q};C^N(\T^3))}$.
\begin{pp}
  For any $0<\alpha<\frac{1}{3}$, let the parameters $\bar{b}_3(\alpha)$ and $\Lambda_3$ be as in the statement of Proposition \ref{est on backward flow}. For any $b\in(1,\overline{b}_3(\alpha))$ and $\lambda_0\geqslant\Lambda_3$, we have the following properties:
	\begin{align}
	&\left\|\DTL^s\nabla^r\DTL^km_{\ell}\right\|_{N} \lesssim_{n, p, N, r, M} \ell^{2-s-N-r-k} (\lambda_{q}\delta_{q}^{\frac{1}{2}})^2, \label{est on mixed m_l 1}\\
	&\left\|\DTL^{s}\nabla^r\DTL^kR_{\ell}\right\|_{N}+\delta_{q+1}^{-\frac{1}{2}}\left\|\DTL^{s} \nabla^r\DTL^k\varphi_{\ell}\right\|_{N} \lesssim_{n, p, N, r, M}\ell_{t}^{-s-k}\ell^{-N-r} \lambda_{q}^{-3 \gamma} \delta_{q+1},\label{est on mixed error 1} \\
	&\left\|\DTL^{s}\nabla^r\DTL^{k}\nabla\xi_{I}\right\|_{\CIN}+\left\|\DTL^{s}\nabla^{r}\DTL^{k}(\nabla\xi_{I})^{-1}\right\|_{\CIN} \lesssim_{n, p, N, r, M} \ell^{-N-r-k}(\lambda_{q}\delta_{q}^{\frac{1}{2}})^s,\label{est on mixed nabla xi 1}\\
	&\left\|\DTL^{s}\nabla^r\xi_{I}\right\|_{\CIN}\lesssim_{n, p, N, r, M} \ell^{1-N-r}(\lambda_{q}\delta_{q}^{\frac{1}{2}})^s, \label{est on mixed xi 1}
	\end{align}
	for $s=1,2;k=0,1,2;r\geqslant1;N\geqslant 0$, and
	\begin{align}
	&\left\|\DTL^s\partial_{t}^k\nabla^im_{\ell}\right\|_{N} \lesssim_{n, p, N, M} \ell^{2-s-N-i-k} (\lambda_{q}\delta_{q}^{\frac{1}{2}})^2, \label{est on mixed m_l 2}\\
	&\left\|\DTL^{s}\partial_{t}^k\nabla^{i} R_{\ell}\right\|_{N}+\delta_{q+1}^{-\frac{1}{2}}\left\|\DTL^{s} \partial_{t}^k\nabla^{i}\varphi_{\ell}\right\|_{N} \lesssim_{n, p, N, M} \ell_{t}^{-s}\ell^{-N-i}(\ell_t^{-1}+\ell^{-1})^{k} \lambda_{q}^{-3 \gamma} \delta_{q+1}, \label{est on mixed error 2}\\
	&\left\|\DTL^{s}\partial_{t}^k\nabla^i\nabla\xi_{I}\right\|_{\CIN}+\left\|\DTL^{s}\partial_{t}^k\nabla^{i}(\nabla\xi_{I})^{-1}\right\|_{\CIN} \lesssim_{n, p, N, M} \ell^{-N-k-i}(\lambda_{q}\delta_{q}^{\frac{1}{2}})^s, \label{est on mixed nabla xi 2}\\
	&\left\|\DTL^{s}\partial_{t}^k\nabla^i\xi_{I}\right\|_{\CIN}\lesssim_{n, p, N, M} \ell^{1-N-k-i}(\lambda_{q}\delta_{q}^{\frac{1}{2}})^s, \label{est on mixed xi 2}
\end{align}
	for $s=1,2;k=1,2;i=0,1;N\geqslant 0$. Moreover, we have for $N\geqslant0$ and $0\leqslant r\leqslant 4$,
	\begin{align}
		\left\|\partial_{t}^rR_{\ell}\right\|_{N}+\delta_{q+1}^{-\frac{1}{2}}\left\| \partial_{t}^r\varphi_{\ell}\right\|_{N} \lesssim_{n, p, h, N}\ell^{-N}(\ell_t^{-1}+\ell^{-1})^{r} \lambda_{q}^{-3 \gamma} \delta_{q+1}, \label{est on pt error}
	\end{align}
	 where the implicit constant is independent of $M$.
\end{pp}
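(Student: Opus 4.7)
The plan is to reduce every inequality in the statement to the already-established estimates supplied by Proposition \ref{est on mollification 1}, Proposition \ref{est on backward flow}, and the refined bound \eqref{est on Dtl s m_l}, by iterating the two elementary commutator identities
\begin{equation*}
[\DTL,\nabla_i]F=-\partial_i(m_\ell/n)_j\,\partial_jF,\qquad [\DTL,\partial_t]F=-\partial_t(m_\ell/n)_j\,\partial_jF.
\end{equation*}
Each time one pushes a $\DTL$ past $\nabla^r$ or $\partial_t^k$, extra factors $\partial_t^a\nabla^b(m_\ell/n)$ appear; the second inequalities in \eqref{est on pt m_l / n N}, obtained via the absorption $M\lambda_q\delta_q^{1/2}\leqslant\ell^{-1}\delta_q^{1/2}$ from \eqref{pp of parameter 1}, give $\|\partial_t^a\nabla^b(m_\ell/n)\|_0\lesssim_{n,p,h,a,b}\ell^{-a-b}\delta_q^{1/2}$ with an $M$-free constant, so every such commutator contribution is absorbed into the claimed $\ell$- and $\ell_t$-weights.

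For \eqref{est on mixed m_l 1} I rearrange $\DTL^s\nabla^r\DTL^k m_\ell$ into a sum of terms of the form $C\!\cdot(\partial_t^a\nabla^b(m_\ell/n))\cdot\nabla^{r'}\DTL^{s+k-j}m_\ell$ with $0\leqslant j\leqslant s$ and appropriate multi-indices, then invoke \eqref{est on Dtl s m_l} on the rightmost factor; counting derivative orders produces exactly $\ell^{2-s-N-r-k}(\lambda_q\delta_q^{1/2})^2$. The estimate \eqref{est on mixed error 1} for $R_\ell,\varphi_\ell$ is obtained identically from \eqref{est on Dtl error 0}--\eqref{est on Dtl error N}, the $\ell_t^{-s-k}$ weight arising because each $\DTL$ that ultimately lands on $R_\ell$ or $\varphi_\ell$ costs $\ell_t^{-1}$. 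For \eqref{est on mixed nabla xi 1} and \eqref{est on mixed xi 1} I start from the explicit formulas \eqref{Dtl nabla xi} for $\DTL^s\nabla\xi_I$ and $\DTL^s(\nabla\xi_I)^{-1}$, commute the extra $\nabla^r\DTL^k$ inward, and bound each resulting piece using Proposition \ref{est on backward flow} together with the $M$-free $(m_\ell/n)$-bounds above; the estimate \eqref{est on mixed xi 1} for $\xi_I$ itself follows from its $\nabla\xi_I$ counterpart by integration in $t$ from $\xi_I(t_u,x)=x$. The $\partial_t^k\nabla^i$ versions \eqref{est on mixed m_l 2}--\eqref{est on mixed xi 2} are proved analogously, exchanging the roles of $\DTL$ and $\partial_t$; each $\partial_t$ costs either $\ell^{-1}$ (when it creates a $\nabla$ upon commutation) or $\ell_t^{-1}$ (when it becomes a $\DTL$ landing on a mollified error), which accounts for the $(\ell_t^{-1}+\ell^{-1})^k$ factor.

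The genuine obstacle is the $M$-independent bound \eqref{est on pt error}, because expanding $\partial_t^r=(\DTL-(m_\ell/n)\cdot\nabla)^r$ produces factors $\DTL^j(m_\ell/n)$ for which no $M$-independent bound has been established. To bypass this I differentiate $R_\ell$ directly under its integral representation: writing
\begin{equation*}
R_\ell(t,x)=\int_\R \PL R_q(\sigma,\Phi(\sigma,x;t))\,\rho_{\ell_t}(\sigma-t)\,\rd\sigma
\end{equation*}
and using that $\partial_\sigma$ and $\partial_t$ (the latter acting in the third slot of $\Phi$) commute, integration by parts in $\sigma$ gives
\begin{equation*}
\partial_t^r R_\ell(t,x)=\sum_{k=0}^{r}\binom{r}{k}(-1)^{k}\int_\R \partial_t^{r-k}\!\bigl[\PL R_q(\sigma,\Phi(\sigma,x;t))\bigr]\,\rho_{\ell_t}^{(k)}(\sigma-t)\,\rd\sigma.
\end{equation*}
Each $\partial_t^{r-k}[\,\cdot\,]$ expands into a polynomial in $\partial_t^j\Phi$ multiplying a spatial derivative of $\PL R_q$. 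The main technical step is to prove, by induction on $j$ using the variational equation $\partial_\sigma\partial_t\Phi=\nabla(m_\ell/n)(\sigma,\Phi)\partial_t\Phi$ and its iterated $t$-derivatives, that for $|\sigma-t|\leqslant \ell_t$ one has $\|\partial_t^j\Phi(\sigma,\cdot\,;t)\|_{C^N(\T^3)}\lesssim_{n,p,h,N,j}\ell^{-N}$ with an $M$-free constant; this is possible precisely because the second inequalities in \eqref{est on pt m_l / n N} furnish $M$-free bounds for every derivative of $m_\ell/n$, and the $\underline M$-factors that do appear depend only on $n,p,h$. Combining this with $\|\nabla^i \PL R_q\|_0\lesssim \ell^{-i}\lambda_q^{-3\gamma}\delta_{q+1}$ and $\int|\rho_{\ell_t}^{(k)}|\lesssim\ell_t^{-k}$ then yields $\|\partial_t^r R_\ell\|_N\lesssim_{n,p,h,N}\ell^{-N}(\ell_t^{-1}+\ell^{-1})^r\lambda_q^{-3\gamma}\delta_{q+1}$, and the same argument applies verbatim to $\varphi_\ell$.
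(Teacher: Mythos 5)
Your commutator-iteration approach to the mixed-derivative bounds \eqref{est on mixed m_l 1}--\eqref{est on mixed xi 2} matches the paper's route, which packages the same iteration of $[\DTL,\nabla]F=-\nabla(m_\ell/n)\cdot\nabla F$ into Lemma \ref{Estimtates for mixed derivatives} and then feeds in \eqref{est on Dtl s m_l}, \eqref{est on Dtl error 0}--\eqref{est on Dtl error N}, and Proposition~\ref{est on backward flow}. For \eqref{est on pt error}, however, your reading of the difficulty and your proposed cure both miss the mark.

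First, the obstacle you describe is not actually present in the paper's method. You are right that a naive binomial expansion of $\partial_t^r=(\DTL-(m_\ell/n)\cdot\nabla)^r$ generates factors $\DTL^j(m_\ell/n)$ with no $M$-free control. But the paper never expands that way: it iterates the identity $\partial_t^{r+1}F=\partial_t^r\DTL F-\sum_{r_0+r_1=r}\binom{r}{r_0}\partial_t^{r_0}(m_\ell/n)\cdot\nabla\,\partial_t^{r_1}F$, so every derivative that lands on $m_\ell/n$ remains a \emph{pure} $\partial_t^a\nabla^b$ derivative. These are all $M$-free by \eqref{est on pt m_l / n N} (after the absorption $M\lambda_q\delta_q^{1/2}\leqslant\ell^{-1}\delta_q^{1/2}$ from \eqref{pp of parameter 1}), and the remaining factors are of the form $\nabla^c\DTL^d R_\ell$ or $\nabla^c\DTL^d\varphi_\ell$, which are $M$-free by \eqref{est on Dtl error 0}--\eqref{est on Dtl error N}. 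One never meets $\DTL^j(m_\ell/n)$.

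Second, your alternative argument by direct differentiation of the flow representation has a genuine gap: the asserted bound $\|\partial_t^j\Phi(\sigma,\cdot\,;t)\|_{C^N(\T^3)}\lesssim_{n,p,h,N,j}\ell^{-N}$ is false for $j\geqslant 2$. Differentiating $\Phi(t,x;t)=x$ twice in $t$ and using $\partial_\sigma\partial_t\Phi=\nabla(m_\ell/n)(\sigma,\Phi)\,\partial_t\Phi$ gives
\begin{equation*}
\partial_t^2\Phi(\sigma,x;t)\big|_{\sigma=t}=-\partial_t(m_\ell/n)(t,x)+\nabla(m_\ell/n)(t,x)\cdot(m_\ell/n)(t,x),
\end{equation*}
whose $C^0$-norm is of size $\ell^{-1}\delta_q^{1/2}$, which diverges as $q\to\infty$; hence $\|\partial_t^2\Phi\|_{C^0}$ cannot be bounded by a $q$-independent constant. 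The correct order is roughly $\|\partial_t^j\Phi\|_{C^N}\lesssim\ell^{-N}(\ell^{-1}\delta_q^{1/2})^{j-1}$. Your overall count can be repaired with this bound: the extra $(\ell^{-1}\delta_q^{1/2})^{\sum j_i-a}$ factors are compensated by taking $a$ fewer powers of $\ell^{-1}$ from $\|\nabla^a\PL R_q\|_0\lesssim\ell^{-a}\lambda_q^{-3\gamma}\delta_{q+1}$, leaving at most $\delta_q^{(r-k-a)/2}\leqslant 1$, and one still lands on $\ell^{-N}(\ell^{-1}+\ell_t^{-1})^r\lambda_q^{-3\gamma}\delta_{q+1}$. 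But as written the key intermediate estimate is incorrect, so the proof of \eqref{est on pt error} does not go through.
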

\begin{proof}
	$\left\|\cdot\right\|_N$ in the following inequality can be replaced by $\|\cdot\|_{\CIN}$, we only prove one case for convenience. Recall the estimate on $\DTL m_\ell$ in \eqref{est on Dtl s m_l} and apply Lemma \ref{Estimtates for mixed derivatives} to $m_\ell,\ R_\ell,\ \varphi_\ell,\ \nabla\xi_{I},\ (\nabla\xi_{I})^{-1}$, and $\xi_{I}$. Then, we could get for $s=1,2;r\geqslant 1;k\geqslant 0;N\geqslant 0,$
	\begin{equation}\notag
	\begin{aligned}
	\left\|\DTL\nabla^r\DTL^km_{\ell}\right\|_{N}&\lesssim_{N,r}\left\|\DTL^{k+1} m_\ell\right\|_{N+r}+\sum_{N_0+N_1=N+r-1}\left\|\DTL^km_\ell\right\|_{N_0+1}\left\|m_\ell/n\right\|_{N_1+1}\\
	&\lesssim_{n, p, N, r, k, M}  \ell^{1-N-r-k}(\lambda_{q}\delta_{q}^{\frac{1}{2}})^2,\\
	\left\|\DTL^2\nabla^{r}\DTL^k m_\ell\right\|_{N}
	&\lesssim_{N,r}\left\|\DTL^{k+2} m_\ell\right\|_{N+r}+
	\sum_{N_0+N_1=N+r-1}\left\| \DTL^{k+1} m_\ell\right\|_{N_0+1}\left\|m_\ell/n\right\|_{N_1+1}\\
	&\quad+\sum_{N_0+N_1=N+r-1}\left\|\DTL^km_\ell\right\|_{N_0+1}\left\|\DTL (m_\ell/n)\right\|_{N_1+1}\\
	&\quad+\sum_{N_0+N_1+N_2=N+r-1}\left\|\DTL^km_\ell\right\|_{N_0+1}\left\|m_\ell/n\right\|_{N_1+1}\left\|m_\ell/n\right\|_{N_2+1}\\
	&\lesssim_{n, p, N, r, k, M}\ell^{-N-r-k}(\lambda_{q}\delta_{q}^{\frac{1}{2}})^2.
	\end{aligned}
	\end{equation}
Similarly, we have 
	\begin{align*}
	&\left\|\DTL^s\nabla^r\DTL^{k}R_\ell\right\|_{N}+\delta_{q+1}^{-\frac{1}{2}}\left\|\DTL^s\nabla^r\DTL^k\varphi_\ell\right\|_{N}\lesssim_{n, p, N, r, k, M} \ell_t^{-s-k}\ell^{-N-r}\lambda_{q}^{-3 \gamma} \delta_{q+1},\\
	&\left\|\DTL^s\nabla^r\nabla\xi_{I}\right\|_{\CIN}+\left\|\DTL^s\nabla^r(\nabla\xi_{I})^{-1}\right\|_{\CIN}\lesssim_{n, p, N, r, M} \ell^{-N-r}(\lambda_{q}\delta_{q}^{\frac{1}{2}})^s,\\
	&\left\|\DTL^s\nabla^r\xi_{I}\right\|_{\CIN}\lesssim_{n, p, N, r, M} \ell^{1-N-r}(\lambda_{q}\delta_{q}^{\frac{1}{2}})^s.
	\end{align*}
Moreover, from \eqref{Dtl nabla xi}, it is easy to achieve
	\begin{align*}
	\left\|\DTL^r\nabla\xi_{I}\right\|_{\CIN}
	&\leqslant\left\|\DTL^{r-1}(\nabla\xi_{I}\nabla(m_\ell/n))\right\|_{C^{0}\left(\cI_{u} ; C^{N}\left(\R^{3}\right)\right)}\\
	&\lesssim\sum_{N_0+N_1=N}\sum_{r_0+r_1=r-1}\left\|\DTL^{r_0} \nabla\xi_{I}\right\|_{C^{0}\left(\cI_{u} ; C^{N_0}\left(\R^{3}\right)\right)}\left\|\DTL^{r_1}\nabla (m_\ell/n)\right\|_{N_1}\\
	&\lesssim_{n, p, N, r, M}\ell^{2-N-r}(\lambda_{q}\delta_{q}^{\frac{1}{2}})^2,\\
	\left\|\DTL^r(\nabla\xi_{I})^{-1}\right\|_{\CIN}
	&\leqslant\left\|\DTL^{r-1}(\nabla(m_\ell/n)(\nabla\xi_{I})^{-1})\right\|_{C^{0}\left(\cI_{u} ; C^{N}\left(\R^{3}\right)\right)}\\
	&\lesssim\sum_{N_0+N_1=N}\sum_{r_0+r_1=r-1}\left\|\DTL^{r_0} (\nabla\xi_{I})^{-1}\right\|_{C^{0}\left(\cI_{u} ; C^{N_0}\left(\R^{3}\right)\right)}\left\|\DTL^{r_1}\nabla (m_\ell/n)\right\|_{N_1}\\
	&\lesssim_{n, p, N, r, M}\ell^{2-N-r}(\lambda_{q}\delta_{q}^{\frac{1}{2}})^2,
	\end{align*}
	for $r\geqslant 2$. Then, we could similarly obtain for $k\geqslant 1$,
	\begin{align*}
	\left\|\DTL\nabla^r \DTL^k\nabla\xi_{I}\right\|_{\CIN}
	&\lesssim_{N,r}\left\|\DTL^{k+1} \nabla\xi_{I}\right\|_{C^{0}\left(\cI_{u} ; C^{N+r}\left(\R^{3}\right)\right)}+\sum_{N_0+N_1=N+r-1}\left\|\DTL^k \nabla\xi_{I}\right\|_{C^{0}\left(\cI_{u} ; C^{N_0+1}\left(\R^{3}\right)\right)}\left\|m_\ell/n\right\|_{N_1+1}\\
	&\lesssim_{n, p, N, r, k, M}\ell^{1-N-r-k}(\lambda_{q}\delta_{q}^{\frac{1}{2}})^2,\\
	\left\|\DTL^2\nabla^r \DTL^k\nabla\xi_{I}\right\|_{\CIN}&\lesssim_{N,r}\left\|\DTL^{k+2} \nabla\xi_{I}\right\|_{C^{0}\left(\cI_{u} ; C^{N+r}\left(\R^{3}\right)\right)}+\sum_{N_0+N_1=N+r-1}\left\|\DTL^{k+1} \nabla\xi_{I}\right\|_{C^{0}\left(\cI_{u} ; C^{N_0+1}\left(\R^{3}\right)\right)}\left\|m_\ell/n\right\|_{N_1+1}\\
	&\quad+\sum_{N_0+N_1+N_2=N+r-1}\left\|\DTL^k \nabla\xi_{I}\right\|_{C^{0}\left(\cI_{u} ; C^{N_0+1}\left(\R^{3}\right)\right)}\left\|m_\ell/n\right\|_{N_1+1}\left\|m_\ell/n\right\|_{N_2+1}\\
	&\quad+\sum_{N_0+N_1=N+r-1}\left\|\DTL^k \nabla\xi_{I}\right\|_{C^{0}\left(\cI_{u} ; C^{N_0+1}\left(\R^{3}\right)\right)}\left\|\DTL\nabla (m_\ell/n)\right\|_{N_1}\\
	&\lesssim_{n, p, N, r, k, M}\ell^{-N-r-k}(\lambda_{q}\delta_{q}^{\frac{1}{2}})^2.
	\end{align*}
The estimate on $(\nabla\xi)^{-1}$ is similar, and we have
\begin{align*}
\left\|\DTL^s\nabla^r \DTL^k\nabla\xi_{I}\right\|_{\CIN}+\left\|\DTL^s\nabla^r \DTL^k(\nabla\xi_{I})^{-1}\right\|_{\CIN}\lesssim_{n, p, N, r, k, M}\ell^{-N-r-k}(\lambda_{q}\delta_{q}^{\frac{1}{2}})^s.
\end{align*}
Until now, \eqref{est on mixed m_l 1}--\eqref{est on mixed xi 1} have been proved. Observing that
$\partial_{t}F=\DTL F-(m_\ell/n)\cdot\nabla F,$ we could get
	\begin{align}
		\left\|\DTL^s\partial_{t}F\right\|_N&\lesssim_{n,N}\left\|\DTL^{s+1}F\right\|_N+\sum_{N_0+N_1=N}\sum_{s_1+s_2=s}\left\|\DTL^{s_1}(m_\ell/n)\right\|_{N_0}\left\|D_{t,\ell}^{s_2}\nabla F\right\|_{N_1}.\label{est on Dtl^s pt F}
	\end{align}
	By applying it to $m_\ell,\ R_\ell,\ \varphi_\ell,\ \nabla\xi_{I},\ (\nabla\xi_{I})^{-1},$ and $\xi_{I}$, we could get
	\begin{equation}\label{est on Dtl pt item}
		\begin{aligned}
		&\left\|\DTL^s\partial_{t}m_\ell\right\|_N\lesssim_{n, p, N, M}\ell^{1-N-s}(\lambda_{q}\delta_{q}^{\frac{1}{2}})^2,\\ &\left\|\DTL^s\partial_{t}R_\ell\right\|_N+\delta_{q+1}^{-\frac{1}{2}}\left\|\DTL^s\partial_{t}\varphi_\ell\right\|_N\lesssim_{n, p, N, M}\ell_t^{-s}\ell^{-N}(\ell_t^{-1}+\ell^{-1})\lambda_{q}^{-3\gamma}\delta_{q+1},\\
		&\left\|\DTL^{s}\partial_{t}\nabla\xi_{I}\right\|_{\CIN}+\left\|\DTL^{s}\partial_{t}(\nabla\xi_{I})^{-1}\right\|_{\CIN} \lesssim_{n, p, N, M} \ell^{-N-1}(\lambda_{q}\delta_{q}^{\frac{1}{2}})^s,\\
		&\left\|\DTL^s\partial_{t}\xi_{I}\right\|_{\CIN}\lesssim_{n, p, N, M} \ell^{-N}(\lambda_{q}\delta_{q}^{\frac{1}{2}})^s.
	\end{aligned}
	\end{equation}
	Next, we can similarly use Lemma \ref{Estimtates for mixed derivatives} to get
	\begin{align*}
	\left\|\DTL\nabla\partial_{t}F\right\|_{N}&\lesssim_{N}\left\|\DTL\partial_{t}F\right\|_{N+1}+\sum_{N_0+N_1=N}\left\|\partial_{t}F\right\|_{N_0+1}\left\|m_\ell/n\right\|_{N_1+1},\\
	\left\|\DTL^2\nabla\partial_{t}F\right\|_{N}
	&\lesssim_{N}\left\|\DTL^{2} \partial_{t}F\right\|_{N+1}+
	\sum_{N_0+N_1=N}\left\| \DTL \partial_{t}F\right\|_{N_0+1}\left\|m_\ell/n\right\|_{N_1+1}\\
	&\quad+\sum_{N_0+N_1=N}\left\|\partial_{t}F\right\|_{N_0+1}\left\|\DTL (m_\ell/n)\right\|_{N_1+1}\\
	&\quad+\sum_{N_0+N_1+N_2=N}\left\|\partial_{t}F\right\|_{N_0+1}\left\|m_\ell/n\right\|_{N_1+1}\left\|m_\ell/n\right\|_{N_2+1}.
	\end{align*}
	Then, we apply it to $m_\ell,\ R_\ell,\ \varphi_\ell,\ \nabla\xi_{I}$, and $(\nabla\xi_{I})^{-1}$, and combine it with \eqref{est on mixed m_l 1}--\eqref{est on mixed xi 1} and \eqref{est on Dtl pt item}. It follows that
	\begin{equation}\label{est on Dtl pt nabla item}
		\begin{aligned}
		&\left\|\DTL^s\partial_{t}\nabla m_\ell\right\|_N\lesssim_{n, p, N, M}\ell^{-N-s}(\lambda_{q}\delta_{q}^{\frac{1}{2}})^2,\\
		&\left\|\DTL^s\partial_{t}\nabla R_\ell\right\|_N+\delta_{q}^{\frac{1}{2}}\left\|\DTL^s\partial_{t}\nabla \varphi_\ell\right\|_N\lesssim_{n, p, N, M}\ell_{t}^{-s}\ell^{-N-1}(\ell_t^{-1}+\ell^{-1})\lambda_{q}^{-3\gamma}\delta_{q+1},\\
		&\left\|\DTL^s\partial_{t}\nabla \nabla\xi_{I}\right\|_{\CIN}+\left\|\DTL^s\partial_{t}\nabla (\nabla\xi_{I})^{-1}\right\|_{\CIN}\lesssim_{n, p, N, M}\ell^{-N-2}(\lambda_{q}\delta_{q}^{\frac{1}{2}})^s.
	\end{aligned}
	\end{equation}
	Observing that $
	 \partial_{t}^2F=\partial_{t}\DTL F-\partial_{t}((m_\ell/n)\cdot\nabla F),$
	we can immediately have for $s\geqslant 1$,
	\begin{align*}
		\left\|\DTL^s\partial_{t}^2 F\right\|_N&\lesssim_{N}\left\|\DTL^{s}\partial_{t}\DTL F\right\|_N+\sum_{N_0+N_1=N}\sum_{s_1+s_2=s}\left\|\DTL^{s_1}\partial_{t}(m_\ell/n)\right\|_{N_0}\left\|\DTL^{s_2}\nabla F\right\|_{N_1}\\
		&\quad+\sum_{N_0+N_1=N}\sum_{s_1+s_2=s}\left\|\DTL^{s_1}(m_\ell/n)\right\|_{N_0}\left\|\DTL^{s_2}\partial_{t}\nabla F\right\|_{N_1}\\
		&\lesssim_{N}\left\|\DTL^{s+2}F\right\|_N+\sum_{N_0+N_1=N}\sum_{s_1+s_2=s}\left\|\DTL^{s_1}(m_\ell/n)\right\|_{N_0}\left\|\DTL^{s_2}\nabla\DTL F\right\|_{N_1}\\
		&\quad+\sum_{N_0+N_1=N}\sum_{s_1+s_2=s}\left\|\DTL^{s_1}\partial_{t} (m_\ell/n)\right\|_{N_0}\left\|\DTL^{s_2}\nabla F\right\|_{N_1}+\sum_{N_0+N_1=N}\sum_{s_1+s_2=s}\left\|\DTL^{s_1} (m_\ell/n)\right\|_{N_0}\left\|\DTL^{s_2}\partial_{t}\nabla F\right\|_{N_1},
	\end{align*}
	where we have used \eqref{est on Dtl^s pt F} to get the second inequality. Applying it to $m_\ell,\ R_\ell,\ \varphi_\ell,\ \nabla\xi_{I},\ (\nabla\xi_{I})^{-1},$ and $\xi_{I}$, we have
	\begin{align*}
	&\left\|\DTL^s\partial_{t}^2 m_\ell\right\|_N\lesssim_{n, p, N, M}\ell^{-N-s}(\lambda_{q}\delta_{q}^{\frac{1}{2}})^2,\\
	&\left\|\DTL^s\partial_{t}^2 R_\ell\right\|_N+\delta_{q+1}^{-\frac{1}{2}}\left\|\DTL^s\partial_{t}^2\varphi_\ell\right\|_N\lesssim_{n, p, N, M}\ell_t^{-s}\ell^{-N}(\ell_t^{-1}+\ell^{-1})^{2}\lambda_{q}^{-3\gamma}\delta_{q+1},\\
	&\left\|\DTL^{s}\partial_{t}^2\nabla\xi_{I}\right\|_{\CIN}+\left\|\DTL^{s}\partial_{t}^2(\nabla\xi_{I})^{-1}\right\|_{\CIN} \lesssim_{n, p, N, M} \ell^{-N-2}(\lambda_{q}\delta_{q}^{\frac{1}{2}})^s,\\
	&\left\|\DTL^s\partial_{t}^2\xi_{I}\right\|_{\CIN}\lesssim_{n, p, N, M} \ell^{-N-1}(\lambda_{q}\delta_{q}^{\frac{1}{2}})^s,
	\end{align*}
	where we have used \eqref{est on mixed m_l 1}--\eqref{est on mixed xi 1} and \eqref{est on Dtl pt nabla item}. Especially, when $s=0$ and $0\leqslant r\leqslant 4$, we can get
	\begin{align*}
		\left\|\partial_{t}^{r+1}F\right\|_N&\lesssim_{N}\left\|\partial_{t}^r\DTL F\right\|_N+\sum_{r_0+r_1=r}\sum_{N_0+N_1=N}\left\|\partial_{t}^{r_0}(m_\ell/n)\right\|_{N_0}\left\|\partial_{t}^{r_1}\nabla F\right\|_{N_1}\\
		&\lesssim_{N}\left\|\partial_{t}^{r-1}\DTL^2 F\right\|_N+\sum_{r_0+r_1=r-1}\sum_{N_0+N_1=N}\left\|\partial_{t}^{r_0}(m_\ell/n)\right\|_{N_0}\left\|\partial_{t}^{r_1}\nabla \DTL F\right\|_{N_1}\\
		&\quad+\sum_{r_0+r_1=r}\sum_{N_0+N_1=N}\left\|\partial_{t}^{r_0}(m_\ell/n)\right\|_{N_0}\left\|\partial_{t}^{r_1}\nabla F\right\|_{N_1}\\
		&\lesssim_{N}\left\|\DTL^{r+1} F\right\|_N+\sum_{r_0+r_1+r_2=r}\sum_{N_0+N_1=N}\left\|\partial_{t}^{r_0}(m_\ell/n)\right\|_{N_0}\left\|\partial_{t}^{r_1}\nabla \DTL^{r_2} F\right\|_{N_1}.
	\end{align*}
	By applying it to $R_q$ and $\varphi_q$, and combining it with \eqref{est on Dtl error 0}--\eqref{est on pt m_l / n N},  \eqref{relation}, and \eqref{pp of Lambda_3}, we could immediately obtain \eqref{est on pt error}. Moreover, the implicit constants can be chosen independent of $M$. We are now ready to give the estimate on $\DTL^s\partial_{t}^2\nabla F$. Observe that
	\begin{align*}
	\left\|\DTL \nabla\partial_{t}^2 F\right\|_{N}
	&\lesssim_{N}\left\|\DTL\partial_{t}^2 F\right\|_{N+1}+
	\sum_{N_0+N_1=N}\left\|\partial_{t}^2F\right\|_{N_0+1}\left\|m_\ell/n\right\|_{N_1+1},\\
	\left\|\DTL^2\nabla\partial_{t}^2 F\right\|_{N}
	&\lesssim_{N}\left\|\DTL^{2}\partial_{t}^2 F\right\|_{N+1}+
	\sum_{N_0+N_1=N}\left\| \DTL\partial_{t}^2 F\right\|_{N_0+1}\left\|m_\ell/n\right\|_{N_1+1}\nonumber\\
	&\quad+\sum_{N_0+N_1=N}\left\|\partial_{t}^2F\right\|_{N_0+1}\left\|\DTL(m_\ell/n)\right\|_{N_1+1}+\sum_{N_0+N_1+N_2=N}\left\|\partial_{t}^2F\right\|_{N_0+1}\left\|m_\ell/n\right\|_{N_1+1}\left\|m_\ell/n\right\|_{N_2+1}.
	\end{align*}
	We could apply it to $m_\ell,\ R_\ell,\ \varphi_\ell,\ \nabla\xi_{I}$, and $(\nabla\xi_{I})^{-1}$ to get
	\begin{align*}
		&\left\|\DTL^s\partial_{t}^2 \nabla m_\ell\right\|_N\lesssim_{n, p, N, M}\ell^{-N-s-1}(\lambda_{q}\delta_{q}^{\frac{1}{2}})^2,\\
		&\left\|\DTL^s\partial_{t}^2\nabla R_\ell\right\|_N+\delta_{q+1}^{-\frac{1}{2}}\left\|\DTL^s\partial_{t}^2\nabla\varphi_\ell\right\|_N\lesssim_{n, p, N, M}\ell_t^{-s}\ell^{-N-1}(\ell_t^{-1}+\ell^{-1})^{2}\lambda_{q}^{-3\gamma}\delta_{q+1},\\
		&\left\|\DTL^{s}\partial_{t}^2\nabla\nabla\xi_{I}\right\|_{\CIN}+\left\|\DTL^{s}\partial_{t}^2\nabla(\nabla\xi_{I})^{-1}\right\|_{\CIN} \lesssim_{n, p, N, M} \ell^{-N-3}(\lambda_{q}\delta_{q}^{\frac{1}{2}})^s.\qedhere
	\end{align*}
\end{proof}
\subsection{New building blocks}
In this part, we will use the Mikado flows to construct new building blocks of the solution of the Euler-Maxwell equations. For each $I=(u,\upsilon,f)$, we could choose proper $z_I$ satisfying
\begin{equation}
	z_I=z_{u,\upsilon}+\tilde{x}_f\in\R,\qquad z_{u,\upsilon}=z_{u,\upsilon^\prime},\qquad if\ \mu(\upsilon-\upsilon^\prime)\in2\pi\Z^3,\label{z periodic}
\end{equation}
so that $\psi_{f_I}(\cdot-z_I)$ and $\Psi_{f_I}(\cdot-z_I)$ are periodic functions. For convenience, we denote $\psi_I$ for $\psi_{f_I}(\cdot-z_I)$, $\Psi_I$ for $\Psi_{f_I}(\cdot-z_I)$, and $\tilde{f}_I$ for $(\nabla\xi_I)^{-1}f_I$. Moreover, we could assume that
\begin{equation}
	\supp(\psi_I)\subset B(l_{f_I,z_I},\frac{\eta}{10}):=\left\lbrace x\in\R^3:|x-y|<\frac{\eta}{10} \ \mathrm{for\ some}\ y\in l_{f_I,z_I}\right\rbrace ,\qquad \forall f\in\mathcal{F}^j,\label{supp}
\end{equation}
where $l_{f_I,z_I}:=\left\lbrace\lambda f_I+z_I:\lambda\in \R\right\rbrace+\Z^3$
and $\eta$ is a geometric constant as chosen in \cite[Proposition 3.4]{GK22}.  Then, we can guarantee that different Mikado flows  don't intersect with each other. 
\begin{pp}\cite[Proposition 3.4]{GK22}.\label{def of eta}
	There is a constant $\eta$=$\eta(\mathcal{F})$ in \eqref{supp} such that it allows a choice of the shifts
	$z_I=z_{u,\upsilon}+\tilde{x}_f$ which ensure that for each $(\mu_q,\tau_q,\lambda_{q+1})$, the condition $\supp(\theta_{I}\chi_I(\xi_{I})\psi_{I})\bigcap \supp(\theta_{J}\chi_J(\xi_{J})\psi_{J})=\emptyset$ holds for every $I\neq J$ and that \eqref{z periodic} holds for every $u,\upsilon$ and $v^\prime$.
\end{pp}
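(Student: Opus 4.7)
The plan is to produce the geometric constant $\eta = \eta(\cF)$ together with the shifts $\{\tilde x_f\}_{f \in \cF}$ once and for all (independent of $q$), and then, for each scale $(\mu_q, \tau_q, \lambda_{q+1})$, to pick $z_{u,\upsilon}$ realizing the periodicity condition \eqref{z periodic}. The underlying principle is a transversality/dimension count on $\T^3$: two lines in distinct integer directions can be generically shifted so that their thin tubular neighborhoods are disjoint, and the ``bad'' set of shifts for which they intersect has measure $O(\eta)$.

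First I would fix $\eta$ by enumerating the finite set $\cF := \bigsqcup_{j \in \Z_3^3} \cF^j$ of all Mikado direction vectors produced by Lemmas \ref{Geometric Lemma I} and \ref{Geometric Lemma II}; by disjointness of the families, $\cF$ is a finite collection of distinct nonzero vectors in $\Z^3$. For each ordered pair $(f, f')$ from $\cF$, the minimum distance on $\T^3$ between lines $l_{f, a}$ and $l_{f', b}$ is a continuous function of $(a, b)$ whose zero set has codimension one in shift space. Hence, for sufficiently small $\eta > 0$, the set $\{(a, b) : \mathrm{dist}(l_{f,a}, l_{f',b}) \leq \eta/2\}$ has measure $O(\eta)$, and by finiteness I can pick $\tilde x_f$ for each $f \in \cF$ so that within every family $\cF^j$ the tubes $\{B(l_{f, \tilde x_f}, \eta/10)\}_{f \in \cF^j}$ are pairwise disjoint on $\T^3$.

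Next, for each $(\mu_q, \tau_q, \lambda_{q+1})$, I select $z_{u,\upsilon}$ satisfying \eqref{z periodic}: the condition $\mu_q(\upsilon - \upsilon') \in 2\pi\Z^3$ defines an equivalence relation on $\Z^3$ with finitely many classes (owing to $\mu_q^{-1} \in \Z_+$ from \eqref{def of mu tau}), so I pick a single representative value $z_{u,\upsilon}$ per class. The remaining freedom is used, via the same generic-position argument, to ensure that tubes coming from vectors in different families also remain disjoint wherever the associated spatial cutoffs can be simultaneously nonzero.

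Finally, the disjoint-support claim is verified by case analysis on $I = (u,\upsilon,f) \neq J = (u',\upsilon',f')$. If $|u - u'| \geq 2$, the time cutoffs $\theta_I, \theta_J$ have disjoint supports. If $[\upsilon] = [\upsilon']$ and $\upsilon \neq \upsilon'$, then $|\upsilon - \upsilon'|_\infty \geq 3$, so the cutoff centers $2\pi\mu_q\upsilon$ and $2\pi\mu_q\upsilon'$ are separated by at least $6\pi\mu_q$ while each spatial cutoff has radius $9\pi\mu_q/8$; this separation persists after composition with $\xi_I, \xi_J$ since Proposition \ref{est on backward flow} \eqref{est on nabla xi-Id} gives $\|\nabla \xi_I - \Id\|_0 \leq 1/5$. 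In every remaining configuration both cutoffs can simultaneously be nonzero, but the Mikado tubes are disjoint by the choice of $\eta$ and shifts, and this disjointness survives the near-identity distortion by $\xi_I, \xi_J$. The principal obstacle is ensuring that $\eta$ truly depends only on $\cF$: this works because the tubes live on the fixed fundamental torus $\T^3$ independently of the rescaling parameters, while the remaining $q$-dependent constraints are absorbed by the choice of $z_{u,\upsilon}$.
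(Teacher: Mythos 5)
Your high-level architecture --- fix $\eta$ and the intra-family shifts $\tilde{x}_f$ once and for all, then choose $z_{u,\upsilon}$ per scale, then verify disjointness by a case split on $u,u',\upsilon,\upsilon',f,f'$ --- matches the intent of the result. The genuine gap is in the final case analysis, and it matters.

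When $u_I = u_J$, the flows agree, $\xi_I = \xi_J$, so disjointness in the $\xi_u$--variable passes directly to disjointness in $(t,x)$; no distortion estimate is needed at all. The hard case is $|u-u'| = 1$: the time cutoffs overlap on an interval of length $\tau_q$, the spatial cutoffs can be the same (take $\upsilon = \upsilon'$, and then $f' = f$ is allowed, since both indices draw from the same family $\mathcal{F}^{[\upsilon]}$), and the flows $\xi_u, \xi_{u'}$ are \emph{different}. In $x$-space the Mikado tubes for $I$ and $J$ are $\{\,\lambda_{q+1}\xi_u(t,x) - z_I \in \supp\psi_f \ (\mathrm{mod}\ \Z^3)\,\}$ and $\{\,\lambda_{q+1}\xi_{u'}(t,x) - z_J \in \supp\psi_f\ (\mathrm{mod}\ \Z^3)\,\}$, tubes of width $\eta/(10\lambda_{q+1})$. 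Disjointness requires the quantity $\Delta(t,x) := \lambda_{q+1}\bigl(\xi_u(t,x) - \xi_{u'}(t,x)\bigr) - (z_{u,\upsilon} - z_{u',\upsilon})$ to stay bounded away from the tube lattice (mod $\Z^3$, transversally to $f$) uniformly over the overlap region. The constant part of $\lambda_{q+1}(\xi_u - \xi_{u'})$ can be absorbed by a good choice of $z_{u,\upsilon} - z_{u',\upsilon}$; what must be \emph{controlled} is the variation of $\lambda_{q+1}(\xi_u - \xi_{u'})$ over a single cutoff block of diameter $\mu_q$. That variation is of order $\lambda_{q+1}\mu_q\,\tau_q\|\nabla(m_\ell/n)\|_0$, and one needs it $\lesssim \eta$ --- this is precisely the parameter relation $\mu_q\tau_q\|\nabla(m_\ell/n)\|_0 \leq \eta/(10\pi\lambda_{q+1})$ in \eqref{geomoetry condition 2}, which the paper explicitly notes the proof ``requires,'' and which you never invoke.

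Your proof replaces this with ``$\|\nabla\xi_I - \Id\|_0 \leq 1/5$,'' which is several orders of magnitude too coarse: a $1/5$-Lipschitz distortion over a region of size $\mu_q$ displaces points by $O(\mu_q)$, while the tubes are separated at scale $\lambda_{q+1}^{-1} \ll \mu_q$. The statement that ``this disjointness survives the near-identity distortion by $\xi_I, \xi_J$'' is therefore false as stated. (It \emph{is} true for the coarse spatial cutoffs $\chi_\upsilon$, which live at scale $\mu_q$, but not for the Mikado tubes.) What's missing is the fine estimate above together with an explicit measure-theoretic argument that, since the bad set of $z_{u,\upsilon}$ per obstructing pair has measure $O(\eta)$ and the number of such pairs within one $\mu$-period is finite and controlled, a simultaneous good choice of all $z_{u,\upsilon}$ respecting \eqref{z periodic} exists. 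Without the parameter relation and this step, the proposal does not establish the claim for adjacent time windows.
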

It should be noted that the proof for the proposition requires the choice of parameters satisfying the relations
\begin{equation}
	\mu_q^{-1}\ll\lambda_{q+1}\in\N,\quad \tau_q\left\|\nabla(m_\ell/n)\right\|_0\leqslant\frac{1}{10},\quad \mu_q\tau_q\left\|\nabla(m_\ell/n)\right\|_0\leqslant\frac{\eta}{10\pi\lambda_{q+1}},\label{geomoetry condition 2}
\end{equation}
where $\eta$ is a positive constant determined by $\mathcal{F}=\underset{j\in\Z^3}{\bigcap}\mathcal{F}^j$, which has finite cardinality.\par
Noticing that $\psi_I$ is a smooth function on $\T^3$ with zero-mean, we could represent it as Fourier series:
\begin{equation}
	\psi_I(x)=\sumkO \overset{\circ}{b}_{I,k}e^{ik\cdot x},\qquad \Psi_I(x)=-\sumkO \frac{\overset{\circ}{b}_{I,k}}{|k|^2}e^{ik\cdot x}.\label{def of psi_I}
\end{equation}
Since $\psi_I\in C^\infty(\T^3)$, we have
\begin{equation}\label{est on sum of cof 1}
	\sum_{k\in\Z^3}|k|^{\tilde{n}_0+2}|\overset{\circ}{b}_{I,k}|\lesssim 1,\qquad \tilde{n}_0=\left\lceil\frac{2b(2+\alpha)}{(b-1)(1-\alpha)}\right\rceil.
\end{equation} 
From the definition of Mikado flow in \eqref{def of Mikado flow}, it is easy to get $f_I\cdot\nabla\psi_I=0$ so that $\overset{\circ}{b}_{I,k}(f_I\cdot k)=0$.  Here, we introduce
\begin{equation}
\nabla\times((\nabla\xi_I)^{\top}U(\xi_I))=\text{cof}(\nabla\xi_I)^{\top}(\nabla\times U)(\xi_I)=\mathrm{det}(\nabla\xi_I)(\nabla\xi_I)^{-1}(\nabla\times U)(\xi_I),\label{potential formula}
\end{equation}
which can be proved as following
\begin{align*}
(\nabla\times((\nabla\xi_I)^{\top}U(\xi_I)))_i&=\varepsilon_{ijk}\partial_j(\partial_k(\xi_I)_rU_r)=\varepsilon_{ijk}\partial_j\partial_k(\xi_I)_rU_r+\varepsilon_{ijk}(\partial_k(\xi_I)_r\partial_sU_r\partial_j(\xi_I)_s)\\
&=\partial_sU_r\varepsilon_{ijk}\partial_k(\xi_I)_r\partial_j(\xi_I)_s=\varepsilon_{qrs}\partial_sU_r\text{cof}(\nabla\xi_I)^{\top}_{q,i}=(\nabla\times U)_q\text{cof}(\nabla\xi_I)^{\top}_{qi}\\
&=(\text{cof}(\nabla\xi_I)^{\top}(\nabla\times U)(\xi_I))_i=(\mathrm{det}(\nabla\xi_I)(\nabla\xi_I)^{-1}(\nabla\times U)(\xi_I))_i.
\end{align*}
So Mikado flow can be written in terms of potential
\begin{equation}
\nabla_{\xi_I}\times\left(\frac{ik\times f_I}{|k|^2}e^{i\lambda_{q+1}k\cdot\xi_I}\right)=-i\lambda_{q+1}\frac{ik\times f_I}{|k|^2}\times ke^{i\lambda_{q+1}k\cdot\xi_I}=\lambda_{q+1}f_Ie^{i\lambda_{q+1}k\cdot\xi_I}.\label{Mikado flow potential}
\end{equation}
Now, we would present a crucial lemma that outlines how to construct the building blocks and provide the corresponding estimates.
\begin{lm}\label{New building blocks}
For any $c<l<d$, $\tlm\gg1$, $f\in\Z^3$, and smooth functions $a(t,x),\upsilon(t,x)\in C^\infty([c,d]\times\T^3)$, there exists a non-trivial tuple $(\cm_{k}(f,a,\upsilon,\tlm),\cE_k(f,a,\upsilon,\tlm),\cB_k(f,a,\upsilon,\tlm))$ satisfying
\begin{equation}
\left\{\begin{aligned}
	&\partial_t\cE_k-\nabla\times \cB_k=\cm_k,\\
	&\partial_t\cB_k+\nabla\times \cE_k=0,\\
	&\Div\cE_k=\Div\cB_k=\Div\cm_k=0,
\end{aligned}\right. 
\end{equation}
and $\supp\cm_k,\supp\cE_k,\supp\cB_k\subset\supp a$. Moreover, the main part of $\cm_k(f,a,\upsilon,\tlm)$, denoted by $\cm_{p,k}(f,a,\upsilon,\tlm,t_0,x_0)$, has the following form:
\begin{equation}\label{def of cm_pk}
	\begin{aligned}
		\cm_{p,k}(f,a,\upsilon,\tlm,t_0,x_0)&:=a(t,x)
		\left(1-\frac{(k\cdot\upsilon(t_0 ,x_0))^2}{|k|^2}\right)(\nabla\xi)^{-1}fe^{i\tlm k\cdot\xi},
	\end{aligned}
\end{equation}
where $(t_0,x_0)\in [c,d]\times\T^3$, and $\xi$ is the backward flow for $\upsilon$,
\begin{equation}\notag
	\left\{\begin{aligned}
		&\partial_t\xi(t,x)+\upsilon(t,x)\cdot\nabla\xi(t,x)=0,\\
		&\xi(l,x)=x.
	\end{aligned}\right.
\end{equation}
Let $\|\cdot\|_N=\|\cdot\|_{C^0([c,d];C^N(\T^3))}$, if we have for $N\geqslant 0;0\leqslant r\leqslant 4;s=0,1,2;k=0,1,2;i=0,1,$
\begin{equation}\label{assume in building blocks}
\begin{aligned}
&\|\partial_t^ra\|_N\lesssim\mu^{-N-r},&&\|(\partial_t+\upsilon\cdot\nabla)^s\partial_t^k\nabla ^ia\|_N+\|(\partial_t+\upsilon\cdot\nabla)^s\nabla ^{k+i}a\|_N\lesssim\mu^{-N-k-i}\tau^{-s},\\
&\|\partial_t^{r+1}\xi\|_N\lesssim\ell_*^{-N-r},&&\|(\partial_t+\upsilon\cdot\nabla)^s\partial_t^k\xi\|_N+\|(\partial_t+\upsilon\cdot\nabla)^s\nabla ^{k}\xi\|_N\lesssim\ell_*^{1-N-k}\tau^{-s},\\
&\|\partial_t^r(\nabla\xi)\|_N\lesssim\ell_*^{-N-r},&&\|(\partial_t+\upsilon\cdot\nabla)^s\partial_t^k\nabla ^i(\nabla\xi)\|_N+\|(\partial_t+\upsilon\cdot\nabla)^s\nabla ^{k+i}(\nabla\xi)\|_N\lesssim\ell_*^{-N-k-i}\tau^{-s},\\
&\|\partial_t^r(\nabla\xi)^{-1}\|_N\lesssim\ell_*^{-N-r},&&\|(\partial_t+\upsilon\cdot\nabla)^s\partial_t^k\nabla ^i(\nabla\xi)^{-1}\|_N+\|(\partial_t+\upsilon\cdot\nabla)^s\nabla ^{k+i}(\nabla\xi)^{-1}\|_N\lesssim\ell_*^{-N-k-i}\tau^{-s},\\
&\|\partial_t^r\upsilon\|_N\lesssim\ell_*^{-N-r},&&\|\nabla\xi-\Id\|_0\lesssim(\tlm\mu)^{-1},\quad\|(\nabla\xi)^{\top}-(\nabla\xi)^{-1}\|_0\lesssim(\tlm\mu)^{-1},
\end{aligned}
\end{equation}
where $\max\{\tau^{-1},\ell_*^{-1}\}\ll\mu^{-1}\ll\tlm$. Then, we could obtain 
\begin{equation}\label{est on N+r}
	\begin{aligned}
		&\left\|\partial_t^r\cE_k(f,a,\upsilon,\tlm)\right\|_N\lesssim_{N}|f|(\tlm|k|)^{N+r-1},&&\left\|\partial_t^r\cB_k(f,a,\upsilon,\tlm)\right\|_N\lesssim_{N}|f|(\tlm|k|)^{N+r-1},\\ 
		&\left\|\partial_t^r\cm_{k}(f,a,\upsilon,\tlm)\right\|_N\lesssim_{N}|f|(\tlm|k|)^{N+r},
	\end{aligned}
\end{equation}
for any $N\geqslant0$ and $r=0,1,2$, and
\begin{equation}\label{est on N+s}
	\begin{aligned}
	&\left\|(\partial_t+\upsilon\cdot\nabla)^s\cE_k(f,a,\upsilon,\tlm)\right\|_N\lesssim_{N}|f|(\tlm|k|)^{N-1}\tau^{-s},&&\left\|(\partial_t+\upsilon\cdot\nabla)^s\cB_k(f,a,\upsilon,\tlm)\right\|_N\lesssim_{N}|f|(\tlm|k|)^{N-1}\tau^{-s},\\ &\left\|(\partial_t+\upsilon\cdot\nabla)^s\cm_{k}(f,a,\upsilon,\tlm)\right\|_N\lesssim_{N}|f|(\tlm|k|)^{N}\tau^{-s},
\end{aligned}
\end{equation}
for any $N\geqslant0$ and $s=0,1,2$. Moreover, we have
\begin{equation}\label{est on m_pk 1}
	\begin{aligned}
	\|\partial_{t}^r(\cm_{p,k}(f,a,\upsilon,\tlm,t_0,x_0)-\cm_k(f,a,\upsilon,\tlm))\|_{N}\lesssim_{N}|f|(\tlm|k|)^{N+r}\left(\left\|\frac{(\partial_{t}\xi)^2}{(\det(\nabla\xi))^2}-\upsilon^2(t_0 ,x_0)\right\|_{0}+(\tlm\mu)^{-1}\right),\end{aligned}
\end{equation} 
for any $N\geqslant0$ and $r=0,1,2$, and
\begin{equation}\label{est on m_pk 2}
	\begin{aligned}
	\left\|(\partial_t+\upsilon\cdot\nabla)^s(\cm_{p,k}(f,a,\upsilon,\tlm,t_0,x_0)-\cm_k(f,a,\upsilon,\tlm))\right\|_{N}\lesssim_{N}|f|(\tlm|k|)^{N}\tau^{-s},
	\end{aligned}
\end{equation} 
for any $N\geqslant0$ and $s=0,1,2$. 
\end{lm}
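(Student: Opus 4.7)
\medskip

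\noindent\textbf{Proof plan.} The idea is to build $(\cm_k,\cE_k,\cB_k)$ through a Maxwell potential formulation adapted to the Lagrangian frame $y=\xi(t,x)$, so that the oscillatory factor $e^{i\tlm k\cdot\xi}$ (with fast wavenumber $\tlm|k|$) dominates all derivative counts. The starting point is the Mikado potential identity \eqref{Mikado flow potential}--\eqref{potential formula}, which transforms plane waves in the Lagrangian coordinate into divergence-free curl structures in the Eulerian coordinate.

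First, I introduce a vector potential $A_k$ of leading form
\[
A_k \;\sim\; \frac{a(t,x)}{\tlm^{2}|k|^{2}}\,(\nabla\xi)^{-1}f\,e^{i\tlm k\cdot\xi}
\]
(possibly with a $\det(\nabla\xi)^{-1}$ prefactor so that \eqref{potential formula} applies cleanly). Set $\cB_k := \nabla\times A_k$, solve the mean-zero Poisson problem $\Delta\phi_k = -\partial_t\Div A_k$ on $\T^{3}$, and set $\cE_k := -\partial_t A_k - \nabla\phi_k$. Then $\Div\cB_k=0$, $\Div\cE_k=0$, and $\partial_t\cB_k+\nabla\times\cE_k = -\nabla\times\nabla\phi_k=0$ hold automatically. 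Finally, define
\[
\cm_k \;:=\; \partial_t \cE_k - \nabla\times\cB_k \;=\; (-\partial_t^{2}+\Delta)A_k \;-\;\partial_t\nabla\phi_k \;-\;\nabla\Div A_k,
\]
which satisfies $\Div\cm_k=\partial_t\Div\cE_k=0$. All three Maxwell constraints and the divergence-free condition for $\cm_k$ are built into the construction, and $\supp\cm_k,\supp\cE_k,\supp\cB_k\subset\supp a$ because every factor multiplies $a$.

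Second, I extract the principal part of $\cm_k$. When the wave operator $-\partial_t^{2}+\Delta$ acts on the exponential $e^{i\tlm k\cdot\xi}$, the two $\tlm$'s pulled down combine into $\tlm^{2}[\,|(\nabla\xi)^{\top}k|^{2}-(k\cdot\partial_t\xi)^{2}\,]$. Using $\partial_t\xi=-(\upsilon\cdot\nabla)\xi$, together with $\nabla\xi=\Id+O((\tlm\mu)^{-1})$ and the Jacobi-type identity relating $\partial_t\xi$, $\det(\nabla\xi)$ and $\upsilon$ provided by \eqref{assume in building blocks}, this bracket equals $|k|^{2}-(k\cdot\upsilon)^{2}$ up to an $O((\tlm\mu)^{-1}|k|^{2})$ error. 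Combining with the chosen amplitude produces exactly $a(1-(k\cdot\upsilon)^{2}/|k|^{2})(\nabla\xi)^{-1}f\,e^{i\tlm k\cdot\xi}$ at the principal order. Freezing $\upsilon$ at $(t_0,x_0)$ yields the definition of $\cm_{p,k}$, and the resulting freezing error is controlled by $\|(\partial_t\xi)^{2}/(\det\nabla\xi)^{2}-\upsilon^{2}(t_0,x_0)\|_0$ plus $(\tlm\mu)^{-1}$, as claimed in \eqref{est on m_pk 1}.

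Third, the quantitative bounds \eqref{est on N+r}--\eqref{est on m_pk 2} follow from a bookkeeping argument: every spatial or time derivative landing on $e^{i\tlm k\cdot\xi}$ produces a factor $\tlm|k|$, whereas each material derivative $\partial_t+\upsilon\cdot\nabla$ annihilates the phase to leading order (by definition of $\xi$) and therefore only contributes the slow rate $\tau^{-1}$ through the amplitudes; the hypotheses \eqref{assume in building blocks} then plug directly into the Leibniz rule. Standard elliptic regularity on $\T^{3}$ handles $\phi_k$: since the leading Fourier frequency of $\partial_t\Div A_k$ is $\tlm k$, applying $\Delta^{-1}$ gains precisely $(\tlm|k|)^{-2}$, confirming that the $\phi_k$-contribution is subleading. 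The principal obstacle is exactly this control on the scalar potential $\phi_k$: because the ansatz $A_k$ is not exactly divergence-free (this would require $k\cdot f=0$), one must verify that $\Div A_k$, while of order $(\tlm|k|)^{-1}$, has its oscillation concentrated at frequency $\tlm k$ so that $\Delta^{-1}$ provides the full two-derivative gain. Once this is established, every term outside of $\cm_{p,k}$ is shown to be of size $O((\tlm\mu)^{-1})$ times $|f|(\tlm|k|)^{N+r}$, matching \eqref{est on m_pk 1}--\eqref{est on m_pk 2}.
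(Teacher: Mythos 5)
Your construction diverges from the paper's at exactly the place you flagged as ``the principal obstacle,'' and that obstacle is not merely technical: it breaks one of the properties the lemma asserts. You propose $\cE_k := -\partial_t A_k - \nabla\phi_k$ with $\phi_k$ solving a Poisson problem $\Delta\phi_k = -\partial_t\Div A_k$ on $\T^3$. Because $\Delta^{-1}$ is a nonlocal operator, $\nabla\phi_k$ has support spread over all of $\T^3$ even when $a$ is compactly supported; consequently $\cE_k$ and $\cm_k$ would \emph{not} satisfy $\supp\cE_k,\,\supp\cm_k\subset\supp a$, which is one of the conclusions of the lemma. This property is essential to the downstream argument (the cutoffs $\theta_I,\chi_I$ and the disjoint-support property in Proposition~\ref{def of eta} rely on the building blocks being local in $a$), so the Coulomb-gauge repair cannot be used here.

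The paper's route avoids the scalar potential entirely by taking \emph{two} curls instead of one. It chooses a ``pre-potential'' $\ccA_k$ of the form $\ccA_k \sim a\,(\det\nabla\xi)^{-3}(\nabla\xi)^{\top}\tfrac{ik\times f}{|k|^4}\,e^{i\tlm k\cdot\xi}$, then sets $\cA_k := \nabla\times\ccA_k$ as the genuine electromagnetic vector potential, and finally $\cE_k := -\partial_t\cA_k$, $\cB_k := \nabla\times\cA_k$, $\cm_k := \partial_t\cE_k - \nabla\times\cB_k$. Since $\cA_k$ is a curl, $\Div\cA_k = 0$ identically, so $\Div\cE_k = 0$ holds without any gauge fixing, and $\cm_k = (\Delta-\partial_{tt})\cA_k$ is the wave operator applied to a local expression. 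Every object is built from $a$ by purely differential (hence local) operations, so all the support inclusions hold automatically. Moreover, the specific choice $(\nabla\xi)^{\top}\tfrac{ik\times f}{|k|^4}$ is designed so that, under the chain-rule curl identity \eqref{potential formula} and the Mikado potential identity \eqref{Mikado flow potential}, each of the two curls pulls down the intended structure and the leading term of $\cm_k$ becomes exactly $a\left(1-\tfrac{(k\cdot\partial_t\xi)^2}{|k|^2(\det\nabla\xi)^2}\right)(\nabla\xi)^{-1}f\,e^{i\tlm k\cdot\xi}$, which freezes to $\cm_{p,k}$ after replacing $\upsilon$ by $\upsilon(t_0,x_0)$. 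Your amplitude ansatz $A_k\sim a\,(\nabla\xi)^{-1}f$ (without the $k\times f$ structure and without the second curl) does not reproduce this cancellation mechanism, and in particular the factor $|k|^2-(k\cdot\upsilon)^2$ you extract from the wave operator appears for a somewhat different reason than in the paper, where it comes from the $\partial_t\cE_k$ term acting on the phase together with the identity $\partial_t\xi = -(\upsilon\cdot\nabla)\xi$ and the determinant factors. To repair your proposal you would need to restructure the potential so that no inverse Laplacian ever appears; the double-curl ansatz is precisely the device that accomplishes this.
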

\begin{proof}
We first define magnetic vector potential $\ccA_k(f,a,\upsilon,\tlm)$ and $\cA_k(f,a,\upsilon,\tlm)$ as
\begin{align}
\ccA_k(f,a,\upsilon,\tlm)&=-\frac{1}{\tlm^3}\frac{a(t,x)}{(\mathrm{det}(\nabla\xi))^3}(\nabla\xi)^{\top}\frac{ik\times f}{|k|^4}e^{i\tlm k\cdot\xi}\notag\\
&=:\frac{1}{\tlm^3}\cta_{k}(f,a,\upsilon,\tlm)e^{i\tlm k\cdot\xi},\label{def of ol ccA_k}\\
\cA_k(f,a,\upsilon,\tlm)&=\nabla\times\ccA_k(f,a,\upsilon,\tlm)\nonumber\\
&=-\frac{1}{\tlm^2}\left(
\frac{1}{\tlm}\nabla\left(\frac{a(t,x)}{(\mathrm{det}(\nabla\xi))^3}\right)\times\left((\nabla\xi)^{\top}\frac{ik\times f}{|k|^4}\right)+\frac{a(t,x)}{(\mathrm{det}(\nabla\xi))^2}(\nabla\xi)^{-1}\frac{ f}{|k|^2}\right)e^{i\tlm k\cdot\xi}\nonumber\\
&=:\frac{1}{\tlm^2}\ca_{k}(f,a,\upsilon,\tlm)e^{i\tlm k\cdot\xi}.\label{def of cA_k}
\end{align}
Then, the electromagnetic fields $\cE_k(f,a,\upsilon,\tlm)$ and $\cB_k(f,a,\upsilon,\tlm)$ can then be represented as
\begin{align}
\cE_k(f,a,\upsilon,\tlm)&=-\partial_t \cA_k(f,a,\upsilon,\tlm)\notag\\
&=\frac{1}{\tlm^2}\partial_t\left(\frac{1}{\tlm}	\nabla\left(\frac{a(t,x)}{(\mathrm{det}(\nabla\xi))^3}\right)\times\left((\nabla\xi)^{\top}\frac{ik\times f}{|k|^4}\right)+\frac{a(t,x)}{(\mathrm{det}(\nabla\xi))^2}(\nabla\xi)^{-1}\frac{f}{|k|^2}\right)e^{i\tlm k\cdot\xi}\notag\\
&\quad+\frac{i(k\cdot\partial_t\xi)}{\tlm}	\left(\frac{1}{\tlm}\nabla\left(\frac{a(t,x)}{(\mathrm{det}(\nabla\xi))^3}\right)\times\left((\nabla\xi)^{\top}\frac{ik\times f}{|k|^4}\right)+\frac{a(t,x)}{(\mathrm{det}(\nabla\xi))^2}(\nabla\xi)^{-1}\frac{f}{|k|^2}\right)e^{i\tlm k\cdot\xi}\label{def of cE_k}\\
&=\frac{1}{\tlm}\left(\frac{i(k\cdot\partial_{t}\xi_{I})a(t,x)}{(\mathrm{det}(\nabla\xi))^2}(\nabla\xi)^{-1}\frac{f}{|k|^2}+\ce_{c,k}(f,a,\upsilon,\tlm)\right)e^{i\tlm k\cdot\xi}\notag\\
&=:\frac{1}{\tlm}\ce_k(f,a,\upsilon,\tlm)e^{i\tlm k\cdot\xi},\notag\\
\cB_k(f,a,\upsilon,\tlm)&=\nabla\times \cA_k(f,a,\upsilon,\tlm)\notag\\
&=-\frac{1}{\tlm^3}\nabla\times\left(	\nabla\left(\frac{a(t,x)}{(\mathrm{det}(\nabla\xi))^3}\right)\times\left((\nabla\xi)^{\top}\frac{ik\times f}{|k|^4}\right)\right)e^{i\tlm k\cdot\xi}\notag\\
&\quad-\frac{i}{\tlm^2}((\nabla\xi)^{\top}k)\times \left(\nabla\left(\frac{a(t,x)}{(\mathrm{det}(\nabla\xi))^3}\right)\times\left((\nabla\xi)^{\top}\frac{ik\times f}{|k|^4}\right)\right)e^{i\tlm k\cdot\xi}\notag\\
&\quad-\frac{1}{\tlm^2}\nabla\left(\frac{a(t,x)}{(\mathrm{det}(\nabla\xi))^2}\right)\times\left((\nabla\xi)^{-1}\frac{f}{|k|^2}\right)e^{i\tlm k\cdot\xi}-\frac{1}{\tlm}\frac{a(t,x)}{\mathrm{det}(\nabla\xi)}(\nabla\xi)^{-1}\frac{ik\times f}{|k|^2}e^{i\tlm k\cdot\xi}\label{def of cB_k}\\
&\quad-\frac{1}{\tlm^2}\frac{a(t,x)}{(\mathrm{det}(\nabla\xi))^2}\nabla\times\left(\left((\nabla\xi)^{-1}-(\nabla\xi)^{\top}\right)\frac{f}{|k|^2}\right)e^{i\tlm k\cdot\xi}\notag\\
&\quad-\frac{i}{\tlm}
\frac{a(t,x)}{(\mathrm{det}(\nabla\xi))^2}\left((\nabla\xi)^{\top}k\right)\times\left(\left((\nabla\xi)^{-1}-(\nabla\xi)^{\top}\right)\frac{f}{|k|^2}\right)e^{i\tlm k\cdot\xi}\notag\\
&=-\frac{1}{\tlm}\left(\frac{a(t,x)}{\mathrm{det}(\nabla\xi)}(\nabla\xi)^{-1}\frac{ik\times f}{|k|^2}+\cg_{c,k}(f,a,\upsilon,\tlm)\right)e^{i\tlm k\cdot\xi}\notag\\
&=:\frac{1}{\tlm} \cg_k(f,a,\upsilon,\tlm)e^{i\tlm k\cdot\xi},\notag
\end{align}
where $\ce_{c,k}(f,a,\upsilon,\tlm)$ and $\cg_{c,k}(f,a,\upsilon,\tlm)$ are defined as
	\begin{align}
		\ce_{c,k}(f,a,\upsilon,\tlm)&=-\frac{1}{\tlm}\partial_t\ca_{k}(f,a,\upsilon,\tlm)+\frac{i(k\cdot\partial_t\xi)}{\tlm}\nabla\left(\frac{a(t,x)}{(\mathrm{det}(\nabla\xi))^2}\right)\times\left((\nabla\xi)^{\top}\frac{ik\times f}{|k|^4}\right),\label{def of ce_ck}\\
		\cg_{c,k}(f,a,\upsilon,\tlm)&=\frac{1}{\tlm^2}\nabla\times\left(	\nabla\left(\frac{a(t,x)}{(\mathrm{det}(\nabla\xi))^3}\right)\times\left((\nabla\xi)^{\top}\frac{ik\times f}{|k|^4}\right)\right)\notag+\frac{i}{\tlm}((\nabla\xi)^{\top}k)\times\left(	\nabla\left(\frac{a(t,x)}{(\mathrm{det}(\nabla\xi))^3}\right)\times\left((\nabla\xi)^{\top}\frac{ik\times f}{|k|^4}\right)\right)\notag\\
		&\quad+\frac{1}{\tlm}\left(\nabla\left(\frac{a(t,x)}{(\mathrm{det}(\nabla\xi))^2}\right)\times\left((\nabla\xi)^{-1}\frac{f}{|k|^2}\right)\right)+\frac{1}{\tlm}\frac{a(t,x)}{(\mathrm{det}(\nabla\xi))^2}\nabla\times\left(\left((\nabla\xi)^{-1}-(\nabla\xi)^{\top}\right)\frac{f}{|k|^2}\right)\label{def of cg_ck}\\
		&\quad+\frac{ia(t,x)}{(\mathrm{det}(\nabla\xi))^2}\left((\nabla\xi)^{\top}k\right)\times\left(\left((\nabla\xi)^{-1}-(\nabla\xi)^{\top}\right)\frac{f}{|k|^2}\right).\notag
	\end{align}
Moreover, we define the  momentum $\cm_{E,k}(f,a,\upsilon,\tlm), \cm_{B,k}(f,a,\upsilon,\tlm)$ as 
\begin{align}
\cm_{E,k}(f,a,\upsilon,\tlm)&=\partial_t \cE_{k}(f,a,\upsilon,\tlm)\notag\\
&=\left(-\frac{(k\cdot\partial_{t}\xi_{I})^2a(t,x)}{|k|^2(\mathrm{det}(\nabla\xi))^2}(\nabla\xi)^{-1}f+\cte_{k}(f,a,\upsilon,\tlm)\right)e^{i\tlm k\cdot\xi},\label{def of cm_Ek}\\
\cm_{B,k}(f,a,\upsilon,\tlm)&=-\nabla\times\cB_{k}(f,a,\upsilon,\tlm)\notag\\
&=a(t,x)(\nabla\xi)^{-1}fe^{i\tlm k\cdot\xi}+\frac{1}{\tlm}\nabla\left(\frac{a(t,x)}{\mathrm{det}(\nabla\xi)}\right)\times\left((\nabla\xi)^{-1}\frac{ik\times f}{|k|^2}\right)e^{i\tlm k\cdot\xi}\notag\\
&\quad+\frac{1}{\tlm}\left(\frac{a(t,x)}{\mathrm{det}(\nabla\xi)}\nabla\times\left(\left((\nabla\xi)^{-1}-(\nabla\xi)^{\top}\right)\frac{ik\times f}{|k|^2}\right)\right)e^{i\tlm k\cdot\xi}\notag\\
&\quad+\frac{ia(t,x)}{\mathrm{det}(\nabla\xi)}((\nabla\xi)^{\top}k)\times\left(\left((\nabla\xi)^{-1}-(\nabla\xi)^{\top}\right)\frac{ik\times f}{|k|^2}\right)e^{i\tlm k\cdot\xi}\notag\\
&\quad+\left(\frac{1}{\tlm}\nabla\times \cg_{c,k}(f,a,\upsilon,\tlm)+(i(\nabla\xi)^{\top}k)\times \cg_{c,k}(f,a,\upsilon,\tlm)\right)e^{i\tlm k\cdot\xi}\notag\\
&=\left(a(t,x)(\nabla\xi)^{-1}f+\ctg_{k}(f,a,\upsilon,\tlm)\right)e^{i\tlm k\cdot\xi},\label{def of cm_Bk}
\end{align}
where $\cte_{k}(f,a,\upsilon,\tlm)$ and $\ctg_{k}(f,a,\upsilon,\tlm)$ are defined as
	\begin{align}
		\cte_{k}(f,a,\upsilon,\tlm)&=-\frac{1}{\tlm^2}\partial_{tt}\ca_{k}(f,a,\upsilon,\tlm)-\frac{2i}{\tlm}(k\cdot\partial_{t}\xi)\partial_{t}\ca_{k}(f,a,\upsilon,\tlm)-\frac{i}{\tlm}(k\cdot\partial_{tt}\xi)\ca_{k}(f,a,\upsilon,\tlm)\notag\\
		&\quad-\frac{(k\cdot\partial_{t}\xi)^2}{\tlm}\left(\nabla\left(\frac{a(t,x)}{(\mathrm{det}(\nabla\xi))^3}\right)\times\left((\nabla\xi)^{\top}\frac{ik\times f}{|k|^4}\right)\right),\label{def of cte_k}\\
		\ctg_{k}(f,a,\upsilon,\tlm)
		&=\frac{1}{\tlm}\left(\nabla\left(\frac{a(t,x)}{\mathrm{det}(\nabla\xi)}\right)\times\left((\nabla\xi)^{-1}\frac{ik\times f}{|k|^2}\right)+\frac{a(t,x)}{\mathrm{det}(\nabla\xi)}\nabla\times\left(\left((\nabla\xi)^{-1}-(\nabla\xi)^{\top}\right)\frac{ik\times f}{|k|^2}\right)\right)\notag\\
		&\quad+\frac{a(t,x)}{\mathrm{det}(\nabla\xi)}((\nabla\xi)^{\top}k)\times\left(\left((\nabla\xi)^{-1}-(\nabla\xi)^{\top}\right)\frac{ik\times f}{|k|^2}\right)\label{def of ctg_ck}\\
		&\quad+\frac{1}{\tlm}\nabla\times \cg_{c,k}(f,a,\upsilon,\tlm)+(i(\nabla\xi)^{\top}k)\times \cg_{c,k}(f,a,\upsilon,\tlm).\notag
		\end{align}
Finally, we give $\cm_{p,k}(f,a,\upsilon,\tlm,t_0,x_0)$ and $\cm_{k}(f,a,\upsilon,\tlm)$,
\begin{align}
		\cm_{k}(f,a,\upsilon,\tlm)&:=\cm_{E,k}(f,a,\upsilon,\tlm)+\cm_{B,k}(f,a,\upsilon,\tlm),\label{def of cm_k}\\
		\cm_{p,k}(f,a,\upsilon,\tlm,t_0,x_0)&:=a(t,x)
		\left(1-\frac{(k\cdot\upsilon(t_0 ,x_0))^2}{|k|^2}\right)(\nabla\xi)^{-1}fe^{i\tlm k\cdot\xi}.\label{def of  cm_pk}
\end{align}
Based on the definition mentioned above, we can use \eqref{assume in building blocks} to  get 
\begin{equation}\label{est on cof N+r 1}
	\begin{aligned}
		&\left\|\partial_t^r\cta_{k}(f,a,\upsilon,\tlm)\right\|_N\lesssim_{N}|f||k|^{-3}\mu^{-N-r},&&\left\|\partial_t^r\ca_{k}(f,a,\upsilon,\tlm)\right\|_N\lesssim_{N}|f||k|^{-2}\mu^{-N-r},\\
		&\left\|\partial_t^r\ce_{c,k}(f,a,\upsilon,\tlm)\right\|_N\lesssim_{N}|f||k|^{-1}\mu^{-N-r}(\tlm\mu)^{-1},&&\left\|\partial_t^r\cg_{c,k}(f,a,\upsilon,\tlm)\right\|_N\lesssim_{N}|f||k|^{-1}\mu^{-N-r},\\
		&\left\|\partial_t^r\ce_k(f,a,\upsilon,\tlm)\right\|_N\lesssim_{N}|f||k|^{-1}\mu^{-N-r},&&\left\|\partial_t^r\cg_k(f,a,\upsilon,\tlm)\right\|_N\lesssim_{N}|f||k|^{-1}\mu^{-N-r},\\
		&\left\|\partial_t^r\cte_k(f,a,\upsilon,\tlm)\right\|_N\lesssim_{N}|f|\mu^{-N-r}(\tlm\mu)^{-1},&&\left\|\partial_t^r\ctg_k(f,a,\upsilon,\tlm)\right\|_N\lesssim_{N}|f|\mu^{-N-r},
	\end{aligned}
\end{equation}
for any $N\geqslant 0$ and $r=0,1,2$. Especially, we have
\begin{equation}\label{est on cof N+r 2}
	\begin{aligned}
	&\left\|\ce_{c,k}(f,a,\upsilon,\tlm)\right\|_0\lesssim|f||k|^{-1}(\tlm\mu)^{-1},&&\left\|\cg_{c,k}(f,a,\upsilon,\tlm)\right\|_0\lesssim|f||k|^{-1}(\tlm\mu)^{-1},\\
	&\left\|\cte_k(f,a,\upsilon,\tlm)\right\|_0\lesssim|f|(\tlm\mu)^{-1},&&\left\|\ctg_k(f,a,\upsilon,\tlm)\right\|_0\lesssim|f|(\tlm\mu)^{-1}.
	\end{aligned}
\end{equation}
Moreover, we could obtain the following estimates on advective derivatives,
\begin{equation}\label{est on cof N+s}
	\begin{aligned}
		|k|\left\|(\partial_t+\upsilon\cdot\nabla)^s\cta_k(f,a,\upsilon,\tlm)\right\|_N+\left\|(\partial_t+\upsilon\cdot\nabla)^s\ca_k(f,a,\upsilon,\tlm)\right\|_N&\lesssim_{N}|f||k|^{-2}\mu^{-N}\tau^{-s},\\
		\left\|(\partial_t+\upsilon\cdot\nabla)^s\ce_k(f,a,\upsilon,\tlm)\right\|_N+\left\|(\partial_t+\upsilon\cdot\nabla)^s\cg_k(f,a,\upsilon,\tlm)\right\|_N&\lesssim_{N}|f||k|^{-1}\mu^{-N}\tau^{-s},\\
		\left\|(\partial_t+\upsilon\cdot\nabla)^s\cte_k(f,a,\upsilon,\tlm)\right\|_N+\left\|(\partial_t+\upsilon\cdot\nabla)^s\ctg_k(f,a,\upsilon,\tlm)\right\|_N&\lesssim_{N}|f|\mu^{-N}\tau^{-s},
	\end{aligned}
\end{equation}
for any $N\geqslant 0$ and $s=0,1,2$.
More importantly, combining the above definition and estimates, we can obtain
\begin{equation}\label{est on N+r 1}
	\begin{aligned}
		&\left\|\partial_t^r\ccA_k(f,a,\upsilon,\tlm)\right\|_N\lesssim_{N}|f|(\tlm|k|)^{N+r-3},&&\left\|\partial_t^r\cA_k(f,a,\upsilon,\tlm)\right\|_N\lesssim_{N}|f|(\tlm|k|)^{N+r-2},\\
		&\left\|\partial_t^r\cE_k(f,a,\upsilon,\tlm)\right\|_N\lesssim_{N}|f|(\tlm|k|)^{N+r-1},&&\left\|\partial_t^r\cB_k(f,a,\upsilon,\tlm)\right\|_N\lesssim_{N}|f|(\tlm|k|)^{N+r-1},\\ 
		&\left\|\partial_t^r\cm_{E,k}(f,a,\upsilon,\tlm)\right\|_N\lesssim_{N}|f|(\tlm|k|)^{N+r},&&\left\|\partial_t^r\cm_{B,k}(f,a,\upsilon,\tlm)\right\|_N\lesssim_{N}|f|(\tlm|k|)^{N+r},\\
		&\left\|\partial_t^r\cm_{k}(f,a,\upsilon,\tlm)\right\|_N\lesssim_{N}|f|(\tlm|k|)^{N+r}, &&\left\|\partial_t^r\cm_{p,k}(f,a,\upsilon,\tlm)\right\|_N\lesssim_{N}|f|(\tlm|k|)^{N+r},
	\end{aligned}
\end{equation}
for any $N\geqslant 0$ and $r=0,1,2$, and
\begin{equation}\label{est on N+s 1}
	\begin{aligned}
		\tlm|k|\left\|(\partial_t+\upsilon\cdot\nabla)^s\ccA_k(f,a,\upsilon,\tlm)\right\|_N+\left\|(\partial_t+\upsilon\cdot\nabla)^s\cA_k(f,a,\upsilon,\tlm)\right\|_N&\lesssim_{N}|f|(\tlm|k|)^{N-2}\tau^{-s},\\
		\left\|(\partial_t+\upsilon\cdot\nabla)^s\cE_k(f,a,\upsilon,\tlm)\right\|_N+\left\|(\partial_t+\upsilon\cdot\nabla)^s\cB_k(f,a,\upsilon,\tlm)\right\|_N&\lesssim_{N}|f|(\tlm|k|)^{N-1}\tau^{-s},\\
		\left\|(\partial_t+\upsilon\cdot\nabla)^s\cm_{E,k}(f,a,\upsilon,\tlm)\right\|_N+\left\|(\partial_t+\upsilon\cdot\nabla)^s\cm_{B,k}(f,a,\upsilon,\tlm)\right\|_N&\lesssim_{N}|f|(\tlm|k|)^{N}\tau^{-s},\\ \left\|(\partial_t+\upsilon\cdot\nabla)^s\cm_{k}(f,a,\upsilon,\tlm)\right\|_N+\left\|(\partial_t+\upsilon\cdot\nabla)^s\cm_{p,k}(f,a,\upsilon,\tlm)\right\|_N&\lesssim_{N}|f|(\tlm|k|)^{N}\tau^{-s},
	\end{aligned}
\end{equation}
for any $N\geqslant 0$ and $s=0,1,2$. \eqref{est on N+r}, \eqref{est on N+s}, and \eqref{est on m_pk 2} follow.
Finally, we could calculate to get \eqref{est on m_pk 1},
\begin{equation}\notag
	\begin{aligned}
		&\quad\|\partial_{t}^r(\cm_{p,k}(f,a,\upsilon,\tlm,t_0,x_0)-\cm_k(f,a,\upsilon,\tlm))\|_N\\
		&\lesssim_{N}(\tlm|k|)^{N+r}\left(|f|\left\|a\right\|_{0}\left\|\frac{(\partial_{t}\xi)^2}{(\det(\nabla\xi))^2}-\upsilon^2(t_0 ,x_0)\right\|_{0}\left\|(\nabla\xi)^{-1}\right\|_{0}+\|\cte_{k}(f,a,\upsilon,\tlm)\|_0+\|\ctg_{k}(f,a,\upsilon,\tlm)\|_0\right)\\
		&\lesssim_{N}|f|(\tlm|k|)^{N+r}\left(\left\|\frac{(\partial_{t}\xi)^2}{(\det(\nabla\xi))^2}-\upsilon^2(t_0 ,x_0)\right\|_{0}+(\tlm\mu)^{-1}\right),
	\end{aligned}
\end{equation}
for any $N\geqslant 0$ and $r=0,1,2$. If $|d-c|$ is small enough, $\left\|\frac{(\partial_{t}\xi)^2}{(\det(\nabla\xi))^2}-\upsilon^2(t_0 ,x_0)\right\|_{0}\lesssim(\tlm\mu)^{-1}$. That is why we refer to $\cm_{p,k}$ as the main component of $\cm_k$.\qedhere
\end{proof}
Instead of using the classical Mikado flow $\psi_If=\sum_{k \in \Z^3\setminus\{0\}}\cb_{I,k}fe^{ik\cdot x}$ directly as in \cite{GK22}, in order to keep the linear Maxwell equation hold during our iteration, we shall use $\cm_{p,k}$ and $\cm_{k}$ defined in Lemma \ref{New building blocks}. But as shown in its definition, when the resonance occurs, namely $\frac{k}{|k|}\cdot\partial_t\xi_{I}\approx \frac{k}{|k|}\cdot(m_\ell/n)\approx1$, a strong electromagnetic field can only lead to a weak fluid flow. To overcome this difficulty caused by resonance, we construct new scalar functions $\psi_I^*(x)\in C^\infty(\T^3)$ satisfying
\begin{equation}\label{def of psi^*}
	\begin{aligned}
		\psi_I^*(x)&=\sum_{k \in \Z^3\setminus\{0\}}\overset{\circ}{b}_{I,k}\left(1-\frac{(k\cdot(m_\ell/n)(\tau_q u,2\pi\mu_q\upsilon))^2}{|k|^2}\right)fe^{ik\cdot x}\\
		&=(\Delta\Psi_{I}(x)-(m_\ell/n)^{\top}(\tau_q u,2\pi\mu_q\upsilon)\nabla^2\Psi_I(x) (m_\ell/n)(\tau_q u,2\pi\mu_q\upsilon)).
	\end{aligned}
\end{equation}
Moreover, we can find
	\begin{align*}
	\psi_I^*(\xi_I)(\nabla\xi_I)^{-1}f&=\sum_{k \in \Z^3\setminus\{0\}}\cm_{p,k}(f,\overset{\circ}{b}_{I,k},m_\ell/n,1,\tau_q u,2\pi\mu_q\upsilon)\\
	&=\sum_{k \in \Z^3\setminus\{0\}}\overset{\circ}{b}_{I,k}\left(1-\frac{(k\cdot(m_\ell/n)(\tau_q u,2\pi\mu_q\upsilon))^2}{|k|^2}\right)(\nabla\xi_I)^{-1}fe^{ik\cdot\xi_I}\\
	&=(\Delta\Psi_{I}(\xi_I)-(m_\ell/n)^{\top}(\tau_q u,2\pi\mu_q\upsilon)\nabla^2\Psi_I(\xi_I) (m_\ell/n)(\tau_q u,2\pi\mu_q\upsilon))(\nabla\xi_I)^{-1}f.
	\end{align*}
Actually, $\psi_I^*(x)f$ is still a special kind of Mikado flow due to $\cb_{I,k}(k\cdot f_I)=0$. We could calculate
\begin{equation}\notag
	\begin{aligned}
	(\psi_I^*\tilde{f})\otimes(\psi_I^*\tilde{f})=(\psi_I^*)^2\tilde{f}\otimes \tilde{f},&&|\psi_I^*\tilde{f}|^2\psi_I^*\tilde{f}=(\psi_I^*)^3|\tilde{f}|^2\tilde{f}.
	\end{aligned}
\end{equation}
In order to use the low-frequency component of the combination of $(\psi_I^*\tilde{f})\otimes(\psi_I^*\tilde{f})$ to cancel $n^2R_\ell$ and the low-frequency component of the combination of $|\psi_I^*\tilde{f}|^2\psi_I^*\tilde{f}$ to cancel  $n^3\varphi_\ell$, we need
$$
\begin{aligned}
	\int_{\T^3}(\psi_I^*)^2\rd x=1,&&\int_{\T^3}(\psi_I^*)^3\rd x=0,&&&\forall I=(u,\upsilon,f),  f\in\mathcal{F}^{[\upsilon],R},\\&&\int_{\T^3}(\psi_I^*)^3\rd x=1, &&&\forall I=(u,\upsilon,f),f\in\mathcal{F}^{[\upsilon],\varphi}.
\end{aligned}
$$
The difficulty lies in the fact that $(m_\ell/n)(\tau_q u,2\pi\mu_q\upsilon)$ varies for different $I$, so we need to construct a special $\psi_f$ that satisfies the above condition for different $I$. Here, we introduce the following lemma to select the special $\psi_f$.
\begin{pp}\label{prop of Mikado flow scalar function}
	For each $f\in\mathcal{F}^{j,R}$, there exists smooth functions $\psi_f$ and $\Psi_f$ satisfying $\supp\Psi_f\subseteq\T^3$ and
	\begin{equation}
		\left\lbrace
		\begin{aligned}
			&\psi_f=\Delta\Psi_f,\\
			&\int_{\T^3}\left(\psi_{f}-L^{\top}\nabla^2\Psi_f L\right)\rd x=0,&& \forall L\in \R^3,\\
			&\int_{\T^3}\left(\psi_{f}-L^{\top}\nabla^2\Psi_f L\right)^2\rd x\geqslant C_1,&& \forall L\in \R^3,\\
			&\int_{\T^3}\left(\psi_{f}-L^{\top}\nabla^2\Psi_f L\right)^3\rd x=0,&& \forall L\in \R^3,
		\end{aligned}\right.
	\end{equation}
	where $C_1>0$ is a constant independent of $L$. Meanwhile, for each $f\in\mathcal{F}^{j,\varphi}$, given $L\in\R^3$, we can choose $\psi_{f,L}=\psi_{f,L}(x;L)\in C_c^\infty(\T^3)$ which depends on $L$ satisfying
	\begin{equation}
		\left\lbrace
		\begin{aligned}
			&\psi_{f,L}=\Delta\Psi_{f,L},\\
			&\int_{\T^3}\left(\psi_{f,L}-L^{\top}\nabla^2\Psi_{f,L} L\right)\rd x=0,&& \forall L\in \R^3,\\
			&\left|\int_{\T^3}\left(\psi_{f,L}-L^{\top}\nabla^2\Psi_{f,L} L\right)^3\rd x\right|\geqslant C_2,&& \forall L\in \R^3,
		\end{aligned}\right.
	\end{equation}
	where $C_2>0$ is a constant independent of $L$.
\end{pp}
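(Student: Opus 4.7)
My plan is to reduce the construction to a two-dimensional problem on the plane transverse to $f$, and to handle the two subfamilies $\mathcal{F}^{j,R}$ and $\mathcal{F}^{j,\varphi}$ by qualitatively different parity arrangements. First, choose coordinates in which $f$ points along the $x_3$-axis and take $\Psi_f$ to be a smooth function of $(x_1,x_2)$ alone, supported in a tiny transverse disk; after periodization this gives $\supp\Psi_f\subset\T^3$, forces $f\cdot\nabla\psi_f\equiv 0$, and keeps $\supp\psi_f$ inside the thin tube required by Proposition \ref{def of eta}. In these coordinates
\[
\psi_f - L^\top\nabla^2\Psi_f\,L \;=\; \Delta_\perp\Psi_f - (L_\perp\cdot\nabla_\perp)^2\Psi_f,
\]
which depends only on the transverse projection $L_\perp=(L_1,L_2)$ of $L$; the zero-mean condition is automatic since each $\partial_i\partial_j\Psi_f$ integrates to zero on the periodic cell. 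The remaining work splits according to whether $f\in\mathcal{F}^{j,R}$ or $f\in\mathcal{F}^{j,\varphi}$.

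For $f\in\mathcal{F}^{j,R}$ I impose odd parity $\Psi_f(-x_\perp)=-\Psi_f(x_\perp)$. Then $\nabla_\perp^2\Psi_f$ is odd, so $(\mathrm{Id}-L\otimes L):\nabla^2\Psi_f$ is odd and its cube integrates to zero for every $L$, dispatching the cubic condition uniformly. For the $L^2$ lower bound, Plancherel yields
\[
\int_{\R^2}\bigl((\mathrm{Id}-L\otimes L):\nabla^2\Psi_f\bigr)^2\,dx_\perp = \frac{1}{(2\pi)^2}\int_{\R^2}\bigl(|k|^2-(L_\perp\cdot k)^2\bigr)^2\bigl|\widehat\Psi_f(k)\bigr|^2\,dk =: Q(L_\perp).
\]
Since $\Psi_f$ is compactly supported, $\widehat\Psi_f$ is entire of exponential type by Paley--Wiener and vanishes only on a measure-zero set, while $\{k\in\R^2:(L_\perp\cdot k)^2=|k|^2\}$ is a union of two lines through the origin (also measure zero); so $Q(L_\perp)>0$ pointwise. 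Continuity of $Q$ and the asymptotic $Q(L_\perp)\sim|L_\perp|^4\int(\widehat L_\perp\cdot k)^4|\widehat\Psi_f|^2\,dk$ as $|L_\perp|\to\infty$, whose angular coefficient is continuous and strictly positive on the unit circle by the same Paley--Wiener argument, upgrade this to $\inf_{L_\perp\in\R^2}Q(L_\perp)=:C_1>0$.

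For $f\in\mathcal{F}^{j,\varphi}$ I exploit the freedom to let $\Psi_{f,L}$ depend on $L$. The matrix $A_L|_\perp=\mathrm{Id}_2-L_\perp\otimes L_\perp$ has eigenvalues $1$ and $1-|L_\perp|^2$, nondegenerate except at $|L_\perp|=1$. Off the degenerate locus I diagonalize $A_L|_\perp=U\mathrm{diag}(\lambda_1,\lambda_2)U^\top$ and set $\Psi_{f,L}(x)=\eta(U^\top x_\perp)$ for a fixed asymmetric bump $\eta\in C_c^\infty(\R^2)$; then $\int(A_L:\nabla^2\Psi_{f,L})^3\,dx$ becomes an explicit polynomial in $(\lambda_1,\lambda_2)$ with generically nonvanishing leading behavior. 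At $|L_\perp|=1$ I use a tensor product $\Psi_{f,L}(x_\perp)=\eta_1(\tau)\eta_2(\nu)$ aligned with $L_\perp^\perp$ and $L_\perp$, so that $A_L:\nabla^2\Psi_{f,L}=\eta_1''(\tau)\eta_2(\nu)$ and the cube factorizes as $\int(\eta_1'')^3\,d\tau\cdot\int\eta_2^3\,d\nu$, both factors easily made nonzero. Since in the iteration $L$ is substituted by the bounded quantity $m_\ell/n$ (controlled by $\|m_\ell/n\|_0\lesssim M$), its admissible range is compact; patching the two constructions via a finite partition of unity in $L$, together with continuity and the freedom to negate $\Psi_{f,L}$ to fix the sign, produces the uniform bound $|\int(A_L:\nabla^2\Psi_{f,L})^3\,dx|\geq C_2$.

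The main obstacle is the $\mathcal{F}^{j,\varphi}$-case near the degenerate locus $|L_\perp|=1$: there $A_L:\nabla^2$ switches from elliptic to hyperbolic and the cube integral, viewed as a function of $L$, can cross zero, so the construction must be performed piecewise and then reconciled across the transition. The fact that the proposition only asks for an absolute-value lower bound, rather than a signed one, is precisely what makes the piecewise-in-$L$ strategy acceptable.
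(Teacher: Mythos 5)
Your treatment of the $\mathcal{F}^{j,R}$ case is correct and takes a genuinely different, somewhat slicker, route than the paper's. You impose that $\Psi_f$ be \emph{odd} under $x_\perp\mapsto -x_\perp$, so that $\nabla^2\Psi_f$ and hence $(\Id-L\otimes L):\nabla^2\Psi_f$ are odd, killing the cube integral by parity alone; and you get the uniform $L^2$ lower bound from Plancherel, Paley--Wiener analyticity of $\widehat\Psi_f$, and the coercive growth of $Q(L_\perp)$ at infinity. The paper instead chooses an \emph{even} tensor-product profile $\tilde\Psi_R(x_1,x_2)=\tilde\phi(x_1)\tilde\phi(x_2)$ with $\tilde\phi$ odd and proves both the vanishing of the cube and an explicit lower bound for the square by direct 1D factorization (the cube-vanishing there uses more than parity of $\tilde\Psi_R$: it hinges on the disjoint-support odd structure of $\tilde\phi$ so that integrals such as $\int(\tilde\phi')^3$ and $\int\tilde\phi''\tilde\phi\tilde\phi'$ vanish). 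Your argument is softer and shorter but yields only an implicit constant; the paper's is computational but gives $C_1$ explicitly in terms of $\phi_*$.

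The $\mathcal{F}^{j,\varphi}$ case is where there is a genuine gap. You locate the degeneracy only at $|L_\perp|=1$, on the grounds that $A_L|_\perp=\Id_2-L_\perp\otimes L_\perp$ loses rank there. But the condition that fails is not ellipticity of $A_L$; it is the nonvanishing of the cube integral, and that depends on the profile. With your fixed rotated bump $\Psi_{f,L}(x)=\eta(U^\top x_\perp)$, the cube integral becomes
\[
p(1,\lambda_2)=\sum_{i+j=3}\binom{3}{i}\lambda_2^{\,j}\int_{\R^2}(\partial_1^2\eta)^i(\partial_2^2\eta)^j\,\rd y,\qquad \lambda_2=1-|L_\perp|^2,
\]
a genuine cubic in $\lambda_2$ whenever $\int(\partial_2^2\eta)^3\neq 0$ (your ``generically nonvanishing leading behavior''). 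A cubic in one real variable always has a real root, so there is some $|L_\perp|\neq 1$ at which $p(1,\lambda_2)$ vanishes --- a degeneracy your two-case analysis does not cover. The paper's own radial profile $\tilde\Psi_{\varphi,3}$ has cube integral proportional to $(1-\tfrac12R^2)R^4$, so its degeneracies are at $R=0$ and $R^2=2$, not $R=1$; this is exactly why the paper introduces \emph{four} profiles $\tilde\Psi_{\varphi,1},\dots,\tilde\Psi_{\varphi,4}$, splitting not only on the magnitude $R$ but also on the angle of $\tilde L$ when $R^2\approx 2$, to obtain a lower bound uniform over all of $\R^2$. Finally, your appeal to the compactness of the admissible $L$-range ``since in the iteration $L$ is substituted by the bounded quantity $m_\ell/n$'' is not available here: the proposition asserts a constant $C_2$ independent of $L$ for all $L\in\R^3$, and is used precisely to make the subsequent weight selection well-defined before the bounds on $m_\ell/n$ are invoked; invoking those bounds now would be circular. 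To repair this case you would need to match the paper's strategy --- a finite cover of the full $L_\perp$-plane (including $|L_\perp|\to\infty$, where the $|L_\perp|^6$ growth of the cubic supplies the bound) with a different profile on each patch, ensuring the cube integral is bounded away from zero on each piece.
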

\begin{proof}
	We could first choose $\Psi=\Psi(x_1,x_2)$ with $\supp\Psi\subset\T^2$ and $\psi=\Delta\Psi$ which depends only on $(x_1,x_2)$, and then define $\psi_f=\psi(x\cdot \frac{f^*}{|f^*|}, x\cdot(\frac{f}{|f|}\times\frac{f^*}{|f^*|})),\Psi_f=\Psi(x\cdot \frac{f^*}{|f^*|}, x\cdot(\frac{f}{|f|}\times\frac{f^*}{|f^*|}))$, where $\left\lbrace f,f^*,f\times f^*\right\rbrace\subseteq\Z^3$ is an orthogonal basis. Here, we use the notation $\overline{a}=(a_1,a_2)$ to represent the tuple obtained by selecting the first two components of $a$, i.e., $\overline{a}=(a_1,a_2)$ for $a=(a_1,a_2,a_3)$. Notice that for $x=Ay$, where
	$$
	A=\left(\frac{f^*}{|f^*|},\frac{f}{|f|}\times\frac{f^*}{|f^*|},\frac{f}{|f|}\right),\quad |AL|=|L|.
	$$
	We could calculate
	$$
	\int_{\T^3}\left(\psi_{f}-L^{\top}\nabla^2\Psi_f L\right)^k\rd x=\int_{\T^3}\left(\psi-\overline{L^{\top}A}\nabla^2\Psi \overline{A^{\top}L}\right)^k\rd y=2\int_{-1}^{1}\int_{-1}^{1}\left(\Delta\Psi-\overline{L^{\top}A}\nabla^2\Psi \overline{A^{\top}L}\right)^k\rd y_1\rd y_2,
	$$
	where $k=1,2,3$. Due to this observation, we just need to do the construction in the two-dimensional case.\par Let  $\tilde{L}=\overline{A^{\top}L}=(\tilde{L}_1,\tilde{L}_2)\in\R^2$ and $R^2=\tilde{L}_1^2+\tilde{L}_2^2$ for $f\in\mathcal{F}^{j,R}$, we set
	$$
	\tilde{\Psi}_R(x_1,x_2)=(\phi_*(x_1+1)-\phi_*(x_1-1))(\phi_*(x_2+1)-\phi_*(x_2-1))=:\tilde{\phi}(x_1)\tilde{\phi}(x_2),
	$$
	where
	$$
	\phi_*(x)=\left\lbrace
	\begin{aligned}
		&e^{\frac{1}{|x|^2-1}},&&|x|<1,\\
		&0,&&|x|\geqslant1.
	\end{aligned}\right.
	$$
	Then, we could get 
	$$
	\Delta\tilde{\Psi}_R-\tilde{L}^{\top}\nabla^2\tilde{\Psi}_R \tilde{L}=(1-\tilde{L}_1^2)\tilde{\phi}^{\tp}(x_1)\tilde{\phi}(x_2)+(1-\tilde{L}_2^2)\tilde{\phi}^{\tp}(x_2)\tilde{\phi}(x_1)-2\tilde{L}_1\tilde{L}_2\tilde{\phi}^{\prime}(x_1)\tilde{\phi}^{\prime}(x_2).
	$$
	Noting that $\tilde{\phi}$ is odd, we have
	\begin{align*}
		\int_{\R}\int_{\R}\left(\Delta\tilde{\Psi}_R-\tilde{L}^{\top}\nabla^2\tilde{\Psi}_R \tilde{L}\right)\rd x_1\rd x_2&=0,\quad\int_{\R}\int_{\R}\left(\Delta\tilde{\Psi}_R-\tilde{L}^{\top}\nabla^2\tilde{\Psi}_R \tilde{L}\right)^3\rd x_1\rd x_2=0.
	\end{align*}
	Meanwhile,
	\begin{align*}
		&\quad\int_{\R}\int_{\R}\left(\Delta\tilde{\Psi}_R-\tilde{L}^{\top}\nabla^2\tilde{\Psi}_R \tilde{L}\right)^2\rd x_1\rd x_2\\&=((1-\tilde{L}_1^2)^2+(1-\tilde{L}_2^2)^2)\int_{\R}(\tilde{\phi}^{\tp}(x))^2\rd x\int_{\R}(\tilde{\phi}(x))^2\rd x+\left(4(\tilde{L}_1\tilde{L}_2)^2+2(1-\tilde{L}_1^2)(1-\tilde{L}_2^2)\right)\left(\int_{\R}(\tilde{\phi}^{\prime}(x))^2\rd x\right)^2\\
		&\geqslant\left((1-\tilde{L}_1^2)^2+(1-\tilde{L}_2^2)^2\right)\left(\int_{\R}(\tilde{\phi}^{\tp}(x))^2\rd x\int_{\R}(\tilde{\phi}(x))^2\rd x-\left(\int_{\R}(\tilde{\phi}^{\prime}(x))^2\rd x\right)^2\right)+4(\tilde{L}_1\tilde{L}_2)^2\left(\int_{\R}(\tilde{\phi}^{\prime}(x))^2\rd x\right)^2\\
		&=\left((1-\tilde{L}_1^2)^2+(1-\tilde{L}_2^2)^2\right)\left(D_1-D_2\right)+4(\tilde{L}_1\tilde{L}_2)^2D_2\\
		&\geqslant2(D_1-D_2)\left(1-\frac{R^2}{2}\right)^2+4D_2\left(\frac{R^2}{2}\right)^2\\
		&\geqslant\frac{4(D_1-D_2)D_2}{(D_1+D_2)}=:\tilde{C},
	\end{align*}
	where
	$$
	\begin{aligned}
	&D_1=\int_{\R}(\phi_*^{\tp}(x+1)-\phi_*^{\tp}(x-1))^2\rd x\int_{\R}(\phi_*(x+1)-\phi_*(x-1))^2\rd x,\\ &D_2=\left(\int_{\R}(\phi_*^{\prime}(x+1)-\phi_*^{\prime}(x-1))^2\rd x\right)^2.
	\end{aligned}
	$$
	$D_1$ and $D_2$ depend only on $\phi_*$ and thus are absolute constants. We can choose $\Psi_f=\Psi(x\cdot \frac{f^*}{|f^*|}, x\cdot(\frac{f}{|f|}\times\frac{f^*}{|f^*|}))$ with $\Psi(x_1,x_2)=\frac{1}{\tilde{C}^{\frac{1}{2}}\tilde{r}}\tilde{\Psi}_R(x_1/\tilde{r},x_2/\tilde{r})$, for $f\in\cF^{j,R}$, where the parameter $\tilde{r}<\frac{\eta}{100}$ and $\eta$ has been defined in Proposition \ref{def of eta}. Then, we have
	$$
	\begin{aligned}
		\int_{\R}\int_{\R}\left(\Delta\Psi_f-\tilde{L}^{\top}\nabla^2\Psi_f \tilde{L}\right)\rd x_1\rd x_2&=\int_{\R}\int_{\R}\left(\Delta\Psi_f-\tilde{L}^{\top}\nabla^2\Psi_f \tilde{L}\right)^3\rd x_1\rd x_2=0,\\
		\int_{\R}\int_{\R}\left(\Delta\Psi_f-\tilde{L}^{\top}\nabla^2\Psi_f \tilde{L}\right)^2\rd x_1\rd x_2&\geqslant 2.
	\end{aligned}
	$$\par
	For $f\in\mathcal{F}^{j,\varphi}$, we will construct four kinds of $\psi_{f,L}$ for different $L$. First, if $\tilde{L}$ is close to $\tilde{L}_1^2+\tilde{L}_2^2=2$, let
	$$
	\tilde{\Psi}_{\varphi,1}(x_1,x_2)=(\phi_*(x_1+1/2)-\phi_*(x_1-1/2))(\phi_*(x_2+1/2)-\phi_*(x_2-1/2))=:\overline{\phi}_1(x_1)\overline{\phi}_1(x_2),
	$$
	we could achieve 
	$$
	\begin{aligned}
		&\quad\int_{\R}\int_{\R}(\Delta\tilde{\Psi}_{\varphi,1}-\tilde{L}^{\top}\nabla^2\tilde{\Psi}_{\varphi,1} \tilde{L})\rd x_1\rd x_2=0,\\
		&\quad\int_{\R}\int_{\R}(\Delta\tilde{\Psi}_{\varphi,1}-\tilde{L}^{\top}\nabla^2\tilde{\Psi}_{\varphi,1} \tilde{L})^3\rd x_1\rd x_2=-\tilde{L}_1\tilde{L}_2\left(11(\tilde{L}_1\tilde{L}_2)^2+3-3R^2\right)\left(\int_{\R}(\overline{\phi}_1^{\prime}(x))^3\rd x\right)^2.
	\end{aligned}
	$$
	Let $\tilde{L}_1=R\cos\theta,\tilde{L}_2=R\sin\theta$, we could get
	$$
	\begin{aligned}
		\left|\int_{\R}\int_{\R}(\Delta\tilde{\Psi}_{\varphi,1}-\tilde{L}^{\top}\nabla^2\tilde{\Psi}_{\varphi,1} \tilde{L})^3\rd x_1\rd x_2\right|&=\frac{R^2}{2}\left|\sin2\theta\right|\left|\frac{11}{4}R^4(\sin2\theta)^2+3-3R^2\right|\left(\int_{\R}(\overline{\phi}_1^{\prime}(x))^3\rd x\right)^2.\\
	\end{aligned}
	$$
	Notice that $|\sin2\theta|\geqslant\frac{1}{2}$, if $|\theta-\frac{1+2k}{4}\pi|\leqslant\frac{1}{6}\pi,k=0,1,2,3$. Then, we have
	$$
	\left|\int_{\R}\int_{\R}(\Delta\tilde{\Psi}_{\varphi,1}-\tilde{L}^{\top}\nabla^2\tilde{\Psi}_{\varphi,1} \tilde{L})^3\rd x_1\rd x_2\right|>C\left(\int_{\R}(\phi_*^{\prime}(x_1+1/2)-\phi_*^{\prime}(x_1-1/2))^3\rd x\right)^2>\tilde{C}_1,
	$$
	when $\frac{18}{11}\leqslant R^2 \leqslant \frac{30}{11},|\theta-\frac{1+2k}{4}\pi|\leqslant\frac{1}{6}\pi,k=0,1,2,3$.\par
	Next, we define
	$$
	\tilde{\Psi}_{\varphi,2}(x_1,x_2)=(2\phi_*(x_1)-\phi_*(x_1-2)-\phi_*(x_1+2))(2\hat{\phi}(x_2)-\hat{\phi}(x_2-2)-\hat{\phi}(x_2+2))=:\overline{\phi}_2(x_1)\overline{\phi}_3(x_2),
	$$
	where
	$$
	\hat{\phi}(x)=\frac{1}{2}\phi_*^2(x)=\left\lbrace
	\begin{aligned}
		&\frac{1}{2}e^{\frac{2}{|x|^2-1}},&&|x|<1,\\
		&0,&&|x|\geqslant1.
	\end{aligned}\right.
	$$
	We could calculate
	$$
	\begin{aligned}
		&\quad\int_{\R}\int_{\R}(\Delta\tilde{\Psi}_{\varphi,1}-\tilde{L}^{\top}\nabla^2\tilde{\Psi}_{\varphi,2} \tilde{L})\rd x_1\rd x_2=0,\\
		&\quad\int_{\R}\int_{\R}(\Delta\tilde{\Psi}_{\varphi,1}-\tilde{L}^{\top}\nabla^2\tilde{\Psi}_{\varphi,2}\tilde{L})^3\rd x_1\rd x_2\\
		&=(1-\tilde{L}_1^2)^3\int_{\R}(\overline{\phi}_3(x))^3\rd x\int_{\R}(\overline{\phi}_2^{\tp}(x))^3\rd x+(1-\tilde{L}_2^2)^3\int_{\R}(\overline{\phi}_2(x))^3\rd x\int_{\R}(\overline{\phi}_3^{\tp}(x))^3\rd x\\
		&\quad+3(1-\tilde{L}_1^2)^2(1-\tilde{L}_2^2)\int_{\R}(\overline{\phi}_2^{\tp}(x))^2\overline{\phi}_2(x) \rd x\int_{\R}(\overline{\phi}_3(x))^2\overline{\phi}_3^{\tp}(x)\rd x\\
		&\quad+3(1-\tilde{L}_1^2)(1-\tilde{L}_2^2)^2\int_{\R}(\overline{\phi}_3^{\tp}(x))^2\overline{\phi}_3(x) \rd x\int_{\R}(\overline{\phi}_2(x))^2\overline{\phi}_2^{\tp}(x)\rd x.
	\end{aligned}
	$$
	Similarly, let $\tilde{L}_1=R\cos\theta,\tilde{L}_2=R\sin\theta$, we could get, if $R^2=2$,
	\begin{align*}
		&\quad\int_{\R}\int_{\R}(\Delta\tilde{\Psi}_{\varphi,2}-\tilde{L}^{\top}\nabla^2\tilde{\Psi}_{\varphi,2}\tilde{L})^3\rd x_1\rd x_2\\
		&=(1-\tilde{L}_1^2)^3\left(\int_{\R}(\overline{\phi}_3(x))^3\rd x\int_{\R}(\overline{\phi}_2^{\tp}(x))^3\rd x-\int_{\R}(\overline{\phi}_2(x))^3\rd x\int_{\R}(\overline{\phi}_3^{\tp}(x))^3\rd x\right)\\
		&\quad+3(1-\tilde{L}_1^2)^3\left(\int_{\R}(\overline{\phi}_3^{\tp}(x))^2\overline{\phi}_3(x) \rd x\int_{\R}(\overline{\phi}_2(x))^2\overline{\phi}_2^{\tp}(x)\rd x-\int_{\R}(\overline{\phi}_2^{\tp}(x))^2\overline{\phi}_2(x) \rd x\int_{\R}(\overline{\phi}_3(x))^2\overline{\phi}_3^{\tp}(x)\rd x\right)\\
		&=(1-\tilde{L}_1^2)^3\left(\int_{\R}\int_{\R}\left(\overline{\phi}_3(x_2)\overline{\phi}^{\tp}_2(x_1)-\overline{\phi}^{\tp}_3(x_2)\overline{\phi}_2(x_1)\right)^3\rd x_1\rd x_2\right)\\
		&=:(1-\tilde{L}_1^2)^3\tilde{C}_2.
	\end{align*}
	When  $\left|\theta-\frac{k}{2}\pi\right|\leqslant\frac{\pi}{6},k=0,1,2,3$, we could obtain $0\leqslant\tilde{L}_1^2\leqslant\frac{1}{2}$ or $\frac{3}{2}\leqslant\tilde{L}_1^2\leqslant2$ and
	$$
	\left|\int_{\R}\int_{\R}(\Delta\tilde{\Psi}_{\varphi,2}-\tilde{L}^{\top}\nabla^2\tilde{\Psi}_{\varphi,2} \tilde{L})^3\rd x_1\rd x_2\right|\geqslant\frac{1}{8}|\tilde{C}_2|.
	$$
	Moreover, there exists $\varepsilon<\frac{4}{11}$ such that
	$$
	\left|\int_{\R}\int_{\R}(\Delta\tilde{\Psi}_{\varphi,2}-\tilde{L}^{\top}\nabla^2\tilde{\Psi}_{\varphi,2} \tilde{L})^3\rd x_1\rd x_2\right|\geqslant\frac{1}{16}|\tilde{C}_2|,
	$$
	when $2-\varepsilon\leqslant R^2\leqslant2+\varepsilon,\left|\theta-\frac{k}{2}\pi\right|\leqslant\frac{\pi}{6},k=0,1,2,3$.\par
	Finally, we choose two axially symmetric functions $\tilde{\Psi}_{\varphi,3}(x_1,x_2)=\overline{\phi}_4(r),\tilde{\Psi}_{\varphi,4}(x_1,x_2)=\overline{\phi}_5(r)$ which satisfy
	$$
	\begin{aligned}
		&\int_{0}^{2\pi}\int_{0}^{1}\left(\frac{d^2\overline{\phi}_4}{dr^2}+\frac{1}{r}\frac{d\overline{\phi}_4}{dr}\right)r\rd r\rd \theta=0,&&\int_{0}^{2\pi}\int_{0}^{1}\left(\frac{d^2\overline{\phi}_5}{dr^2}+\frac{1}{r}\frac{d\overline{\phi}_5}{dr}\right)r\rd r\rd \theta=0,\\
		&\int_{0}^{2\pi}\int_{0}^{1}\left(\frac{d^2\overline{\phi}_4}{dr^2}+\frac{1}{r}\frac{d\overline{\phi}_4}{dr}\right)^3r\rd r\rd \theta=0,&&\int_{0}^{2\pi}\int_{0}^{1}\left(\frac{d^2\overline{\phi}_5}{dr^2}+\frac{1}{r}\frac{d\overline{\phi}_5}{dr}\right)^3r\rd r\rd \theta=1,\\
		&\int_{0}^{2\pi}\int_{0}^{1}\left(\frac{d^2\overline{\phi}_4}{dr^2}+\frac{1}{r}\frac{d\overline{\phi}_4}{dr}\right)\left(\frac{d^2\overline{\phi}_4}{dr^2}-\frac{1}{r}\frac{d\overline{\phi}_4}{dr}\right)^2r\rd r\rd \theta\neq0,
	\end{aligned}
	$$
	and then 
	$$
	\begin{aligned}
		&\quad\int_{\R}\int_{\R}(\Delta\tilde{\Psi}_{\varphi,4}-\tilde{L}^{\top}\nabla^2\tilde{\Psi}_{\varphi,4} \tilde{L})^3\rd x_1\rd x_2\\
		&=2\pi\left(1-\frac{1}{2}R^2\right)^3+\frac{3}{4}\pi\left(1-\frac{1}{2}R^2\right)R^4\int_{0}^{1}\left(\left(\frac{d^2\overline{\phi}_5}{dr^2}\right)+\left(\frac{1}{r}\frac{d\overline{\phi}_5}{dr}\right)\right)\left(\frac{d^2\overline{\phi}_5}{dr^2}-\frac{1}{r}\frac{d\overline{\phi}_5}{dr}\right)^2rdr.
	\end{aligned}
	$$
	The right side equals $2\pi$ when $R=0$. So there exists $\varepsilon_1$ such that if $R^2\leqslant\varepsilon_1<1$,
	$$
	\begin{aligned}
	\left|\int_{\R}\int_{\R}(\Delta\tilde{\Psi}_{\varphi,4}-\tilde{L}^{\top}\nabla^2\tilde{\Psi}_{\varphi,4} \tilde{L})^3\rd x_1\rd x_2\right|\geqslant\pi.
	\end{aligned}
	$$	
	As for $\tilde{\Psi}_{\varphi,3}(x_1,x_2)$, we could obtain
	$$
	\begin{aligned}
	\int_{\R}\int_{\R}(\Delta\tilde{\Psi}_{\varphi,3}-\tilde{L}^{\top}\nabla^2\tilde{\Psi}_{\varphi,3} \tilde{L})^3\rd x_1\rd x_2=\frac{3}{4}\pi\left(1-\frac{1}{2}R^2\right)R^4\int_{0}^{1}\left(\left(\frac{d^2\overline{\phi}_4}{dr^2}\right)+\left(\frac{1}{r}\frac{d\overline{\phi}_4}{dr}\right)\right)\left(\frac{d^2\overline{\phi}_4}{dr^2}-\frac{1}{r}\frac{d\overline{\phi}_4}{dr}\right)^2rdr.
	\end{aligned}
	$$
	So if $|R^2-2|>\varepsilon$ and $R^2>\varepsilon_1$, we could immediately get
	$$
	\begin{aligned}
		&\quad\left|\int_{\R}\int_{\R}(\Delta\tilde{\Psi}_{\varphi,3}-\tilde{L}^{\top}\nabla^2\tilde{\Psi}_{\varphi,3} \tilde{L})^3\rd x_1\rd x_2\right|\geqslant\tilde{C}_3(\varepsilon,\varepsilon_1),
	\end{aligned}
	$$
	where $\tilde{C}_3(\varepsilon,\varepsilon_1)$ depends on $\varepsilon,\varepsilon_1$. So for different $\tilde{L}=(R\cos\theta,R\sin\theta)=\overline{A^{\top}L}$, we could construct 
	$$
	\Psi_{\varphi}(x_1,x_2;\tilde{L})=\left\lbrace
	\begin{aligned}
		&\frac{1}{\overline{C}\tilde{r}^{\frac{2}{3}}}\tilde{\Psi}_{\varphi,1}(x_1/\tilde{r},x_2/\tilde{r}),&&|R^2-2|\leqslant\varepsilon,|\theta-\frac{1+2k}{4}\pi|\leqslant\frac{\pi}{6},k=0,1,2,3,\\
		&\frac{1}{\overline{C}\tilde{r}^{\frac{2}{3}}}\tilde{\Psi}_{\varphi,2}(x_1/\tilde{r},x_2/\tilde{r}),&&|R^2-2|\leqslant\varepsilon,\left|\theta-\frac{k}{2}\pi\right|\leqslant\frac{\pi}{6}, k=0,1,2,3,\\
		&\frac{1}{\overline{C}\tilde{r}^{\frac{2}{3}}}\tilde{\Psi}_{\varphi,3}(x_1/\tilde{r},x_2/\tilde{r}),&&|R^2-2|>\varepsilon,R^2>\varepsilon_1,\\
		&\frac{1}{\overline{C}\tilde{r}^{\frac{2}{3}}}\tilde{\Psi}_{\varphi,4}(x_1/\tilde{r},x_2/\tilde{r}),&&R^2\leqslant\varepsilon_1,
	\end{aligned}\right.
	$$
	where $\tilde{r}<\frac{\eta}{100},\overline{C}=\frac{\min\left\lbrace\tilde{C}_1^\frac{1}{3},|\tilde{C}_2|^\frac{1}{3},\tilde{C}_3^\frac{1}{3},\pi^\frac{1}{3}\right\rbrace}{16}$. Finally, we define $\Psi_{f,L}(x;L)=\Psi_{\varphi}(x\cdot \frac{f^*}{|f^*|}, x\cdot(\frac{f}{|f|}\times\frac{f^*}{|f^*|});\overline{A^{\top}L})$ for $f\in\cF^{j,\varphi}$. Then, we have
	\begin{align*}
		&\int_{\R}\int_{\R}\left(\Delta\Psi_{f,L}-\tilde{L}^{\top}\nabla^2\Psi_{f,L} \tilde{L}\right)\rd x_1\rd x_2=0,\quad\left|\int_{\R}\int_{\R}\left(\Delta\Psi_{f,L}-\tilde{L}^{\top}\nabla^2\Psi_{f,L} \tilde{L}\right)^3\rd x_1\rd x_2\right|\geqslant 1. \qedhere
	\end{align*}
\end{proof}
To simplify the notation, we define the scalar function $\Psi_I(x)$ and $\psi_I^*(x)$ as follows:
$$
\Psi_I(x)=\left\lbrace
\begin{aligned}
	&\Psi_f(x-z_I),&& I=(u,\upsilon,f),f\in\mathcal{F}^{[\upsilon],R},\\
	&\Psi_{f,L}(x-z_I;(m_\ell/n)(\tau_q u,2\pi\mu_q\upsilon)),&& I=(u,\upsilon,f),f\in\mathcal{F}^{[\upsilon],\varphi},
\end{aligned}\right.
$$
and
$$
\psi_I^*(x)=\Delta\Psi_{I}(x)-((m_\ell/n)(\tau_q u,2\pi\mu_q\upsilon))^{\top}\nabla^2\Psi_{I}(x)(m_\ell/n)(\tau_q u,2\pi\mu_q\upsilon).\quad
$$
$U_I(x)=\psi_I(x)f$ and $U_I^*(x)=\psi_I^*(x)f$ satisfy the definition of Mikado flow, as in Definition \ref{def of Mikado flow}.  
\subsection{The definition of the perturbation}\label{The definition of the perturbation}
In this part, we use the new building blocks defined in Lemma \ref{New building blocks} to construct the perturbation $(\tm,\tE,\tB)$. We start by defining $B_{I,k}$ as follows:
\begin{equation}
	B_{I,k}=\left\lbrace
	\begin{aligned}
		&\frac{\theta_I(t)\chi_I(\xi_I)\delta_{q+1}^{-\frac{1}{2}}\gamma_I\cb_{I,k}}{\left(\int_{\T^3}\left(\psi_I^*(x)\right)^2\rd x\right)^{\frac{1}{2}}},&& I=(u,\upsilon,f),f\in\mathcal{F}^{[\upsilon],R},\\
		&\frac{\theta_I(t)\chi_I(\xi_I)\delta_{q+1}^{-\frac{1}{2}}\gamma_I\cb_{I,k}}{\left(\int_{\T^3}\left(\psi_I^*(x)\right)^3\rd x\right)^{\frac{1}{3}}},&& I=(u,\upsilon,f),f\in\mathcal{F}^{[\upsilon],\varphi}.\\
	\end{aligned}\right.\label{def of BIk}
\end{equation}
Here, $\gamma_I=\gamma_{I}(R_\ell,\varphi_\ell,n,t,x)$ are smooth weights functions that will be chosen in the next section. Using $B_{I,k}$, we can construct the magnetic vector potential $\tilde{\mathcal{A}},\tilde{A}$ as follows:
\begin{align}
		\tcA&=\delta_{q+1}^{\frac{1}{2}}\sum_u\sumkO\sum_{I:u_I=u}\ccA_{k}(f_I,B_{I,k},m_\ell/n,\lambda_{q+1})\nonumber\\
		&=\frac{\delta_{q+1}^{\frac{1}{2}}}{\lambda_{q+1}^3}\sum_u\sumkO\sum_{I:u_I=u}\cta_{k}(f_I,B_{I,k},m_\ell/n,\lambda_{q+1})e^{i\lambda_{q+1}k\cdot\xi_I},\label{def of ol tcA}\\
		\tilde{A}=\nabla\times\tilde{\mathcal{A}}&=\delta_{q+1}^{\frac{1}{2}}\sum_u\sumkO\sum_{I:u_I=u}\cA_{k}(f_I,B_{I,k},m_\ell/n,\lambda_{q+1})\nonumber\\
		&=\frac{\delta_{q+1}^{\frac{1}{2}}}{\lambda_{q+1}^2}\sum_u\sumkO\sum_{I:u_I=u}\ca_{k}(f_I,B_{I,k},m_\ell/n,\lambda_{q+1})e^{i\lambda_{q+1}k\cdot\xi_I}.\label{def of tilde A}
\end{align}
Then, the corresponding electromagnetic field $\tE_p,\tB_p$ can be defined as
\begin{align}
		\tE_p=-\partial_t \tilde{A}&=\delta_{q+1}^{\frac{1}{2}}\sum_u\sumkO\sum_{I:u_I=u}\cE_{k}(f_I,B_{I,k},m_\ell/n,\lambda_{q+1})\notag\\
		&=\frac{\delta_{q+1}^{\frac{1}{2}}}{\lambda_{q+1}}\sum_u\sumkO\sum_{I:u_I=u}\ce_{k}(f_I,B_{I,k},m_\ell/n,\lambda_{q+1})e^{i\lambda_{q+1}k\cdot\xi_I},\label{def of tE_p}\\
		\tB_p=\nabla\times \tilde{A}&=\delta_{q+1}^{\frac{1}{2}}\sum_u\sumkO\sum_{I:u_I=u}\cB_{k}(f_I,B_{I,k},m_\ell/n,\lambda_{q+1})\notag\\
		&=\frac{\delta_{q+1}^{\frac{1}{2}}}{\lambda_{q+1}}\sum_u\sumkO\sum_{I:u_I=u}\cg_{k}(f_I,B_{I,k},m_\ell/n,\lambda_{q+1})e^{i\lambda_{q+1}k\cdot\xi_I}.\label{def of tB_p}
\end{align}
Finally, we define the momentum caused by $\tE_p$ and $\tB_p$ as
	\begin{align}
		\tm_{EB}=\partial_t \tE_p-\nabla\times \tB_p&=\delta_{q+1}^{\frac{1}{2}}\sum_u\sumkO\sum_{I:u_I=u}\cm_{k}(f_I,B_{I,k},m_\ell/n,\lambda_{q+1})\nonumber\\
		&=\delta_{q+1}^{\frac{1}{2}}\sum_u\sumkO\sum_{I:u_I=u}B_{I,k}\left(1-\frac{(k\cdot\partial_{t}\xi_{I})^2}{|k|^2(\mathrm{det}(\nabla\xi_I))^2}\right)(\nabla\xi_I)^{-1}f_Ie^{i\lambda_{q+1}k\cdot\xi_I}\label{def of tm_EB}\\
		&\quad+\delta_{q+1}^{\frac{1}{2}}\sum_u\sumkO\sum_{I:u_I=u}\left(\cte_{k}(f_I,B_{I,k},m_\ell/n,\lambda_{q+1})+\ctg_{k}(f_I,B_{I,k},m_\ell/n,\lambda_{q+1})\right)e^{i\lambda_{q+1}k\cdot\xi_I},\nonumber
		\end{align}
and its main part $\tm_p$ as
		\begin{equation}\label{def of tm_p 1}
		\begin{aligned}
		\tm_p&=\delta_{q+1}^{\frac{1}{2}}\sum_u\sumkO\sum_{I:u_I=u}\cm_{p,k}(f,B_{I,k},m_\ell/n,\lambda_{q+1},\tau_q u,2\pi\mu_q\upsilon)\\
		&=\delta_{q+1}^{\frac{1}{2}}\sum_u\sumkO\sum_{I:u_I=u}B_{I,k}\left(1
		-\frac{\left(k\cdot(m_\ell/n)(\tau_q u,2\pi\mu_q\upsilon)\right)^2}{|k|^2}\right)(\nabla\xi_I)^{-1}f_Ie^{i\lambda_{q+1}k\cdot\xi_I}\\
		&=\sum_u\sum_{I:u_I=u}\theta_I(t)\chi_I(\xi_I)\gamma_{I}\tilde{f}_IM_I(\lambda_{q+1}\xi_{I}),
		\end{aligned}
		\end{equation}
where
$$
M_I(x):=\left\lbrace
\begin{aligned}
	&\frac{\psi_I^*(x)}{\left(\int_{\T^3}\left(\psi_I^*(x)\right)^2\rd x\right)^{\frac{1}{2}}},&& I=(u,\upsilon,f),f\in\mathcal{F}^{[\upsilon],R},\\
	&\frac{\psi_I^*(x)}{\left(\int_{\T^3}\left(\psi_I^*(x)\right)^3\rd x\right)^{\frac{1}{3}}},&& I=(u,\upsilon,f),f\in\mathcal{F}^{[\upsilon],\varphi}.
\end{aligned}\right.\\
$$
We could immediately obtain that
$$
\begin{aligned}
	\langle M_I \rangle=&\langle M_I^3 \rangle=0,\langle M_I^2 \rangle=1,&& \forall I=(u,\upsilon,f),f\in\mathcal{F}^{[\upsilon],R},\\
	&\langle M_I \rangle=0,\langle M_I^3 \rangle=1,&& \forall I=(u,\upsilon,f),f\in\mathcal{F}^{[\upsilon],\varphi},
\end{aligned}
$$
where $\langle u \rangle = \fint_{\T^3} u (x) \rd x$.\par
For convenience, we denote  $G_{u,k}=\sum_{I:u_I=u}\overset{\circ}{G}_{k}(f_I,B_{I,k},m_\ell/n,\lambda_{q+1})$ for the functions $G=\ta,a,e,g,\tilde{e},\tilde{g}$, and define 
\begin{align}
s_{u,k}=\sum_{I:u_I=u}B_{I,k}\left(1-\frac{(k\cdot\partial_{t}\xi_{I})^2}{|k|^2(\mathrm{det}(\nabla\xi_I))^2}\right)\tilde{f},\label{def of suk}
\end{align}
and $\tm_{EB}$ can be rewritten as,
$$
\tm_{EB}=\delta_{q+1}^{\frac{1}{2}}\sum_u\sumkO(s_{u,k}+\tilde{e}_{u,k}+\tilde{g}_{u,k})e^{i\lambda_{q+1}k\cdot\xi_I}.
$$
Moreover, let $I=(u,\upsilon,f),I^\prime=(u^\prime,\upsilon^\prime,f^\prime)$, noticing that $B_{I,k}\bigcap B_{I^\prime,k^\prime}=\emptyset$, for $|u-u^\prime|>1,\forall k,k^\prime\in \Z^3/\left\lbrace0\right\rbrace$, then we have $\text{supp}(\tilde{e}_{u,k})\bigcap\text{supp}(\tilde{e}_{u^\prime,k^\prime})=\text{supp}(\tilde{g}_{u,k})\bigcap\text{supp}(\tilde{g}_{u^\prime,k^\prime})=\text{supp}(e_{u,k})\bigcap\text{supp}(e_{u^\prime,k^\prime})=\text{supp}(g_{u,k})\bigcap\text{supp}(g_{u^\prime,k^\prime})$ $=\text{supp}(s_{u,k})\bigcap\text{supp}(s_{u^\prime,k^\prime})=\text{supp}(\ta_{u,k})\bigcap\text{supp}(\ta_{u^\prime,k^\prime})=\text{supp}(a_{u,k})\bigcap$ $\text{supp}(a_{u^\prime,k^\prime})=\emptyset,$ for $|u-u^\prime|>1,\forall k,k^\prime\in \Z^3/\left\lbrace0\right\rbrace $.\par
Next, we introduce a time correction term $\tm_t$. From the momentum equation, we can obtain
$$
\begin{aligned}
	\quad\partial_tm_q+\Div\left(\frac{m_q\otimes m_q}{n}\right)+\nabla p(n)+nE_q+m_q\times B_q=\Div(n(R_q-c_q\Id)),
\end{aligned}$$
where $c_q=\sum_{j=q+1}^{\infty}\delta_j$. We first calculate the left-hand side of the equation, if we apply a perturbation $m_{EB}$ to $m_q$ and integrate it with respect to $x$. This yields:
\begin{align}
		&\quad\int_{\T^3}\left(\partial_t(m_q+\tm_{EB})+\Div\left(\frac{(m_q+\tm_{EB})\otimes (m_q+\tm_{EB})}{n}\right)+\nabla p(n)+n(E_{q}+\tE_p)+(m_q+\tm_{EB})\times (B_{q}+\tB_p)\right)\rd x\notag\\
		&=\int_{\T^3}\left(n \tE_p+\tm_{EB}\times B_{q}+m_{q}\times \tB+\tm_{EB}\times \tB \right)\rd x.\label{momentum q+1}
\end{align}
It's important to note that the resulting value may not be zero.  As a result, the left-hand side cannot be represented as the divergence of some matrix. To address this, we introduce a time correction term $\tm_t$ that is a smooth function of time $t$ and satisfies 
\begin{equation}
	\left\lbrace
	\begin{aligned}
		&\partial_t\tE_t-\nabla\times \tB_t=\tm_t,\\
		&\Div\tE_t=0,\\
		&\partial_t\tB_t+\nabla\times \tE_t=0,\\
		&\Div\tB_t=0,\\
	\end{aligned}
	\right.\label{C Maxwell}
\end{equation}
where $\tE_t(t)=\int_{0}^{t}\tm_t(\tau)\rd \tau$, $B_t=0$. So we can choose proper $\tE_t$ to make sure that
\begin{equation}\label{Eq of tE_t}
	\partial_{tt}\tE_t+\left(\int_{\T^3}n\rd x\right)\tE_t=-\int_{\T^3}\left(n \tE_p+\tm_{EB}\times B_{q}+m_{q}\times \tB+\tm_{EB}\times \tB \right)\rd x.
\end{equation}
Especially, in order to solve this ODE, we choose $\tE_t(0)=0$ and $ \partial_t\tE_t(0)=0.$
So if we choose the perturbation as 
$\tm=\tm_{EB}+\tm_t$, $\tE=\tE_p+\tE_t$, and $\tB=\tB_p$,  both sides of \eqref{momentum q+1} equal zero. Then, we can obtain
$$\partial_tm_{q+1}+\Div\left(\frac{m_{q+1}\otimes m_{q+1}}{n}\right)+\nabla p(n)+nE_{q+1}+m_{q+1}\times B_{q+1}=\Div(n(R_{q+1}-c_{q+1}\Id)),$$
For convenience, we further denote $\tm_c=\tm-\tm_{p}$.
\subsection{Choice of the weights}
In this section, we will give the weights functions $\gamma_I$ for different $I$.  
\subsubsection{Energy weights}
For $I\in\mathscr{I}_\varphi$, we want to choose proper weights $\gamma_I$ so that  $n^3\varphi_\ell$ can be canceled by the low frequency part of $\frac{1}{2}|\tm_p|^2\tm_p$. After a direct calculation,
\begin{equation}
	\begin{aligned}
		|\tm_p|^2\tm_p&=\sum_{I\in\mathscr{I}}\theta_I^3(t)\chi_I^3(\xi_I)\gamma_I^3M_I^3(\lambda_{q+1}\xi_I)|\tilde{f}_I|^2\tilde{f}_I\\
		&=\underbrace{\sum_{I\in\mathscr{I}}\theta_I^3(t)\chi_I^3(\xi_I)\gamma_I^3\big<M_I^3\big>|\tilde{f}_I|^2\tilde{f}_I}_{=:(|\tm_p|^2\tm_p)_L}+\underbrace{\sum_{I\in\mathscr{I}}\theta_I^3(t)\chi_I^3(\xi_I)\gamma_I^3(M_I^3(\lambda_{q+1}\xi_I)-\big<M_I^3\big>)|\tilde{f}_I|^2\tilde{f}_I}_{=:(|\tm_p|^2\tm_p)_H}.
	\end{aligned}\label{m_p^3}
\end{equation}
And we want to choose proper $\gamma_I$ to make $(|m_p|^2m_p)_L$ be $-2n^3\varphi_\ell$, namely,
\begin{equation}\label{Eq of Gamma_I phi}
	(|\tm_p|^2\tm_p)_L=\sum_{u,\upsilon}\theta_u^6\left(\frac{t}{\tau_q}\right)\chi_\upsilon^6\left(\frac{\xi_u}{\mu_q}\right)\sum_{I\in\mathscr{I}_{u,\upsilon,\varphi}}\gamma_I^3|\tilde{f}_I|^2\tilde{f}_I=-2n^3\varphi_\ell.
\end{equation}
From \eqref{est on Dtl error 0} and \eqref{est on nabla xi and xi^-1}, we have 
$$
\begin{aligned}
	|\tilde{f}_I|=|(\nabla\xi_I)^{-1}f_I|\geqslant\frac{3}{4},\quad\left\|2\lambda_{q+1}^{3\gamma}\delta_{q+1}^{-\frac{3}{2}}n^3\nabla\xi_I\varphi_\ell\right\|_{C_x^0}\leqslant3C_1(n),
\end{aligned}
$$
for sufficiently large $\lambda_0$.
Recalling Lemma \ref{Geometric Lemma II}, we can apply it with $N_0=3C_1(n)$ to the set $\mathcal{F}^{[\upsilon],\varphi}$
and set
\begin{equation}\label{def of Gamma_I phi}
\Gamma_I(t,x)=\Gamma_{f_I}^{\frac{1}{3}}(-2\lambda_{q}^{3\gamma}\delta_{q+1}^{-3/2}(\nabla\xi_I)n^3\varphi_\ell),
\end{equation}
where $\Gamma_{f_I}$ are smooth functions depends on $C_1$.  For different $[\upsilon]\in\Z_3^3$, we just need to use Lemma \ref{Geometric Lemma II} for 27 times. If we choose $\gamma_I$ for $I\in\mathscr{I}_{u,\upsilon,\varphi}$ as
$$
\gamma_I=\frac{\lambda_{q}^{-\gamma}\delta_{q+1}^{\frac{1}{2}}\Gamma_I}{|\tilde{f}_I|^{\frac{2}{3}}},
$$ we have \eqref{Eq of Gamma_I phi}.
\subsubsection{Reynolds weights}
Similarly, we could also decompose $(\tm_p\otimes \tm_p)$ into two parts,
\begin{equation}
	\begin{aligned}
		\tm_p\otimes \tm_p&=\sum_{I\in\mathscr{I}}\theta_I^2\chi_I^2(\xi_I)\gamma_I^2M_I^2(\lambda_{q+1}\xi_I)\tilde{f}_I\otimes \tilde{f}_I\\
		&=\underbrace{\sum_{I\in\mathscr{I}}\theta_I^2\chi_I^2(\xi_I)\gamma_I^2\big<M_I^2\big>\tilde{f}_I\otimes\tilde{f}_I}_{=:(\tm_p\otimes \tm_p)_L}+\underbrace{\sum_{I\in\mathscr{I}}\theta_I^2\chi_I^2(\xi_I)\gamma_I^2(M_I^2(\lambda_{q+1}\xi_I)-\big<M_I^2\big>)\tilde{f}_I\otimes\tilde{f}_I}_{=:(\tm_p\otimes \tm_p)_H}.
	\end{aligned}\label{m_p^2}
\end{equation}
We just need to give $\gamma_I$ for $I\in\mathscr{I}_R$, since we have chosen $\gamma _I$ for $I\in\mathscr{I}_\varphi$. For given $(u,\upsilon)$, we use $I(u,\upsilon)$ to represent the sets of indices $(u^\prime,\upsilon^\prime)$ such that $\max\left\lbrace |u-u^\prime|_\infty,|\upsilon-\upsilon^\prime|_\infty\right\rbrace\leqslant 1 $ (where $|w|_\infty:=\max \left\lbrace|w_1|,|w_2|,|w_3| \right\rbrace$ for any $w\in\R^3$) and rewrite
$$
\begin{aligned}
	(\tm_p\otimes \tm_p)_L&=\sum_{u,\upsilon}\theta_u^6\left(\frac{t}{\tau_q}\right)\chi_\upsilon^6\left(\frac{\xi_u}{\mu_q}\right)\sum_{I\in\mathscr{I}_{u,\upsilon,R}}\gamma_I^2\tilde{f}\otimes\tilde{f} +\sum_{J\in\mathscr{I}_\varphi}\theta_J^2\chi_J^2(\xi_I)\gamma_J^2\big<M_J^2\big>\tilde{f}_J\otimes\tilde{f}_J\\
	&=\sum_{u,\upsilon}\theta_u^6\left(\frac{t}{\tau_q}\right)\chi_\upsilon^6\left(\frac{\xi_u}{\mu_q}\right)\Big[\sum_{I\in\mathscr{I}_{u,\upsilon,R}}\gamma_I^2\tilde{f}\otimes\tilde{f}+\sum_{\substack{J\in\mathscr{I}_{u^\prime,\upsilon^\prime,\varphi}\\
	(u^\prime,\upsilon^\prime)\in I(u.\upsilon)}}\theta_J^2\chi_J^2(\xi_I)\gamma_J^2\big<M_J^2\big>\tilde{f}_J\otimes\tilde{f}_J\Big].
\end{aligned}
$$
Similarly, we choose $\gamma_I$'s such that 
\begin{equation}\label{Eq of tm_p^2}
	(\tm_p\otimes \tm_p)_L=n^2(\delta_{q+1}\Id-R_\ell),
\end{equation}
which can be rewritten as
\begin{equation}\label{Eq of Gamma_I f otimes f}
	\begin{aligned}
\sum_{I\in\mathscr{I}_{u,\upsilon,R}}\gamma_I^2f_I\otimes f_I&=\nabla\xi_I\Big[n^2(\delta_{q+1}\Id-R_\ell)-\sum_{(u^\prime,\upsilon^\prime)\in I(u.\upsilon)}\sum_{J\in\mathscr{I}_{u^\prime,\upsilon^\prime,\varphi}}\theta_J^2\chi_J^2(\xi_I)\gamma_J^2\big<M_J^2\big>\tilde{f}_J\otimes\tilde{f}_J\Big](\nabla\xi_I)^{\top}\\
&=\delta_{q+1}n^2(\Id+\delta_{q+1}^{-1}\mathcal{M}_I),
\end{aligned}
\end{equation}
where
\begin{equation}
	\begin{aligned}
		\mathcal{M}_I=\nabla\xi_I\Big[(\delta_{q+1}\Id-R_\ell)-n^{-2}\sum_{(u^\prime,\upsilon^\prime)\in I(u.\upsilon)}\sum_{J\in\mathscr{I}_{u^\prime,\upsilon^\prime,\varphi}}\theta_J^2\chi_J^2(\xi_I)\gamma_J^2\big<M_J^2\big>\tilde{f}_J\otimes\tilde{f}_J\Big](\nabla\xi_I)^{\top}-\delta_{q+1}\Id.
	\end{aligned}\label{def of M_I}
\end{equation}
So we could choose $\gamma_I$ as $\gamma_I=\delta_{q+1}^{\frac{1}{2}}n\Gamma_I$ for $I\in\mathscr{I}_{u,\upsilon,R}$, and  then
\begin{equation}\label{Eq of Gamma_I R}
	\sum_{I\in\mathscr{I}_{u^\prime,\upsilon^\prime,R}}\Gamma_I^2f_I\otimes f_I=\Id+\delta_{q+1}^{-1}\mathcal{M}_I.
\end{equation}
Recall that $\left\|R_\ell\right\|_0\lesssim \lambda_{q}^{-3\gamma}\delta_{q+1}$ and $\left\|\gamma_J^2\right\|_0\lesssim\lambda_{q}^{-2\gamma}\delta_{q+1},J\in\mathscr{I}_{u^\prime,\upsilon^\prime,\varphi}$. We can apply Lemma \ref{Geometric Lemma I} to the set $\mathcal{F}^{[\upsilon],R}$ and set
$$
\Gamma_I=\Gamma_{f_I}(\Id+\delta_{q+1}^{-1}\mathcal{M}_I).
$$
Similarly, we also apply Lemma \ref{Geometric Lemma I} for 27 times for different $\mathcal{F}^{[\upsilon],R}$. Then, we could achieve \eqref{Eq of tm_p^2}.
\subsection{Some estimates on the perturbation}
Similarly, we represent $M_I,M_I^2,M_I^3$ as Fourier series:
\begin{equation}\label{FS of M_I^k}
	M_I(x)=\sumkO \overset{\circ}{c}_{I,k}e^{ik\cdot x},\quad M_I^2(x)=\overset{\circ}{d}_{I,0}+\sumkO \overset{\circ}{d}_{I,k}e^{ik\cdot x},\quad M_I^3(x)=\overset{\circ}{n}_{I,0}+\sumkO \overset{\circ}{n}_{I,k}e^{ik\cdot x},
\end{equation}
where ${d}_{I,0}=\langle M_I^2\rangle,{n}_{I,0}=\langle M_I^3\rangle$. Noticing that $M_I\in C^\infty(\T^3)$, we could get
\begin{equation}\label{est on sum of cof 2}
\sum_{k \in \mathbb{Z}^3}|k|^{\tilde{n}_0+2}|\overset{\circ}{c}_{I, k}|+\sum_{k \in \mathbb{Z}^3}|k|^{\tilde{n}_0+2}|\overset{\circ}{d}_{I, k}|+\sum_{k \in \mathbb{Z}^3}|k|^{\tilde{n}_0+2}|\overset{\circ}{n}_{I, k}| \lesssim 1, \quad \sum_{k \in \mathbb{Z}^3}|\overset{\circ}{d}_{I, k}|^2 \lesssim 1
\end{equation}
Moreover, $\tm_p,\tm_p\otimes\tm_p,\frac{1}{2}|\tm_p|^2\tm_p$ can be written as:
\begin{align}
&\tm_p=\sum_u\sumkO\delta_{q+1}^{\frac{1}{2}}c_{u,k}e^{i\lambda_{q+1}k\cdot\xi_I},\\
&\tm_p\otimes\tm_p=n^2(\delta_{q+1}\Id-R_\ell)+\sum_u\sumkO\delta_{q+1}d_{u,k}e^{i\lambda_{q+1}k\cdot\xi_I},\label{def of tm_p^2}\\
&\frac{1}{2}|\tm_p|^2\tm_p=-n^3\varphi_\ell+\frac{1}{2}\sum_u\sumkO\delta_{q+1}^{\frac{3}{2}}n_{u,k}e^{i\lambda_{q+1}k\cdot\xi_I},\\
&\frac{1}{2}|\tm_p|^2=-n^2\kappa_\ell+\frac{3}{2}\delta_{q+1}n^2+\frac{1}{2}\sum_u\sumkO\delta_{q+1}\tr (d_{u,k})e^{i\lambda_{q+1}k\cdot\xi_I},
\end{align}
where $\kappa_\ell=\frac{1}{2}\tr R_\ell$ and the coefficients are defined as:
\begin{equation}\begin{split}\label{def of cdn}
		&c_{u,k} = \sum_{I: u_I=u} \theta_I\chi_I(\xi_I) \delta_{q+1}^{-\frac{1}{2}}\gamma_I \overset{\circ}c_{I,k} \tilde f_I ,\\
		&d_{u,k}
		=
		\sum_{I: u_I=u} \theta_I^2\chi_I^2(\xi_I)
		\delta_{q+1}^{-1}\gamma_I^2 \overset{\circ}d_{I,k}
		\tilde f_I\otimes \tilde f_I,\\
		&n_{u,k}
		= \sum_{I: u_I=u} \theta_I^3\chi_I^3(\xi_I)
		\delta_{q+1}^{-\frac 32}\gamma_I^3 \overset{\circ}n_{I,k}
		|\tilde f_I|^2 \tilde f_I.
\end{split}\end{equation}
By the choice of $\theta_{I}$, we know if $|u-u'|> 1$,
$
	\supp_{t,x} (c_{u,k})\bigcap \supp_{t,x} (c_{u',k'})=\supp_{t,x} (d_{u,k})\bigcap \supp_{t,x} (d_{u',k'})=\supp_{t,x} (n_{u,k})\bigcap \supp_{t,x} (n_{u',k'})
	= \emptyset,
$
for any $k, k'\in \Z^3\setminus\{0\}$. Next, we denote $\left\|\cdot\right\|_N=\left\|\cdot\right\|_{(\cI_u;C^N(\T^3))}$ and introduce some estimates for these coefficients:
\begin{pp}\label{est on cof 1}
	For any $0<\alpha<\frac{1}{3}$,  let the parameters $\bar{b}_3(\alpha)$ and $\Lambda_3$ be as in the statement of Proposition \ref{est on backward flow}. For any $b\in(1,\overline{b}_3(\alpha))$, $\lambda_0\geqslant\Lambda_3$, and $I=(u,\upsilon,f)\in\mathscr{I}$, we have
	\begin{align}
		\tau_q^s\left\|\DTL ^sc_{u,k}\right\|_N&\lesssim_{n, p, N, M}\mu_q^{-N}\underset{I}{\max}|\overset{\circ}{c}_{I,k}|,\label{est on Dtl cof of cuk}\\
		\tau_q^s\left\|\DTL ^sd_{u,k}\right\|_N&\lesssim_{n, p, N, M}\mu_q^{-N}\underset{I}{\max}|\overset{\circ}{d}_{I,k}|,\label{est on Dtl cof of duk}\\
		\tau_q^s\left\|\DTL ^sn_{u,k}\right\|_N&\lesssim_{n, p, N, M}\mu_q^{-N}\underset{I}{\max}|\overset{\circ}{n}_{I,k}|,\label{est on Dtl cof of nuk}\\
		\tau_q^{s}|k|\left\|\DTL^s\ta_{u,k}\right\|_N+\tau_q^s\left\|\DTL^sa_{u,k}\right\|_N&\lesssim_{n, p, N, M}\mu_q^{-N}\underset{I}{\max}|\overset{\circ}{b}_{I,k}||k|^{-2},\label{est on Dtl cof of a}\\
		\tau_q^{s}\left\|\DTL^se_{u,k}\right\|_N+\tau_q^s\left\|\DTL^sg_{u,k}\right\|_N&\lesssim_{n, p, N, M}\mu_q^{-N}\underset{I}{\max}|\overset{\circ}{b}_{I,k}||k|^{-1},\label{est on Dtl cof of eg}\\
		\tau_q^{s}\left\|\DTL^ss_{u,k}\right\|_N+\tau_q^s\left\|\DTL ^s\tilde{e}_{u,k}\right\|_N+\tau_q^s\left\|\DTL ^s\tilde{g}_{u,k}\right\|_N&\lesssim_{n, p, N, M}\mu_q^{-N}\underset{I}{\max}|\overset{\circ}{b}_{I,k}|,\label{est on Dtl cof of seg}
	\end{align} 
where $N\geqslant0$ and $s=0,1,2$. Moreover, we have for any $N\geqslant 0$ and $r=0,1,2$, 
	\begin{align}
	\left\|\partial_{t}^rc_{u,k}\right\|_N&\lesssim_{n, p, h, N}\mu_q^{-N-r}\underset{I}{\max}|\overset{\circ}{c}_{I,k}|,\label{est on cof of cuk}\\
	\left\|\partial_{t}^rd_{u,k}\right\|_N&\lesssim_{n, p, h, N}\mu_q^{-N-r}\underset{I}{\max}|\overset{\circ}{d}_{I,k}|,\label{est on cof of duk}\\
	\left\|\partial_{t}^rn_{u,k}\right\|_N&\lesssim_{n, p, h, N}\mu_q^{-N-r}\underset{I}{\max}|\overset{\circ}{c}_{I,k}|,\label{est on cof of nuk}\\
	|k|\left\|\partial_{t}^r\ta_{u,k}\right\|_N+\left\|\partial_{t}^ra_{u,k}\right\|_N&\lesssim_{n, p, h, N}\mu_q^{-N-r}\underset{I}{\max}|\overset{\circ}{b}_{I,k}||k|^{-2},\label{est on cof of a}\\
	\left\|\partial_{t}^re_{u,k}\right\|_N+\left\|\partial_{t}^rg_{u,k}\right\|_N&\lesssim_{n, p, h, N}\mu_q^{-N-r}\underset{I}{\max}|\overset{\circ}{b}_{I,k}||k|^{-1},\label{est on cof of eg}\\
	\left\|\partial_{t}^rs_{u,k}\right\|_N+\left\|\partial_{t}^r\tilde{e}_{u,k}\right\|_N+\left\|\partial_{t}^r\tilde{g}_{u,k}\right\|_N&\lesssim_{n, p, h, N}\mu_q^{-N-r}\underset{I}{\max}|\overset{\circ}{b}_{I,k}|.\label{est on cof of seg}
	\end{align}
	Especially, we could obtain
	\begin{align}
	\left\|\tilde{e}_{u,k}\right\|_0+\left\|\tilde{g}_{u,k}\right\|_0&\lesssim_{n,p,h}(\lambda_{q+1}\mu_q)^{-1}\underset{I}{\max}|\overset{\circ}{b}_{I,k}|,\label{est on cof of tilde eg 0}
	\end{align}
	where the implicit constants are independent of $M$.
\end{pp}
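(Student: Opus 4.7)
The overall strategy is to reduce the estimates on all coefficients to two building blocks: first, $C^N$-type and advective-derivative bounds on the amplitudes $B_{I,k}$ defined by \eqref{def of BIk}; second, the abstract bounds on $\cta_k,\ca_k,\ce_k,\cg_k,\cte_k,\ctg_k$ coming from Lemma \ref{New building blocks}. Once $B_{I,k}$ is shown to satisfy the hypothesis \eqref{assume in building blocks} of that lemma with $\mu=\mu_q$, $\tau=\tau_q$, $\ell_*=\ell$, and $\tlm=\lambda_{q+1}$, the estimates \eqref{est on Dtl cof of a}--\eqref{est on Dtl cof of seg} and \eqref{est on cof of a}--\eqref{est on cof of seg} are essentially immediate from \eqref{est on cof N+r 1}, \eqref{est on cof N+r 2}, and \eqref{est on cof N+s}, after summing over the finitely many indices $I$ with $u_I=u$ whose spatial supports are pairwise disjoint by Proposition \ref{def of eta}. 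The bounds \eqref{est on Dtl cof of cuk}--\eqref{est on Dtl cof of nuk} and \eqref{est on cof of cuk}--\eqref{est on cof of nuk} on $c_{u,k}, d_{u,k}, n_{u,k}$, as well as \eqref{est on Dtl cof of seg} for $s_{u,k}$, do not pass through Lemma \ref{New building blocks}; they will be handled by direct differentiation of \eqref{def of cdn} and \eqref{def of suk} and the chain rule.

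The main work is therefore to verify the estimate $\|\partial_t^r B_{I,k}\|_N\lesssim \mu_q^{-N-r}|\overset{\circ}{b}_{I,k}|$ and the analogous mixed derivative bound $\|\DTL^s\partial_t^k\nabla^i B_{I,k}\|_N\lesssim \mu_q^{-N-k-i}\tau_q^{-s}|\overset{\circ}{b}_{I,k}|$. Recall that $B_{I,k}$ is the product of the Fourier coefficient $\overset{\circ}{b}_{I,k}$, the cutoff $\theta_I(t)\chi_I(\xi_I)$, the weight $\gamma_I$ defined in the previous subsection, and the normalization constant $\langle(\psi_I^*)^2\rangle^{-1/2}$ or $\langle(\psi_I^*)^3\rangle^{-1/3}$. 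Since $\psi_I^*$ depends on $x$ only through the frozen constant vector $(m_\ell/n)(\tau_q u, 2\pi\mu_q \upsilon)$, the normalization factors are literal constants for each $I$, and by Proposition \ref{prop of Mikado flow scalar function} they are uniformly bounded above and below. The cutoff $\theta_I$ contributes $\tau_q^{-r}$ per time derivative; the spatial cutoff $\chi_I(\xi_I)$ contributes $\mu_q^{-1}$ per spatial derivative together with the chain-rule factor $\nabla\xi_I$ (which is bounded by Proposition \ref{est on backward flow}), and the advective derivative annihilates it at leading order by \eqref{backward flow}. The weight $\gamma_I$ is a smooth composition of the functions $\Gamma_{f_I}$ from Lemmas \ref{Geometric Lemma I}--\ref{Geometric Lemma II} with arguments involving $R_\ell, \varphi_\ell, n$, and $\nabla\xi_I$, so its $\mu_q^{-N}$, $\tau_q^{-s}$ bounds follow from the mixed-derivative estimates \eqref{est on mixed error 1}, \eqref{est on mixed nabla xi 1}, \eqref{est on mixed error 2}, \eqref{est on mixed nabla xi 2}, \eqref{est on pt error}, combined with the relation $\mu_q^{-1}\gg\ell^{-1}$ and $\tau_q^{-1}\gg\ell_t^{-1}$ which follows from \eqref{def of l and l_t} and \eqref{def of mu tau}.

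With those amplitude estimates in hand, the bounds on $\cta_{u,k},\ldots,\ctg_{u,k}$ follow from Lemma \ref{New building blocks} applied with the drift $\upsilon=m_\ell/n$, whose mixed-derivative bounds were recorded in \eqref{est on mixed m_l 1}--\eqref{est on mixed xi 1} and \eqref{est on mixed m_l 2}--\eqref{est on mixed xi 2}, making the hypothesis \eqref{assume in building blocks} automatic. For $c_{u,k}, d_{u,k}, n_{u,k}$ the chain rule applied to \eqref{def of cdn} gives the claim since each factor of $\chi_I(\xi_I)^j$, $\gamma_I^j$, $\tilde f_I^{\otimes j}$, $|\tilde f_I|^2\tilde f_I$ obeys the same scalings. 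The estimate \eqref{est on Dtl cof of seg} for $s_{u,k}$ uses in addition that $\partial_t\xi_I$ is controlled by $m_\ell/n$ plus an $O((\lambda_{q+1}\mu_q)^{-1})$ remainder (\eqref{est on pt xi + m_l/n}), so the factor $1-(k\cdot\partial_t\xi_I)^2/(|k|^2(\det\nabla\xi_I)^2)$ is bounded and differentiable in the correct scale.

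The one delicate point, and the place where the gain $(\lambda_{q+1}\mu_q)^{-1}$ in \eqref{est on cof of tilde eg 0} is essential, is that $\tilde e_{u,k}$ and $\tilde g_{u,k}$ are by construction the lower-order remainders $\cte_k$ and $\ctg_k$ in \eqref{def of cte_k} and \eqref{def of ctg_ck}. Inspecting those expressions, every term carries either a prefactor $\tlm^{-1}$ paired with a spatial derivative of an $O(1)$ amplitude, or a factor of $(\nabla\xi_I)^{-1}-(\nabla\xi_I)^{\top}$, which by \eqref{est on nabla xi^T - nabla xi^-1} is $O((\lambda_{q+1}\mu_q)^{-1})$. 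This yields \eqref{est on cof N+r 2}, and summing gives \eqref{est on cof of tilde eg 0} with a constant that is genuinely independent of $M$, because the gain comes from $\mu_q$ and $\lambda_{q+1}$ only, not from any amplitude bound involving $m_\ell/n$. I expect this independence-of-$M$ claim to be the most delicate bookkeeping step; the remaining estimates are a mechanical, if lengthy, application of the chain rule together with Propositions \ref{est on mollification 1}, \ref{est on backward flow}, and the mixed-derivative lemmas of Section \ref{Estimates on mixed derivatives}.
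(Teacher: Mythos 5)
Your proposal follows essentially the same route as the paper: estimate the elementary ingredients $\theta_I$, $\chi_I(\xi_I)$, $\tilde f_I$, $\mathcal{M}_I$ and $\gamma_I$ (in both the $\mathscr{I}_R$ and $\mathscr{I}_\varphi$ cases) via the chain and Leibniz rules and the mixed-derivative bounds of Sections~\ref{Mollification} and~\ref{Construction of the perturbation}, assemble them into the estimate \eqref{est on B_Ik} on $B_{I,k}$, then apply Lemma~\ref{New building blocks} with $\tau=\tau_q$, $\ell_*=\ell$, $\mu=\mu_q$, $\tlm=\lambda_{q+1}$, $a=B_{I,k}$, $\upsilon=m_\ell/n$, while handling $c_{u,k},d_{u,k},n_{u,k}$ directly from \eqref{def of cdn}. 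One refinement: to obtain the $M$-independence of the constants in \eqref{est on cof of cuk}--\eqref{est on cof of seg} and \eqref{est on cof of tilde eg 0} you must route the chain rule only through the pure time-derivative estimates whose constants are already $M$-free --- namely \eqref{est on pt xi}, \eqref{est on pt nabla xi}, \eqref{est on pt det nabla xi}, and \eqref{est on pt error} --- rather than through the advective-derivative mixed estimates \eqref{est on mixed error 1}--\eqref{est on mixed nabla xi 2}, which carry $M$; your sketch cites only \eqref{est on pt error}, but the $\nabla\xi_I$ bounds are equally needed for $\chi_I(\xi_I)$ and for the argument of the $\Gamma_{f_I}$.
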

	\begin{proof}
	First, we can easily obtain for $r\geqslant 0$ and $s=0,1,2,$
		\begin{align}
			&\left\|\DTL^{s} \partial_{t}^r\theta_{I}\right\|_{C^{0}(\R)}=\left\|\partial_{t}^{r+s} \theta_{I}\right\|_{C^{0}(\R)} \lesssim_{r} \tau_{q}^{-s-r}.\label{est on theta_I} 
		\end{align}
	For smooth functions $F=F(x)$ and $g=g(t,x)$, we have
	\begin{equation}
		\begin{aligned}
			&\left\|\DTL F(g)\right\|_{N} \lesssim \sum_{N_{1}+N_{2}=N}\left\|\DTL g\right\|_{N_{1}}\left\|(\nabla F)(g)\right\|_{N_{2}} \text {, }\\
			&\left\|\DTL^{2} F(g)\right\|_{N} \lesssim \sum_{N_{1}+N_{2}=N}\left\|\DTL^{2} g\right\|_{N_{1}}\left\|(\nabla F)(g)\right\|_{N_{2}}+\left\|\DTL g \otimes \DTL g\right\|_{N_{1}}\left\|(\nabla^{2} F)(g)\right\|_{N_{2}}.
		\end{aligned}
	\end{equation}	
	Moreover, we could calcualte
	\begin{align*}
	\partial_{i} (F(g))&=(\nabla F)(g)\partial_{i} g,\quad \partial_{ij}(F(g))=(\nabla^2 F)(g):(\partial_{i}g\otimes\partial_{j}g)+(\nabla F)(g)\partial_{ij}g,\\	
	\partial_{ijk} (F(g))&=(\nabla^3 F)(g):(\partial_{i}g\otimes\partial_{j}g\otimes\partial_{k}g)+(\nabla^2 F)(g):(\partial_k(\partial_{i}g\otimes\partial_{j}g)+\partial_i(\partial_{j}g\otimes\partial_{k}g)+\partial_j(\partial_{i}g\otimes\partial_{k}g))\\
	&\quad+(\nabla F)(g)\partial_{ijk}g.
	\end{align*}
	where $\partial_{i},\partial_{j},\partial_{k}$ means taking the derivative with respect to time or space. By using them, we could get the  estimates on items like $\DTL^s\partial_{t}^k\nabla^iF(g)$. For convenience, we will restrict the range of parameters to $s=0,1,2;k=0,1,2;i=0,1;r=0,1,2,3,4$ in the proof, without mentioning it further. By using $\DTL ^s\chi_{I}(\xi_I)=0 $, \eqref{est on mixed nabla xi 1}, \eqref{est on mixed xi 1}, \eqref{est on mixed nabla xi 2}, and \eqref{est on mixed xi 2}, we could obtain 
	\begin{equation}\label{est on chi_I}
		\begin{aligned}
		\left\|\partial_{t}^r\left(\chi_{I}\left(\xi_{I}\right)\right)\right\|_{N} &\lesssim_{n, p, h, N} \mu_q^{-N-r},\quad\left\|D_{t,\ell}^s\partial_{t}^k\nabla^i\left(\chi_{I}\left(\xi_{I}\right)\right)\right\|_{N}+\left\|D_{t,\ell}^s\nabla^{k+i}\left(\chi_{I}\left(\xi_{I}\right)\right)\right\|_{N}\lesssim_{n, p, N, M} \mu_q^{-N-k-i}(\lambda_{q}\delta_{q}^{\frac{1}{2}})^{s}.
	\end{aligned}
	\end{equation}
	Recall that 
		$$
		\gamma_I=\left\lbrace
		\begin{aligned}
			\frac{\lambda_q^{-\gamma}\delta_{q+1}^{\frac{1}{2}}\Gamma_{f_I}^{\frac{1}{3}}(-2\lambda_{q+1}^{3\gamma}\delta_{q+1}^{-3/2}\nabla\xi_In^3\varphi_\ell)}{|\tilde{f}_I|^{\frac{2}{3}}}, \qquad&I\in\mathscr{I}_\varphi,\\
			\delta_{q+1}^{\frac{1}{2}}n\Gamma_I^{\frac{1}{2}}(\Id+\delta_{q+1}^{-1}\mathcal{M}_I),\qquad\qquad\qquad &I\in\mathscr{I}_R.
		\end{aligned}
		\right.
		$$
		By using \eqref{est on mixed nabla xi 1}, \eqref{est on mixed xi 1}, \eqref{est on mixed nabla xi 2},  \eqref{est on mixed xi 2}, $|(\nabla \xi_{I})^{-1} f_{I}|\geqslant \frac{3}{4}$ and $\Gamma_{f_{I}}\geqslant 3$, we could immediately obtain
		\begin{align}
		\left\|\partial_{t}^r\tilde{f}_I\right\|_{N}+\left\|\partial_{t}^r|(\nabla \xi_{I})^{-1} f_{I}|^{-\frac{2}{3}}\right\|_{N}&\lesssim_{n, p, h, N} \ell^{-N-r},\notag\\
		\left\|\DTL^{s}\partial_{t}^k\nabla^i\tilde{f}_I\right\|_{N}+\left\|\DTL^{s}\nabla^{k+i}\tilde{f}_I\right\|_{N}&\lesssim_{n, p, N, M} \ell^{-N-k-i}(\lambda_{q}\delta_{q}^{\frac{1}{2}})^s\label{est on mixed tf},\\
		\left\|\DTL^{s}\partial_{t}^k\nabla^i|(\nabla \xi_{I})^{-1} f_{I}|^{-\frac{2}{3}}\right\|_{N}+\left\|\DTL^{s}\nabla^{k+i}|(\nabla \xi_{I})^{-1} f_{I}|^{-\frac{2}{3}}\right\|_{N}&\lesssim_{n, p, N, M} \ell^{-N-k-i}(\lambda_{q}\delta_{q}^{\frac{1}{2}})^s.\notag
		\end{align}
		From \eqref{est on Dtl error 0}, \eqref{est on Dtl error N}, \eqref{est on Dtl nabla xi}, \eqref{est on mixed error 2}, \eqref{est on mixed nabla xi 2}, and \eqref{est on pt error}, we could get
		\begin{equation}\label{est on mixed Gamma}
		\begin{aligned}
		\left\|\partial_{t}^r(2 \lambda_{q}^{3 \gamma} \delta_{q+1}^{-\frac{3}{2}}\nabla \xi_{I} n^{3} \varphi_{\ell})\right\|_{N} &\lesssim_{n, p, h, N} \ell^{-N}(\ell_{t}^{-1}+\ell^{-1})^r,\\
		\left\|\DTL^{s}\partial_{t}^k\nabla^i(2 \lambda_{q}^{3 \gamma} \delta_{q+1}^{-\frac{3}{2}}\nabla \xi_{I} n^{3} \varphi_{\ell})\right\|_{N}+	\left\|\DTL^{s}\nabla^{k+i}(2 \lambda_{q}^{3 \gamma} \delta_{q+1}^{-\frac{3}{2}}\nabla \xi_{I} n^{3} \varphi_{\ell})\right\|_{N}
		&\lesssim_{n, p, N, M}  \ell^{-N-i}(\ell_{t}^{-1}+\ell^{-1})^k\tau_{q}^{-s}.
		\end{aligned}
		\end{equation}
	 	Up to now, we are ready to get for $I\in\mathscr{I}_{\varphi}$, 
		\begin{equation}\label{est on gamma_I phi}
		\left\|\partial_{t}^{r} \gamma_{I}\right\|_{N}\lesssim_{n, p, h, N}\delta_{q+1}^{\frac{1}{2}}\ell^{-N}(\ell_{t}^{-1}+\ell^{-1})^r,\quad
		\left\|\DTL^{s}\partial_{t}^{k}\nabla^i \gamma_{I}\right\|_{N}+\left\|\DTL^{s}\nabla^{k+i} \gamma_{I}\right\|_{N}  \lesssim_{n, p, N, M}  \delta_{q+1}^{\frac{1}{2}} \ell^{-N-i}(\ell_{t}^{-1}+\ell^{-1})^k\tau_{q}^{-s}.
		\end{equation}
		But for $I\in\mathscr{I}_R$, we need to give the estimates on $\mathcal{M}_I$ defined in \eqref{def of M_I} first,
		$$
		\begin{aligned}
		\left\|\mathcal{M}_{I}\right\|_{N} &\lesssim_{N} \delta_{q+1}\left\|\left(\nabla \xi_{I}\right)\left(\nabla \xi_{I}\right)^{\top}-\operatorname{Id}\right\|_{N}+\sum_{N_{1}+N_{2} 
		+N_{3}=N}\left\|\nabla \xi_{I}\right\|_{N_{1}}\left\|R_{\ell}\right\|_{N_{2}}\left\|\nabla \xi_{I}\right\|_{N_{3}}\\
		&\quad+\left\|n^{-2}\right\|_{N} \sum_{N_{1}+N_{2}+N_{3}+N_4=N}\sum_{(u^\prime,\upsilon^\prime)\in I(u.\upsilon)}\sum_{J\in\mathscr{I}_{u^\prime,\upsilon^\prime,\varphi}}\left\|\nabla \xi_{I}\right\|_{N_{1}}\left\|\chi_{J}^{2}\left(\xi_{J}\right)\right\|_{N_{2}}\left\|\gamma_{J}^{2}\right\|_{N_{3}}\left\|\nabla \xi_{I}\right\|_{N_{4}} \\
		&\lesssim_{n, p, h, N} \mu_q^{-N}\delta_{q+1}.
		\end{aligned}
		$$
		where we used \eqref{est on Dtl error N}, \eqref{est on nabla xi and xi^-1}, \eqref{est on nabla xi-Id}, \eqref{est on chi_I}, and \eqref{est on gamma_I phi}. Similarly, we can also achieve
		\begin{align*}
		\left\|\partial_{t}^{r} \mathcal{M}_{I}\right\|_{N} \lesssim_{n, p, h, N}\mu_q^{-N-r}\delta_{q+1},\quad
		\left\|\DTL^{s}\partial_{t}^{k}\nabla^i \mathcal{M}_{I}\right\|_{N}+\left\|\DTL^{s}\nabla^{k+i} \mathcal{M}_{I}\right\|_{N} \lesssim_{n, p, N, M}   \mu_q^{-N-k-i}\tau_{q}^{-s}\delta_{q+1}.
		\end{align*}
		Then, for $I\in\mathscr{I}_R$, we have
		\begin{equation}\label{est on gamma_I R}
		\left\|\partial_{t}^{r} \gamma_{I}\right\|_{N}\lesssim_{n, p, h, N}	\mu_q^{-N-r}\delta_{q+1}^{\frac{1}{2}},\quad
		\left\|\DTL^{s}\partial_{t}^{k}\nabla^i\gamma_{I}\right\|_{N}+\left\|\DTL^{s}\nabla^{k+i} \gamma_{I}\right\|_{N}  \lesssim_{n, p, N, M}   \mu_q^{-N-k-i}\tau_{q}^{-s}\delta_{q+1}^{\frac{1}{2}}.
		\end{equation}
		The implicit constants in
		these inequality can be chosen to be independent of $I$ because of the finite cardinality of
		$f_I$. Moreover, the implicit constants in the first inequality of \eqref{est on chi_I}--\eqref{est on gamma_I R} can be chosen independent $M$ because of the fact that the implicit constants in \eqref{est on pt xi}, \eqref{est on pt nabla xi}, \eqref{est on pt det nabla xi}, and \eqref{est on pt error} are also independent of $M$. Based on the definition of $\tilde{c}_{u,k},\tilde{d}_{u,k},\tilde{n}_{u,k}$ in \eqref{def of cdn}, we could use \eqref{est on theta_I}, \eqref{est on chi_I}, \eqref{est on gamma_I phi}, and \eqref{est on gamma_I R} to obtain \eqref{est on Dtl cof of cuk}--\eqref{est on Dtl cof of nuk} and \eqref{est on cof of cuk}--\eqref{est on cof of nuk}. Moreover, we have
		\begin{equation}\label{est on B_Ik}
		\begin{aligned}
		\left\|\partial_{t}^{r} B_{I,k}\right\|_{N}\lesssim_{n, p, h, N}\mu_q^{-N-r}\underset{I}{\max}|\overset{\circ}{b}_{I,k}|,\quad
		\left\|\DTL^{s}\partial_{t}^{k}\nabla^iB_{I,k}\right\|_{N}+\left\|\DTL^{s}\nabla^{k+i} B_{I,k}\right\|_{N}  \lesssim_{n, p, N, M}   \mu_q^{-N-k-i}\tau_{q}^{-s}\underset{I}{\max}|\overset{\circ}{b}_{I,k}|.
		\end{aligned}
	\end{equation}
	Combining it with \eqref{est on nabla xi and xi^-1}--\eqref{est on Dtl nabla xi}, and choosing parameters in Lemma \ref{New building blocks} as $\tau=\tau_{q},\ \ell_*=\ell,\ \mu=\mu_q,\ \tlm=\lambda_{q+1},\ a=B_{I,k},\ \upsilon=m_\ell/n$, we can immediately verify that the assumption \eqref{assume in building blocks} in Lemma \ref{New building blocks} holds. Furthermore, \eqref{est on Dtl cof of a}--\eqref{est on Dtl cof of seg} and \eqref{est on cof of a}--\eqref{est on cof of tilde eg 0}  can be deduced from \eqref{est on cof N+r 1}--\eqref{est on cof N+s} in Lemma \ref{New building blocks}.
	\end{proof}
The following estimates on the perturbation are a direct consequence.
\begin{pp}\label{est on perturbation}
	For any $0<\alpha<\frac{1}{3}$,  let the parameters $\bar{b}_3(\alpha)$ and $\Lambda_3$ be as in the statement of Proposition \ref{est on backward flow}, and let $\|\cdot\|_N=\|\cdot\|_{C^0(\cI^q;C^N(\T^3))}$.  Then, for any $b\in(1,\overline{b}_3(\alpha))$, $\lambda_0\geqslant\Lambda_3$, and $I=(u,\upsilon,f)\in\mathscr{I}$, we have
	\begin{align}	
	\lambda_{q+1}\left\|\partial_{t} ^r\tcA\right\|_N+\left\|\partial_{t} ^r\tilde{A}\right\|_N&\lesssim_{n, p, h}\lambda_{q+1}^{N+r-2}\delta_{q+1}^{\frac{1}{2}},\label{est on A}\\
	\left\|\partial_{t} ^r\tE_p\right\|_N+\left\|\partial_{t} ^r\tB_p\right\|_N&\lesssim_{n, p, h}\lambda_{q+1}^{N+r-1}\delta_{q+1}^{\frac{1}{2}},\label{est on E,B}\\
	\left\|\partial_{t} ^r\tm_{EB}\right\|_N+\left\|\partial_{t} ^r\tm_p\right\|_N&\lesssim_{n, p, h}\lambda_{q+1}^{N+r}\delta_{q+1}^{\frac{1}{2}},\label{est on tm_EBp}\\	
	\left\|\partial_{t} ^r(\tm_{EB}-\tm_p)\right\|_N&\lesssim_{n, p, h}\lambda_{q+1}^{N+r}(\lambda_{q+1}\mu_q)^{-1}\delta_{q+1}^{\frac{1}{2}},\label{est on tm_EB-tm_p}\\
	\left\|\partial_{t}^s\tm\right\|_N&\lesssim_{n, p, h}\lambda_{q+1}^{N+s}\delta_{q+1}^{\frac{1}{2}},\label{est on tm}\\
	\left\|\partial_{t}^s\tm_c\right\|_N&\lesssim_{n, p, h}\lambda_{q+1}^{N+s}(\lambda_{q+1}\mu_q)^{-1}\delta_{q+1}^{\frac{1}{2}},\label{est on tm_c}\\
	\left\|\partial_{t} ^s\tE\right\|_N+\left\|\partial_{t} ^s\tB\right\|_N&\lesssim_{n, p, h}\lambda_{q+1}^{N+s-1}\delta_{q+1}^{\frac{1}{2}},\label{est on tEB}
	\end{align}
where $0\leqslant N+r\leqslant 2$. Moreover, we could have the following estimates on advective derivatives, 
\begin{align}	
	\lambda_{q+1}\left\|\DTL ^s\tcA\right\|_N+\left\|\DTL ^s\tilde{A}\right\|_N&\lesssim_{n, p, h, M}\lambda_{q+1}^{N-2}\tau_{q}^{-s}\delta_{q+1}^{\frac{1}{2}},\label{est on Dtl A}\\
	\left\|\DTL ^s\tE_p\right\|_N+\left\|\DTL ^s\tB_p\right\|_N&\lesssim_{n, p, h, M}\lambda_{q+1}^{N-1}\tau_{q}^{-s}\delta_{q+1}^{\frac{1}{2}},\label{est on Dtl E,B}\\
	\left\|\DTL ^s\tm_{EB}\right\|_N+\left\|\DTL ^s\tm_p\right\|_N&\lesssim_{n, p, h, M}\lambda_{q+1}^{N}\tau_{q}^{-s}\delta_{q+1}^{\frac{1}{2}},\label{est on Dtl tm_EBp}\\
	\left\|\DTL ^s(\tm_{EB}-\tm_p)\right\|_N&\lesssim_{n, p, h, M}\lambda_{q+1}^{N}\tau_{q}^{-s}\delta_{q+1}^{\frac{1}{2}},\label{est on Dtl tm_EB-tm_p}\\
	\left\|\DTL ^s\tm\right\|_N&\lesssim_{n, p, h, M}\lambda_{q+1}^N\tau_{q}^{-s}\delta_{q+1}^{\frac{1}{2}},\label{est on Dtl tm}\\
	\left\|\DTL ^s\tm_c\right\|_N&\lesssim_{n, p, h, M}\lambda_{q+1}^{N}\tau_{q}^{-s}\delta_{q+1}^{\frac{1}{2}},\label{est on Dtl tm_c}\\
	\left\|\DTL ^s\tE\right\|_N+\left\|\DTL ^s\tB\right\|_N&\lesssim_{n, p, h, M}\lambda_{q+1}^{N-1}\tau_{q}^{-s}\delta_{q+1}^{\frac{1}{2}},\label{est on Dtl tEB}
\end{align}
where $s=0,1,2$ and $0\leqslant N\leqslant \tilde{n}_0+1$. 
\end{pp}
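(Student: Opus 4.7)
The plan is to recognize that each of $\tcA,\tilde A,\tE_p,\tB_p,\tm_{EB}$, and $\tm_p$ is a sum of the building blocks constructed in Lemma \ref{New building blocks} with parameters $\tlm=\lambda_{q+1}$, $\mu=\mu_q$, $\tau=\tau_q$, $\ell_*=\ell$, $a=B_{I,k}$, $\upsilon=m_\ell/n$, and $(t_0,x_0)=(\tau_q u,2\pi\mu_q \upsilon)$. The hypothesis \eqref{assume in building blocks} is verified by combining \eqref{est on B_Ik} from Proposition \ref{est on cof 1} (bounds on $B_{I,k}$), \eqref{est on pt m_l / n N} (bounds on $m_\ell/n$), and \eqref{est on nabla xi and xi^-1}--\eqref{est on Dtl nabla xi}, \eqref{est on mixed nabla xi 2}--\eqref{est on mixed xi 2} (bounds on $\xi_I$ and its derivatives); the smallness condition $\|\nabla\xi_I-\Id\|_0+\|(\nabla\xi_I)^{\top}-(\nabla\xi_I)^{-1}\|_0\lesssim (\lambda_{q+1}\mu_q)^{-1}$ is supplied by \eqref{est on nabla xi-Id} and \eqref{est on nabla xi^T - nabla xi^-1}. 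Thus Lemma \ref{New building blocks} applies with an implicit constant depending on $|f_I|$, and the Fourier estimates \eqref{est on N+r}--\eqref{est on m_pk 2} of the individual building blocks are available.

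The estimates \eqref{est on A}--\eqref{est on tm_EBp} and \eqref{est on Dtl A}--\eqref{est on Dtl tm_EBp} then follow by summing the pointwise bounds from Lemma \ref{New building blocks} over $I,u,k$. The sums over $I$ and over $u$ are finite on a neighbourhood of any $(t,x)$ because of the disjointness of supports of $\theta_I\chi_I(\xi_I)$ discussed after \eqref{def of cdn}, while the sum over $k\in\Z^3\setminus\{0\}$ is absorbed using \eqref{est on sum of cof 1} (with $\tilde n_0$ chosen exactly so that $\sum_k|k|^{\tilde n_0+2}|\overset{\circ}{b}_{I,k}|\lesssim 1$); this is what allows the advective-derivative estimates to run up to $N\leqslant\tilde n_0+1$. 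The crucial observation for the $\DTL^s$ bounds is the identity $\DTL\xi_I=0$ on the relevant time interval, so $\DTL$ acts only on the coefficients $\cta_k,\ca_k,\ce_k,\cg_k,\cm_k,\cm_{p,k}$ and not on $e^{i\lambda_{q+1}k\cdot\xi_I}$; this kills the would-be factor of $\lambda_{q+1}$ and gives the $\tau_q^{-s}$ rather than $\lambda_{q+1}^s$ scaling in \eqref{est on Dtl A}--\eqref{est on Dtl tm_EBp}.

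The gain $(\lambda_{q+1}\mu_q)^{-1}$ in \eqref{est on tm_EB-tm_p} comes from \eqref{est on m_pk 1}. On the support of $\theta_I\chi_I(\xi_I)$ one has $|t-\tau_q u|\lesssim\tau_q$ and $|\xi_I(t,x)-2\pi\mu_q\upsilon|\lesssim\mu_q$; combining \eqref{est on pt xi + m_l/n}, \eqref{est on Id-nabla xi^-1}, \eqref{pp of parameter 1}--\eqref{geomoetry condition 1}, and the regularity bounds for $m_\ell/n$ in \eqref{est on pt m_l / n N} yields
\[
\left\|\tfrac{(\partial_t\xi_I)^2}{(\det\nabla\xi_I)^2}-(m_\ell/n)^2(\tau_q u,2\pi\mu_q\upsilon)\right\|_0\lesssim_n \tau_q(\|\partial_t(m_\ell/n)\|_0+\|\nabla(m_\ell/n)\|_0)+(\lambda_{q+1}\mu_q)^{-1}\lesssim_n(\lambda_{q+1}\mu_q)^{-1},
\]
by the definition of $\tau_q$ in \eqref{def of mu tau}. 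Substituting this into \eqref{est on m_pk 1} and summing in $I,u,k$ gives \eqref{est on tm_EB-tm_p}; the corresponding advective-derivative bound \eqref{est on Dtl tm_EB-tm_p} comes directly from \eqref{est on m_pk 2}.

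It remains to control the time corrector, since $\tm=\tm_{EB}+\tm_t$, $\tE=\tE_p+\tE_t$, $\tB=\tB_p$, and $\tm_c=\tm_t+(\tm_{EB}-\tm_p)$. Because $\tm_t,\tE_t$ depend only on $t$, all spatial derivatives vanish and it suffices to bound $\tE_t$ and $\partial_t\tE_t=\tm_t$ in $C_t^0$. The ODE \eqref{Eq of tE_t} with zero initial data is a harmonic oscillator with positive frequency $\int n\,dx$, so standard energy estimates reduce everything to controlling the right-hand side
\[
F(t):=-\int_{\T^3}\left(n\tE_p+\tm_{EB}\times B_q+m_q\times\tB_p+\tm_{EB}\times\tB_p\right)\rd x
\]
and its first time derivative in $L^\infty_t$. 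Each term is either linear or bilinear in the highly oscillatory objects $\tE_p,\tB_p,\tm_{EB}$; the main point, which is the most delicate step in the argument, is that integration in $x$ against a non-oscillatory factor (namely $n$, $B_q$, or $m_q$) extracts a factor of $(\lambda_{q+1}|k|)^{-1}$ via integration by parts against $e^{i\lambda_{q+1}k\cdot\xi_I}$, while the quadratic term $\tm_{EB}\times\tB_p$ produces a non-resonant low-frequency part controlled analogously by stationary-phase arguments. Using the amplitudes from \eqref{est on cof of seg}--\eqref{est on cof of tilde eg 0} and summing in $k$ via \eqref{est on sum of cof 1} yields $\|F\|_{C_t^0}+\tau_q\|F'\|_{C_t^0}\lesssim_{n,p,h}(\lambda_{q+1}\mu_q)^{-1}\delta_{q+1}^{1/2}$, which upon integration of the ODE gives $\|\partial_t^r\tE_t\|_0\lesssim(\lambda_{q+1}\mu_q)^{-1}\lambda_{q+1}^{r-1}\delta_{q+1}^{1/2}$ and $\|\partial_t^r\tm_t\|_0\lesssim\lambda_{q+1}^r(\lambda_{q+1}\mu_q)^{-1}\delta_{q+1}^{1/2}$. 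Combining this with \eqref{est on tm_EB-tm_p} yields \eqref{est on tm}--\eqref{est on tm_c} and \eqref{est on tEB}, and the corresponding advective-derivative bounds \eqref{est on Dtl tm}--\eqref{est on Dtl tEB} follow by writing $\DTL=\partial_t+(m_\ell/n)\cdot\nabla$ and using the purely temporal nature of $\tm_t,\tE_t$ together with \eqref{est on pt m_l / n N}. The principal obstacle throughout is this last step: correctly extracting the factor $(\lambda_{q+1}\mu_q)^{-1}$ from the oscillatory integrals defining $F(t)$, so that the corrector does not inflate the size of $\tm$ beyond the leading perturbation $\tm_p$.
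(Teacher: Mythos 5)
Your proof follows the paper's strategy for the building-block bounds \eqref{est on A}--\eqref{est on tm_EB-tm_p} and \eqref{est on Dtl A}--\eqref{est on Dtl tm_EB-tm_p}: identify the perturbation as a sum of the blocks from Lemma~\ref{New building blocks}, verify \eqref{assume in building blocks}, use $\DTL\xi_I=0$, and sum in $I,u,k$ via \eqref{est on sum of cof 1}. The treatment of \eqref{est on tm_EB-tm_p} via \eqref{est on m_pk 1} and the splitting of $(k\cdot(m_\ell/n)(\tau_qu,2\pi\mu_q\upsilon))^2-\frac{(k\cdot\partial_t\xi_I)^2}{(\det\nabla\xi_I)^2}$ is also as in the paper. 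Up to there, your argument is sound.

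However, there is a genuine gap in the treatment of the time corrector. You claim that integration by parts against the non-oscillatory factor ($n$, $B_q$, or $m_q$) gives $\|F\|_{C_t^0}\lesssim_{n,p,h}(\lambda_{q+1}\mu_q)^{-1}\delta_{q+1}^{1/2}$, and then assert $\|\partial_t^r\tE_t\|_0\lesssim(\lambda_{q+1}\mu_q)^{-1}\lambda_{q+1}^{r-1}\delta_{q+1}^{1/2}$. These two claims are inconsistent: from the bounded-frequency ODE one only gets $\|\tE_t\|_{C^2}\lesssim\|F\|_{C_t^0}$, so the intermediate bound you state can at best yield $\|\tE_t\|_0\lesssim(\lambda_{q+1}\mu_q)^{-1}\delta_{q+1}^{1/2}$, which is larger than the required $\lambda_{q+1}^{-1}\delta_{q+1}^{1/2}$ for \eqref{est on tEB} by a factor of $\mu_q^{-1}\gg1$ (recall $\mu_q^{-1}\ll\lambda_{q+1}$, so $\lambda_{q+1}\mu_q\gg1$, i.e. $(\lambda_{q+1}\mu_q)^{-1}\gg\lambda_{q+1}^{-1}$). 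The delicate term is $\int_{\T^3}\tm_{EB}\times B_q\,\rd x$: a single integration by parts against $e^{i\lambda_{q+1}k\cdot\xi_I}$ costs you one factor of $\mu_q^{-1}$ (from the derivative of $s_{u,k}+\tilde e_{u,k}+\tilde g_{u,k}$), and since $\tm_{EB}$ itself carries no $\lambda_{q+1}^{-1}$ prefactor (unlike $\tE_p$, $\tB_p$), this only produces $\frac{\delta_{q+1}^{1/2}}{\lambda_{q+1}\mu_q}$ and not the required $\frac{\delta_{q+1}^{1/2}}{\lambda_{q+1}(\lambda_{q+1}\mu_q)}$. The paper supplies the missing $\lambda_{q+1}^{-1}$ by exploiting the potential structure: writing $\tm_{EB}=-\nabla\times(\partial_{tt}\tcA+\tB)$ and using $\Div B_q=0$, one obtains $\tm_{EB}\times B_q=\nabla\bigl((\partial_{tt}\tcA+\tB)\cdot B_q\bigr)-\Div\bigl((\partial_{tt}\tcA+\tB)\otimes B_q\bigr)-(\nabla B_q)^{\top}(\partial_{tt}\tcA+\tB)$, so the gradient and divergence terms integrate to zero and the remainder has the $\lambda_{q+1}^{-1}\delta_{q+1}^{1/2}$ amplitude of $\partial_{tt}\tcA+\tB$, after which a single application of Lemma~\ref{est on int operator} closes the bound. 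This step is essential and not supplied by your argument.

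Two smaller inaccuracies: the quadratic term $\int\tm_{EB}\times\tB_p\,\rd x$ does not require any stationary-phase argument — the direct bound $\|\tm_{EB}\times\tB_p\|_0\lesssim\delta_{q+1}/\lambda_{q+1}$, together with $\delta_{q+1}^{1/2}\leqslant(\lambda_{q+1}\mu_q)^{-1}$ from \eqref{relation}, already suffices; and the weighting in your claimed estimate $\|F\|_{C_t^0}+\tau_q\|F'\|_{C_t^0}$ is off, since differentiating $F$ in time brings down a factor $\sim\lambda_{q+1}$, not $\tau_q^{-1}$.
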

\begin{proof}
Noticing that $\DTL  (e^{i\lambda_{q+1}k\cdot\xi_{I}})=0$, and combining it with \eqref{est on sum of cof 1}, \eqref{est on sum of cof 2}, and the estimates on those coefficients in Proposition \ref{est on cof 1}, we could get \eqref{est on A}--\eqref{est on tm_EBp} and \eqref{est on Dtl A}--\eqref{est on Dtl tm_EBp} from their definition. In order to get \eqref{est on tm_EB-tm_p} and \eqref{est on Dtl tm_EB-tm_p}, we first calculate
$$
\begin{aligned}
\tm_{EB}-\tm_p=\delta_{q+1}^{\frac{1}{2}}\sum_u\sumkO\sum_{I:u_I=u}
\left(\cm_{k}(f,B_{I,k},m_\ell/n,\lambda_{q+1})-\cm_{p,k}(f,B_{I,k},m_\ell/n,\lambda_{q+1},\tau_q u,2\pi\mu_q\upsilon)\right).
\end{aligned}
$$
Notice that
$$
\begin{aligned}
&\quad\left(k\cdot(m_\ell/n)(\tau_q u,2\pi\mu_q\upsilon)\right)^2-\frac{(k\cdot\partial_{t}\xi_{I})^2}{(\mathrm{det}(\nabla\xi_I))^2}\\
&=\left(k\cdot(m_\ell/n)(\tau_q u,2\pi\mu_q\upsilon)\right)^2-\left(k\cdot(m_\ell/n)(t,x)\right)^2+\left(k\cdot(m_\ell/n)(t,x)\right)^2-\left(k\cdot\partial_{t}\xi_{I}\right)^2+\left(k\cdot\partial_{t}\xi_{I}\right)^2-\frac{(k\cdot\partial_{t}\xi_{I})^2}{(\mathrm{det}(\nabla\xi_I))^2}.
\end{aligned}
$$
Then, we can use \eqref{geomoetry condition 1} to get
$$
\begin{aligned}
\left\|(m_\ell/n)(\tau_q u,2\pi\mu_q\upsilon)-(m_\ell/n)(t,x)\right\|_0&\leqslant2\mu_q\left\|\nabla (m_\ell/n)\right\|_0+2\tau_{q}\left\|\partial_{t}(m_\ell/n)\right\|_0\lesssim_n (\lambda_{q+1}\mu_q)^{-1}.
\end{aligned}
$$
By combining it with \eqref{est on pt m_l / n N}, \eqref{est on nabla xi-Id}, \eqref{est on pt xi + m_l/n}, and \eqref{est on pt xi}, we have
$$
\begin{aligned}
&\quad\left\|\left(k\cdot(m_\ell/n)(\tau_q u,2\pi\mu_q\upsilon)\right)^2-\frac{(k\cdot\partial_{t}\xi_{I})^2}{(\mathrm{det}(\nabla\xi_I))^2}\right\|_0\\
&=\left\|\left(k\cdot(m_\ell/n)(\tau_q u,2\pi\mu_q\upsilon)\right)^2-\left(k\cdot(m_\ell/n)(t,x)\right)^2\right\|_0+\left\|\left(k\cdot(m_\ell/n)(t,x)\right)^2-\left(k\cdot\partial_{t}\xi_{I}\right)^2\right\|_0+\left\|\left(k\cdot\partial_{t}\xi_{I}\right)^2-\frac{(k\cdot\partial_{t}\xi_{I})^2}{(\mathrm{det}(\nabla\xi_I))^2}\right\|_0\\
&\lesssim_{n, p, h} (\lambda_{q+1}\mu_q)^{-1}.
\end{aligned}
$$
Then, we can use \eqref{est on m_pk 1} and \eqref{est on m_pk 2} in Lemma \ref{New building blocks} to obtain \eqref{est on tm_EB-tm_p} and \eqref{est on Dtl tm_EB-tm_p}, if we choose parameters as $\tau=\tau_{q},\ \ell_*=\ell,\ \mu=\mu_q,\ \tlm=\lambda_{q+1},\ a=B_{I,k},\ \upsilon=m_\ell/n$.\par
Next, we consider the estimates on $\tm_t$ and $\tE_t$. Recall \eqref{Eq of tE_t}, by using \eqref{ode ypp}, we can get the following estimate:
\begin{equation}\notag
	\begin{aligned}
		\left\|\tE_t\right\|_{C^2(\cI^q;\R^3)}&\lesssim_{n}\left\|\int_{\T^3}\left(n \tE_p+\tm_{EB}\times B_{q}+m_{q}\times \tB+\tm_{EB}\times \tB \right)\rd x\right\|_{C^0(\cI^q;\R^3)}\\
		&\lesssim_{n}\left\|\int_{\T^3}\left(n \tE_p+m_{q}\times \tB+\tm_{EB}\times \tB \right)\rd x\right\|_{C^0(\cI^q;\R^3)}+\left\|\int_{\T^3}\left(\tm_{EB}\times B_{q}\right)\rd x\right\|_{C^0(\cI^q;\R^3)}.
	\end{aligned}
\end{equation}
First, we could use Lemma \ref{est on int operator} to  obtain
\begin{align*}
	&\quad\left\|\int_{\T^3}\left(n \tE_p +m_{q}\times \tB+\tm_{EB}\times \tB\right)\rd x\right\|_{C^0(\cI^q;\R^3)}\\
	&=\frac{\delta_{q+1}^{\frac{1}{2}}}{\lambda_{q+1}}\left\|\int_{\T^3}\sum_u\sumkO (ne_{u,k}+m_q\times g_{u,k})e^{i\lambda_{q+1}k\cdot\xi_{I}}\rd x\right\|_{C^0(\cI^q;\R^3)}+\left\|\int_{\T^3}\left(\tm_{EB}\times \tB\right)\rd x\right\|_{C^0(\cI^q;\R^3)}\\
	&\lesssim\frac{\delta_{q+1}^{\frac{1}{2}}}{\lambda_{q+1}}\sum_u\sumkO\frac{\|ne_{u,k}+m_q\times g_{u,k}\|_1+\|ne_{u,k}+m_q\times g_{u,k}\|_0\|\nabla\xi_{I}\|_1}{\lambda_{q+1}|k|}+\left\|\tm_{EB}\times \tB\right\|_{C^0(\cI^q;\R^3)}\\
	&\lesssim_{n, p, h}\frac{M\delta_{q+1}^{\frac{1}{2}}}{\lambda_{q+1}(\lambda_{q+1}\mu_q)}+\frac{\delta_{q+1}}{\lambda_{q+1}}\lesssim_{n, p, h}\frac{M\delta_{q+1}^{\frac{1}{2}}}{\lambda_{q+1}(\lambda_{q+1}\mu_q)}\lesssim_{n, p, h}\frac{\delta_{q+1}^{\frac{1}{2}}}{\lambda_{q+1}},
\end{align*}
where we have used \eqref{est on m_q}, \eqref{est on nabla xi and xi^-1}, \eqref{pp of Lambda_3}, and \eqref{est on cof of eg}. To get the estimate for the second term, we calculate 
$$
\begin{aligned}
\tm_{EB}\times B_{q}&=-\left(\nabla\times(\partial_{tt}\mcA+\tB)\right)\times B_q=\left(\nabla\left(\partial_{tt}\tcA+\tB\right)\right)^{\top}B_q-(B_q\cdot\nabla)\left(\partial_{tt}\tcA+\tB\right)\\
	&=\nabla\left(\left(\partial_{tt}\tcA+\tB\right)\cdot B_q\right)-\Div\left(\left(\partial_{tt}\tcA+\tB\right)\otimes B_q\right)-(\nabla B_q)^{\top}\left(\partial_{tt}\tcA+\tB\right),
\end{aligned}
$$
where we have used $\Div B_q=0$. Then, we have
$$
\int_{\T^3}\left(\tm_{EB}\times B_{q}\right)\rd x=-\int_{\T^3}(\nabla B_q)^{\top}\left(\partial_{tt}\tcA+\tB\right)\rd x,
$$
and we could also use Lemma \ref{est on int operator} to get
\begin{align*}
&\quad\left\|\int_{\T^3}\left(\tm_{EB}\times B_{q}\right)\rd x\right\|_{C^0(\cI^q;\R^3)}\\
&=\left\|\int_{\T^3}(\nabla B_q)^{\top}\left(\partial_{tt}\tcA+\tB\right)\rd x\right\|_{C^0(\cI^q;\R^3)}\\
&=\frac{\delta_{q+1}^{\frac{1}{2}}}{\lambda_{q+1}}\left\|\int_{\T^3}\sum_u\sumkO(\nabla B_q)^{\top}(\lambda_{q+1}^{-2}\partial_{tt}\ta_{u,k}+2i\lambda_{q+1}^{-1}(k\cdot\partial_{t}\xi_{I})\partial_{t}\ta_{u,k})e^{i\lambda_{q+1}k\cdot\xi_{I}}\rd x\right\|_{C^0(\cI^q;\R^3)}\\
&\quad+\frac{\delta_{q+1}^{\frac{1}{2}}}{\lambda_{q+1}}\left\|\int_{\T^3}\sum_u\sumkO(\nabla B_q)^{\top}((i\lambda_{q+1}^{-1}(k\cdot\partial_{tt}\xi_{I})-(k\cdot\partial_{t}\xi_{I})^2)\ta_{u,k}+g_{u,k})e^{i\lambda_{q+1}k\cdot\xi_{I}}\rd x\right\|_{C^0(\cI^q;\R^3)}\\
&\lesssim\frac{\delta_{q+1}^{\frac{1}{2}}}{\lambda_{q+1}}\sum_u\sumkO\frac{\left\|(\nabla B_q)^{\top}(\lambda_{q+1}^{-2}\partial_{tt}\ta_{u,k}+2i\lambda_{q+1}^{-1}(k\cdot\partial_{t}\xi_{I})\partial_{t}\ta_{u,k}+(i\lambda_{q+1}^{-1}(k\cdot\partial_{tt}\xi_{I})-(k\cdot\partial_{t}\xi_{I})^2)\ta_{u,k}+g_{u,k})\right\|_1}{\lambda_{q+1}|k|}\\
&\quad +\frac{\delta_{q+1}^{\frac{1}{2}}}{\lambda_{q+1}}\sum_u\sumkO\frac{\left\|(\nabla B_q)^{\top}(\lambda_{q+1}^{-2}\partial_{tt}\ta_{u,k}+2i\lambda_{q+1}^{-1}(k\cdot\partial_{t}\xi_{I})\partial_{t}\ta_{u,k}+(i\lambda_{q+1}^{-1}(k\cdot\partial_{tt}\xi_{I})-(k\cdot\partial_{t}\xi_{I})^2)\ta_{u,k}+g_{u,k})\right\|_0\|\nabla\xi_{I}\|_1}{\lambda_{q+1}|k|}\\
&\lesssim_{n, p, h}\frac{M\delta_{q+1}^{\frac{1}{2}}}{\lambda_{q+1}(\lambda_{q+1}\mu_q)}\lesssim_{n, p, h}\frac{\delta_{q+1}^{\frac{1}{2}}}{\lambda_{q+1}},
\end{align*}
where we have used \eqref{est on nabla xi and xi^-1}, \eqref{est on pt nabla xi}, \eqref{pp of Lambda_3}, \eqref{est on cof of a}, and \eqref{est on cof of eg}. Similarly, we could get
\begin{equation}\notag\label{Eq of pt tE_t}
	\partial_{tt}(\partial_{t}\tE_t)+\left(\int_{\T^3}n\rd x\right)\partial_{t}\tE_t=-\left(\int_{\T^3}\partial_{t}n\rd x\right)\tE_t-\int_{\T^3}\partial_{t}\left(n \tE_p+\tm_{EB}\times B_{q}+m_{q}\times \tB+\tm_{EB}\times \tB \right)\rd x,
\end{equation}
and  then
\begin{align*}
		\left\|\tE_t\right\|_{C^3(\cI^q;\R^3)}&\lesssim_{n}\left\|\int_{\T^3}\partial_{t}\left(n \tE_p+\tm_{EB}\times B_{q}+m_{q}\times \tB+\tm_{EB}\times \tB \right)\rd x\right\|_{C^0(\cI^q;\R^3)}+\left\|\tE_t\right\|_{C^0(\cI^q;\R^3)}\\
		&\lesssim_{n}\lambda_{q+1}\left\|\int_{\T^3}\left(n \tE_p+(\nabla B_q)^{\top}\left(\partial_{tt}\tcA+\tB\right)+m_{q}\times \tB+\tm_{EB}\times \tB \right)\rd x\right\|_{C^0([0,T];\R^3)}\\&\lesssim_{n, p, h}\frac{M\delta_{q+1}^{\frac{1}{2}}}{\lambda_{q+1}\mu_q}\lesssim_{n, p, h}
		\delta_{q+1}^{\frac{1}{2}}.
	\end{align*}
Up to now, we could obtain
\begin{align}
	&\left\|\tE_t\right\|_{C^2(\cI^q;\R^3)}
	\lesssim_{n, p, h}\frac{M\delta_{q+1}^{\frac{1}{2}}}{\lambda_{q+1}(\lambda_{q+1}\mu_q)}\lesssim_{n, p, h}\frac{\delta_{q+1}^{\frac{1}{2}}}{\lambda_{q+1}},\quad\left\|\tE_t\right\|_{C^3(\cI^q;\R^3)}
	\lesssim_{n, p, h}\frac{M\delta_{q+1}^{\frac{1}{2}}}{\lambda_{q+1}\mu_q}\lesssim_{n, p, h}\delta_{q+1}^{\frac{1}{2}}, \label{est on tE_t}\\
	&\left\|\tm_t\right\|_{C^1(\cI^q;\R^3)}
	\lesssim_{n, p, h}\frac{M\delta_{q+1}^{\frac{1}{2}}}{\lambda_{q+1}(\lambda_{q+1}\mu_q)}\lesssim_{n, p, h}\frac{\delta_{q+1}^{\frac{1}{2}}}{\lambda_{q+1}},\quad\left\|\tm_t\right\|_{C^2(\cI^q;\R^3)}
	\lesssim_{n, p, h}\frac{M\delta_{q+1}^{\frac{1}{2}}}{\lambda_{q+1}\mu_q}\lesssim_{n, p, h}\delta_{q+1}^{\frac{1}{2}}. \label{est on tm_t}
\end{align} 
Finally, we could obtain \eqref{est on tm}--\eqref{est on tEB} and \eqref{est on Dtl tm}--\eqref{est on Dtl tEB}, 
\begin{align*}
	\left\|\partial_{t}^s\tm\right\|_N&\leqslant\left\|\partial_{t} ^s\tm_{EB}\right\|_N+\left\|\partial_{t} ^s\tm_{t}\right\|_N\lesssim_{n, p, h}\lambda_{q+1}^{N+s}\delta_{q+1}^{\frac{1}{2}},\\
	\left\|\partial_{t}^s\tm_c\right\|_N&\leqslant\left\|\partial_{t} ^s(\tm_{EB}-\tm_p)\right\|_N+\left\|\partial_{t} ^s\tm_{t}\right\|_N\lesssim_{n, p, h}\lambda_{q+1}^{N+s}(\lambda_{q+1}\mu_q)^{-1}\delta_{q+1}^{\frac{1}{2}},\label{est on pt tm_c}\\
	\left\|\partial_{t} ^s\tE\right\|_N&\leqslant\left\|\partial_{t} ^s\tE_p\right\|_N+\left\|\partial_{t} ^s\tE_t\right\|_N\lesssim_{n, p, h}\lambda_{q+1}^{N+s-1}\delta_{q+1}^{\frac{1}{2}},\\
	\left\|\partial_{t} ^s\tB\right\|_N&\leqslant\left\|\partial_{t} ^s\tB_p\right\|_N\lesssim_{n, p, h}\lambda_{q+1}^{N+s-1}\delta_{q+1}^{\frac{1}{2}},
\end{align*}
for $0\leqslant N+r\leqslant 2$, and
\begin{align*}
	\left\|\DTL ^s\tm\right\|_N&\leqslant\left\|\DTL ^s\tm_{EB}\right\|_N+\left\|\DTL ^s\tm_{t}\right\|_N\lesssim_{n, p, h, M}\lambda_{q+1}^N\tau_{q}^{-s}\delta_{q+1}^{\frac{1}{2}},\\
	\left\|\DTL ^s\tm_c\right\|_N&\leqslant\left\|\DTL ^s(\tm_{EB}-\tm_p)\right\|_N+\left\|\DTL ^s\tm_{t}\right\|_N\lesssim_{n, p, h, M}\lambda_{q+1}^{N}\tau_{q}^{-s}\delta_{q+1}^{\frac{1}{2}},\\
	\left\|\DTL ^s\tE\right\|_N&\leqslant\left\|\DTL ^s\tE_p\right\|_N+\left\|\DTL ^s\tE_t\right\|_N\lesssim_{n, p, h, M}\lambda_{q+1}^{N-1}\tau_{q}^{-s}\delta_{q+1}^{\frac{1}{2}},\\
	\left\|\DTL ^s\tB\right\|_N&\leqslant\left\|\DTL ^s\tB_p\right\|_N\lesssim_{n, p, h, M}\lambda_{q+1}^{N-1}\tau_{q}^{-s}\delta_{q+1}^{\frac{1}{2}},
\end{align*}
for $s=0,1,2;0\leqslant N\leqslant \tilde{n}_0+1$.
\end{proof}
\section{Definition of the new errors}\label{Definition of the new errors}   
\subsection{New Reynolds stress}
We have constructed the perturbation $\tm$ and $m_{q+1}=m_q+\tm$.
Here, we will give new Reynolds stress $R_{q+1}$ and current $\varphi_{q+1}$ in the Euler-Maxwell-Reynolds system. We will use the inverse divergence operator $\cR$ to define the new error $R_{q+1}$ and the new current $\varphi_{q+1}$. More details about the inverse divergence operator can be found in Appendix \ref{Inverse divergence operator}. We can now define $R_{q+1}$ as follows:
$$
\begin{aligned}
\Div(nR_{q+1})&=\underbrace{ n \DTL  \frac{\tm_{EB}}{n} - \Div(m_q-m_\ell)\frac{\tm_{EB}}{ n}}_{=\nabla \cdot  ( n R_T)} 
+ \underbrace{\Div\left(\frac{\tm\otimes \tm}{ n} +  n (R_\ell - \delta_{q+1}   \Id) \right)}_{=:\nabla \cdot ( n  R_O)}+\underbrace{ \Div \left(\frac{m_q \otimes \tm_t}{n}+\frac{\tm_t \otimes m_q}{n}\right)}_{=\nabla \cdot  ( n R_t)}\\
&\quad+\underbrace{  (\tm_{EB}\cdot \nabla) \frac{m_\ell}{ n} }_{=:\nabla\cdot  ( n R_N)}+\underbrace{\Div \left(\frac{(m_q-m_\ell)\otimes \tm_{EB}}{ n} + \frac{\tm_{EB}\otimes (m_q-m_\ell)}{ n} +  n(R_q-R_\ell)\right) }_{=:\nabla\cdot  ( n R_M)}\\
&\quad+\underbrace{\cR\left(\partial_{t}\tm_t+n\tE_t+\tm_t\times B_q+n\tE_p+m_q\times\tB+\tm\times\tB-(\nabla B_q)^{\top}\left(\partial_{tt}\tcA+\tB\right)\right)}_{=:\nabla\cdot(nR_{EB1})}\\
&\quad+\underbrace{\Div\left(\left(\left(\partial_{tt}\tcA+\tB\right)\cdot B_q\right)\Id-\left(\partial_{tt}\tcA+\tB\right)\otimes B_q\right)}_{=:\nabla\cdot(nR_{EB2})},
\end{aligned}\label{new Reynolds error}
$$ 
and define $R_{EB}:=R_{EB1}+R_{EB2}$. Next, we decompose $R_O$ into
\[
\nabla\cdot (n R_O)=\underbrace{\Div\left(\frac{\tm_p\otimes \tm_p}{ n} +  n (R_\ell - \delta_{q+1}   \Id)\right)}_{=\nabla \cdot (n R_{O1})}
+ \underbrace{\Div\left(\frac{\tm_c\otimes \tm_p}{ n}+\frac{\tm_p\otimes \tm_c}{ n}+\frac{\tm_c \otimes \tm_c}{ n}\right) }_{=\nabla \cdot (n R_{O2})}.
\]
Moreover, $nR_{q+1}$ can be decomposed into two components: a trace-free part and another part that has a non-zero trace. Consider the part that has a non-zero trace:
\begin{align}
		n R_{O2} &:=  \frac{\tm_c\otimes \tm_p}{ n}+\frac{\tm_p\otimes \tm_c}{ n}+\frac{\tm_c \otimes \tm_c}{ n},\label{def of RO2}\\
		n R_{t} &:= \frac{m_q \otimes \tm_t}{n}+\frac{\tm_t \otimes m_q}{n},\label{def of Rt}\\
		n R_M &:=
		\underbrace{ n(R_q-R_\ell)}_{=n R_{M1}}+\underbrace{\frac{(m_q-m_\ell)\otimes \tm_{EB}}{ n} + \frac{\tm_{EB}\otimes (m_q-m_\ell)}{ n}}_{=n R_{M2}},\label{def of RM}\\
		nR_{EB2}&:=\left(\left(\partial_{tt}\tcA+\tB\right)\cdot B_q\right)\Id-\left(\partial_{tt}\tcA+\tB\right)\otimes B_q.
\end{align}
For the part of $R_{q+1}$ which is trace free, we will use the inverse divergence operator $\cR$  and set
\begin{align}
	n R_{O1} &:= \cR \left(\Div\left(\frac{\tm_p\otimes \tm_p}{n} +  n ( R_\ell - \delta_{q+1} \Id)\right) \right),\label{def of nR_01}\\
	n R_{N} &:= \cR \left((\tm_{EB} \cdot \nabla) (m_\ell/n)\right),\label{def of nR_N}\\
	n R_{T} &:= \cR \left( n \DTL  \frac{\tm_{EB}}{ n} - \Div(m_q-m_\ell)\frac{\tm_{EB}}{n}\right),\label{def of nR_T}\\ 
	nR_{EB1}&:=\cR\left(\partial_{t}\tm_t+n\tE_t+\tm_t\times B_q+n\tE_p+m_q\times\tB+\tm\times\tB-(\nabla B_q)^{\top}\left(\partial_{tt}\tcA+\tB\right)\right),\label{def of nR_EB1}
\end{align}
where we have used $n \DTL  \frac{\tm_{EB}}{ n} - \Div(m_q-m_\ell)\frac{\tm_{EB}}{n} = \partial_t \tm_{EB} + \Div \left(\frac{\tm_{EB} \otimes m_\ell}{n}\right)$, $(\tm_{EB} \cdot \nabla)(m_\ell/n) = \Div \left(\frac{m_\ell \otimes \tm_{EB}}{n} \right) $ and \eqref{Eq of tE_t}. So $R_{q+1}$ can be defined  as
\begin{equation}\label{split of Rq+1}
	R_{q+1} :=  R_T +  R_N +  R_{O} + R_M + R_t + R_{EB} + \frac 23 \frac{\zeta(t)}{n}\Id. 
\end{equation}
$\zeta$ which will be specified in Section \ref{New current} is a function of time which does not affect $\Div(n R_{q+1})$. Then, the trace of $R_{q+1}$ is
\[
\tr ( n R_{q+1}) = \tr ( n R_{O2} +  n R_{M} + n R_{t}+n R_{EB2}) + 2\zeta,
\]
and we define
\begin{equation}\label{askR}
	\kappa_{q+1} :=\frac{1}{2} \tr R_{q+1}= \frac{1}{2}  \tr( R_{O2} +  R_{M} + R_{t} +R_{EB2}) + \frac{\zeta}{n} .
\end{equation}
\subsection{New current}\label{New current}
Recall the energy equation
$$
\begin{aligned}
\partial_t\left(\frac{|m_q|^2}{2n}+ne(n)\right)+\Div\left(\frac{m_q}{n}\left(\frac{|m_q|^2}{2n}+ne(n)+p(n)\right)\right)+m_q\cdot E_q=nD_{t,q} k_q+\Div((R_q-c_q\Id)m_q)+\Div(n\varphi_q)+\partial_tH.
\end{aligned}
$$
We could calculate
\begin{equation}\notag
	\begin{aligned}
&\quad nD_{t,q+1}k_{q+1}+\Div(n\varphi_{q+1})\\
&=\underbrace{n D_{t,q} \left(\frac {|\tm|^2}{2n^2} + \kappa_q+ \frac{(m_q-m_\ell)\cdot \tm}{n^2} \right)
}_{=: n D_{t,q+1} \kappa_{q+1}- (\zeta_1+ \zeta_2+\zeta_3+\zeta_4+\zeta_5+\zeta_6)' + \nabla \cdot (n\varphi_T)} 
+ \underbrace{\nabla\cdot \left(\frac {|\tm|^2\tm}{2n^2} + n\varphi_\ell \right)}_{=\nabla\cdot(n\varphi_O)}
+\underbrace{\frac {\tm} {n}\cdot (\Div \UL\PL( n (R_q-c_q \Id)) + Q(m_q,m_q)) }_{=\nabla \cdot (n\varphi_{H1}) + \zeta_1'} \\
&\quad\underbrace{-\Div(R_{q+1} \tm)}_{= \nabla\cdot(n\varphi_R)}  + \underbrace{\nabla \cdot \left(
	\frac{|m_q-m_\ell|^2}{2n}
	\frac {\tm} {n}\right)+ \nabla \cdot (n(\varphi_q-\varphi_\ell)) }_{=\nabla\cdot(n\varphi_{M1})}
+\underbrace{ \frac {\tm} {n} \cdot \nabla (p(n)-p_\ell(n))
}_{=\nabla \cdot(n\varphi_{M4}) + \zeta_4'}\\
&\quad+ \underbrace{\nabla\cdot  \left(\left (\frac{\tm\otimes \tm}{n}+n R_q - \delta_{q+1}n \Id -n R_{q+1} \right) \frac{m_q-m_\ell}{n}\right) + \Div (m_q-m_\ell) \frac{\tm\cdot m_\ell}{n^2}} _{=\nabla\cdot(n\varphi_{M2})+\nabla\cdot(n\varphi_{M3})+ \zeta_2'}
\\
&\quad+ \underbrace{\left(\frac{\tm\otimes \tm}{n}+n R_q-\delta_{q+1}n\Id-n R_{q+1} + \frac{(m_q-m_\ell)\otimes \tm}{n}+ \frac{\tm\otimes (m_q-m_\ell)}{n} \right): \nabla (m_\ell/n)}_{=   \nabla\cdot(n\varphi_{H2})+  \zeta_3'}\\
&\quad+\underbrace{\tm\cdot\tE+\tm\cdot(E_{q}-E_{\ell})+ (m_q-m_\ell)\cdot\tE}_{\nabla \cdot(n\varphi_{E})+\zeta^\prime_{5}}+\underbrace{\frac{\tm\cdot m_\ell\times \tB}{n}+\frac{(m_q-m_{\ell})\cdot m_\ell\times \tB}{n}+\frac{\tm\cdot m_\ell\times (B_q-B_\ell)}{n}}_{\nabla \cdot(n\varphi_{B})+\zeta^\prime_{6}},
\end{aligned}
\end{equation}
where $D_{t,q} = \left(\partial_t + \frac{m_{q}}{ n}\cdot\nabla\right)$. The functions $ \zeta_i,i=1,2,3,4,5,6$, will be defined to invert the divergence. We define
\begin{align}
	n\varphi_O &:= 
	\underbrace{\mathcal{R}\left(\nabla\cdot \left(\frac {|\tm_p|^2\tm_p}{2n^2} +n \varphi_\ell \right)\right)}_{=: n\varphi_{O1}} 
	+ \underbrace{\frac {|\tm|^2\tm - |\tm_p|^2 \tm_p}{2n^2}  }_{=: n\varphi_{O2}}, \\
	n\varphi_R &:= -R_{q+1}\tm,\\
	n\varphi_{M1} &:=  \frac{|m_q-m_\ell|^2}{2n}\frac {\tm} {n}+ n(\varphi_q-\varphi_\ell),\\
	n\varphi_{M2} &:= \left (\frac{\tm\otimes \tm}{n}+n R_q- \delta_{q+1}n \Id -n R_{q+1}  \right) \frac{m_q-m_\ell}{n}.
\end{align}
 Recalling the definition of $\kappa_q$, $R_{O2}$, $R_M$,$R_t$ ,$R_{EB2}$ and $\kappa_{q+1} = \frac{1}{2} \tr (R_{O2} + R_M + R_{t} +R_{EB2}) + \frac{\zeta}{n}$, we can immediately get
\begin{equation}
	\begin{aligned}
	\frac {|\tm|^2}{2n^2} + \kappa_q + \frac{(m_q-m_\ell) \cdot \tm}{n^2}&= \frac {1} {2}\tr\left(  \frac{\tm_p\otimes \tm_p}{n^2} - \delta_{q+1} \Id + R_\ell \right) + \frac 32 \delta_{q+1}    
	+ \frac {1} {2}\tr( R_M + R_{O2})
	\\
	&=\frac 32 \delta_{q+1} + \kappa_{q+1} -\frac{\zeta}{n} +\frac {1} {2}\tr\left(  \frac{\tm_p\otimes \tm_p}{n^2}- \delta_{q+1} \Id + R_\ell- R_{t}-R_{EB2}\right).
	\end{aligned}
	\label{rest on Dtq}
\end{equation}
Let $\zeta = \sum_{i=0}^{6}\zeta_i$, where $\zeta_i$ will be defined in \eqref{def of zeta_0} and \eqref{def of zeta 1}--\eqref{def of zeta 6}. Noticing that $n D_{t,q} (\zeta/n) = \zeta' + \Div((m_q \zeta)/n)$, we could obtain 
\begin{align}
	\nabla \cdot   (n\varphi_T) + \zeta _0' &= 
	\Div \left(-\kappa_{q+1}  \tm+ \frac {1} {2} \tr\left(  \frac{\tm_p\otimes \tm_p}{n^2} - \delta_{q+1} \Id + R_\ell - R_{t}-R_{EB2} \right)(m_q-m_\ell) - \frac{m_q\zeta}{n}
	\right)\notag\\
	&\quad+\frac{n} {2} \DTL \tr\left(  \frac{\tm_p\otimes \tm_p}{n^2} - \delta_{q+1} \Id + R_\ell- R_{t}-R_{EB2}\right) \label{div n varphi_T +zeta0}\\
	&\quad- \frac {1} {2}\tr\left(  \frac{\tm_p\otimes \tm_p}{n^2} - \delta_{q+1} \Id + R_\ell - R_{t}-R_{EB2} \right)\Div(m_q-m_\ell).\notag
\end{align}
Then, we could decompose $ \varphi_T $ into two parts $\varphi_{T1}$ and $\varphi_{T2}$:
\begin{align}
	n\varphi_{T1} &= -\kappa_{q+1}  n+ \frac{1}{2} \tr\left(  \frac{\tm_p\otimes \tm_p}{n^2} - \delta_{q+1} \Id + R_\ell - R_{t}-R_{EB2}\right)(m_q-m_\ell) - \frac{m_q\zeta}{n},\label{def of n varphi_T1}\\
	\zeta_{0} (t) &= \int_0^t \Big\langle \frac{n}{2} \DTL \tr\left(  \frac{\tm_p\otimes \tm_p}{n^2}- \delta_{q+1} \Id + R_\ell - R_{t}-R_{EB2}\right)   \Big\rangle (s)  \rd  s\notag \\
	&\quad - \int_0^t   \Big\langle\frac{1}{2}\tr\left(  \frac{\tm_p\otimes \tm_p}{n^2} - \delta_{q+1} \Id + R_\ell - R_{t}-R_{EB2} \right)\Div(m_q-m_\ell)  \Big\rangle (s)  \rd  s, \label{def of zeta_0}\\
	n\varphi_{T2} &= \mathcal{R} \left(\frac{n}2 \DTL \tr\left(  \frac{\tm_p\otimes \tm_p}{n^2} - \delta_{q+1} \Id + R_\ell - R_{t}-R_{EB2} \right) 
	\right)\notag \\
	&\quad -\mathcal{R}\left( \frac{1}{2}\tr\left(  \frac{\tm_p\otimes \tm_p}{n^2} - \delta_{q+1} \Id + R_\ell - R_{t}-R_{EB2} \right)\Div(m_q-m_\ell) \right),\label{def of n varphi_T2}
\end{align}
where $ \zeta_0$ is defined to make the divergence equation solvable. By the definition of $\mathcal{R}$, we have
\begin{equation}\notag
	\mathcal{R} (g (t, \cdot)) := \mathcal{R} (g (t, \cdot)- f(t))
\end{equation}
for every smooth periodic time-dependent vector field $g$ and for every $f$ which depends only on time. Similarly, we can define
\begin{align}
	\zeta_1 (t) &:= \int_0^t \Big \langle \frac {\tm}{n}\cdot (\Div (\UL\PL(n(R_q-c_q \Id))) + Q(m_q,m_q)) \Big\rangle (s)  \rd  s, \label{def of zeta 1}\\
	\zeta_2 (t) &:= \int_0^t \Big\langle\Div (m_q-m_\ell) \frac{\tm\cdot m_\ell}{n^2}\Big\rangle (s)  \rd  s, \label{def of zeta 2}\\
	\zeta_{3} (t) &:= \int_0^t \Big\langle\left(\frac{\tm\otimes \tm}{n}+n R_q-\delta_{q+1}n\Id-\left(n R_{q+1} -\frac{2} {3} \zeta\Id\right)\right) : \nabla (m_\ell/n)\Big\rangle (s)  \rd  s\notag\\
	&\quad+ \int_0^t \Big\langle 
	\left(\frac{(m_q-m_\ell)\otimes \tm}{n}+ \frac{\tm\otimes (m_q-m_\ell)}{n} \right): \nabla (m_\ell/n)\Big\rangle (s)  \rd  s, \label{def of zeta 3} \\
	\zeta_4 (t) &:= \int_0^t \Big \langle \frac{\tm}{n} \cdot \nabla(p(n) - p_\ell (n)) \Big\rangle (s)  \rd  s, \label{def of zeta 4}\\
	\zeta_5 (t) &:= \int_0^t \Big \langle \tm\cdot\tE+\tm\cdot(E_{q}-E_{\ell})+ (m_q-m_\ell)\cdot\tE \Big\rangle (s)  \rd  s, \label{def of zeta 5}\\
	\zeta_6 (t) &:= \int_0^t \Big \langle\frac{\tm\cdot m_\ell\times \tB}{n}+\frac{(m_q-m_{\ell})\cdot m_\ell\times \tB}{n}+\frac{\tm\cdot m_\ell\times (B_q-B_\ell)}{n} \Big\rangle (s)  \rd  s, \label{def of zeta 6}
\end{align}
and
\begin{align}
	n\varphi_{H1} &:= \mathcal{R} \left(\frac {\tm} {n}\cdot (\Div (\UL\PL(n(R_q-c_q \Id))) + Q(m_q,m_q))\right), \label{def of n varphi_H1}\\
	n\varphi_{M3} &:= \mathcal{R} \left( 
	\Div (m_q-m_\ell) \frac{\tm \cdot m_\ell}{n^2} \right), \label{def of n varphi_M3}\\
	n\varphi_{M4} &:= \mathcal{R} \left( \frac{\tm}{n} \cdot \nabla(p(n) - p_\ell (n)) \right), \label{def of n varphi_M4}\\
	n \varphi_{H2} &:=\mathcal{R} \left(\left(\frac{\tm \otimes \tm}{n}-\delta_{q+1}n\Id+n R_q-\left(n R_{q+1}-\frac23\zeta \Id\right)\right) : \nabla (m_\ell/n) \right) \notag\\
	&\quad+ \mathcal{R} \left(\left(\frac{(m_q-m_\ell)\otimes \tm}{n}+ \frac{\tm \otimes (m_q-m_\ell)}{n} \right): \nabla (m_\ell/n)  \right) -  \frac{2m_\ell\zeta}{3n} ,\label{def of n varphi_H2} \\
	n\varphi_{E} &:=\cR\left(\tm\cdot\tE+\tm\cdot(E_{q}-E_{\ell})+ (m_q-m_\ell)\cdot\tE\right) ,\label{def of n varphi_E}\\
	n\varphi_{B} &:=\cR\left(\frac{\tm\cdot m_\ell\times \tB}{n}+\frac{(m_q-m_{\ell})\cdot m_\ell\times \tB}{n}+\frac{\tm\cdot m_\ell\times (B_q-B_\ell)}{n}\right), \label{def of n varphi_B}
\end{align}
where $\zeta_{3}$ is well-defined because $n R_{q+1} - (2\zeta)/3 \Id $ is independent of $\zeta$. 
In the next part, we will give the estimates on new Reynolds error and new current.
\section{Estimates on the Reynolds stress}\label{Estimates on the Reynolds stress}
In this section, we will give the estimates on the new Reynolds stress $R_{q+1}$ and its new advective derivative $  D_{t,q+1} R_{q+1} = \partial_t R_{q+1} + (m_{q+1}/n\cdot \nabla) R_{q+1}$. The estimates on the function $ \zeta (t)$ are akin to those for the new current, which will be detailed in the next section. For the remaining sections, we set $\left\|\cdot\right\|_N =\left\|\cdot\right\|_{C^0([0,T]+\tau_q; C^N(\T^3))}$ and fix $\tilde{n}_0 =\left\lceil\frac{2b(2+\alpha)}{(b-1)(1-\alpha)}\right\rceil$ so that
\begin{equation}\label{prop of tn_0}
\lambda_{q+1}^2({\lambda_{q+1}\mu_q})^{-(\tilde{n}_0+1)}\lesssim \delta_{q+1}^\frac{1}{2}.
\end{equation}
\begin{pp}\label{prop of Reynolds error}
	For any $0<\alpha<\frac{1}{7}$, let the parameters $\bar{b}_3(\alpha)$ and $\Lambda_3$ be as in the statement of Proposition \ref{est on backward flow}. Then, for any $1<\bar{b}(\alpha)<\bar{b}_3(\alpha)$, we can find  $\Lambda_4(\alpha,b,M,n,p,h)\geqslant\Lambda_3$ such that for any $\lambda_0\geqslant\Lambda_4$ we have the following estimates: 
\begin{align}
		\left\|R_{q+1} - \frac{2}{3}\zeta/n\Id\right\|_N &\leqslant C_{n, p, h, M}\lambda_{q+1}^N \cdot
		{\lambda_q^\frac{1}{2}}\lambda_{q+1}^{-\frac{1}{2}} \delta_q^\frac{1}{4}\delta_{q+1}^\frac{3}{4} 
		\leqslant \frac{1}{2} \lambda_{q+1}^{N-3\gamma} \delta_{q+2},\label{est on R_q+1-zeta} 
		\\
		\left\|D_{t,q+1} (R_{q+1} - \frac{2}{3}\zeta/n\Id)\right\|_{N-1} &\leqslant C_{n, p, h, M} \lambda_{q+1}^N \delta_{q+1}^\frac{1}{2}\cdot
		{\lambda_q^\frac{1}{2}}\lambda_{q+1}^{-\frac{1}{2}} \delta_q^\frac{1}{4}\delta_{q+1}^\frac{3}{4}  \leqslant \frac{1}{2}\lambda_{q+1}^{N-3\gamma}\delta_{q+1}^\frac{1}{2} \delta_{q+2},\label{est on Dtl R_q+1-zeta}
\end{align}
for $N=0,1,2$, where $C_{n, p, h, M}$ depends only upon $n$, $p$, $h$, and the $M=M(n,p,h)>1$ in Propositions \ref{Inductive proposition} and \ref{Bifurcating inductive proposition}.
\end{pp}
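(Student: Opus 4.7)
The plan is to bound each of the seven summands in the decomposition \eqref{split of Rq+1} of $R_{q+1}-\tfrac{2}{3}(\zeta/n)\Id$ separately, check that each contributes to the stated rate $\lambda_{q+1}^{N}\cdot\lambda_q^{1/2}\lambda_{q+1}^{-1/2}\delta_q^{1/4}\delta_{q+1}^{3/4}$, and then absorb the resulting constant $C_{n,p,h,M}$ into the factor $\tfrac12\lambda_{q+1}^{-3\gamma}\delta_{q+2}$ by choosing $\bar b(\alpha)$ close to $1$ and $\Lambda_4$ large, where the restriction $\alpha<1/7$ is imposed to meet the overall rate constraint. The benchmark is the mollification piece $R_{M1}=R_q-R_\ell$, which by \eqref{est on R_q-R_l} already saturates the target rate; every other piece must be shown to fall at or below this same level.

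The central step is the oscillation error $R_{O1}=n^{-1}\cR\bigl(\Div(\tm_p\otimes \tm_p/n+n(R_\ell-\delta_{q+1}\Id))\bigr)$. By the very design of the weights $\gamma_I$ in \eqref{Eq of Gamma_I f otimes f}--\eqref{Eq of Gamma_I R}, the zero-frequency component in the fast variable $\lambda_{q+1}\xi_I$ cancels exactly, leaving only the purely oscillatory sum $\sum_{u,k\neq0}\delta_{q+1}d_{u,k}\,e^{i\lambda_{q+1}k\cdot\xi_I}/n$ from \eqref{def of tm_p^2}. One then applies $\cR$ mode by mode using the inverse-divergence/stationary-phase lemma from the appendix, gaining a factor $\lambda_{q+1}^{-1}$ from the large phase; summability in $k$ is handled by \eqref{est on sum of cof 2} together with the choice of $\tilde n_0$ in \eqref{prop of tn_0}, while derivative losses in $\nabla\xi_I$ are controlled by \eqref{est on nabla xi and xi^-1}. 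Combining these with the parameter relation \eqref{relation}, namely $(\tau_q\lambda_{q+1})^{-1}\lesssim\delta_{q+1}^{1/2}$ and $\mu_q^{-1}\sim\lambda_q^{1/2}\lambda_{q+1}^{1/2}\delta_q^{1/4}\delta_{q+1}^{-1/4}$, delivers the target rate.

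The remaining pieces are more elementary. The mollification correction $R_{M2}$ and the oscillation correction $R_{O2}$ both carry the extra factor $(\lambda_{q+1}\mu_q)^{-1}$ from \eqref{est on m_q-m_l}, \eqref{est on tm_c} and \eqref{est on tm_EB-tm_p}; the time error $R_t$ exploits the smallness of $\tm_t$ from \eqref{est on tm_t}; and the transport error $R_T$ and the Nash error $R_N$ use the identity $\DTL e^{i\lambda_{q+1}k\cdot\xi_I}=0$, so that after \eqref{est on Dtl tm_EBp} and one application of $\cR$ their size is $\tau_q^{-1}\lambda_{q+1}^{-1}\delta_{q+1}^{1/2}\lesssim\delta_{q+1}$. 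For the electromagnetic piece $R_{EB}=R_{EB1}+R_{EB2}$ we repeat the integration by parts used in the proof of \eqref{Eq of tE_t} (using $\Div B_q=0$) to replace $\tm_{EB}\times B_q$ by expressions involving $\partial_{tt}\tcA+\tB$, which gain an extra $\lambda_{q+1}^{-1}$ from the magnetic potential \eqref{def of ol tcA}; the remaining $\tE_p$-type terms are estimated directly from \eqref{est on E,B} with $\cR$ providing the $\lambda_{q+1}^{-1}$ improvement. The advective-derivative bound \eqref{est on Dtl R_q+1-zeta} is obtained along the same lines, writing $D_{t,q+1}=\DTL+\bigl((m_q-m_\ell)/n+\tm/n\bigr)\cdot\nabla$, noting that $\DTL$ commutes with the fast phase, and observing that every spatial bound is inflated by exactly $\tau_q^{-1}\sim M\lambda_q^{1/2}\lambda_{q+1}^{1/2}\delta_q^{1/4}\delta_{q+1}^{1/4}$, which matches the required gain of $\delta_{q+1}^{1/2}$.

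The principal obstacle is bookkeeping: carefully tracking how many derivatives fall on $\nabla\xi_I$, on the coefficients $c_{u,k},d_{u,k},n_{u,k}$, on $\tcA,\tilde A$, and on $n^{-1}$, so that the total $\mu_q^{-1}$- and $\tau_q^{-1}$-losses never overwhelm the $\lambda_{q+1}^{-1}$ gain from the large phase and the buffer provided by \eqref{relation}. This is the place where the condition $\alpha<1/7$ becomes essential, via \eqref{prop of tn_0}, since $\tilde n_0$ has to be taken large enough for the summability in $k$, yet the resulting exponent budget on $b$ and $\gamma=(b-1)^2$ leaves room only under this restriction; the final strict inequality $C_{n,p,h,M}\lambda_q^{1/2}\lambda_{q+1}^{-1/2}\delta_q^{1/4}\delta_{q+1}^{3/4}\leqslant \tfrac12\lambda_{q+1}^{-3\gamma}\delta_{q+2}$ is then enforced by choosing $\bar b(\alpha)\in(1,\bar b_3(\alpha))$ sufficiently close to $1$ and $\Lambda_4(\alpha,b,M,n,p,h)\geqslant\Lambda_3$ sufficiently large.
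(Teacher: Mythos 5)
Your proposal is correct in outline and follows essentially the same route as the paper: decompose $R_{q+1}-\tfrac{2}{3}(\zeta/n)\Id$ into $R_T+R_N+R_O+R_M+R_t+R_{EB}$, estimate the oscillation piece $R_{O1}$ via the stationary-phase/inverse-divergence machinery (Corollary~\ref{est on R operator}), treat $R_{O2}$, $R_M$, $R_t$, $R_{EB2}$ by direct multiplication estimates using the $(\lambda_{q+1}\mu_q)^{-1}$-smallness of $\tm_c$, $m_q-m_\ell$ and $\tm_t$, handle $R_T$, $R_N$ by exploiting $\DTL e^{i\lambda_{q+1}k\cdot\xi_I}=0$, and absorb via \eqref{rel.par} by shrinking $b-1$ and enlarging $\lambda_0$.

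Two small misattributions are worth flagging. First, you locate the restriction $\alpha<1/7$ in the choice of $\tilde n_0$ and \eqref{prop of tn_0}. This is not where it comes from: $\tilde n_0=\lceil 2b(2+\alpha)/((b-1)(1-\alpha))\rceil$ is taken large simply because $b$ is close to $1$, for any $\alpha<1/3$, and is what makes the high-order remainders in the microlocal lemma negligible. The inequality $C_{n,p,h,M}\lambda_q^{1/2}\lambda_{q+1}^{-1/2}\delta_q^{1/4}\delta_{q+1}^{3/4}\leqslant\tfrac12\lambda_{q+1}^{N-3\gamma}\delta_{q+2}$ that closes the Reynolds-stress proposition actually only requires $\alpha<1/5$ (comparing exponents as $b\to1^+$); the sharper constraint $\alpha<1/7$ is forced by the current error $\varphi_{q+1}$ in the companion proposition, and the statement here inherits it only because the two propositions are invoked jointly. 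Second, for the electromagnetic error you write that $\cR$ supplies the extra $\lambda_{q+1}^{-1}$ on the $n\tE_p$-type terms; in fact the paper uses only $\|\cR u\|_N\lesssim\|u\|_N$ there, and the $\lambda_{q+1}^{-1}$ gain is already built into $\tE_p,\tB_p,\partial_{tt}\tcA$ by virtue of their potential structure in \eqref{def of ol tcA}--\eqref{def of tB_p}, as your own remark about the integration by parts for $\tm_{EB}\times B_q$ correctly reflects. Neither issue affects the validity of the argument.
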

We will consider \eqref{split of Rq+1} and  estimate the separate terms $   R_T$, $   R_N$, $   R_{O1}$, $   R_{O2}$, $ R_M$ and $R_{EB}$. For the errors $R_{O2}$, $R_t$, $R_{M}$, and $R_{EB2}$, we use a direct estimate. For the remaining errors, we add the inverse divergence operator $\cR$ and use Corollary \ref{est on R operator}. For convenience, we restrict the range of $N$ as in Proposition \ref{prop of Reynolds error} in this section, without mentioning it further. Remark that 
\begin{align}\label{rel.par}
	\frac {1}{\lambda_{q+1}\tau_q} +\frac{\delta_{q+1}^\frac{1}{2}}{\lambda_{q+1}\mu_q} \lesssim_{n,M}  {\lambda_q^\frac{1}{2}}{\lambda_{q+1}^{-\frac{1}{2}}} \delta_q^\frac{1}{4}\delta_{q+1}^\frac{1}{4}.
\end{align}
\subsection{Transport stress error}
Recalling that
$$
n R_{T} = \cR\left(n \DTL  \frac{\tm_{EB}}{n} - \Div(m_q-m_\ell)\frac{\tm_{EB}}{ n}\right).
$$
Since $\DTL  \xi_I =0$, we have  
\begin{align*}
n \DTL  (\tm_{EB}/ n) 
&=\delta_{q+1}^\frac{1}{2} \sum_{u\in\Z} \sum_{k\in \Z^3\setminus \left\lbrace 0\right\rbrace }n \DTL  \left( n^{-1}(s_{u,k} + \tilde{e}_{u,k}+\tilde{g}_{u,k})\right) e^{i\lambda_{q+1} k\cdot \xi_I}.
\end{align*}
Since $s_{u,k},\tilde{e}_{u,k}$ and $\tilde{g}_{u,k}$ satisfy
$\supp(s_{u,k}), \supp(\tilde{e}_{u,k}),\supp(\tilde{g}_{u,k})\subset (t_u-\frac{1}{2}\tau_q, t_u + \frac 32\tau_q) \times \R^3$
, and  for any $\overline{N}\geqslant 0$,
\begin{align*}
\left\|\DTL  (n^{-1}(s_{u,k} + \tilde{e}_{u,k}+\tilde{g}_{u,k}))\right\|_{\overline{N}} + (\lambda_{q+1}\delta_{q+1}^\frac{1}{2})^{-1}\left\|\DTL^2 (n^{-1}(s_{u,k} + \tilde{e}_{u,k}+\tilde{g}_{u,k}))\right\|_{\overline{N}}\lesssim_{n, p, h, M, \overline{N}} \mu_q^{-\overline{N}}\cdot\tau_{q}^{-1}\underset{I}{\max}|\overset{\circ}{b}_{I,k}|,
\end{align*}
where we have used \eqref{est on Dtl cof of seg} and \eqref{est on cof of seg}. Next, we could calculate
\begin{align*}
\Div(m_q-m_\ell)\frac{\tm_{EB}}{ n} =  \delta_{q+1}^\frac{1}{2}\sumu \sumkO \frac{\Div(m_q-m_\ell)}{n} (s_{u,k} + \tilde{e}_{u,k}+\tilde{g}_{u,k}) e^{i\lambda_{q+1} k\cdot \xi_I}.
\end{align*}
Notice that $\Div(m_q-m_\ell)  = -\partial_t n + \partial_t n_\ell$. We can obtain
\begin{align*}
	&\left\|n^{-1}\Div(m_q-m_\ell) (s_{u,k} + \tilde{e}_{u,k}+\tilde{g}_{u,k})\right\|_{\overline{N}} \\
	&\quad+(\lambda_{q+1}\delta_{q+1}^\frac{1}{2})^{-1}\left\|D_{t,\ell }(n^{-1}\Div(m_q-m_\ell) (s_{u,k} + \tilde{e}_{u,k}+\tilde{g}_{u,k}))
	\right\|_{\overline{N}}
	\lesssim_{n, p, h, M, \overline{N}} \mu_q^{-\overline{N}}\cdot\tau_{q}^{-1}\underset{I}{\max}|\overset{\circ}{b}_{I,k}|. 
\end{align*}
Finally, we could use Corollary \ref{est on R operator} to get
\begin{align}\label{est on RT}
\left\| R_T\right\|_N
\lesssim_{n, p, h, M} \lambda_{q+1}^N\frac {\delta_{q+1}^\frac{1}{2}}{\lambda_{q+1} \tau_q} , \quad
\left\|   D_{t,q+1}  R_T\right\|_{N-1}  \lesssim_{n, p, h, M} \lambda_{q+1}^N\delta_{q+1}^\frac{1}{2} \frac {\delta_{q+1}^\frac{1}{2}}{\lambda_{q+1} \tau_q}.
\end{align}
\subsection{Nash stress error}
Recall $ n R_{N} = \mathcal{R} \left( (\tm_{EB}\cdot \nabla) \frac{m_\ell}{ n}\right)$ and observe that
\begin{align*}
(\tm_{EB}\cdot \nabla) \frac{m_\ell}{ n}
&=  \delta_{q+1}^\frac{1}{2}\sum_{u} \sum_{k\in \Z^3\setminus \{0\}} ((s_{u,k} + \tilde{e}_{u,k}+\tilde{g}_{u,k})\cdot \nabla)\frac{m_\ell}{ n} e^{i\lambda_{q+1} k\cdot \xi_I}.
\end{align*}
Since $s_{u,k},\tilde{e}_{u,k}$ and $\tilde{g}_{u,k}$ satisfy
$\supp(s_{u,k}), \supp(\tilde{e}_{u,k}),\supp(\tilde{g}_{u,k})\subset (t_u-\frac{1}{2}\tau_q, t_u + \frac 32\tau_q) \times \R^3$
, and for any $\overline{N}\geqslant 0$,
\begin{align*}
\left\|((s_{u,k} + \tilde{e}_{u,k}+\tilde{g}_{u,k})\cdot \nabla)\frac{m_\ell}{ n}\right\|_{\overline{N}}
&\lesssim_{n, p, h, M, \overline{N}} \mu_q^{-{\overline{N}}}\cdot\lambda_q\delta_{q}^\frac{1}{2}\underset{I}{\max}|\overset{\circ}{b}_{I,k}|,\\
\left\|\DTL (((s_{u,k} + \tilde{e}_{u,k}+\tilde{g}_{u,k})\cdot \nabla)\frac{m_\ell}{ n})\right\|_{\overline{N}}
&\lesssim_{n, p, h, M, \overline{N}} \lambda_{q+1}\delta_{q+1}^\frac{1}{2} \mu_q^{-{\overline{N}}}\cdot\lambda_q\delta_{q}^\frac{1}{2}\underset{I}{\max}|\overset{\circ}{b}_{I,k}|,
\end{align*}
where we have used \eqref{est on m_l}, \eqref{est on Dtl cof of seg}, and \eqref{est on cof of seg}. Then, we could apply Corollary \ref{est on R operator} and obtain
\begin{align}\label{est on RN}
\left\| R_N\right\|_N 
\lesssim_{n, p, h, M} \lambda_{q+1}^N \frac {\delta_{q+1}^\frac{1}{2}}{\lambda_{q+1}\tau_q} , \quad 
\left\|   D_{t,q+1} R_N\right\|_{N-1}  \lesssim_{n, p, h, M} \lambda_{q+1}^N\delta_{q+1}^\frac{1}{2} \frac {\delta_{q+1}^\frac{1}{2}}{\lambda_{q+1}\tau_q}.
\end{align}
\subsection{Oscillation stress error}
Notice that $ R_O =  R_{O1} +  R_{O2}$, where
\begin{align*}
n R_{O1} = \mathcal{R} \left(\Div\left(\frac{\tm_p\otimes \tm_p}{ n} +  n R_\ell - \delta_{q+1}  n \Id\right) \right),\quad n R_{O2} =  \frac{\tm_p\otimes \tm_c}{ n}+\frac{\tm_c\otimes \tm_p}{ n}+\frac{\tm_c\otimes \tm_c}{ n}.
\end{align*}
We could calculate
\begin{align*}
\Div \left( \frac{\tm_p\otimes \tm_p}{ n} +  n R_\ell - \delta_{q+1}  n \Id\right)
&= \Div \left( \sum_{u\in \Z, k\in \Z^3\setminus \{0\}} \delta_{q+1} (d_{u,k}/n) e^{ i\lambda_{q+1} k\cdot \xi_I} \right)=\sum_{u\in \Z, k\in \Z^3\setminus \{0\}} \delta_{q+1} \Div\left(d_{u,k}/n\right) e^{ i\lambda_{q+1} k\cdot \xi_I},
\end{align*}
because of $\overset{\circ}{d}_{I,k} (f_I\cdot k) =0$. Since we have
\begin{align*}
\DTL  \Div (d_{u,k}/n) = \Div\left( \DTL  (d_{u,k}/n)\right) 
- \left(\nabla (m_\ell/n)\right)_{ij}\left(\nabla (d_{u,k}/n)\right)_{ji},
\end{align*}
and we could use \eqref{est on Dtl cof of duk} to obtain 
$$\left\|\Div(d_{u,k}/n)\right\|_{\overline{N}}+(\lambda_{q+1}\delta_{q+1}^\frac{1}{2})^{-1}\left\|\DTL  \Div (d_{u,k}/n)\right\|_{\overline{N}} \lesssim_{n, p, h, M,\overline{N}}  \mu_q^{-\overline{N}}\cdot\mu_q^{-1}\underset{I}{\max}|\overset{\circ}{d}_{I,k}|,$$
for any $\overline{N}\geqslant 0$. Similarly, $\supp(d_{u,k})\subset (t_u-\frac{1}{2}\tau_q, t_u +\frac{3}{2}\tau_q)\times \R^3$, so we could apply Corollary \ref{est on R operator} to obtain 
\begin{equation}\label{est on R_O1}
	\left\|R_{O1}\right\|_N
	\lesssim_{n, p, h, M} \lambda_{q+1}^N\frac {\delta_{q+1}}{\lambda_{q+1}\mu_q}, \quad
	\left\|  D_{t,q+1} R_{O1}\right\|_{N-1}\lesssim_{n, p, h, M}\lambda_{q+1}^N\delta_{q+1}^\frac{1}{2}\frac {\delta_{q+1}}{\lambda_{q+1}\mu_q}. 
\end{equation}
To get the estimate on $R_{O2}$, we use \eqref{est on tm_EBp}, \eqref{est on tm_c}, \eqref{est on Dtl tm_EBp}, and \eqref{est on Dtl tm_c} to obtain
\begin{align*}
\left\| R_{O2}\right\|_N
&\lesssim \sum_{N_0+N_1+N_2=N}\left\|n^{-1}\right\|_{N_0}\left\|\tm_p\right\|_{N_1}\left\|\tm_c\right\|_{N_2} + \sum_{N_0+N_1+N_2=N}\left\|n^{-1}\right\|_{N_0}\left\|\tm_c\right\|_{N_1}\left\|\tm_c\right\|_{N_2}\\
&\lesssim_{n, p, h, M} \lambda_{q+1}^N\cdot\frac{\delta_{q+1}}{\lambda_{q+1}\mu_q} ,\\
\left\|   D_{t,q+1} R_{O2}\right\|_{N-1}
&\leqslant\left\|\DTL   R_{O2}\right\|_{N-1} +\left\|\left(\frac{(\tm+m_q-m_\ell)}{n}\cdot\nabla\right)  R_{O2}\right\|_{N-1}\\
&\lesssim\left\|n^{-1}\right\|_{N} \sum_{N_1+N_2=N-1} 
\left(\left\|\DTL  \tm_p\right\|_{N_1}\left\|\tm_c\right\|_{N_2} 
+\left\| \tm_p\right\|_{N_1}\left\|\DTL  \tm_c\right\|_{N_2}
+\left\|\DTL \tm_c\right\|_{N_1}\left\|\tm_c\right\|_{N_2}\right)\\
&\quad+\left\|n^{-1}\right\|_{N}\sum_{N_1+N_2=N-1} 
\left(\left\|\tm\right\|_{N_1} +\left\|m_q-m_\ell\right\|_{N_1}\right)\left\|  R_{O2}\right\|_{N_2+1}\\
&\lesssim_{n, p, h, M} \lambda_{q+1}^{N}\delta_{q+1}^\frac{1}{2} \cdot\frac{ \delta_{q+1}}{\lambda_{q+1}\mu_q}.
\end{align*}
Then, we have 
\begin{equation}\label{est on RO}
\left\| R_O\right\|_N \lesssim_{n, p, h, M} \lambda_{q+1}^N \frac{\delta_{q+1}}{\lambda_{q+1}\mu_q}, \quad
\left\|   D_{t,q+1}  R_O\right\|_{N-1}   
\lesssim_{n, p, h, M} \lambda_{q+1}^N\delta_{q+1}^{\frac{1}{2}}\frac{\delta_{q+1}}{\lambda_{q+1}\mu_q}.
\end{equation} 
\subsection{Mediation stress error}
Recalling that 
$$
n R_M =
n(R_q-R_\ell)+\frac{(m_q-m_\ell)\otimes \tm_{EB}}{ n} + \frac{\tm_{EB} \otimes (m_q-m_\ell)}{ n},
$$
and using \eqref{est on m_q-m_l}, \eqref{est on R_q-R_l}, and \eqref{est on tm}, we have
\begin{align*}
\left\| R_M\right\|_{N} 
&\lesssim\left\|n\right\|_{N}\left\|R_q-R_\ell\right\|_N+\sum_{N_0+N_1+N_2=N}\left\|n^{-2}\right\|_{N_0}\left\|m_q-m_\ell\right\|_{N_1}\left\|\tm_{EB}\right\|_{N_2}\\
&\lesssim_{n, p, h, M} \lambda_{q+1}^N \cdot ( {\lambda_q^\frac{1}{2}}\lambda_{q+1}^{-\frac{1}{2}} \delta_q^\frac{1}{4}\delta_{q+1}^\frac{3}{4}  + (\ell\lambda_q)^2\delta_q^\frac{1}{2}\delta_{q+1}^\frac{1}{2})
\lesssim_{n, p, h, M} \lambda_{q+1}^N \cdot  {\lambda_q^\frac{1}{2}}\lambda_{q+1}^{-\frac{1}{2}} \delta_q^\frac{1}{4}\delta_{q+1}^\frac{3}{4},
\end{align*}
where we have used \eqref{est on m_q-m_l}, \eqref{est on R_q-R_l}, and \eqref{est on tm_EBp}. To estimate $   D_{t,q+1}   R_M$, we use the
decomposition $D_{t,q+1}   R_M$  $= \DTL   R_M + {\left(\frac{m_q-m_\ell+\tm}{n}\right)}\cdot\nabla  R_M $  to obtain
\begin{align*}
\left\|   D_{t,q+1} (n R_M)\right\|_{N-1}
&\lesssim\left\|  { \DTL(n} (R_q-R_\ell))\right\|_{N-1} 
+ \left\|    \DTL\left( \frac{(m_q-m_\ell)\otimes \tm_{EB}}{n}\right)\right\|_{N-1}+\left\|\left(\frac{m_q-m_\ell+\tm}{n}\right)\cdot\nabla (n R_M)\right\|_{N-1}\\
&\lesssim_{n, p, h, M} \lambda_{q+1}^N \delta_{q+1}^\frac{1}{2}\cdot  {\lambda_q^\frac{1}{2}}\lambda_{q+1}^{-\frac{1}{2}} \delta_q^\frac{1}{4}\delta_{q+1}^\frac{3}{4}.
\end{align*} 
where we have used \eqref{est on m_q-m_l}, \eqref{est on Dtl m_q-m_l}, \eqref{est on Dtl R_q-R_l}, \eqref{est on tm_EBp}, \eqref{est on tm}, and \eqref{est on Dtl tm_EBp}. To summarize, we obtain 
\begin{align}\label{est on RM}
\left\| R_M\right\|_{N}
\lesssim_{n, p, h,M}  \lambda_{q+1}^N {\lambda_q^\frac{1}{2}}\lambda_{q+1}^{-\frac{1}{2}} \delta_q^\frac{1}{4}\delta_{q+1}^\frac{3}{4}, \quad
\left\|   D_{t,q+1}  R_M\right\|_{N-1}
\lesssim_{n, p, h, M} \lambda_{q+1}^N \delta_{q+1}^\frac{1}{2}  {\lambda_q^\frac{1}{2}}\lambda_{q+1}^{-\frac{1}{2}} \delta_q^\frac{1}{4}\delta_{q+1}^\frac{3}{4}.
\end{align} 
\subsection{Time-corrector stress error}
Notice that 
$$
nR_{t}= \frac{m_q \otimes \tm_t}{n}+\frac{\tm_t \otimes m_q}{n}.
$$
We could use \eqref{est on m_q}  and \eqref{est on tm_t} to obtain
$$
\begin{aligned}
\left\|R_t\right\|_N&\lesssim\sum_{N_0+N_1+N_2=N}\left\|n^{-2}\right\|_{N_0}\left\|m_q\right\|_{N_1}\left\|\tm_{t}\right\|_{N_2}\lesssim_{n, p, h, M}\lambda_{q+1}^N\cdot\frac{\delta_{q+1}^{\frac{1}{2}}}{\lambda_{q+1}(\lambda_{q+1}\mu_q)(\lambda_{q+1}\ell)^N}\lesssim_{n, p, h, M} \lambda_{q+1}^N\cdot \frac{\delta_{q+1}^{\frac{1}{2}}}{\lambda_{q+1}(\lambda_{q+1}\mu_q)},
\end{aligned}
$$
and we can similarly obtain
$$
\begin{aligned}
\left\|D_{t,q+1}R_t\right\|_{N-1}&\leqslant\left\|\DTL \left(\frac{m_q \otimes \tm_t}{n}\right)\right\|_{N-1}+\left\|\DTL \left(\frac{\tm_t \otimes m_q}{n}\right)\right\|_{N-1}+\left\|\left(\left(\frac{m_q-m_\ell+\tm}{n}\right)\cdot\nabla\right) R_t\right\|_{N-1}\\
&\lesssim\left\|n^{-1}\right\|_{N-1} \sum_{N_1+N_2=N-1} 
\left(\left\|\DTL  \tm_t\right\|_{N_1}\left\|m_q\right\|_{N_2} 
+\left\| \tm_t\right\|_{N_1}\left\|\DTL  m_q\right\|_{N_2}\right)\\
&\quad+\left\|n^{-1}\right\|_{N-1}\sum_{N_1+N_2=N-1} 
\left(\left\|\tm\right\|_{N_1} +\left\|m_q-m_\ell\right\|_{N_1}\right)\left\|R_{t}\right\|_{N_2+1}\\
&\lesssim_{n, p, h, M} \ell^{1-N}\frac{\lambda_{q}\delta_{q}\delta_{q+1}^{\frac{1}{2}}}{\lambda_{q+1}(\lambda_{q+1}\mu_q)}+\lambda_{q+1}^{N-1}(\delta_{q+1}^{\frac{1}{2}}+\ell^{2}\lambda_{q}^2\delta_{q}^{\frac{1}{2}})\frac{\delta_{q+1}^{\frac{1}{2}}}{(\lambda_{q+1}\ell)(\lambda_{q+1}\mu_q)}\\
&\lesssim_{n, p, h, M} \lambda_{q+1}^N\delta_{q+1}^{\frac{1}{2}}\cdot\frac{\delta_{q+1}^{\frac{1}{2}}}{\lambda_{q+1}(\lambda_{q+1}\mu_q)},
\end{aligned}
$$
where we have used \eqref{est on m_q}, \eqref{est on m_q-m_l}, \eqref{est on Dtl m_q},  \eqref{est on tm},   and \eqref{est on tm_t}. To summarize, we obtain 
\begin{align}\label{est on R_t}
	\left\| R_t\right\|_{N}
	\lesssim_{n, p, h, M}  \lambda_{q+1}^N\cdot\frac{\delta_{q+1}^{\frac{1}{2}}}{\lambda_{q+1}(\lambda_{q+1}\mu_q)}, \quad
	\left\|   D_{t,q+1}  R_t\right\|_{N-1}
	\lesssim_{n, p, h, M} \lambda_{q+1}^N\delta_{q+1}^{\frac{1}{2}}\cdot\frac{\delta_{q+1}^{\frac{1}{2}}}{\lambda_{q+1}(\lambda_{q+1}\mu_q)}.
\end{align} 
\subsection{Electromagnetic stress error}
Recall that 	
\begin{align*}
nR_{EB1}&=\cR\left(\partial_{t}\tm_t+n\tE_t+\tm_t\times B_q+n\tE_p+m_q\times\tB+\tm\times\tB-(\nabla B_q)^{\top}\left(\partial_{tt}\tcA+\tB\right)\right),\\
nR_{EB2}&=\left(\left(\partial_{tt}\tcA+\tB\right)\cdot B_q\right)\Id-\left(\partial_{tt}\tcA+\tB\right)\otimes B_q.
\end{align*}
For the first term, we use $\left\|\cR u\right\|_N\lesssim_N\left\|u\right\|_N$ to obtain
\begin{align*}
\left\|R_{EB1}\right\|_N&\lesssim_n\left\|\partial_{t}\tm_t+n\tE_t+\tm_t\times B_q+n\tE_p+m_q\times\tB+\tm\times\tB-(\nabla B_q)^{\top}\left(\partial_{tt}\tcA+\tB\right)\right\|_N\lesssim_{n, p, h, M}\lambda_{q+1}^{N-1}\delta_{q+1}^{\frac{1}{2}},\\
\left\|\partial_{t}R_{EB1}\right\|_N&\lesssim_n\left\|\partial_{t}\left(\partial_{t}\tm_t+n\tE_t+\tm_t\times B_q+n\tE_p+m_q\times\tB+\tm\times\tB-(\nabla B_q)^{\top}\left(\partial_{tt}\tcA+\tB\right)\right)\right\|_N\lesssim_{n, p, h, M}\lambda_{q+1}^{N}\delta_{q+1}^{\frac{1}{2}}.
\end{align*}
For the second term, we could get
\begin{align*}
	\left\|R_{EB2}\right\|_N&\leqslant\left\|n^{-1}\right\|_N\sum_{N_0+N_1=N}\left\|B_q\right\|_{N_0}\left\|\partial_{tt}\tcA+\tB\right\|_{N_1}\lesssim_{n, p, h, M}\lambda_{q+1}^{N-1}\delta_{q+1}^{\frac{1}{2}}.
\end{align*}
Next, we could get the estimate on the advective derivative,
$$
\begin{aligned}
	\left\|D_{t,q+1}R_{EB1}\right\|_{N-1}&\lesssim\left\|\partial_{t}R_{EB1}\right\|_{N-1}+\left\|n^{-1}\right\|_{N-1}\sum_{N_0+N_1=N-1}\left(\left\|m_q\right\|_{N_0}+\left\|\tm\right\|_{N_0}\right)\left\|R_{EB1}\right\|_{N_1+1}\lesssim_{n, p, h, M} \lambda_{q+1}^{N-1}\delta_{q+1}^{\frac{1}{2}},\\
	\left\|D_{t,q+1}R_{EB2}\right\|_{N-1}&\leqslant\left\|\DTL \left(B_q\otimes\left(\partial_{tt}\tcA+\tB\right)\right)\right\|_{N-1}+\left\|\DTL\left( \left(\partial_{tt}\tcA+\tB\right)\cdot B_q\right)\right\|_{N-1}+\left\|\left(\left(\frac{m_q-m_\ell+\tm}{n}\cdot\nabla\right) R_{EB2}\right)\right\|_{N-1}\\
	&\lesssim\left\|n^{-1}\right\|_{N}\sum_{N_1+N_2=N-1} 
	(\left\|\DTL  B_q\right\|_{N_1}\left\|\partial_{tt}\tcA+\tB\right\|_{N_2} 
	+\left\|B_q\right\|_{N_1}\left\|\DTL (\partial_{tt}\tcA+\tB)\right\|_{N_2})\\
	&\quad+\left\|n^{-1}\right\|_{N-1}\sum_{N_1+N_2=N-1} 
	(\left\|\tm\right\|_{N_1} +\left\|m_q-m_\ell\right\|_{N_1})\left\|R_{EB2}\right\|_{N_2+1}\\
	&\lesssim_{n, p, h, M} \lambda_{q+1}^{N-1}\frac{\delta_{q+1}^{\frac{1}{2}}}{\lambda_{q+1}\tau_{q}}+\lambda_{q+1}^{N-1}(\delta_{q+1}^{\frac{1}{2}}+\ell^{2}\lambda_{q}^2\delta_{q}^{\frac{1}{2}})\delta_{q+1}^{\frac{1}{2}}\lesssim_{n, p, h, M}\lambda_{q+1}^{N}\delta_{q+1}^{\frac{1}{2}}\cdot\frac{\delta_{q+1}^{\frac{1}{2}}}{\lambda_{q+1}}.
\end{aligned}
$$
To summarize, we obtain 
\begin{align}\label{est on R_EB}
	\left\| R_{EB}\right\|_{N}
	\lesssim_{n, p, h, M}   \lambda_{q+1}^{N}\cdot\frac{\delta_{q+1}^{\frac{1}{2}}}{\lambda_{q+1}}, \quad
	\left\|   D_{t,q+1}  R_{EB}\right\|_{N-1}
	\lesssim_{n, p, h, M} \lambda_{q+1}^{N}\delta_{q+1}^{\frac{1}{2}}\cdot\frac{1}{\lambda_{q+1}}.
\end{align} 
Finally, Proposition \ref{prop of Reynolds error} follows from
\eqref{est on RT}--\eqref{est on R_EB}.

\section{Estimates on the new current error}\label{Estimates on the new current error} 
In this section, we derive the estimates for the new current $\varphi_{q+1}$ and the remaining part of the Reynolds stress, $\frac{2}{3}\zeta/n\Id$. These estimates are summarized in the following proposition.

\begin{pp}\label{prop of current}
For any $0<\alpha<\frac{1}{7}$, let the parameters $\bar{b}_3(\alpha)$ and $\Lambda_4$ be as in the statement of Proposition \ref{prop of Reynolds error}. There exists  $1<\bar{b}_0(\alpha)<\bar{b}_3(\alpha)$ such that for any $1<b<\bar{b}_0(\alpha)$, we could find 
$\Lambda_0 = \Lambda_0 (\alpha, b, M, n, p, h)\geqslant\Lambda_4,$ satisfying that if $\lambda_0\geqslant\Lambda_0$, we have the following estimates for $N=0,1,2$:
\begin{align}
	\left\|\varphi_{q+1}\right\|_N&\leqslant \lambda_{q+1}^{N-3\gamma} \delta_{q+2}^\frac{3}{2},\label{est on varphi}\\
	\left\|  D_{t,q+1} \varphi_{q+1}\right\|_{N-1} 
	&\leqslant \lambda_{q+1}^{N-3\gamma}\delta_{q+1}^\frac{1}{2} \delta_{q+2}^\frac{3}{2},\label{est on Dtl varphi}\\
	\left\|\zeta\right\|_0 +\left\|\zeta'\right\|_0&\leqslant {\frac{\varepsilon_0^2}{20\underline{M}}}   \lambda_{q+1}^{-3\gamma} \delta_{q+2}^\frac{3}{2}
	\label{est on zeta},
\end{align}
where $\underline M$ defined as in \eqref{est on m_q}.
\end{pp}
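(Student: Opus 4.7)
The plan is to decompose $\varphi_{q+1}$ into the eleven pieces listed in Section \ref{New current} and estimate each separately. For every piece of the form $n\varphi_{\ast}=\cR(\cdots)$, I would invoke Corollary \ref{est on R operator}, gaining a factor of $\lambda_{q+1}^{-1}$ from the fast oscillation $e^{i\lambda_{q+1}k\cdot\xi_I}$ carried by the perturbation. For pieces without $\cR$, the bounds follow directly from Proposition \ref{est on perturbation}, the mollification estimates of Section \ref{Mollification}, and the new Reynolds bounds \eqref{est on R_q+1-zeta}--\eqref{est on Dtl R_q+1-zeta}. Advective-derivative bounds $\|D_{t,q+1}\cdot\|_{N-1}$ are obtained by writing $D_{t,q+1}=\DTL+((m_q-m_\ell+\tm)/n)\cdot\nabla$ and treating both summands via the same route.

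The delicate term is $\varphi_{O1}$. By the choice \eqref{Eq of Gamma_I phi} of the weights $\gamma_I$ for $I\in\mathscr{I}_\varphi$, the low-frequency part of $|\tm_p|^2\tm_p$ cancels $-2n^3\varphi_\ell$ exactly, so
\begin{equation*}
\frac{|\tm_p|^2\tm_p}{2n^2}+n\varphi_\ell=\frac{(|\tm_p|^2\tm_p)_H}{2n^2}=\frac{\delta_{q+1}^{3/2}}{2n^2}\sum_{u}\sum_{k\in\Z^3\setminus\{0\}}n_{u,k}\,e^{i\lambda_{q+1}k\cdot\xi_I}.
\end{equation*}
Hence $n\varphi_{O1}=\cR(\Div(\cdots))$ consists only of oscillatory modes. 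Applying Corollary \ref{est on R operator} mode by mode, together with the coefficient estimates \eqref{est on Dtl cof of nuk}, \eqref{est on cof of nuk} and the summability \eqref{est on sum of cof 2}, produces $\|\varphi_{O1}\|_N\lesssim_{n,p,h,M}\lambda_{q+1}^N\,\delta_{q+1}^{3/2}/(\lambda_{q+1}\mu_q)$ and the parallel advective bound. The remaining oscillatory pieces $\varphi_{T2},\varphi_{H1},\varphi_{H2},\varphi_{M3},\varphi_{M4},\varphi_E,\varphi_B$ are treated by the same recipe using Proposition \ref{est on cof 1}; the direct pieces $\varphi_{T1},\varphi_{O2},\varphi_R,\varphi_{M1},\varphi_{M2}$ are bounded by H\"older inequalities combined with \eqref{est on tm_EB-tm_p}, \eqref{est on tm_c}, \eqref{est on m_q-m_l}, \eqref{est on R_q-R_l}, \eqref{est on phi_q-phi_l}, and Proposition \ref{prop of Reynolds error}.

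For $\zeta=\sum_{i=0}^{6}\zeta_i$, each $\zeta_i'$ is the spatial average of one of the integrands already estimated, so a single stationary-phase integration by parts in $\xi_I$ (controlled via Lemma \ref{est on int operator}) yields pointwise smallness of $\zeta_i'$ in $t$. The bound $\|\zeta_i\|_0\leqslant T\|\zeta_i'\|_0$ then delivers \eqref{est on zeta} once $\lambda_0$ is large enough to absorb the $C_{n,p,h,M}$ prefactor into $\varepsilon_0^2/(20\underline{M})$.

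The main obstacle is the combined algebraic book-keeping: every individual bound must simultaneously fit into $\lambda_{q+1}^{N-3\gamma}\delta_{q+2}^{3/2}$ for $\varphi_{q+1}$ and into the corresponding advective target, while $\zeta$ carries an extra factor $T$. Using $\delta_{q+2}/\delta_{q+1}=\lambda_{q+1}^{-2\alpha b(b-1)}$ and $\gamma=(b-1)^2$, every required inequality reduces, modulo the relations \eqref{rel.par} and \eqref{relation}, to a strict exponent inequality in $(\alpha,b)$; these hold for $\alpha<1/7$ provided $b$ is close enough to $1$. The threshold $\bar{b}_0(\alpha)$ is then chosen smaller than $\bar{b}_3(\alpha)$ so that all such inequalities become strict, and $\Lambda_0$ is enlarged to absorb the remaining implicit constants.
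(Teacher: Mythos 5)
Your overall architecture matches the paper's: decompose $\varphi_{q+1}$ into the named pieces, use Corollary~\ref{est on R operator} on the oscillatory pieces, use direct H\"older bounds on the rest, and check that every estimate fits under $\lambda_{q+1}^{N-3\gamma}\delta_{q+2}^{3/2}$ via a strict exponent inequality. Your identification of the cancellation $(|\tm_p|^2\tm_p)_L=-2n^3\varphi_\ell$ behind $\varphi_{O1}$ is also correct.

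However, there is a genuine gap in your treatment of $\varphi_{H2}$ (and the closely related $\zeta_3'$). You lump $\varphi_{H2}$ in with ``oscillatory pieces treated by the same recipe,'' but this term is \emph{not} of the form $\cR\bigl(\sum_{u,k}a_{u,k}e^{i\lambda_{q+1}k\cdot\xi_I}\bigr)$. After the rewriting \eqref{dec of n varphi_H2}, the problematic contributions are of the form $\cR\bigl(nR_\tri : \nabla(m_\ell/n)\bigr)$ with $R_\tri\in\{R_{O1},R_T,R_N,R_{EB1}\}$, and each such $R_\tri=\cR G_\tri$ is already the output of an inverse-divergence applied to an oscillatory $G_\tri$. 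The function $\cR G_\tri$ is smooth in amplitude, not oscillatory, and $\nabla(m_\ell/n)$ carries no oscillation at all; hence the product $\cR G_\tri : \nabla(m_\ell/n)$ does not present an $e^{i\lambda_{q+1}k\cdot\xi_I}$ phase to which Corollary~\ref{est on R operator} could be applied, so the outer $\cR$ does not trivially gain $\lambda_{q+1}^{-1}$. One needs the second gain of $\lambda_{q+1}^{-1}$ to close the required bound $\lesssim\lambda_{q+1}^{N}\bigl(\delta_{q+1}^{1/2}/(\lambda_{q+1}\tau_q)+\delta_{q+1}/(\lambda_{q+1}\mu_q)\bigr)\cdot\lambda_q\delta_q^{1/2}/\lambda_{q+1}$, and naively one only has the first. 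The paper resolves this by a two-fold frequency decomposition: write $\nabla(m_\ell/n)=\nabla(m_\ell/n)_1+\nabla(m_\ell/n)_2$ as in \eqref{dec of m_l / n}, where $(m_\ell/n)_1$ is genuinely frequency-localized below $\lambda_{q+1}/32$, and $(m_\ell/n)_2$ involves $P_{\gtrsim\lambda_{q+1}/64}n^{-1}$, which is $O(\lambda_{q+1}^{-2})$ since $n$ is smooth (see \eqref{est on PG n}); simultaneously split $G_\tri=P_{\GL}G_\tri+P_{\LL}G_\tri$ as in \eqref{dec of G_tri}, where the low-frequency remainder is controlled by the microlocal estimates \eqref{est on P G_tri}. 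Only the pairing $\cR P_{\GL}G_\tri : \nabla(m_\ell/n)_1$ is genuinely high-frequency, and only after this isolation does $\cR$ deliver the extra $\lambda_{q+1}^{-1}$; the other two contributions are small for independent reasons (smallness of the remainder, respectively smallness of the tail of $n^{-1}$). Without some version of this argument your bound for $\varphi_{H2}$ does not close, and the same issue reappears when estimating $\zeta_{32}'$, where you cannot simply invoke Lemma~\ref{est on int operator} because the integrand is not of the oscillatory form it requires. A separate minor imprecision: $\varphi_E$ and $\varphi_B$ are estimated in the paper with the crude bound $\|\cR f\|_0\lesssim\|f\|_0$, not with Corollary~\ref{est on R operator}; the needed smallness comes from the $\lambda_{q+1}^{-1}$ factor already inside $\tE,\tB$, not from stationary phase, so listing them as ``oscillatory pieces'' is misleading though not fatal.
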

The method used to prove \eqref{est on zeta} is similar to the one used to prove \eqref{est on varphi} and \eqref{est on Dtl varphi}, we first assume that \eqref{est on zeta} holds and finally prove it. In fact, \eqref{est on zeta} only appears in the estimation of $\varphi_{T1}$ and $\varphi_{H2}$, which does not involve circular reasoning. We single out the following fact that will be used repeatedly: given $\alpha<\frac{1}{7}$, there exists $\bar{b}(\alpha)>1$ such that for any $1<b<\bar{b}(\alpha)$ and a constant $\tilde{C}_{n, p, h, M}$ depending only on $n$, ${p}$, $h$ and $M$, we can find $\lambda_0=\lambda_0(\alpha, b,M,n,p,h)$ such that
\[
C_{n, p, h, M}\left[ \frac {\delta_{q+1}}{\lambda_{q+1}\tau_q} +\frac{\delta_{q+1}^\frac{3}{2}}{\lambda_{q+1}\mu_q} + \frac {\lambda_q^\frac{1}{2}}{\lambda_{q+1}^\frac{1}{2}} \delta_q^\frac{1}{4}\delta_{q+1}^\frac54\right]
\leqslant  \lambda_{q+1}^{-3\gamma} \delta_{q+2}^\frac{3}{2},
\] 
for any $\lambda \geqslant \lambda_0$. This is possible because $\alpha<\frac17$.

\subsection{High frequency current error}
Observe that the defintion of $  \varphi_{H1}$:
\begin{equation}
n\varphi_{H1} = \mathcal{R} \left(\frac {\tm}{n}\cdot (\Div(\UL\PL(n(R_q-c_q \Id))) + Q(m_q,m_q))\right).
\end{equation}
We thus can apply Corollary \ref{est on R operator} to
\begin{align*}
&\quad \frac {\tm_{EB}}{n}\cdot(\Div (\UL\PL(n(R_q-c_q \Id))) + Q(m_q,m_q))\\
&= \delta_{q+1}^\frac{1}{2}\sum_{u\in \Z, k\in \Z^3\setminus \{0\}} n^{-1} (\Div( \UL\PL(n(R_q-c_q \Id))) + Q(m_q,m_q)) (s_{u,k} + \tilde{e}_{u,k}+\tilde{g}_{u,k})e^{i\lambda_{q+1}k\cdot \xi_I}.
\end{align*}
We first use \eqref{est on commutator m_l / n 2} to obtain
\begin{align*}
	\left\|\DTL  \Div (\UL\PL (n R_q))\right\|_{N-1}&\leqslant\left\|\Div (\UL\PL (\DTL (n R_q)))\right\|_{N-1}
	+\left\|\Div [m_\ell/n \cdot\nabla, \UL\PL] (n R_q)\right\|_{N-1}\\
	&\quad+\left\|(\nabla (m_\ell/n))_{ki} \partial_k \UL\PL (n R_q)_{ij}\right\|_{N-1}\\
	&\lesssim_{n} \ell^{1-N} \left(\left\|\DTL  (n R_q)\right\|_{1}
	+ \lambda_{q}\delta_{q}^{\frac{1}{2}}\left\|n R_q\right\|_{1}\right)+\sum_{N_1+N_2=N-1}\left\|m_\ell\right\|_{N_1{+1}} \ell^{-N_2}\left\|n R_q\right\|_1
	\\
	&\lesssim_{n, M} \ell^{1-N} \lambda_q^{2-3\gamma}\delta_{q}^{\frac{1}{2}} \delta_{q+1}.
\end{align*}
In a similar way, we also get $\left\|\DTL  (\Div \UL\PL (n c_q \Id))\right\|_{N-1}\lesssim_{n, M} \ell^{1-N} \lambda_q^{2-3\gamma}\delta_{q}^{\frac{1}{2}} \delta_{q+1}$. Combining it with \eqref{est on error}, \eqref{est on Qmm}, \eqref{est on Dtl Qmm},
\eqref{est on Dtl cof of seg}, and \eqref{est on cof of seg},  we have
\begin{align*}
&\quad\left\|n^{-1} (\Div( \UL\PL(n(R_q-c_q \Id))) + Q(m_q,m_q))(s_{u,k} + \tilde{e}_{u,k}+\tilde{g}_{u,k})\right\|_{\overline{N}}\\
&\quad+(\lambda_{q+1}\delta_{q+1}^{\frac{1}{2}})^{-1}\left\|\DTL (n^{-1} (\Div( \UL\PL(n(R_q-c_q \Id))) + Q(m_q,m_q))(s_{u,k} + \tilde{e}_{u,k}+\tilde{g}_{u,k}))\right\|_{\overline{N}}\\
&\lesssim_{n, p, h, M, \overline{N}}\mu_q^{-\overline{N}}\cdot(\lambda_q^{1-3\gamma}\delta_{q+1} + \ell\lambda_q^2\delta_q)\underset{I}{\max}|\overset{\circ}{b}_{I,k}|.
\end{align*}
We could use Corollary \ref{est on R operator} to obtain
\begin{align*}
\left\|\cR\left(\frac {\tm_{EB}}{n}\cdot(\Div (\UL\PL(n(R_q-c_q \Id))) + Q(m_q,m_q))\right)\right\|_N
&\lesssim_{n, p, h, M} \lambda_{q+1}^{N-1} \lambda_q^{1-3\gamma} \delta_{q+1}^\frac{3}{2},\\
\left\|D_{t,q+1}\cR\left(\frac {\tm_{EB}}{n}\cdot(\Div (\UL\PL(n(R_q-c_q \Id))) + Q(m_q,m_q))\right)\right\|_{N-1}& \lesssim_{n, p, h, M}\lambda_{q+1}^{N-1}\delta_{q+1}^\frac{1}{2} \lambda_q^{1-3\gamma}\delta_{q+1}^\frac{3}{2}.
\end{align*}
Moreover, we have
\begin{align*}
\left\|\cR\left(\frac {\tm_{t}}{n}\cdot(\Div (\UL\PL(n(R_q-c_q \Id))) + Q(m_q,m_q))\right)\right\|_N&\lesssim\left\|\frac {\tm_{t}}{n}\cdot(\Div \UL\PL(n(R_q-c_q \Id)) + Q(m_q,m_q))\right\|_N\\
&\lesssim_{n, p, h, M}\ell^{-N}\lambda_{q+1}^{-1} (\lambda_{q+1}\mu_q)^{-1}\delta_{q+1}^\frac{1}{2} (\lambda_q^{1-3\gamma}\delta_{q+1} + \ell\lambda_q^2\delta_q) \\
&\lesssim_{n, p, h, M} \lambda_{q+1}^{N-1}(\lambda_{q+1}\ell)^{-N}(\lambda_{q+1}\mu_q)^{-1} \lambda_q^{1-3\gamma} \delta_{q+1}^\frac{3}{2},
\end{align*}
and we can use  \eqref{est on Qmm} to get
\begin{align*}	
&\quad\left\|D_{t,q+1}\cR\left(\frac {\tm_{t}}{n}\cdot(\Div (\UL\PL(n(R_q-c_q \Id))) + Q(m_q,m_q))\right)\right\|_{N-1}\\
&=\left\|\cR\left(\partial_{t}\left(\frac {\tm_{t}}{n}\cdot(\Div (\UL\PL(n(R_q-c_q \Id))) +Q(m_q,m_q))\right)\right)\right\|_{N-1}\\
&\quad+\left\|\left(\frac{m_q+\tm}{n}\cdot\nabla\right)\cR\left(\frac {\tm_{t}}{n}\cdot(\Div (\UL\PL(n(R_q-c_q \Id))) + Q(m_q,m_q))\right)\right\|_{N-1}\\
&\lesssim_{n, p, h, M}\ell^{-N}\lambda_{q+1}^{-1} (\lambda_{q+1}\mu_q)^{-1}\delta_{q+1}^\frac{1}{2} (\lambda_q^{1-3\gamma}\delta_{q+1} + \ell\lambda_q^2\delta_q)+\lambda_{q+1}^{N-1}(\lambda_{q+1}\ell)^{-1}(\lambda_{q+1}\mu_q)^{-1}  \lambda_q^{1-3\gamma}\delta_{q+1}^{\frac{3}{2}}  \\
&\lesssim_{n, p, h, M}\lambda_{q+1}^{N-1}(\lambda_{q+1}\ell)^{-1}(\lambda_{q+1}\mu_q)^{-1}\lambda_q^{1-3\gamma}\delta_{q+1}^{\frac{3}{2}}.
\end{align*}
To summarize, we obtain 
\begin{align}\label{est on varphi_H1}
	\left\| \varphi_{H1}\right\|_{N}
	\lesssim_{n, p, h, M}  \lambda_{q+1}^N\cdot \frac{\lambda_q^{1-3\gamma}}{\lambda_{q+1}} \delta_{q+1}^\frac{3}{2}, \quad
	\left\|   D_{t,q+1}  \varphi_{H1}\right\|_{N-1}
	\lesssim_{n, p, h, M} \lambda_{q+1}^N\delta_{q+1}^{\frac{1}{2}} \cdot\frac{\lambda_q^{1-3\gamma}}{\lambda_{q+1}} \delta_{q+1}.
\end{align} 
Notice that we could use the definition of $R_{q+1}$ to rewrite $n\varphi_{H2}$ as
\begin{equation}\label{dec of n varphi_H2} 
\begin{aligned}
	n \varphi_{H2}&=\mathcal{R} \left(\left(\frac{\tm \otimes \tm}{n}+n R_q-\delta_{q+1}n\Id-n R_{q+1}+\frac23\zeta \Id\right) : \nabla (m_\ell/n)
	\right) \\ 
	&\quad+ \mathcal{R} \left(\left(\frac{(m_q-m_\ell)\otimes \tm}{n}+ \frac{\tm\otimes(m_q-m_\ell)}{n} \right): \nabla (m_\ell/n) 
	\right) -  \frac{2m_\ell\zeta}{3n} \\ 
	&=\cR\left(\left(\left(\frac{\tm_p\otimes \tm_p}{n}-\delta_{q+1}n \Id+n R_\ell \right)-  n R_{O1} - n R_t - n R_T -n R_N - n R_{EB}\right):\nabla (m_\ell/n)\right)-  \frac{2m_\ell\zeta}{3n}.
\end{aligned}
\end{equation}
Assuming \eqref{est on zeta}, we could obtain $\left\|(m_\ell \zeta)/n^2\right\|_N
\leqslant\left\|\zeta\right\|_0\left\|m_\ell/n^2\right\|_N
\leqslant \frac 1{20}\lambda_{q+1}^{N-3\gamma} \delta_{q+2}^\frac{3}{2}$ for $N=0,1,2,$ and
\begin{align*}
\left\|D_{t,q+1} ((m_\ell \zeta)/n^2) \right\|_{N-1}
&\leqslant \left(\left\|n^{-2}D_{t,q+1} m_\ell \right\|_{N-1}
+\left\| D_{t,q+1}( n^{-2})m_\ell\right\|_{N-1} \right)\left\|\zeta\right\|_0
+\left\|\zeta'\right\|_0\left\|m_\ell/n^2\right\|_{N-1}\leqslant \frac 1{20} \lambda_{q+1}^{N-3\gamma}\delta_{q+1}^\frac{1}{2} \delta_{q+2}^\frac{3}{2},
\end{align*}
for $N=1,2$. Here, we used
\begin{equation}\label{est on Dtq+1 m_l}
	\begin{aligned}
	\left\|D_{t,q+1} m_\ell\right\|_{N-1}
	&\leqslant\left\|\DTL m_\ell\right\|_{N-1}+\left\|
	((m_q-m_\ell)/n\cdot\nabla) m_\ell\right\|_{N-1}+\left\|(\tm/n\cdot\nabla) m_\ell\right\|_{N-1}\\
	&\lesssim_{n, p, h, M} \lambda_{q+1}^{N-1}\delta_{q+1}^{\frac{1}{2}}\lambda_q\delta_q^{\frac{1}{2}} + \lambda_q^N\delta_{q} + \ell^{3-N}\lambda_{q}^3\delta_q\lesssim_{n, p, h, M} \lambda_{q+1}^{N-1}\delta_{q+1}^{\frac{1}{2}}\lambda_q\delta_q^{\frac{1}{2}},
	\end{aligned}
\end{equation}
obtained from \eqref{est on m_l},  \eqref{est on m_q-m_l}, \eqref{est on pt Dtl m_l N}, and \eqref{est on tm}. In a similar way, we could get for $N=1,2,$
\begin{equation}\label{est on Dtq+1 n}
\begin{aligned}
	&\left\|D_{t,q+1}n\right\|_0 
	\lesssim \left\|\partial_t n\right\|_0 +\left\|m_{q+1}/n\right\|_0\left\|\nabla n\right\|_0
	\lesssim_{n, p, h, M} 1,\\
	&\left\|D_{t,q+1}n\right\|_N
	\lesssim \left\|\partial_t n\right\|_N + \sum_{N_1+N_2=N}\left\|m_{q+1}/n\right\|_{N_1}\left\|\nabla n\right\|_{N_2}
	\lesssim_{n, p, h, M} \lambda_{q+1}^{N}\delta_{q+1}^\frac{1}{2}.  
\end{aligned}
\end{equation}
Notice that
\begin{align*}
\left(\frac{\tm_p\otimes \tm_p}{n}-\delta_{q+1}n \Id+n R_\ell \right):\nabla (m_\ell/n)=\delta_{q+1}\sum_u\sumkO(d_{u,k}:\nabla (m_\ell/n))e^{i\lambda_{q+1}k\cdot\xi_I}.
\end{align*}
So we could use \eqref{est on pt Dtl m_l N} and \eqref{est on cof of duk} to get
$$
\left\|d_{u,k}:\nabla (m_\ell/n)\right\|_{\overline{N}}+(\lambda_{q+1}\delta_{q+1}^\frac{1}{2})^{-1}\left\|\DTL \left(d_{u,k}:\nabla (m_\ell/n)\right)\right\|_{\overline{N}} \lesssim_{n, p, h, M,\overline{N}}  \mu_q^{-\overline{N}}\lambda_{q}\delta_{q}^{\frac{1}{2}} \underset{I}{\max}|\overset{\circ}{d}_{I,k}|,
$$
for any $\overline{N}\geqslant 0$.
Using $\supp(d_{u,k}) \subset (t_u -\frac{1}{2}\tau_q, t_u + \frac 32\tau_q) \times \R^3$ and applying Corollary \ref{est on R operator} , we have
\begin{align*}
\left\|\cR\left(\left(\frac{\tm_p\otimes \tm_p}{n}-\delta_{q+1}n \Id+n R_\ell \right):\nabla (m_\ell/n)\right)\right\|_N
&\lesssim_{n, p, h, M} \lambda_{q+1}^N \lambda_q \delta_q^\frac{1}{2} \frac {\delta_{q+1}}{\lambda_{q+1}},\\
\left\| D_{t,q+1} \cR\left(\left(\frac{\tm_p\otimes \tm_p}{n}-\delta_{q+1}n \Id+n R_\ell \right):\nabla (m_\ell/n)\right)\right\|_{N-1}
& \lesssim_{n, p, h, M} \lambda_{q+1}^N\delta_{q+1}^\frac{1}{2} \lambda_q \delta_q^\frac{1}{2} \frac {\delta_{q+1}}{\lambda_{q+1}}.
\end{align*}
To estimate the remaining term, we denote $ R_\tri$ as either $ R_{O1}$, $ R_T$, $ R_N$ or $ R_{EB1}$ which can be written as $ R_\tri  = \cR G_\tri$ and satisfy
\begin{align}\label{est on G_tri} 
\left\|G_\tri\right\|_N \lesssim_{n, p, h, M} \lambda_{q+1}^N \left(\frac{\delta_{q+1}}{\mu_q} + \frac{\delta_{q+1}^\frac{1}{2}}{\tau_q} \right), 
\quad
\left\|\DTL  G_\tri \right\|_{N-1} \lesssim_{n, p, h, M} \lambda_{q+1}^N \delta_{q+1}^\frac{1}{2}
\left(\frac{\delta_{q+1}}{\mu_q} + \frac{\delta_{q+1}^\frac{1}{2}}{\tau_q} \right).
\end{align}
Furthermore, such $G_\tri$ has the form $\sum_{u,k}g_{u,k}^\tri e^{i\lambda_{q+1}k\cdot\xi_I}$ and can be decomposed into two parts
\begin{equation}\label{dec of G_tri}
G_\tri = P_{\GL} G_\tri+P_{\LL} G_\tri,    
\end{equation}
where $P_{\LL}  G_\tri$ satisfies
\begin{equation}\label{est on P G_tri}
	\begin{aligned}
	&\left\|P_{\LL} G_\tri\right\|_0 \lesssim_{n, p, h, M} \lambda_{q+1}^{-2}\delta_{q+1}^\frac{1}{2} \left(\frac{\delta_{q+1}}{\mu_q} + \frac{\delta_{q+1}^\frac{1}{2}}{\tau_q} \right),\quad\left\|\DTL P_{\LL} G_\tri\right\|_0 \lesssim_{n, p, h, M} \lambda_{q+1}^{-1}\delta_{q+1} \left(\frac{\delta_{q+1}}{\mu_q} + \frac{\delta_{q+1}^\frac{1}{2}}{\tau_q} \right), 
\end{aligned}
\end{equation}
obtained from \eqref{est on epsilon} and \eqref{est on Dtl epsilon}. Here, we could give the following decomposition 
\begin{equation}\label{dec of m_l / n}
	\begin{aligned}
		\nabla (m_\ell/n)= \nabla \left( m_\ell P_{\lesssim \frac1{64}\lambda_{q+1}} n^{-1}\right) + \nabla \left( m_\ell P_{\gtrsim \frac1{64}\lambda_{q+1}} n^{-1}\right)=: \nabla \left(m_\ell / n\right)_1 + \nabla \left(m_\ell / n\right)_2.
	\end{aligned}
\end{equation}
$\nabla \left(m_\ell / n\right)_1$ has the frequency localized to ${\leqslant} \frac1{32} \lambda_{q+1}$, since $m_\ell$ had frequency localized to $\ell^{-1}$ and $\ell^{-1} \leqslant \frac 1{128}\lambda_{q+1}$ for sufficiently large $\lambda_0$. Thus, $\cR P_{\GL} G_\tri:\nabla \left(m_\ell / n\right)_1$ has the frequency localized to $\gtrsim \lambda_{q+1}$, we have
\begin{equation}\label{est on high 1}
\begin{aligned}
	&\quad\left\|\cR(\cR P_{\GL} G_\tri:\nabla \left(m_\ell / n\right)_1)\right\|_N\\
	&\lesssim \frac 1{\lambda_{q+1}}\left\|\cR  P_{\GL} G_\tri:\nabla \left(m_\ell / n\right)_1\right\|_N\lesssim \frac 1{\lambda_{q+1}^2} \sum_{N_1+N_2=N}\left\| G_\tri\right\|_{N_1}\left\|\nabla (m_\ell/n)\right\|_{N_2}\lesssim_{n, p, h, M} \lambda_{q+1}^{N-2} \left(\frac{\delta_{q+1}}{\mu_q} + \frac{\delta_{q+1}^\frac{1}{2}}{\tau_q} \right) \lambda_q\delta_q^\frac{1}{2}.
\end{aligned}
\end{equation}
On the other hand, $\cR P_{\LL}  G_\tri:\nabla \left(m_\ell / n\right)_1$ has the frequency localized to $\lesssim \lambda_{q+1}^{-1}$, so that
\begin{equation}\label{est on low 1}
	\begin{aligned} 
	\left\|\cR(\cR P_{\LL}  G_\tri:\nabla \left(m_\ell / n\right)_1)\right\|_N
	&\lesssim \lambda_{q+1}^N\left\|\cR(\cR P_{\LL}  G_\tri:\nabla \left(m_\ell / n\right)_1)\right\|_0\lesssim \lambda_{q+1}^N\left\|P_{\LL}  G_\tri\right\|_0\left\|\nabla (m_\ell/n)\right\|_0.
	\end{aligned}	
	\end{equation}
Noting that $n$ is smooth in space-time and $n\geqslant\varepsilon_0$, we could use Bernstein's inequality to get
\begin{equation}\label{est on PG n}
\left\|P_{\gtrsim \lambda} n^{-1}\right\|_{N'}+\left\|\partial_t P_{\gtrsim \lambda} n^{-1}\right\|_{N'} 
\lesssim \lambda^{-2}(\left\|\nabla^2 n^{-1}\right\|_{N'}+\left\|\nabla^2\partial_t  n^{-1}\right\|_{N'})
\lesssim_{n} \lambda^{-2}, 
\end{equation}
for any $\lambda \geqslant 1$ and for any $N'=0,1,2,3.$ Then, $\left\|(m_\ell/n)_2\right\|_{N'} \lesssim_{n} \frac{1}{\lambda_{q+1}^2}\left\|m_\ell\right\|_{N'}$ for any $N'=0,1,2,3$, and we have
\begin{equation}\label{est on low 2}\begin{aligned} 
	\left\|\cR(\cR G_\tri:\nabla \left(m_\ell / n\right)_2)\right\|_N 
	&\lesssim \sum_{N_1+N_2=N}\left\| G_\tri\right\|_{N_1}\left\|\nabla \left(m_\ell / n\right)_2\right\|_{N_2}\lesssim \frac1{\lambda_{q+1}^{2}} \sum_{N_1+N_2=N}\left\| G_\tri\right\|_{N_1}\left\| m_\ell\right\|_{N_2+1}.
\end{aligned}\end{equation}
Up to now, we could use \eqref{est on G_tri}, \eqref{est on P G_tri} and \eqref{est on high 1}--\eqref{est on low 2} to obtain
\begin{align*}
&\quad\left\|\cR( R_{O1}:\nabla (m_\ell/n))\right\|_N+\left\|\cR( R_N:\nabla (m_\ell/n))\right\|_N+\left\|\cR( R_T:\nabla (m_\ell/n))\right\|_N+\left\|\cR( R_{EB1}:\nabla (m_\ell/n))\right\|_N\\
&\lesssim_{n, p, h, M}\lambda_{q+1}^N
\left(\frac {\delta_{q+1}^\frac{1}{2}}{\lambda_{q+1}\tau_{q}}+ \frac {\delta_{q+1}}{\lambda_{q+1}\mu_q}\right) \frac{\lambda_q\delta_q^\frac{1}{2}}{\lambda_{q+1}}.
\end{align*}
Next, let us consider their advective derivatives. Noticing that we have the following decomposition
\begin{align*}
D_{t,q+1} \cR( R_{\tri}:\nabla (m_\ell/n)) 
= \DTL \cR( R_{\tri}:\nabla (m_\ell/n)) + \left(\frac{\tm+ (m_q-m_\ell)}{n}\cdot \nabla \right)\cR( R_{\tri}:\nabla (m_\ell/n)). 
\end{align*}
It is easy to estimate the second term
\begin{align*}
\left\|\left(\frac{\tm+ (m_q-m_\ell)}{n}\cdot \nabla\right) \cR( R_{\triangle}:(m_\ell/n))\right\|_{N-1}
&\lesssim_{n, p, h, M} \lambda_{q+1}^N\delta_{q+1}^\frac{1}{2}
\left(\frac {\delta_{q+1}^\frac{1}{2}}{\lambda_{q+1}\tau_{q}}+ \frac {\delta_{q+1}}{\lambda_{q+1}\mu_q}\right) \frac{\lambda_q\delta_q^\frac{1}{2}}{\lambda_{q+1}}.
\end{align*}
As for the first term, we have
\begin{align*}
\left\|\DTL \cR( R_{\triangle}:\nabla \left(m_\ell / n\right)_1)\right\|_{N-1}
&\leqslant 
\left\|\DTL \cR(\cR P_{\GL} G_\tri :\nabla \left(m_\ell / n\right)_1)\right\|_{N-1}+\left\|\DTL \cR( \cR P_{\LL}  G_\tri:\nabla \left(m_\ell / n\right)_1)\right\|_{N-1}.
\end{align*}
To estimate the first term on the right hand, we could consider the following decomposition 
\begin{align*}
\DTL \cR P_{\GL} H
&=\cR P_{\GL} \DTL   H 
+\cR \left[(m_\ell/n)\cdot\nabla, P_{\GL}\right]  H 
+ \left[(m_\ell/n)\cdot\nabla, \cR\right] P_{\GL} H,
\end{align*}
for any smooth vector-valued function $H$ and Littlewood-Paley operator $P_{\gtrsim\lambda_{q+1}}$ projecting to the frequency $\gtrsim \lambda_{q+1}$. First, we could use Lemma \ref{lem of commutator 0} when $\ell^{-1}$ is replaced by $C\lambda_{q+1}$ and \eqref{est on PG n} to get
\begin{align*}
\left\|[(m_\ell/n)_1 \cdot\nabla,P_{\GL}] H\right\|_{N-1}
&{\lesssim \lambda_{q+1}^{N-2}\left\|\nabla(m_\ell/n)_1\right\|_0\left\|\nabla H\right\|_0}
\lesssim \lambda_{q+1}^{N-2}\left\|m_q\right\|_1\left\|\nabla H\right\|_0,\\
\left\|[(m_\ell/n)_2\cdot\nabla,P_{\GL}] H\right\|_{N-1}
&{\lesssim \lambda_{q+1}^{N-2}\left\|\nabla(m_\ell/n)_2\right\|_{1}\left\|\nabla H\right\|_0\lesssim \lambda_{q+1}^{N-4}}\left\|m_q\right\|_1\left\|\nabla H\right\|_0.
\end{align*}
From Lemma \ref{est on R commutator}, we could know
\begin{align}
	\left\|\left[{m_\ell}{\PL n^{-1}}\cdot\nabla, \cR\right]P_{\GL} H\right\|_{N-1} 
	\lesssim \sum_{N_1+N_2=N-1} \ell\left\|\nabla ({m_\ell}{\PL n^{-1}})\right\|_{N_1}\left\|H\right\|_{N_2}.\label{est on R commutator P}
\end{align}
By using $\left\|\mathcal{R}f\right\|_0\lesssim\left\|f\right\|_0,\left\|\PG n^{-1}\right\|_{N'}\lesssim_{n} \ell^3\left\|n^{-1}\right\|_{N'+3}$, we could obtain
\begin{align*}
	\left\|\left[{m_\ell}{\PG n^{-1}}\cdot\nabla, \cR\right]P_{\GL} H\right\|_{N-1} 
	&\lesssim \sum_{N_1+N_2+N_3=N-1}
	\left\|m_{\ell}\right\|_{N_1}
	\left\|\PG n^{-1}\right\|_{N_2}\left\|\nabla P_{\GL}H\right\|_{N_3}\\
	&\lesssim_{n} \ell^3 \sum_{N_1+N_3=N-1}
	\left\|m_{\ell}\right\|_{N_1}
	\left\|\nabla P_{\GL}H\right\|_{N_3}.
\end{align*}
Since $P_{\GL} \DTL   H$ and $ [(m_\ell/n)_1\cdot\nabla, P_{\GL}]  H$ have frequencies localized to $\gtrsim \lambda_{q+1}$, it follows that
\begin{equation}\label{est on Dtl R PG H}
	\begin{aligned}
	\left\|\DTL \cR P_{\GL} H\right\|_{N-1}
	&\lesssim\left\|\cR P_{\GL} \DTL   H \right\|_{N-1} 
	+\left\|\cR [(m_\ell/n)_1\cdot\nabla, P_{\GL}]  H \right\|_{N-1}
	\\
	&\quad+\left\|\cR [(m_\ell/n)_2\cdot\nabla, P_{\GL}]  H \right\|_{N-1}+\left\| [m_\ell/n\cdot\nabla, \cR] P_{\gtrsim\lambda_{q+1}} H\right\|_{N-1}\\
	&\lesssim 
	\frac 1{\lambda_{q+1}}\left\| P_{\GL} \DTL   H \right\|_{N-1} 
	+  \frac 1{\lambda_{q+1}}\left\| [(m_\ell/n)_1\cdot\nabla, P_{\GL}]  H \right\|_{N-1}\\
	&\quad+\left\| [(m_\ell/n)_2\cdot\nabla, P_{\GL}]  H \right\|_{N-1}
	+\left\| [{m_\ell}/{n}\cdot\nabla,\cR]P_{\GL} H\right\|_{N-1}\\
	&\lesssim \frac 1{\lambda_{q+1}}\left\| \DTL   H \right\|_{N-1} 
	+ \lambda_{q+1}^{N-3}\left\|m_q\right\|_1 \left\|\nabla H\right\|_0
	+\sum_{N_1+N_2=N-1} \ell\left\|\nabla (m_\ell/n)\right\|_{N_1}\left\|H\right\|_{N_2}.
\end{aligned}\end{equation}
Up to now, we only have to apply it to $H= \cR P_{\GL} G_\tri:\nabla \left(m_\ell / n\right)_1$. For such $H$, we have $H= P_{\geqslant \frac18\lambda_{q+1}}H$ for sufficiently large $\lambda_0$, and it follows that
\begin{equation}\label{est on Dtl R High 1}\begin{aligned}
	&\quad\left\|\DTL \cR(\cR  P_{\GL}  G_\tri:\nabla \left(m_\ell / n\right)_1)\right\|_{N-1}\\
	&\lesssim \frac 1{\lambda_{q+1}}\left\|\DTL (\cR  P_{\GL}  G_\tri: \nabla \left(m_\ell / n\right)_1)\right\|_{N-1} 
	+ \lambda_{q+1}^{N-3}\left\| m_q\right\|_1 
	\left\| \cR P_{\GL} G_\tri:\nabla \left(m_\ell / n\right)_1\right\|_1\\
	&\quad+\sum_{N_1+N_2=N-1} \ell\left\|\nabla (m_\ell/n)\right\|_{N_1}\left\|\cR P_{\GL} G_\tri:\nabla \left(m_\ell / n\right)_1\right\|_{N_2}\\
	&\lesssim_{n, p, h, M} \lambda_{q+1}^{N-2} \delta_{q+1}^\frac{1}{2}
	\left(\frac{\delta_{q+1}}{\mu_q} + \frac{\delta_{q+1}^\frac{1}{2}}{\tau_q} \right) \lambda_q\delta_q^\frac{1}{2}.
\end{aligned}\end{equation}
Similarly, the second inequality follows from \eqref{est on Dtl R PG H} with $H= G_\tri$,
\begin{equation}\label{est on Dtl High 1}
\begin{aligned}
	&\quad\left\|\DTL (\cR P_{\GL} G_\tri: \nabla \left(m_\ell / n\right)_1)\right\|_{N-1} \\
	&\lesssim \sum_{N_1+N_2=N-1}
	\left\|\DTL \cR  P_{\GL}  G_\tri\right\|_{N_1}\left\|\nabla (m_\ell / n)_1\right\|_{N_2}
	+\left\|\cR  P_{\GL}  G_\tri\right\|_{N_1}\left\| \DTL \nabla (m_\ell / n)_1\right\|_{N_2}\\
	&\lesssim \sum_{N_1+N_2=N-1}
	\left(\lambda_{q+1}^{-1}\left\|\DTL G_\tri\right\|_{N_1}+\lambda_{q+1}^{N_1-3}\left\|m_q\right\|_1\left\|\nabla G_\tri\right\|_0 \right)\left\|\nabla (m_\ell / n)_1\right\|_{N_2}\\
	&\quad+\sum_{\substack{N_{11}+N_{12} = N_1\\ N_1+N_2=N-1}} \ell\left\|\nabla m_\ell\right\|_{N_{11}}\left\|G_\tri\right\|_{N_{12}}\left\|\nabla (m_\ell / n)_1\right\|_{N_2}
    +\sum_{N_1+N_2=N-1}\lambda_{q+1}^{-1}\left\| G_\tri\right\|_{N_1}\left\| \DTL \nabla (m_\ell / n)_1\right\|_{N_2}\\
	&\lesssim_{n, p, h, M} \lambda_{q+1}^{N-1} \delta_{q+1}^\frac{1}{2}
\left(\frac{\delta_{q+1}}{\mu_q} + \frac{\delta_{q+1}^\frac{1}{2}}{\tau_q} \right) \lambda_q\delta_q^\frac{1}{2},
\end{aligned}
\end{equation}
where we used 
\begin{equation}\label{est on Dtl m_l / n}
\begin{aligned}
	\left\|\DTL  \nabla (m_\ell/n)_1\right\|_{N_2}
	&\leqslant
	\left\|P_{\lesssim \frac{1}{64}\lambda_{q+1}}n^{-1}\DTL  \nabla m_\ell\right\|_{N_2}
	+\left\|\DTL  m_\ell \otimes\nabla P_{\lesssim \frac{1}{64}\lambda_{q+1}}n^{-1}\right\|_{N_2}\\
	&\quad+\left\| \nabla m_\ell \DTL P_{\lesssim \frac{1}{64}\lambda_{q+1}}n^{-1}\right\|_{N_2}+\left\|m_\ell\otimes \DTL {\nabla}P_{\lesssim \frac{1}{64}\lambda_{q+1}}n^{-1}\right\|_{N_2}\\
	&\lesssim_{n, p, h, M} \lambda_{q+1}^{ 
			N_2+1}\delta_{q+1}^\frac{1}{2} \lambda_q\delta_q^\frac{1}{2},
\end{aligned}
\end{equation}
which follows from \eqref{est on m_l}, \eqref{est on pt Dtl m_l N}, and \eqref{est on mixed m_l 1}.
As for the remaining term $\left\|\DTL \cR( \cR P_{\LL}  G_\tri:\nabla \left(m_\ell / n\right)_1))\right\|_{N-1}$, 
we set $
\DTL^{L} := \partial_t + (m_\ell/n)_1 \cdot \nabla$
and calculate
\begin{align*}
&\quad\left\|\DTL^{L} \cR(\cR P_{\LL} G_\tri:\nabla \left(m_\ell / n\right)_1))\right\|_0
\\
&\lesssim\left\| \cR \DTL^{L}(\cR P_{\LL}  G_\tri:\nabla \left(m_\ell / n\right)_1))\right\|_0
+\left\|[(m_\ell/n)_1\cdot\nabla ,\cR] (\cR P_{\LL}  G_\tri:\nabla \left(m_\ell / n\right)_1))\right\|_0\\
&\lesssim \left\| \DTL^{L}\cR P_{\LL}  G_\tri\right\|_0\left\|\nabla (m_\ell / n)_1\right\|_0 +\left\|  P_{\LL} G_\tri\right\|_0\left\| \DTL^{L}\nabla (m_\ell / n)_1\right\|_0+\left\|(m_\ell/n)_1\right\|_0\left\|\nabla(\cR P_{\LL}  G_\tri:\nabla (m_\ell/n)_1)\right\|_0 \\
&\lesssim \left(\left\| \DTL^{L}P_{\LL}  G_\tri\right\|_0
+\left\|[(m_\ell/n)_1\cdot \nabla, \cR] P_{\LL}  G_\tri\right\|_0\right)\left\|\nabla (m_\ell/n)_1\right\|_0\\
&\quad+\left\|  P_{\LL}  G_\tri\right\|_0\left\| \DTL^L \nabla (m_\ell/n)_1\right\|_0
+\lambda_{q+1}\left\|P_{\LL}  G_\tri\right\|_0\left\|\nabla (m_\ell/n)_1\right\|_0\\
&\lesssim
\lambda_q\delta_q^\frac{1}{2}\left(\left\| \DTL^{L}P_{\LL}  G_\tri\right\|_0
+\lambda_{q+1}\left\|P_{\LL}  G_\tri\right\|_0\right)\\
&\lesssim_{n, p, h, M} \lambda_{q+1}^{-1}\delta_{q+1}^\frac{1}{2} \left(\frac {\delta_{q+1}^\frac{1}{2}}{\tau_{q}}+ \frac {\delta_{q+1}}{\mu_q}\right) \lambda_q\delta_q^\frac{1}{2},
\end{align*}
where we have used $\left\|\cR f\right\|_0 \lesssim\left\|f\right\|_0$. 
Since the frequency of $\DTL^{L} \cR(\cR P_{\LL}  G_\tri:\nabla \left(m_\ell / n\right)_1))$ is  localized to $\lesssim \lambda_{q+1}$, we have
\begin{align*}
&\left\|\DTL^{L} \cR(\cR P_{\LL}  G_\tri:\nabla \left(m_\ell / n\right)_1))\right\|_{N-1}
\lesssim \lambda_{q+1}^{N-1}\left\|\DTL^{L} \cR(\cR P_{\LL}  G_\tri:\nabla \left(m_\ell / n\right)_1))\right\|_0.
\end{align*}
Up to now, we could obtain
\begin{align*}
&\quad\left\|\DTL  \cR(\cR P_{\LL}  G_\tri:\nabla \left(m_\ell / n\right)_1))\right\|_{N-1}\\
&\leqslant\left\|\DTL^{L} \cR(\cR P_{\LL}  G_\tri:\nabla \left(m_\ell / n\right)_1))\right\|_{N-1} +\left\|((m_\ell/n)_2 \cdot \nabla) \cR(\cR P_{\LL}  G_\tri:\nabla \left(m_\ell / n\right)_1))\right\|_{N-1}\\
&\lesssim \lambda_{q+1}^{N-1}\left\|\DTL^{L} \cR(\cR P_{\LL}  G_\tri:\nabla \left(m_\ell / n\right)_1))\right\|_0 + \lambda_{q+1}^{-2}\left\|\cR(\cR P_{\LL}  G_\tri:\nabla \left(m_\ell / n\right)_1))\right\|_{N}\\
&\lesssim_{n, p, h, M} \lambda_{q+1}^N\delta_{q+1}^\frac{1}{2}
\left(\frac {\delta_{q+1}^\frac{1}{2}}{\lambda_{q+1}\tau_{q}}+ \frac {\delta_{q+1}}{\lambda_{q+1}\mu_q}\right) \frac{\lambda_q\delta_q^\frac{1}{2}}{\lambda_{q+1}}.
\end{align*}
Combining with \eqref{est on Dtl R High 1}, we could obtain
\begin{align*}
\left\|\DTL \cR( R_\triangle:\nabla \left(m_\ell / n\right)_1))\right\|_{N-1}
\lesssim_{n, p, h, M} \lambda_{q+1}^N\delta_{q+1}^\frac{1}{2}
\left(\frac {\delta_{q+1}^\frac{1}{2}}{\lambda_{q+1}\tau_{q}}+ \frac {\delta_{q+1}}{\lambda_{q+1}\mu_q}\right) \frac{\lambda_q\delta_q^\frac{1}{2}}{\lambda_{q+1}}.
\end{align*}
Next, we consider $\DTL \cR( R_\triangle:\nabla \left(m_\ell / n\right)_2)$. Observing that
$
\DTL \cR H
=\cR \DTL   H  
+\left [(m_\ell/n)\cdot\nabla, \cR\right] H
$
and setting $H =  R_\triangle:\nabla \left(m_\ell / n\right)_2$, we have
\begin{align*}
\left\|\DTL \cR H\right\|_{N-1}
&=\left\|\cR \DTL  ( R_\triangle:\nabla \left(m_\ell / n\right)_2)\right\|_{N-1} 
+\left\|[(m_\ell/n)\cdot\nabla, \cR] ( R_\triangle:\nabla \left(m_\ell / n\right)_2)\right\|_{N-1}\\
&\lesssim\left\| \DTL   ( R_\triangle:\nabla \left(m_\ell / n\right)_2) \right\|_{N-1} 
+\left\|[(m_\ell/n)\cdot\nabla, \cR] ( R_\triangle:\nabla \left(m_\ell / n\right)_2)\right\|_{N-1}
\end{align*}
As for the first term, we use \eqref{est on PG n} and $\left\|\DTL \nabla \left(m_\ell / n\right)_2\right\|_{N_2} \lesssim \delta_{q+1}^\frac{1}{2} \lambda_q \delta_q^\frac{1}{2}$ to get
\begin{align*}
\left\|\DTL ( R_\triangle:\nabla \left(m_\ell / n\right)_2))\right\|_{N-1} &\lesssim \sum_{N_1 + N_2= N-1}\left\|\DTL  R_\triangle\right\|_{N_1}\left\|\nabla \left(m_\ell / n\right)_2\right\|_{N_2} +\left\| R_\triangle\right\|_{N_1}\left\|\DTL \nabla \left(m_\ell / n\right)_2\right\|_{N_2}\\
&\lesssim_{n, p, h, M} \lambda_{q+1}^{N-2} \delta_{q+1} \tau_q^{-1} \lambda_q \delta_q^{\frac{1}{2}}.
\end{align*}   
The second term can be estimated as
\begin{align*}
&\quad\left\|\left[(m_\ell/n)\cdot\nabla, \cR\right]( R_\triangle:\nabla \left(m_\ell / n\right)_2))\right\|_{N-1}\\
&\lesssim \sum_{N_1 + N_2 + N_3 = N-1}\left\|(m_\ell/n)\right\|_{N_1}\left\| R_\triangle\right\|_{N_2+1}\left\|\nabla \left(m_\ell / n\right)_2\right\|_{N_3}  +\left\|(m_\ell/n)\right\|_{N_1}\left\| R_\triangle\right\|_{N_2}\left\|\nabla \left(m_\ell / n\right)_2\right\|_{N_3+1}\lesssim_{n, p, h, M}\lambda_{q+1}^N \delta_{q+1} \frac{\delta_q^{\frac{1}{2}}}{\lambda_{q+1}^2 \tau_q}.
\end{align*}
Now, we could obtain
\begin{align*}
\left\|   D_{t,q+1} \cR( R_\tri:\nabla (m_\ell/n))\right\|_{N-1}
\lesssim_{n, p, h, M}
\lambda_{q+1}^N\delta_{q+1}^\frac{1}{2}
\left(\frac {\delta_{q+1}^\frac{1}{2}}{\lambda_{q+1}\tau_{q}}+ \frac {\delta_{q+1}}{\lambda_{q+1}\mu_q}\right) \frac{\lambda_q\delta_q^\frac{1}{2}}{\lambda_{q+1}},
\end{align*}
and the estimates on $  \varphi_{H2}$ follow, 
\begin{align*}
\left\|  \varphi_{H2}\right\|_N 
\lesssim_{n, p, h, M} \lambda_{q+1}^N \lambda_q \delta_q^\frac{1}{2} \frac {\delta_{q+1}}{\lambda_{q+1}},  \quad
\left\|   D_{t,q+1}  \varphi_{H2}\right\|_{N-1} 
\lesssim_{n, p, h, M}\lambda_{q+1}^N\delta_{q+1}^\frac{1}{2} \lambda_q \delta_q^\frac{1}{2} \frac {\delta_{q+1}}{\lambda_{q+1}}.
\end{align*}
To summarize, we get
\begin{align}
\left\|  \varphi_{H}\right\|_N 
\leqslant \frac {1}{10}\lambda_{q+1}^{N-3\gamma} \delta_{q+2}^\frac{3}{2} ,  \quad
\left\|   D_{t,q+1}  \varphi_{H}\right\|_{N-1} 
\leqslant \frac {1}{10}\lambda_{q+1}^{N-3\gamma}\delta_{q+1}^\frac{1}{2} \delta_{q+2}^\frac{3}{2},\label{est on varphi_H}
\end{align}
for sufficiently small $b-1>0$ and large $\lambda_0$. 
\subsection{Transport current error}
Recalling the definition of $\varphi_T,  \varphi_{T1}$ and $\varphi_{T2}$, we could first see
\begin{align*}
	\frac{\tm_p\otimes \tm_p}{n^2} - \delta_{q+1} \Id + R_\ell=\delta_{q+1}\sumu \sum_{k\in \Z^3\setminus \{0\}}   (d_{u,k}/n^2) e^{ i\lambda_{q+1} k\cdot \xi_I},
\end{align*}
and then
\begin{align}
\left\|\frac{\tm_p\otimes \tm_p}{n^2} - \delta_{q+1} \Id + R_\ell\right\|_N
&\lesssim \delta_{q+1}\sumu\sum_{k\in \Z^3\setminus \{0\}}\left\|  (d_{u,k}/n^2) e^{ i\lambda_{q+1} k\cdot \xi_I}\right\|_{N} 
\lesssim_{n, p, h, M} \lambda_{q+1}^N \delta_{q+1}, \label{est on G_O1}\\	
\left\|   D_{t,q+1} \left(\frac{\tm_p\otimes \tm_p}{n^2} - \delta_{q+1} \Id + R_\ell\right)\right\|_{N-1}
&\lesssim \delta_{q+1} \sumu\sum_{k\in \Z^3\setminus \{0\}} \left\|  \left(\DTL (d_{u,k}/n^2)\right) e^{ i\lambda_{q+1} k\cdot \xi_I}\right\|_{N-1}\nonumber\\
&\quad+\left\|(\tm+(m_q-m_\ell)\cdot \nabla )\left(\frac{\tm_p\otimes \tm_p}{n^2} - \delta_{q+1} \Id + R_\ell\right)\right\|_{N-1}\nonumber\\ 
&\lesssim_{n, p, h, M} \lambda_{q+1}^N\delta_{q+1}^\frac{1}{2} \cdot \delta_{q+1}, \label{est on Dtl G_O1}
\end{align}
where we use \eqref{est on m_q-m_l}, \eqref{est on cof of duk}, \eqref{est on tm}, and \eqref{est on Dtl tm}. Next, recalling the definition of $n\varphi_{T1}$
\begin{align*}
n\varphi_{T1} &= -\kappa_{q+1}  \tm+ \frac{1}{2} \tr\left(  \frac{\tm_p\otimes \tm_p}{n^2} - \delta_{q+1} \Id + R_\ell-R_t-R_{EB2} \right)(m_q-m_\ell) - \frac{m_q\zeta}{n},
\end{align*}
and using \eqref{est on m_q}, \eqref{est on m_q-m_l}, \eqref{est on R_q+1-zeta}, \eqref{est on R_O1}, \eqref{est on R_t}, \eqref{est on R_EB},  \eqref{est on zeta}, \eqref{est on G_O1}, and $\kappa_{q+1} = \frac{1}{2} \tr(R_{q+1})$, we have
\begin{align*}
\left\| n \varphi_{T1}\right\|_N 
&\lesssim  \sum_{N_1+N_2=N}\left\|R_{q+1} -{\textstyle{\frac 23}}  \zeta {/n}\Id +{\textstyle{\frac 23}}  \zeta{/n}\Id\right\|_{N_1}\left\|\tm\right\|_{N_2}\\
&\quad+  \sum_{N_1+N_2=N}\left\| \frac{\tm_p\otimes \tm_p}{n^2} - \delta_{q+1} \Id + R_\ell-R_t-R_{EB}\right\|_{N_1}\left\|m_q-m_\ell\right\|_{N_2}
+\left\|m_q/n\right\|_{N}\left\|\zeta\right\|_0\\
&\lesssim _{n, p, h, M} \lambda_{q+1}^N\left(\lambda_q^\frac{1}{2}\lambda_{q+1}^{-\frac{1}{2}}\delta_q^\frac{1}{4}\delta_{q+1}^\frac{5}{4} +\ell^2\lambda_q^2 \delta_q^\frac{1}{2}\left( \delta_{q+1}+\frac{\delta_{q+1}^{\frac{1}{2}}}{\lambda_{q+1}}+\frac{\delta_{q+1}^{\frac{1}{2}}}{\lambda_{q+1}(\lambda_{q+1}\mu_{q})}\right)\right)
\lesssim_{n, p, h, M} \lambda_{q+1}^N 
\lambda_q^\frac{1}{2}\lambda_{q+1}^{-\frac{1}{2}}\delta_q^\frac{1}{4}\delta_{q+1}^\frac54.
\end{align*}
For the advective derivative, we use \eqref{est on Dtq+1 n} and \eqref{est on Dtl G_O1} to get
\begin{align*}
\left\|   D_{t,q+1} \varphi_{T1}\right\|_{N-1}
&\lesssim
\sum_{N_1+N_2=N-1}\left\|   D_{t,q+1} (R_{q+1} -{\textstyle{\frac 23} \frac{\zeta}{n}} \Id + {\textstyle{\frac 23} \frac{\zeta}{n}} \Id)\right\|_{N_1}\left\| \tm\right\|_{N_2} +\left\|R_{q+1}-{\textstyle{\frac 23}\frac{\zeta}{n} \Id + {\textstyle{\frac 23}} \frac{\zeta}{n} \Id} \right\|_{N_1}\left\|    D_{t,q+1} \tm\right\|_{N_2} \\
&\quad+ \sum_{N_1+N_2=N-1} \left\|   D_{t,q+1} \left(\frac{\tm_p\otimes \tm_p}{n^2} -\delta_{q+1}I+ R_\ell\right)\right\|_{N_1}\left\|m_q-m_\ell\right\|_{N_2}\\
&\quad+ \sum_{N_1+N_2=N-1}  \left\|\frac{\tm_p\otimes \tm_p}{n^2} -\delta_{q+1}I+ R_\ell\right\|_{N_1}\left\|   D_{t,q+1} (m_q-m_\ell)\right\|_{N_2}
+\left\|D_{t,q+1}(m_\ell\zeta /n^2)\right\|_N
\\
&\lesssim_{n, p, h, M} \lambda_{q+1}^N \delta_{q+1}^\frac{1}{2}
\lambda_q^\frac{1}{2}\lambda_{q+1}^{-\frac{1}{2}}\delta_q^\frac{1}{4}\delta_{q+1}^\frac54.
\end{align*}
As for $  \varphi_{T2}$, we could calculate
\begin{align*}
n\varphi_{T2} &= \frac{1}{2}\mathcal{R} \left(n \DTL \tr\left(  \frac{\tm_p\otimes \tm_p}{n^2} - \delta_{q+1} \Id + R_\ell  \right) 
-\tr\left(  \frac{\tm_p\otimes \tm_p}{n^2} - \delta_{q+1} \Id + R_\ell \right)\Div( m_q - m_\ell)\right)\\
&=\frac{1}{2}\cR\left(\sum_{u} \sum_{k\in \Z^3\setminus \{0\}}  \delta_{q+1} \tr(n \DTL  d_{u,k}+(\partial_t n - \partial_t n_\ell) d_{u,k}) e^{ i\lambda_{q+1} k\cdot \xi_I}\right),
\end{align*}
and use \eqref{est on cof of duk} to obtain for $\overline{N}\geqslant 0$
$$\begin{aligned}
\left\|\tr(n \DTL  d_{m,k}+(\partial_t n - \partial_t n_\ell) d_{m,k})\right\|_{\overline{N}}+(\lambda_{q+1}\delta_{q+1}^\frac{1}{2})^{-1}\left\|\DTL  \tr(n \DTL  d_{m,k}+(\partial_t n - \partial_t n_\ell) d_{m,k})\right\|_{\overline{N}} \lesssim_{n, p, h, M,\overline{N}}  \mu_q^{-\overline{N}}\tau_{q}^{-1} \underset{I}{\max}|\overset{\circ}{d}_{I,k}|.
\end{aligned}$$
Then, we could use $\supp(d_{u,k}) \subset (t_u -\frac{1}{2}\tau_q, t_u + \frac 32\tau_q) \times \R^3$ and apply Corollary \ref{est on R operator} to $n\varphi_{T2}$ to get
\begin{align*}
\left\|  \varphi_{T2}\right\|_N
\lesssim_{n, p, h, M}  \lambda_{q+1}^N
\frac {\delta_{q+1}}{\lambda_{q+1} \tau_q},\quad
\left\|   D_{t,q+1}  \varphi_{T2}\right\|_{N-1} \lesssim_{n, p, h, M}  \lambda_{q+1}^N\delta_{q+1}^\frac{1}{2}
\frac {\delta_{q+1}}{\lambda_{q+1} \tau_q}. 
\end{align*}
To summarize, we have
\begin{align}
\left\|  \varphi_{T}\right\|_N\leqslant \frac {1}{10}  \lambda_{q+1}^{N-3\gamma}\delta_{q+2}^\frac{3}{2},\quad
\left\|   D_{t,q+1}  \varphi_{T}\right\|_{N-1}\leqslant \frac {1}{10}  \lambda_{q+1}^{N-3\gamma}\delta_{q+1}^\frac{1}{2}\delta_{q+2}^\frac{3}{2},\label{est on varphi_T}
\end{align}
for sufficiently small $b-1>0$ and large $\lambda_0$. 
\subsection{Oscillation current error} 
Note that 
\begin{align*}
n\varphi_{O1}=\cR\left(\Div \left(\frac {|\tm_p|^2\tm_p}{2n^2} +n \varphi_\ell \right)\right)
=\delta_{q+1}^\frac 32\cR\left(\sumu \sum_{k\in \Z^3\setminus \{0\}}   \Div\left(n_{u,k}/n^2\right)e^{ i\lambda_{q+1} k\cdot \xi_I}\right),
\end{align*}
because of $\overset{\circ}{n}_{I,k} (f_I\cdot k) =0$. We use \eqref{est on cof of nuk} to get
\begin{align*}
\left\|\DTL  \Div (n_{u,k}/n^2)\right\|_{\overline{N}}
\lesssim\left\|\DTL  (n_{u,k}/n^2)\right\|_{\overline{N}+1} +\left\|\nabla (m_\ell/n)^{\top}:\nabla (n_{u,k}/n^2)\right\|_{\overline{N} }
\lesssim_{n, p, h, M} \mu_q^{-\overline{N}-1}\lambda_{q+1}\delta_{q+1}^\frac{1}{2} \underset{I}{\max}|\overset{\circ}{d}_{I,k}|,
\end{align*}
and then
\begin{align*}
	\left\|\Div (n_{u,k}/n^2)\right\|_{\overline{N}}+(\lambda_{q+1}\delta_{q+1}^\frac{1}{2})^{-1}\left\|\DTL  \Div (n_{u,k}/n^2)\right\|_{\overline{N}}
	\lesssim_{n, p, h, M} \mu_q^{-\overline{N}-1} \underset{I}{\max}|\overset{\circ}{d}_{I,k}|.
\end{align*}
Therefore, using $\supp(d_{u,k}) \subset (t_u -\frac{1}{2}\tau_q, t_u + \frac 32\tau_q) \times \R^3$, and  Corollary \ref{est on R operator} with \eqref{est on cof of nuk}, we have
\begin{align*}
\left\| n \varphi_{O1}\right\|_N
\lesssim_{n, p, h, M} \lambda_{q+1}^N\frac {\delta_{q+1}^\frac 32}{\lambda_{q+1}\mu_q}, \quad
\left\|   D_{t,q+1} (n \varphi_{O1})\right\|_{N-1} 
\lesssim_{n, p, h, M} \lambda_{q+1}^N \delta_{q+1}^\frac{1}{2} \frac {\delta_{q+1}^\frac 32}{\lambda_{q+1}\mu_q} .
\end{align*}
Next, recall that $ n \varphi_{O2} = \frac {|\tm|^2\tm - |\tm_p|^2 \tm_p}{2n^2}$. Then, we could use \eqref{est on tm_EBp}, \eqref{est on tm_c}, \eqref{est on Dtl tm_EBp}, and \eqref{est on Dtl tm_c} to get
\begin{align*}
\left\| n \varphi_{O2}\right\|_N
&\lesssim 
\left\|\frac{(\tm_p\cdot \tm_c) \tm}{n^2}\right\|_N 
+\left\|\frac{|\tm_c|^2 \tm}{2n^2}\right\|_N +\left\|\frac{|{\tm_p}|^2 \tm_c}{2n^2}\right\|_N
\lesssim_{n, p, h, M} \lambda_{q+1}^N \cdot \frac {\delta_{q+1}^\frac{3}{2}}{\lambda_{q+1}\mu_q},\\
\left\|   D_{t,q+1} (n \varphi_{O2})\right\|_{N-1}
&\lesssim\left\|   D_{t,q+1} \left(\frac{|{\tm_p}|^2 \tm_c}{2n^2}\right)\right\|_{N-1}
+\left\|   D_{t,q+1} \left(\frac{(\tm_p\cdot \tm_c)\tm}{n^2} +\frac{ |\tm_c|^2\tm}{2n^2}\right)\right\|_{N-1}\lesssim_{n, p, h, M} \lambda_{q+1}^N\delta_{q+1}^\frac{1}{2} \cdot \frac {\delta_{q+1}^\frac{3}{2}}{\lambda_{q+1}\mu_q}.
\end{align*}
Therefore, combining the estimates, we get
\begin{align}
\left\|  \varphi_{O}\right\|_N\leqslant \frac 1{10} \lambda_{q+1}^{N-3\gamma} \delta_{q+2}^\frac{3}{2}, \quad 
\left\|   D_{t,q+1}  \varphi_{O}\right\|_{N-1}\leqslant \frac 1{10} \lambda_{q+1}^{N-3\gamma} \delta_{q+1}^\frac{1}{2}\delta_{q+2}^\frac{3}{2},\label{est on varphi_O}
\end{align}
for sufficiently small $b-1>0$ and large $\lambda_0$. 
\subsection{Reynolds current error} 
Recall that $ n \varphi_R = (R_{q+1}-\frac23(\zeta/n) \Id) \tm + \frac23 (\zeta/n)  \tm$. Similar to the estimate for $\kappa_{q+1} \tm$ in $  \varphi_{T1}$, we have
\begin{align*}
\left\|  \varphi_R\right\|_N 
&\lesssim_{n, p, h, M} \lambda_{q+1}^N 
\lambda_q^\frac{1}{2}\lambda_{q+1}^{-\frac{1}{2}} \delta_q^\frac{1}{4}\delta_{q+1}^\frac54
\leqslant \frac {1}{10} \lambda_{q+1}^{N-3\gamma} \delta_{q+2}^\frac{3}{2},\\
\left\|   D_{t,q+1}   \varphi_R\right\|_{N-1}
&\lesssim_{n, p, h, M} \lambda_{q+1}^N\delta_{q+1}^\frac{1}{2} \lambda_q^\frac{1}{2}\lambda_{q+1}^{-\frac{1}{2}} \delta_q^\frac{1}{4}\delta_{q+1}^\frac54
\leqslant \frac {1}{10} \lambda_{q+1}^{N-3\gamma}\delta_{q+1}^\frac{1}{2} \delta_{q+2}^\frac{3}{2},
\end{align*}
for sufficiently small $b-1>0$ and large $\lambda_0$. 
\subsection{Mediation current error} 
Recall that $n \varphi_M = n \varphi_{M1} + n \varphi_{M2} + n \varphi_{M3} + n \varphi_{M4}$ where
\begin{align*}
n\varphi_{M1} &=   \frac{|m_q-m_\ell|^2}{2n}\frac {\tm}{n}+ n(\varphi_q-\varphi_\ell),\\
n\varphi_{M2} &= \left (\frac{\tm \otimes \tm}{n}+n R_q -n R_{q+1} - \delta_{q+1} n \Id \right)\frac{m_q-m_\ell}{n},\\ 
n\varphi_{M3} &= \mathcal{R} \left( 
\Div (m_q-m_\ell) \frac{\tm\cdot m_\ell}{n^2} \right),\\
n\varphi_{M4} &= \mathcal{R} \left( \frac{\tm}{n} \cdot \nabla(p(n) - p_\ell (n))  \right).
\end{align*}
For $\varphi_{M1}$, we use \eqref{est on m_q-m_l}, \eqref{est on phi_q-phi_l}, and \eqref{est on Dtl tm} to get 
\begin{align*}
\left\|  \varphi_{M1}\right\|_N \leqslant \frac 1{20} \lambda_{q+1}^{N-3\gamma} \delta_{q+2}^\frac{3}{2}, \quad 
\left\|   D_{t,q+1}    \varphi_{M1}\right\|_{N-1} \leqslant \frac 1{20} \lambda_{q+1}^{N-3\gamma}\delta_{q+1}^\frac{1}{2} \delta_{q+2}^\frac{3}{2}.
\end{align*}
for sufficiently small $b-1>0$ and large $\lambda_0$. For $\varphi_{M2}$, we estimate $\varphi_{M2}$ in a similar way as $  \varphi_{T1}$ and $\varphi_{H2}$,
\begin{align*}
\left\|\left (\frac{\tm \otimes \tm}{n}+n R_q -n R_{q+1} - \delta_{q+1} n \Id \right) \frac{m_q-m_\ell}{n}\right\|_{N}
&\lesssim_{n, p, h, M} \lambda_{q+1}^N\lambda_q^\frac{1}{2}\lambda_{q+1}^{-\frac{1}{2}} \delta_q^\frac{1}{4}\delta_{q+1}^\frac54,\\
\left\|   D_{t,q+1} \left[\left (\frac{\tm \otimes \tm}{n}+n R_q -n R_{q+1} - \delta_{q+1} n \Id \right) \frac{m_q-m_\ell}{n}\right]\right\|_{N-1}
&\lesssim_{n, p, h, M} \lambda_{q+1}^N\delta_{q+1}^\frac{1}{2}\lambda_q^\frac{1}{2}\lambda_{q+1}^{-\frac{1}{2}} \delta_q^\frac{1}{4}\delta_{q+1}^\frac54.
\end{align*}
For $\varphi_{M3}$ and $\varphi_{M4}$, notice that 
\begin{align*}
&\varphi_{M3}=\delta_{q+1}^\frac{1}{2}\cR\left(\sum_{u} \sum_{k\in \Z^3\setminus \{0\}}  (\partial_tn_\ell-\partial_t n) \frac{m_\ell}{n^2} \cdot (s_{u,k} + \tilde{e}_{u,k}+\tilde{g}_{u,k})) e^{ i\lambda_{q+1} k\cdot \xi_I}\right)+\cR\left((\partial_tn_\ell-\partial_t n)\frac{\tm_{t}\cdot m_\ell }{n^2}\right),\\
&\varphi_{M4}=\delta_{q+1}^\frac{1}{2}\cR\left(\sum_{u} \sum_{k\in \Z^3\setminus \{0\}}  \nabla(p(n) - p_\ell (n)) \frac{1}{n} \cdot (s_{u,k} + \tilde{e}_{u,k}+\tilde{g}_{u,k})) e^{ i\lambda_{q+1} k\cdot \xi_I}\right)+\cR\left(\frac{\tm_t}{n} \cdot \nabla(p(n) - p_\ell (n))  \right).
\end{align*}
Therefore, since $\supp(b_{u,k}) \subset (t_u -\frac{1}{2}\tau_q, t_u + \frac 32\tau_q) \times \R^3$ and
\begin{align*}
&\left\|(\partial_tn_\ell-\partial_t n) \frac{m_\ell}{n^2} \cdot (s_{u,k} + \tilde{e}_{u,k}+\tilde{g}_{u,k}))\right\|_{\overline{N}}\\
&\quad+(\lambda_{q+1}\delta_{q+1}^\frac{1}{2})^{-1}\left\|\DTL ( (\partial_tn_\ell-\partial_t n) \frac{m_\ell}{n^2} \cdot (s_{u,k} + \tilde{e}_{u,k}+\tilde{g}_{u,k})))\right\|_{\overline{N}}
\lesssim_{n, p, h, M} \mu_q^{-\overline{N}}\ell^2\lambda_{q}\delta_{q}^{\frac{1}{2}} \underset{I}{\max}|\overset{\circ}{c}_{I,k}|,\\
&\left\| \nabla(p(n) - p_\ell (n)) \frac{1}{n} \cdot (s_{u,k} + \tilde{e}_{u,k}+\tilde{g}_{u,k}))\right\|_{\overline{N}}\\
&\quad+(\lambda_{q+1}\delta_{q+1}^\frac{1}{2})^{-1}\left\|\DTL   (\nabla(p(n) - p_\ell (n)) \frac{1}{n} \cdot (s_{u,k} + \tilde{e}_{u,k}+\tilde{g}_{u,k})))\right\|_{\overline{N}}
\lesssim_{n, p, h, M} \mu_q^{-\overline{N}} \ell^2\underset{I}{\max}|\overset{\circ}{c}_{I,k}|,
\end{align*}
it follows from Corollary \ref{est on R operator} that
\begin{align*}
\left\|\cR\left(\sum_{u} \sum_{k\in \Z^3\setminus \{0\}} \delta_{q+1}^\frac{1}{2} (\partial_tn_\ell-\partial_t n) \frac{m_\ell}{n^2} \cdot (s_{u,k} + \tilde{e}_{u,k}+\tilde{g}_{u,k})) e^{ i\lambda_{q+1} k\cdot \xi_I}\right)\right\|_N&\lesssim_{n, p, h, M}\lambda_{q+1}^N\lambda_{q}^{\frac{1}{2}}\lambda_{q+1}^{-\frac{1}{2}} \delta_q^\frac{1}{4}\delta_{q+1}^\frac54,\\
\left\|D_{t,q+1}\cR\left(\sum_{u} \sum_{k\in \Z^3\setminus \{0\}} \delta_{q+1}^\frac{1}{2} (\partial_tn_\ell-\partial_t n) \frac{m_\ell}{n^2} \cdot (s_{u,k} + \tilde{e}_{u,k}+\tilde{g}_{u,k})) e^{ i\lambda_{q+1} k\cdot \xi_I}\right)\right\|_N&\lesssim_{n, p, h, M}\lambda_{q+1}^N\delta_{q+1}^{\frac{1}{2}}\lambda_{q}^{\frac{1}{2}}\lambda_{q+1}^{-\frac{1}{2}} \delta_q^\frac{1}{4}\delta_{q+1}^\frac54,\\
\left\|\cR\left(\sum_{u} \sum_{k\in \Z^3\setminus \{0\}} \delta_{q+1}^\frac{1}{2} \nabla(p(n) - p_\ell (n)) \frac{1}{n} \cdot (s_{u,k} + \tilde{e}_{u,k}+\tilde{g}_{u,k})) e^{ i\lambda_{q+1} k\cdot \xi_I}\right)\right\|_N&\lesssim_{n, p, h, M}\lambda_{q+1}^N\lambda_{q}^{\frac{1}{2}}\lambda_{q+1}^{-\frac{1}{2}} \delta_q^\frac{1}{4}\delta_{q+1}^\frac54,\\
\left\|D_{t,q+1}\cR\left(\sum_{u} \sum_{k\in \Z^3\setminus \{0\}} \delta_{q+1}^\frac{1}{2} \nabla(p(n) - p_\ell (n)) \frac{1}{n} \cdot (s_{u,k} + \tilde{e}_{u,k}+\tilde{g}_{u,k})) e^{ i\lambda_{q+1} k\cdot \xi_I}\right)\right\|_N&\lesssim_{n, p, h, M}\lambda_{q+1}^N\delta_{q+1}^{\frac{1}{2}}\lambda_{q}^{\frac{1}{2}}\lambda_{q+1}^{-\frac{1}{2}} \delta_q^\frac{1}{4}\delta_{q+1}^\frac54.
\end{align*}
And it follows from \eqref{est on m_l}, \eqref{est on pt Dtl m_l N} and \eqref{est on tm_t} that
\begin{align*}
	\left\|\cR\left((\partial_tn_\ell-\partial_t n)\frac{\tm_{t}\cdot m_\ell }{n^2}\right)\right\|_N&\lesssim_{n, p, h, M}\lambda_{q+1}^N\lambda_{q}^{\frac{1}{2}}\lambda_{q+1}^{-\frac{1}{2}} \delta_q^\frac{1}{4}\delta_{q+1}^\frac54,\\
	\left\|D_{t,q+1}\left((\partial_tn_\ell-\partial_t n)\frac{\tm_{t}\cdot m_\ell }{n^2}\right)\right\|_N&\lesssim_{n, p, h, M}\lambda_{q+1}^N\delta_{q+1}^{\frac{1}{2}}\lambda_{q}^{\frac{1}{2}}\lambda_{q+1}^{-\frac{1}{2}} \delta_q^\frac{1}{4}\delta_{q+1}^\frac54,\\
	\left\|\cR\left(\frac{\tm_t}{n} \cdot \nabla(p(n) - p_\ell (n))  \right)\right\|_N&\lesssim_{n, p, h, M}\lambda_{q+1}^N\lambda_{q}^{\frac{1}{2}}\lambda_{q+1}^{-\frac{1}{2}} \delta_q^\frac{1}{4}\delta_{q+1}^\frac54,\\
	\left\|D_{t,q+1}\cR\left(\frac{\tm_t}{n} \cdot \nabla(p(n) - p_\ell (n))  \right)\right\|_N&\lesssim_{n, p, h, M}\lambda_{q+1}^N\delta_{q+1}^{\frac{1}{2}}\lambda_{q}^{\frac{1}{2}}\lambda_{q+1}^{-\frac{1}{2}} \delta_q^\frac{1}{4}\delta_{q+1}^\frac54.
\end{align*}
To summarize, we get
\begin{align*}
\left\|  \varphi_{M}\right\|_N \leqslant \frac {1}{10} \lambda_{q+1}^{N-3\gamma} \delta_{q+2}^\frac{3}{2}, \quad 
\left\|   D_{t,q+1}  \varphi_{M}\right\|_{N-1} \leqslant \frac {1}{10} \lambda_{q+1}^{N-3\gamma}\delta_{q+1}^\frac{1}{2} \delta_{q+2}^\frac{3}{2}.
\end{align*}
for sufficiently small $b-1>0$ and large $\lambda_0$.
\subsection{Electromagnetic current error}
Recall that
\begin{align*}
n\varphi_{E}
&:=\cR\left(\tm\cdot\tE+\tm\cdot(E_{q}-E_{\ell})+ (m_q-m_\ell)\cdot\tE\right) ,\\
n\varphi_{B}	&:=\cR\left(\frac{\tm\cdot m_\ell\times \tB}{n}+\frac{(m_q-m_{\ell})\cdot m_\ell\times \tB}{n}+\frac{\tm\cdot m_\ell\times (B_q-B_\ell)}{n}\right). 
\end{align*}
By using $\left\|\cR f\right\|_0 \lesssim\left\|f\right\|_0$, we have for $r\leqslant 1$,
\begin{align}
\|\partial_{t}^r(n\varphi_{E})\|_N&\lesssim\sum_{r_1+r_2=r}\sum_{N_1+N_2=N}\|\partial_{t}^{r_1}\tm\|_{N_1}\|\partial_{t}^{r_2}\tE\|_{N_2}+\sum_{r_1+r_2=r}\sum_{N_1+N_2=N}\|\partial_{t}^{r_1}\tm\|_{N_1}\|\partial_{t}^{r_2}(E_q-E_\ell)\|_{N_2}\notag\\
&\quad+\sum_{r_1+r_2=r}\sum_{N_1+N_2=N}\|\partial_{t}^{r_1}(m_q-m_\ell)\|_{N_1}\|\partial_{t}^{r_2}\tE\|_{N_2}\notag\\
&\lesssim_{n, p, h, M}\lambda_{q+1}^{N+r}(\lambda_{q+1}^{-1}\delta_{q+1}+\ell^2\lambda_{q}\delta_{q}^{\frac{1}{2}}\delta_{q+1}^{\frac{1}{2}}+\ell^2\lambda_{q}^2\delta_{q}^{\frac{1}{2}}\lambda_{q+1}^{-1}\delta_{q+1}^{\frac{1}{2}})\notag\\
&\lesssim_{n, p, h, M}\lambda_{q+1}^{N+r}\lambda_{q}^{\frac{1}{2}}\lambda_{q+1}^{-\frac{1}{2}} \delta_q^\frac{1}{4}\delta_{q+1}^\frac54,\label{est on varphi_E}\\
\|D_{t,q+1}(n\varphi_{E})\|_{N-1}&\lesssim\|\partial_{t}(n\varphi_{E})\|_{N-1}+\sum_{N_1+N_2=N-1}(\|\tm\|_{N_1}+\|m_q\|_{N_1})\|n\varphi_{E}\|_{N_2+1}\notag\\
&\lesssim_{n, p, h, M}\lambda_{q+1}^{N}(\lambda_{q+1}^{-1}\delta_{q+1}+\ell^2\lambda_{q}\delta_{q}^{\frac{1}{2}}\delta_{q+1}^{\frac{1}{2}}+\ell^2\lambda_{q}^2\delta_{q}^{\frac{1}{2}}\lambda_{q+1}^{-1}\delta_{q+1}^{\frac{1}{2}})(1+\delta_{q+1}^{\frac{1}{2}})\notag\\
&\lesssim_{n, p, h, M}\lambda_{q+1}^{N}\delta_{q+1}^{\frac{1}{2}}\lambda_{q}^{\frac{1}{2}}\lambda_{q+1}^{-\frac{1}{2}} \delta_q^\frac{1}{4}\delta_{q+1}^\frac54,\label{est on Dtl varphi_E}\\
\|\partial_{t}^r(n\varphi_{B})\|_N&\lesssim\|\partial_{t}n^{-1}\|_N\sum_{r_0+r_1+r_2=r}\sum_{N_0+N_1+N_2=N}\|\partial_{t}^{r_0}\tm\|_{N_0}\|\partial_{t}^{r_1}m_\ell\|_{N_1}\|\partial_{t}^{r_2}\tB\|_{N_2}\notag\\
&\quad+\|\partial_{t}n^{-1}\|_N\sum_{r_0+r_1+r_2=r}\sum_{N_0+N_1+N_2=N}\|\partial_{t}^{r_0}(m_q-m_\ell)\|_{N_0}\|\partial_{t}^{r_1}m_\ell\|_{N_1}\|\partial_{t}^{r_2}\tB\|_{N_2}\notag\\
&\quad+\|\partial_{t}n^{-1}\|_N\sum_{r_0+r_1+r_2=r}\sum_{N_0+N_1+N_2=N}\|\partial_{t}^{r_0}\tm\|_{N_0}\|\partial_{t}^{r_1}m_\ell\|_{N_1}\|\partial_{t}^{r_2}(B_q-B_\ell)\|_{N_2}\notag\\
&\lesssim_{n, p, h, M}\lambda_{q+1}^{N+r}(\lambda_{q+1}^{-1}\delta_{q+1}+\ell^2\lambda_{q}\delta_{q}^{\frac{1}{2}}\delta_{q+1}^{\frac{1}{2}}+\ell^2\lambda_{q}^2\delta_{q}^{\frac{1}{2}}\lambda_{q+1}^{-1}\delta_{q+1}^{\frac{1}{2}})\notag\\
&\lesssim_{n, p, h, M}\lambda_{q+1}^{N+r}\lambda_{q}^{\frac{1}{2}}\lambda_{q+1}^{-\frac{1}{2}} \delta_q^\frac{1}{4}\delta_{q+1}^\frac54,\label{est on varphi_B}\\
\|D_{t,q+1}(n\varphi_{B})\|_{N-1}&\lesssim\|\partial_{t}(n\varphi_{B})\|_{N-1}+\sum_{N_1+N_2=N-1}(\|\tm\|_{N_1}+\|m_q\|_{N_1})\|n\varphi_{B}\|_{N_2+1}\\
&\lesssim_{n, p, h, M}\lambda_{q+1}^{N}(\lambda_{q+1}^{-1}\delta_{q+1}+\ell^2\lambda_{q}\delta_{q}^{\frac{1}{2}}\delta_{q+1}^{\frac{1}{2}}+\ell^2\lambda_{q}^2\delta_{q}^{\frac{1}{2}}\lambda_{q+1}^{-1}\delta_{q+1}^{\frac{1}{2}})(1+\delta_{q+1}^{\frac{1}{2}})\notag\\
&\lesssim_{n, p, h, M}\lambda_{q+1}^{N}\delta_{q+1}^{\frac{1}{2}}\lambda_{q}^{\frac{1}{2}}\lambda_{q+1}^{-\frac{1}{2}} \delta_q^\frac{1}{4}\delta_{q+1}^\frac54,\label{est on Dtl varphi_B}
\end{align}
where we used \eqref{est on m_q-m_l}--\eqref{est on Dtl B_q-B_l}, \eqref{est on pt Dtl m_l N}, \eqref{est on tm}, \eqref{est on tEB}, \eqref{est on Dtl tm}, and \eqref{est on Dtl tEB}.
To summarize, we get
\begin{align*}
	\left\|  \varphi_{E}\right\|_N &\leqslant \frac {1}{10} \lambda_{q+1}^{N-3\gamma} \delta_{q+2}^\frac{3}{2}, \quad 
	\left\|   D_{t,q+1}  \varphi_{E}\right\|_{N-1} \leqslant \frac {1}{10} \lambda_{q+1}^{N-3\gamma}\delta_{q+1}^\frac{1}{2} \delta_{q+2}^\frac{3}{2},\\
	\left\|  \varphi_{B}\right\|_N &\leqslant \frac {1}{10} \lambda_{q+1}^{N-3\gamma} \delta_{q+2}^\frac{3}{2}, \quad 
	\left\|   D_{t,q+1}  \varphi_{B}\right\|_{N-1} \leqslant \frac {1}{10} \lambda_{q+1}^{N-3\gamma}\delta_{q+1}^\frac{1}{2} \delta_{q+2}^\frac{3}{2}.
\end{align*}
for sufficiently small $b-1>0$ and large $\lambda_0$.
\subsection{Estimates on $\zeta$}\label{Estimates on zeta}
In this section, we prove
\begin{align}\label{est on zeta'}
	\left\|\zeta'\right\|_0
	\leqslant \frac{\varepsilon_0^2}{50\underline{M}(1+T+\tau_0)} \lambda_{q+1}^{-3\gamma} \delta_{q+2}^\frac{3}{2},
\end{align}
which implies \eqref{est on zeta} by integration in time.
\subsubsection{Estimates on $ \zeta_1$ and $ \zeta_3$} 
From \eqref{dec of n varphi_H2}, $\zeta_3'$ can be written as
\begin{align*}\label{dec of zeta 3}
	\zeta_3'= \underbrace{\fint_{\T^3} \left(\frac{\tm_p\otimes \tm_p}{n} - \delta_{q+1}n\Id + n R_\ell\right) : \nabla (m_\ell/n) \rd x }_{=: \zeta_{31}'} 
	\underbrace{- \int_{\T^3}
		n (R_{O1}+R_t+R_T+R_N+ R_{EB}): \nabla (m_\ell/n)\rd x }_{ \zeta_{32}'}.
\end{align*}
  In a similar way as the estimates on $\tm_t$ in Proposition \ref{est on perturbation}, we could use  \eqref{def of tm_p^2}, \eqref{est on tm_t}, and Lemma \ref{est on int operator} to obtain $$
  \left\|\zeta_1'\right\|_0+\left\|\zeta_{31}'\right\|_0 \leqslant \frac{\varepsilon_0^2}{300\underline{M}(1+T+\tau_0)} \lambda_{q+1}^{-3\gamma} \delta_{q+2}^\frac{3}{2}.$$
  As for $\zeta_{32}'$, recalling the method we have used to get estimates on $n\varphi_{H2}$, we have
  \begin{align*}
  	\left\langle n R_\tri : \nabla \frac {m_\ell}{n}\right\rangle
  	&=
  	\left\langle\mathcal{R}P_{\lesssim \lambda_{q+1}}G_\tri : \nabla \frac {m_\ell}{n}\right\rangle
  	+\left\langle \mathcal{R}P_{\GL}G_\tri : \nabla ( {m_\ell}\PL{n}^{-1})\right\rangle+\left\langle \mathcal{R}P_{\GL}G_\tri : \nabla  ({m_\ell}P_{\geqslant \ell^{-1}}{n}^{-1})\right\rangle.
  \end{align*}
  where $n R_\tri$ represents either $n R_{O1}$, $n R_T$, $n R_N$ or $n R_{EB1}$ and can be written as $\mathcal{R}G_\tri$. Since the second term has frequency localized to $\gtrsim \lambda_{q+1}$, it has zero-mean. The magnitude of the first term can be estimated {by ${\varepsilon_0^2}/(600\underline M(1+T+\tau_0)) \lambda_{q+1}^{-3\gamma}\delta_{q+2}^\frac{3}{2}$} as in \eqref{est on low 1} and \eqref{est on low 2}. The estimate on the last term follows from
\begin{align*}
&\quad|\langle \mathcal{R}P_{\GL}G_\tri : \nabla  ({m_\ell}P_{\geqslant \ell^{-1}}{n}^{-1})\rangle |\\
&\lesssim 
\left\|\mathcal{R}P_{\GL}G_\tri\right\|_0\left\|\nabla  ({m_\ell}P_{\geqslant \ell^{-1}}{n}^{-1})\right\|_0\lesssim_{n, p, h, M} \lambda_{q+1}^{-1}\left\|G_\tri\right\|_0 \ell^2\lambda_q\delta_q^\frac{1}{2}\leqslant \frac{\varepsilon_0^2}{600\underline{M}(1+T+\tau_0)} \lambda_{q+1}^{-3\gamma}\delta_{q+2}^\frac{3}{2},
\end{align*}
where we use \eqref{est on PG n} in the second inequality and \eqref{est on G_tri} in the last one. Up to now, we have proved 
\begin{align*}
	\left\|\zeta_1'\right\|_0+\left\|\zeta_3'\right\|_0
	\leqslant \frac{\varepsilon_0^2}{150\underline{M}(1+T+\tau_0)} \lambda_{q+1}^{-3\gamma} \delta_{q+2}^\frac{3}{2}.
\end{align*}
	
\subsubsection{Estimates on $ \zeta_0$, $ \zeta_2$,  and $ \zeta_4$}
	We first decompose $\zeta_0'$ into $\zeta_{01}'$ and $\zeta_{02}'$:
	\begin{align*}
		(2\pi)^3 \zeta_{01}' &= \int_{\T^3} \frac{n}2 \DTL \tr\left(  \frac{\tm_p\otimes \tm_p}{n^2}- \delta_{q+1} \Id + R_\ell \right)   \rd x= \int_{\T^3} \sum_{u} \sum_{k\in \Z^3\setminus \{0\}} \frac{ \delta_{q+1}  }2n\tr(\DTL d_{u,k}) e^{ i\lambda_{q+1} k\cdot \xi_I} \rd x, \\
		(2\pi)^3\zeta_{02}'&= - \int_{\T^3}  \frac{1}{2}\tr\left(  \frac{\tm_p\otimes \tm_p}{n^2} - \delta_{q+1} \Id + R_\ell \right)\Div(m_q-m_\ell) \rd x\\
		&=-\int_{\T^3} \sum_{u} \sum_{k\in \Z^3\setminus \{0\}} \frac{ \delta_{q+1} }2(\partial_t n_\ell-\partial_t n)\tr(d_{u,k}) e^{ i\lambda_{q+1} k\cdot \xi_I} \rd x.
	\end{align*}
	we can also use Lemma \ref{est on int operator} to get
	$$
	\left\| \zeta_0'\right\|_0 \leqslant 
	\frac{\varepsilon_0^2}{300\underline{M}(1+T+\tau_0)} \lambda_{q+1}^{-3\gamma} \delta_{q+2}^\frac{3}{2}.
	$$
	In a similar way as $\varphi_{M3}$ and $\varphi_{M4}$, we write $\zeta_2'$ and $\zeta_4'$ as
	\begin{align*}
		(2\pi)^3  \zeta_{2}' 
		&=\int_{\T^3} \Div (m_q-m_\ell) \frac{\tm\cdot m_\ell}{n^2} \rd x\\
		&=\int_{\T^3} \sum_{u} \sum_{k\in \Z^3\setminus \{0\}} \delta_{q+1}^\frac{1}{2}(\partial_{t}n_\ell-\partial_{t}n) \frac{m_\ell}{n^2} \cdot (s_{u,k} + \tilde{e}_{u,k}+\tilde{g}_{u,k}) e^{ i\lambda_{q+1} k\cdot \xi_I} \rd x+\int_{\T^3} (\partial_{t}n_\ell-\partial_{t}n) \frac{m_\ell\cdot\tm_{t}}{n^2} \rd x,
	\end{align*}
	and
	\begin{align*}
		(2\pi)^3  \zeta_{4}' 
		&=
		\int_{\T^3} \frac{\tm}{n} \cdot \nabla(p(n) - p_\ell (n)) \rd x\\
		&= \int_{\T^3} \sum_{u} \sum_{k\in \Z^3\setminus \{0\}} \delta_{q+1}^\frac{1}{2} \frac{1}{n} \nabla(p(n) - p_\ell (n)) \cdot (s_{u,k} + \tilde{e}_{u,k}+\tilde{g}_{u,k})) e^{ i\lambda_{q+1} k\cdot \xi_I} \rd x+\int_{\T^3}\frac{1}{n} \nabla(p(n) - p_\ell (n)) \cdot \tm_{t} \rd x.
	\end{align*}
	Therefore, as before, we apply Lemma \ref{est on int operator} to obtain
	$$
	\left\| \zeta_2'\right\|_0+\left\| \zeta_4'\right\|_0 \leqslant \frac{\varepsilon_0^2}{150\underline{M}(1+T+\tau_0)} \lambda_{q+1}^{-3\gamma} \delta_{q+2}^\frac{3}{2}. 
	$$
	\subsubsection{Estimates on $ \zeta_5$ and $ \zeta_6$}
	In a similar way as $\varphi_E$ and $\varphi_{B}$, it follows from \eqref{est on varphi_E}--\eqref{est on Dtl varphi_B} that
	$$
	\left\| \zeta_5'\right\|_0+\left\| \zeta_6'\right\|_0 \leqslant \frac{\varepsilon_0^2}{150\underline{M}(1+T+\tau_0)} \lambda_{q+1}^{-3\gamma} \delta_{q+2}^\frac{3}{2}. 
	$$
	Until now, we have proved \eqref{est on zeta'}.
\section{Proof of the inductive propositions}\label{Proof of the Inductive Proposition} 
\subsection{Proof of Proposition \ref{Inductive proposition}}
For any $0<\alpha<\frac{1}{7}$, let the parameters $\bar{b}_0(\alpha)$ and $\Lambda_0$ be as in the statement of Proposition \ref{prop of current}. For any $1<b<\bar{b}_0(\alpha)$ and $\lambda_0\geqslant\Lambda_0$, given a Maxwell-Euler-Maxwell-Reynolds flow $(m_q.E_q,B_q,R_q,\varphi_q)$ defined on $\cI^{q-1}\times\T^3$, we have constructed a perturbation $\tm=\tm_{EB}+\tm_{t}$ that can be applied to $m_q$. This results in a new Reynolds stress $R_{q+1}$ and a new current $\varphi_{q+1}$, which satisfy the estimates in Proposition \ref{prop of Reynolds error} and Proposition \ref{prop of current}. We now need to confirm whether $(m_{q+1}.E_{q+1},B_{q+1},R_{q+1},\varphi_{q+1})$ satisfy \eqref{est on m_q}--\eqref{est on current} at the $q+1$ step.
First, we denote the maximum implicit constant in \eqref{est on tm} and \eqref{est on tEB}, which depends on $n$, $p$, $h$, by $M_0$ and set $M=\max\{3M_0,\underline{M}(n,p,h)\}$. If we set $\|\cdot\|_N=\|\cdot\|_{C^0(\cI^{q};C^N(\T^3))}$, we have
\begin{equation}\label{Induction Proposition}
\begin{aligned}
	\|m_{q+1}-m_q\|_0+\lambda_{q+1}^{-1}\|m_{q+1}-m_q\|_1+\lambda_{q+1}^{-1}\|\partial_{t}(m_{q+1}-m_q)\|_0&\leqslant3M_0\delta_{q+1}^{\frac{1}{2}}\leqslant M\delta_{q+1}^{\frac{1}{2}},\\
	\|E_{q+1}-E_q\|_0+\lambda_{q+1}^{-1}\|E_{q+1}-E_q\|_1+\lambda_{q+1}^{-1}\|\partial_{t}(E_{q+1}-E_q)\|_0&\leqslant3M_0\lambda_{q+1}^{-1}\delta_{q+1}^{\frac{1}{2}}\leqslant M\lambda_{q+1}^{-1}\delta_{q+1}^{\frac{1}{2}},\\
	\|B_{q+1}-B_q\|_0+\lambda_{q+1}^{-1}\|B_{q+1}-B_q\|_1+\lambda_{q+1}^{-1}\|\partial_{t}(B_{q+1}-B_q)\|_0&\leqslant3M_0\lambda_{q+1}^{-1}\delta_{q+1}^{\frac{1}{2}}\leqslant M\lambda_{q+1}^{-1}\delta_{q+1}^{\frac{1}{2}}.
\end{aligned}
\end{equation}
Moreover, we could get 
\begin{align*}
\|m_{q+1}\|_0&\leqslant\|m_{q}\|_0+\|\tm\|_0\leqslant \underline{M}-\delta_{q}^{\frac{1}{2}}+M_0\delta_{q+1}^{\frac{1}{2}}\leqslant \underline{M}-\delta_{q+1}^{\frac{1}{2}},\\
\|\partial_{t}^rm_{q+1}\|_N&\leqslant\|\partial_{t}^rm_{q}\|_N+\|\partial_{t}^r\tm\|_N\leqslant M\lambda_{q}^{N+r}\delta_{q}^{\frac{1}{2}}+\frac{1}{2}M\lambda_{q+1}^{N+r}\delta_{q+1}^{\frac{1}{2}}\leqslant M\lambda_{q+1}^{N+r}\delta_{q+1}^{\frac{1}{2}},
\end{align*}
for $1\leqslant N+r\leqslant 2$, and 
\begin{align*}
\|\partial_{t}^r\tE_{q+1}\|_N&\leqslant\|\partial_{t}^rE_{q}\|_N+\|\partial_{t}^r\tE\|_N\leqslant \underline{M}-\delta_{q}^{\frac{1}{2}}+M_0\delta_{q+1}^{\frac{1}{2}}\leqslant \underline{M}-\delta_{q+1}^{\frac{1}{2}},&& 0\leqslant N+r\leqslant 1,\\
\|\partial_{t}^r\tB_{q+1}\|_N&\leqslant\|\partial_{t}^rB_{q}\|_N+\|\partial_{t}^r\tB\|_N\leqslant \underline{M}-\delta_{q}^{\frac{1}{2}}+M_0\delta_{q+1}^{\frac{1}{2}}\leqslant \underline{M}-\delta_{q+1}^{\frac{1}{2}},&& 0\leqslant N+r\leqslant 1,\\
\|\partial_{t}^r\tE_{q+1}\|_N&\leqslant\|\partial_{t}^rE_{q}\|_N+\|\partial_{t}^r\tE\|_N\leqslant M\lambda_{q}\delta_{q}^{\frac{1}{2}}+\frac{1}{2}M\lambda_{q+1}\delta_{q+1}^{\frac{1}{2}}\leqslant M\lambda_{q+1}\delta_{q+1}^{\frac{1}{2}},&&  N+r=2,\\
\|\partial_{t}^r\tB_{q+1}\|_N&\leqslant\|\partial_{t}^rB_{q}\|_N+\|\partial_{t}^r\tB\|_N\leqslant M\lambda_{q}\delta_{q}^{\frac{1}{2}}+\frac{1}{2}M\lambda_{q+1}\delta_{q+1}^{\frac{1}{2}}\leqslant M\lambda_{q+1}\delta_{q+1}^{\frac{1}{2}},&& N+r=2.
\end{align*}
The estimates on $R_{q+1}$ and $\varphi_{q+1}$ have been proved in in Proposition \ref{prop of Reynolds error} and Proposition \ref{prop of current}. 
\subsection{Proof of Proposition \ref{Bifurcating inductive proposition}}
Similar as in \cite{DK22,GK22},
we consider a given time interval $\cal I \subset (0,T)$ with $|\cal I| \geqslant 3\tau_q$ on which we can always find $u_0$ such that $\supp(\theta_{u_0}(\tau_q^{-1}\cdot))\subset\cal I$. If $I = (u_0, v, f)\in\mathscr I_R$, we replace $\gamma_{I}$ in $\tm_{EB}$ by $ \tilde{\Gamma}_{I} = -\Gamma_{I}$ which will make $\tilde{\gamma}_{I} =-\gamma_{I}$. We denote the new perturbation by $(\tm_{new},\tE_{new},\tB_{new})$. As for the other tuple, we don't change $\gamma_{I}$. Note that $\tilde{\Gamma}_{I}$ still solves \eqref{Eq of Gamma_I R} and $\tilde{\gamma}_{I}$ satisfies \eqref{Eq of Gamma_I f otimes f} , and the replacement does not change the estimates on $\tilde{\Gamma}_{I}$. We could know the estimates on the new perturbation $\tm_{new}$ is the same as $\tm$. Up to now, we could construct the new corrected dissipative Euler-Maxwell-Reynolds flow $( \overline{m}_{q+1},\overline{E}_{q+1},\overline{B}_{q+1}, c_{q+1},  \overline{R}_{q+1},  \overline{\varphi}_{q+1})$ satisfies \eqref{est on m_q}--\eqref{est on current} at $q+1$ step where $\overline{m}_{q+1}=m_q+\tm_{new}$, $\overline{E}_{q+1}=E_q+\tE_{new}$,  and $\overline{B}_{q+1}=B_q+\tB_{new}$. By the construction, the correction $ \tm_{new}$ differs from $\tm$ on the support of $\theta_{u_0}(\tau_q^{-1}\cdot)$. Therefore, $\supp_t(\tm_{q+1} -  m_{q+1})
=\supp_t(\tm_{new} -  \tm)\subset \cal I. $
Recalling \eqref{Eq of Gamma_I f otimes f} and \eqref{Eq of Gamma_I R}, and setting $\|\cdot\|_N=\|\cdot\|_{C([0,T];C^N(\T^3))}$, we have 
\begin{align*}
	\sum_{I\in \mathscr I_{u,v,R}}\gamma_I^2 |\tilde{f}_I|^2
	&=\tr\left(
	(\nabla \xi_I)^{-1}  \sum_{f\in \mathscr I_{u,v,R}} \gamma_I^2 f_I\otimes f_I [(\nabla \xi_I)^{-1}]^\top
	\right)= \tr (n^2(\delta_{q+1}\Id - R_\ell - \tilde{\mathcal{M}}_{I})),
\end{align*} 
where 
$$
\tilde{\mathcal{M}}_I := \sum_{(u',\upsilon')\in I(u,v)}\sum_{I'\in \mathscr I_{u',\upsilon',\varphi}}\theta_{I'}^2\chi_{I'}^2(\xi_{I'})\gamma_{I'}^2\fint_{\T^3}M_{I'}^2\rd x ((\nabla \xi_{I'})^{-1} f') \otimes  (\nabla \xi_{I'}^{-1} f').
$$
for $I'=(u',\upsilon',f')$. Moreover, we also have $\|\tilde{\mathcal{M}}_I\|_0 \lesssim \lambda_q^{-2\gamma} \delta_{q+1}$. And then
\begin{align*}
	|\tm_{p,new} -  \tm_p |^2
	&=  \sum_{I\in \mathscr I_R: u_I = u_0}
	4\theta_I^2(t) \chi_I^2(\xi_I) \gamma_I^2 |\tilde{f}_I|^2 (1+(M_I^2(\lambda_{q+1} \xi_I)-1))\\
	&= \sum_{I\in \mathscr I_R: u_I = u_0}
	4\theta_I^2(t)\chi_I^2(\xi_I)  (n^2(3\delta_{q+1} - \tr(R_\ell) - \tr(\tilde{\mathcal{M}}_I))\\
	&\quad+
	\sum_{k\in \Z^3\setminus\{0\}}\sum_{I\in \mathscr I_R: u_I = u_0}
	4\theta_I^2(t)\chi_I^2(\xi_I) \gamma_I^2 |\tilde{f}_I|^2\tr(
	\overset{\circ}{d}_{I,k}) e^{i\lambda_{q+1}k\cdot \xi_I}\\
	&= 4\theta_{u_0}^6(\tau_q^{-1}t) (n^2(3\delta_{q+1}- (\tr R_\ell) - \tr (\tilde{\mathcal{M}}) )+ \sum_{k\in \Z^3\setminus\{0\}} 4\delta_{q+1}\tr( \overset{\circ}{d}_{u_0,k,R})e^{i\lambda_{q+1}k\cdot \xi_I},
\end{align*}
where 
$$
\tr( \overset{\circ}{d}_{u_0,k,R}) = \sum_{I\in \mathscr I_R: u_I = u_0}
\theta_I^2(t) \chi_I^2(\xi_I) \delta_{q+1}^{-1}\gamma_I^2\overset{\circ}{d}_{I,k} |(\nabla \xi_I)^{-1} f_I|^2.
$$
Since we have $\|\tr( \overset{\circ}{d}_{u_0,k,R})\|_N \lesssim \mu_q^{-N} |\overset{\circ}{d}_{I,k}|$ for $N=0,1,2$, and then
\begin{align*}
\|\tm_{p,new} -  \tm_p\|_{C^0([0,T]; L^2(\T^3))}^2
&\geqslant 12\delta_{q+1}\|n\|_{C^0({[t_{u_0}+\frac18\tau_q, t_{u_0}+\frac78\tau_q]}; L^2(\T^3))}^2 - 4(2\pi)^3(\|n^2 R_\ell\|_0 + \|\tr(\tilde{\mathcal{M}})\|_0) \\
&\quad- {\sup_{t\in [0,T]}}\sum_{k\in \Z^3\setminus\{0\}}4 \left|\delta_{q+1} \int\tr( \overset{\circ}{d}_{u_0,k,R})e^{i\lambda_{q+1}k\cdot \xi_I} \rd x
	\right|\\
&\geqslant 12\delta_{q+1}\|n\|^2_{C^0([t_{u_0}+\frac18\tau_q, t_{u_0}+\frac78\tau_q]; L^2(\T^3))} - c_{n} \delta_{q+1}(\lambda_q^{-3\gamma} + \lambda_q^{-2\gamma} + (\lambda_{q+1} \mu_q)^{-2})\\
&\geqslant 4\delta_{q+1}\varepsilon_0^2,
\end{align*}
for sufficiently large $\lambda_0$. To get the second inequality, we use Lemma \ref{est on int operator} and have
\begin{align*}
	\sum_{k\in \Z^3\setminus\{0\}}\left| \int \tr(\overset{\circ}{d}_{u_0,k,R}) e^{\lambda_{q+1} k\cdot \xi_I} \rd x\right|
	&\leqslant \sum_{k \in \Z^3\setminus\{0\}}\frac{\|\tr( \overset{\circ}{d}_{u_0,k,R})\|_2 + \|\tr( \overset{\circ}{d}_{u_0,k,R})\|_0\|{\nabla} \xi_I\|_{C^0([t_{u_0}-\frac12\tau_q, t_{u_0}+\frac32\tau_q];C^2( \T^3))}}{\lambda_{q+1}^2 |k|^2}
	\\
	&  \leqslant (\lambda_{q+1}\mu_q)^{-2} \sum_{k \in \Z^3\setminus\{0\}}\frac{|\overset{\circ}{d}_{u_0,k,R}|}{|k|^2}
	\leqslant (\lambda_{q+1}\mu_q)^{-2} 
	\left(\sum_{k \in \Z^3\setminus\{0\}} |\overset{\circ}{d}_{u_0,k,R}|^2\right)^\frac12
	\left(\sum_{k \in \Z^3\setminus\{0\}} \frac{1}{|k|^4}\right)^\frac12.
\end{align*}
Therefore, we obtain
\begin{align*}
	\|\overline{m}_{q+1} -  m_{q+1} \|_{C^0([0,T]; L^2(\T^3))}
	&=\|\tm_{new}-\tm\|_{C^0([0,T]; L^2(\T^3))}\\
	&\geqslant \|\tm_{p,new} -  \tm_p\|_{C^0([0,T]; L^2(\T^3))}
	-(2\pi)^\frac32(\|\tm_{c,new} \|_0 + \|\tm_c\|_0)\\
	&\geqslant  2\delta_{q+1}^\frac12 \varepsilon_0 -\frac{(2\pi)^\frac32 2M_0}{\lambda_{q+1}\mu_q} \delta_{q+1}^\frac12 \geqslant \delta_{q+1}^\frac12 \varepsilon_0.
\end{align*}
for sufficiently large $\lambda_0$. In a similar way, we could get
\begin{align*}
\|\overline{E}_{q+1} -  E_{q+1} \|_{C^0([0,T]; L^2(\T^3))}\geqslant\lambda_{q+1}^{-1}\delta_{q+1}^\frac12 \varepsilon_0,\\
\|\overline{B}_{q+1} -  B_{q+1} \|_{C^0([0,T]; L^2(\T^3))}\geqslant\lambda_{q+1}^{-1}\delta_{q+1}^\frac12 \varepsilon_0.
\end{align*}
 Assume we  are given a dissipative Euler-Maxwell-Reynolds flow $( \overline{m}_q,\overline{E}_q,\overline{B}_q, c_q,  \overline{R}_q,   \overline{\varphi}_q)$ satisfies \eqref{est on m_q}--\eqref{est on current} and
$$
\supp_t(\overline{m}_q- m_q,\overline{E}_q- E_q,\overline{B}_q- B_q, \overline{R}_q- R_q,  \overline{\varphi}_q- \varphi_q)\subset \cal{J} 
$$
for some time interval $\cal{J}$. Then, we could construct the regularized flow $ R_\ell$ and $ \varphi_\ell$. Notice that they differ only in $\cal J + \ell_t\subset\cal J + (\lambda_q\delta_q^\frac12)^{-1}$. As a result, $\tm_{new}$ differs from $ \tm$ at $q+1$  step in $\cal J + (\lambda_q\delta_q^\frac12)^{-1}$. So we could obtain that the corrected dissipative Euler-Maxwell-Reynolds flows $(\overline{m}_{q+1},\overline{E}_{q+1},\overline{B}_{q+1},  c_{q+1},  \overline{R}_{q+1},  \overline{\varphi}_{q+1})$ and $( m_{q+1},E_{q+1},B_{q+1},  c_{q+1},  R_{q+1},  \varphi_{q+1})$ satisfying 
$$
\supp_t(\overline{m}_{q+1}- m_{q+1},\overline{E}_{q+1}- E_{q+1},\overline{B}_{q+1}- B_{q+1}, \overline{R}_{q+1}- R_{q+1},  \overline{\varphi}_{q+1}- \varphi_{q+1})\subset \cal{J} + (\lambda_q\delta_q^\frac12)^{-1}.
$$ 
\section{Construction of a starting tuple }\label{Construction of a Starting Tuple}
We need to construct a tuple that satisfies \eqref{est on m_q}--\eqref{est on current}. This tuple will serve as the starting point for the subsequent proof.  Here we give two important lemmas. In this section, we denote $\left\|\cdot\right\|_N=\left\|\cdot\right\|_{C^0(\cI^{-1};C^N(\T^3))}$ and the Mikado flow $U_f(x)=\psi_f(x)f=\sum_{k\in \Z^3\setminus \{0\}}\cb_{f,k}fe^{ik\cdot x}$.
\begin{lm}\label{Construction of a Starting Tuple stationary}
Given a stationary density $n(t,x)=n_0(x)\in C^{\infty}(\T^3)$ which satisfies $n \geqslant \varepsilon_0$ for some positive constant $\varepsilon_0$, and $ \int_{\mathbb{T}^3} n(x) d x=\int_{\mathbb{T}^3} h(x) \rd x$  for all $t\in\cI^{-1}$, we could construct a starting tuple $\left(m_{0}, E_{0}, B_{0}, R_{0}, \varphi_{0}\right)$ which satisfies \eqref{est on m_q}--\eqref{est on current}.
\end{lm}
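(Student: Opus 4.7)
My plan is to take all of $m_0, E_0, B_0$ stationary (time--independent). This makes every $\partial_t$--bound in \eqref{est on m_q}--\eqref{est on B_q} trivial and reduces $D_{t,0}$ to the first--order spatial operator $(m_0/n_0)\cdot\nabla$. First, I fix the electromagnetic sector by solving the Poisson problem $-\Delta\phi_0=h-n_0$ on $\T^3$, solvable thanks to the zero--mean hypothesis $\int(h-n_0)\,dx=0$, and setting $E_0=-\nabla\phi_0$. This yields $\Div E_0=h-n_0$ and $\nabla\times E_0=0$, so Faraday's law is automatic for any stationary $B_0$, and $\|E_0\|_{C^N}\lesssim_{n,h}1$; the bounds in \eqref{est on E_q} follow by taking $\underline M$ and $\lambda_0$ large.

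The core task is to build $m_0$ satisfying $\Div m_0=0$ together with $B_0$ obeying $\nabla\times B_0=-m_0$, $\Div B_0=0$ (determined from $m_0$ by Biot--Savart), so that the resulting Reynolds stress and current meet \eqref{est on error}--\eqref{est on current}. I would use a Mikado--potential ansatz at an intermediate frequency $K\sim\lambda_0^{1-\alpha}$, structurally the $\upsilon\equiv 0$ specialization of Lemma \ref{New building blocks}: write $m_0$ as a curl of a vector potential of the form $\sum_f a_f(x)\Psi_f(Kx)\,(\nabla\xi)^\top f$ with $\xi=\mathrm{Id}$, so that $\Div m_0=0$ is automatic. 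Then $B_0$ has amplitude $\mathcal{O}(K^{-1})$, and the Lorentz self--interaction $m_0\times B_0$ enters as a lower--order perturbation of size $\mathcal{O}(K^{-1})$ rather than driving the leading balance.

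With this ansatz one decomposes $m_0\otimes m_0/n_0$ into a low--frequency mean $L$ and a mean--zero oscillation. Using Lemma \ref{Geometric Lemma I} applied to a fixed frame $\cF_0\subset\Z^3$, I would choose the amplitudes $\{a_f^2\}$ so that $\Div L = -\nabla p(n_0)-n_0 E_0$ modulo an additive multiple of $\mathrm{Id}$ absorbed into the $c_0$--offset. This reduces to the representability of a smooth symmetric matrix near $\mathrm{Id}$, available after a suitable rescaling bringing the target into the admissible set $S_N$ of Lemma \ref{Geometric Lemma I}. The residual high--frequency terms then pass through the inverse--divergence operator $\cR$ of Corollary \ref{est on R operator} to give
\[
\|R_0\|_N \lesssim_{n,p,h} K^{N-1}\lesssim \lambda_0^{(N-1)(1-\alpha)}\leqslant \lambda_0^{N-3\gamma}\delta_1,
\]
the last inequality valid for $\lambda_0$ large because $\alpha<\frac17$ forces $1-\alpha>3\gamma+2\alpha b$. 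The current $\varphi_0$ is produced in the same way from the cubic $|m_0|^2 m_0/(2n_0^2)$ in the energy equation, gaining an extra factor of $\delta_1^{1/2}$. Stationarity then converts every advective--derivative bound in \eqref{est on error}--\eqref{est on current} into a spatial bound with one more derivative at the price of $\|m_0/n_0\|_0\lesssim_{n,p,h}1$.

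The one genuine obstacle is the implicit nature of the amplitude equation: because $m_0\times B_0$ depends quadratically on the very amplitudes one is solving for, the matrix equation produced by Lemma \ref{Geometric Lemma I} is nonlinear. The plan is to close this by a contraction argument in the amplitude tuple $\{a_f\}$, treating the $K^{-1}$--small Lorentz contribution as a perturbation of the explicit linear solve. The nontrivial verification is that the iterates remain inside the admissible neighborhood $S_N$ uniformly in $\lambda_0$ and that the $a_f$ stay $\mathcal{O}_{n,p,h}(1)$; once this is done, all remaining estimates are standard stationary--phase computations entirely analogous to those already carried out in Sections \ref{Construction of the perturbation}--\ref{Estimates on the new current error}.
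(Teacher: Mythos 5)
Your overall scheme — stationary $(m_0,E_0,B_0)$ with a Mikado-potential ansatz, inverse divergence $\cR$ for the oscillatory residue, and Geometric Lemma~\ref{Geometric Lemma I} to solve the amplitude equation — is the same as the paper's. But there is a genuine obstruction you have not addressed, plus one structural piece your route replaces with an unnecessary iteration.

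The obstruction is a zero-average compatibility condition. You propose to solve $\Div L = -\nabla p(n_0)-n_0 E_0$ (modulo a multiple of $\Id$) with $L$ the low-frequency part of $m_0\otimes m_0/n_0$. The left side, being a divergence, has vanishing spatial average, but with your choice $E_0=-\nabla\phi_0$ one has $\int_{\T^3} n_0 E_0\,\rd x = \int_{\T^3} \phi_0\nabla n_0\,\rd x\neq 0$ generically, and the additive multiple of $\Id$ contributes nothing after taking the divergence and integrating. So your amplitude equation is unsolvable as written. The paper fixes this by shifting $E_0$ by a constant vector $E_t=-\bigl(\int n E_c\,\rd x\bigr)/\bigl(\int n\,\rd x\bigr)$ so that $\int n_0 E_0\,\rd x=0$; a constant shift is permitted since it preserves $\Div E_0=h-n_0$ and $\nabla\times E_0=0$. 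Without this shift the momentum balance cannot close even at the level of spatial means.

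Second, your contraction argument for the Lorentz force is not only unnecessary but points at a missed algebraic structure that the paper relies on. Because $m_0=-\nabla\times B_0$ and $\Div B_0=0$, one has the identity
\begin{equation*}
m_0\times B_0 = \Div\!\left(\tfrac{1}{2}|B_0|^2\Id - B_0\otimes B_0\right),
\end{equation*}
and likewise, since $E_0$ is curl-free with $\Div E_0=h-n_0$,
\begin{equation*}
n_0 E_0 = \Div\!\left(\tfrac{1}{2}|E_0|^2\Id - E_0\otimes E_0\right) + hE_0.
\end{equation*}
These put both electromagnetic forcings directly into the Reynolds stress without any coupling back to the amplitude equation, so no implicit system and no contraction is needed; the only remaining non-divergence term is $hE_0$, which is mean-zero precisely thanks to the $E_t$ shift and is then handled by $\cR$ and a second application of Geometric Lemma~\ref{Geometric Lemma I}. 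You should also verify that your Mikado profiles satisfy $\langle\psi_f^3\rangle=0$ so that the low-frequency part of $|m_0|^2 m_0/(2n_0^2)$ vanishes — otherwise $\varphi_0$ will not be $O(\delta_1^{3/2})$ — and note that your potential ansatz is off by a factor of $K^{-1}$ (a single curl of $\Psi_f(Kx)f$ scales like $K$, not like $1$), though that is easily corrected.
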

\begin{proof}
Noting that the starting tuple $\left(m_{0}, E_{0}, B_{0}, R_{0}, \varphi_{0}\right)$ satisfies \eqref{Euler-Maxwell-Reynolds System}, in order to make $R_0$ small enough to satisfy \eqref{est on error} for $q=0$, we will use low-frequency component of $\Div(m_0\otimes m_0)$ to cancel $\nabla (p(n)+c_0n)$, where $c_0=\sum_{q=1}^\infty\delta_{q+1}$.
Let $e_i$ be the standard unit vectors whose $i$th component is 1, we first use building blocks in Lemma \ref{New building blocks} to define $(\overline{m}_0,\overline{E}_0,\overline{B}_0)$ and $\overline{m}_{0,p}$ , which is the main part of $\overline{m}_0$, as 
\begin{align}
	\overline{m}_0&=\sum_{i=1}^{3}\sum_{k\in \Z^3\setminus \{0\}}m_k(e_i,n^{\frac{1}{2}}(\overline{C}_{n}-p(n)-c_0n)^{\frac{1}{2}}\cb_{e_i,k}e^{ik\cdot(-\overline{x}_i)},0,\tlm),\label{def of om_p}\\
	\overline{E}_0&=\sum_{i=1}^{3}\sum_{k\in \Z^3\setminus \{0\}}E_k(e_i,n^{\frac{1}{2}}(\overline{C}_{n}-p(n)-c_0n)^{\frac{1}{2}}\cb_{e_i,k}e^{ik\cdot(-\overline{x}_i)},0,\tlm)=0,\label{def of oE_p}\\
	\overline{B}_0&=\sum_{i=1}^{3}\sum_{k\in \Z^3\setminus \{0\}}B_k(e_i,n^{\frac{1}{2}}(\overline{C}_{n}-p(n)-c_0n)^{\frac{1}{2}}\cb_{e_i,k}e^{ik\cdot(-\overline{x}_i)},0,\tlm),\label{def of oB_p}\\
	\overline{m}_{0,p}&=\sum_{i=1}^{3}\sum_{k\in \Z^3\setminus \{0\}}m_{p,k}(e_i,n^{\frac{1}{2}}(\overline{C}_{n}-p(n)-c_0n)^{\frac{1}{2}}\cb_{e_i,k}e^{ik\cdot(-\overline{x}_i)},0,\tlm,0,0)\notag\\
	&=\sum_{i=1}^{3}\sum_{k\in \Z^3\setminus \{0\}}n^{\frac{1}{2}}(\overline{C}_{n}-p(n)-c_0n)^{\frac{1}{2}}\cb_{e_i,k}e_ie^{ik\cdot(\tlm x-\overline{x}_i)}\notag\\
	&=\sum_{i=1}^{3}n^{\frac{1}{2}}(\overline{C}_{n}-p(n)-c_0n)^{\frac{1}{2}}\psi_{e_i}(\tlm x-\overline{x_i})e_i.\label{def of om_pk}
\end{align}
where $\overline{C}_{n}=2(c_0\left\|n\right\|_0+\left\|p(n)\right\|_0)$, and  $\psi_{e_i}=\psi_{e_i}(\cdot-\overline{x_i})$ which satisfy
$$
\int_{\T^3}\psi_{e_i}\rd x=\int_{\T^3}\psi_{e_i}^3\rd x=0,\qquad\int_{\T^3}\psi_{e_i}^2\rd x=1.
$$
$\overline{x_i}$ will be chosen to ensure pairwise disjoint $\supp(\psi_i)$. We can immediately obtain $\partial_{t}\overline{m}_0=0$ and 
$$
\frac{\overline{m}_{0,p}\otimes \overline{m}_{0,p}}{n}=(\overline{C_{n}-p(n)-c_0n})\Id+(\overline{C_{n}-p(n)-c_0n})\sum_{i=1}^{3}(\psi_{e_i}^2(\tilde{\lambda}x)-1) e_i\otimes e_i.
$$
We could calculate
\begin{align*}
	\partial_{t}\overline{m}_{0,p}+\Div\left(\frac{\overline{m}_{0,p}\otimes \overline{m}_{0,p}}{n}\right)+\nabla (p(n)+c_0n)&=\Div\left((\overline{C_{n}-p(n)-c_0n})\sum_{i=1}^{3}(\psi_{e_i}^2(\tilde{\lambda}x)-1) e_i\otimes e_i\right)\\
	&=-\sum_{i=1}^{3}\partial_i(p(n)+c_0n)(\psi_{e_i}^2(\tilde{\lambda}x)-1)e_i.
\end{align*} 
And we choose a constant which satisfies 
\begin{equation}\label{pp of um 1}
\underline{M}^{\frac{1}{8}}(n,p,h)>\max\{20,\varepsilon_0^{-4},480(1+c_0)\sum_{N=1}^{3}\left(\left\|h\right\|_{N}+\left\|n\right\|_N+\left\|p(n)\right\|_N\right)\}.
\end{equation}
In order to ensure $\Div E_0=h(x)-n_0(x)$, we add another correction $(m_c,E_c,B_c)$ which satisfy
$$
\begin{aligned}
	E_c&=\cR(h(x)-n(x)),\\
	B_c&=-\int_{-\tau_{-1}}^{t}\nabla\times\cR(h(x)-n(x))\rd \tau=0,\\
	m_c&=\partial_{t}E_c-\nabla \times B_c=\int_{-\tau_{-1}}^{t}\nabla\times\nabla\times\cR(h(x)-n(x))\rd \tau=0.
\end{aligned}
$$
Notice that $nE_c$ will disrupt the momentum balance in the integral sense. Furthermore, there will be new items with low-frequency in the left side of momentum equation, which would make $R_0$ not small enough. To address the two conditions, we will add two corrections. Similar to the construction of $\tm_t$ in \eqref{Eq of tE_t} from Section \ref{The definition of the perturbation}, we first add a time correction $(m_{t},E_{t},B_t=0)$ which only depends on $t$ and satisfies 
$$
\begin{aligned}
	\quad\int_{\T^3}\left(\partial_{tt}E_{t}+nE_{t}+nE_c\right)\rd x&=\partial_{tt}E_{t}+\int_{\T^3}n(x)\rd xE_{t}+\int_{\T^3}nE_c\rd x=0.
\end{aligned}
$$
We could choose
$$
E_{t}=-\frac{\int_{\T^3}nE_c\rd x}{\int_{\T^3}n\rd x},\quad m_t=0.
$$
For the second case, we first rewrite the items with low-frequency as
$$
nR_c:=\cR(h(E_c+E_{t}))+\left(\frac{|E_c+E_t|^2}{2}\Id-(E_c+E_t)\otimes (E_c+E_t)\right).
$$ 
Then, we could use $\|\cR u\|_N\leqslant C_R\|u\|_N$ and choose $\underline{M}(n,p,h) $ satisfying
\begin{equation}\label{pp of um 2}
\underline{M}^{\frac{1}{8}}(n,p,h)> C_R, 
\end{equation}
to obtain for $N=0,1$,
$$
\begin{aligned}
	\left\|nR_c\right\|_N\leqslant C\|h-n\|_N\leqslant \frac{1}{120}\underline{M}^{\frac{1}{4}}(n,p,h),\quad\left\|R_c\right\|_N\leqslant\left\|n^{-1}(nR_c)\right\|_N\leqslant\frac{1}{240}\underline{M}^{\frac{3}{8}}(n,p,h),
\end{aligned}
$$
Then, we apply Lemma \ref{Geometric Lemma I} to $\cF=\left\lbrace f_i\right\rbrace_{i=1}^6 =\left\lbrace(1,\pm1,0),(1.0,\pm1),(0,1,\pm1)\right\rbrace$. There exists $\Gamma_{i}=\Gamma_{f_i}(x)$ which satisfy
$$
\Id-\left(\frac{1}{64}\underline{M}^{\frac{1}{2}}(n,p,h)\right)^{-1}nR_c=\sum_{i=1}^{6}\Gamma_i^2f_i\otimes f_i.
$$
Notice that $\Gamma_i$ can be determined by $n$ and $h$, then $\underline{M}$ can be chosen to satisfy  
\begin{equation}\label{pp of um 3}
\underline{M}^{\frac{1}{8}}(n,p,h)\geqslant\sum_{i=1}^{6}\left\|\Gamma_{i}\right\|_1.
\end{equation}
Similarly to $(\overline{m}_{p},\overline{E}_p,\overline{B}_p)$, we could define $(\overline{m}_R,\overline{E}_R,\overline{B}_R)$ as
\begin{align}
	\overline{m}_R&=\sum_{i=1}^{6}\sum_{k\in \Z^3\setminus \{0\}}m_k(f_i,\frac{1}{8}\underline{M}^{\frac{1}{4}}(n,p,h)n^{\frac{1}{2}}\Gamma_i\cb_{f_i,k}e^{ik\cdot(-y_i)},0,\tlm),\label{def of om_R}\\
	\overline{E}_R&=\sum_{i=1}^{6}\sum_{k\in \Z^3\setminus \{0\}}E_k(f_i,\frac{1}{8}\underline{M}^{\frac{1}{4}}(n,p,h)n^{\frac{1}{2}}\Gamma_i\cb_{f_i,k}e^{ik\cdot(-y_i)},0,\tlm)=0,\label{def of oE_R}\\
	\overline{B}_R&=\sum_{i=1}^{6}\sum_{k\in \Z^3\setminus \{0\}}B_k(e_i,\frac{1}{8}\underline{M}^{\frac{1}{4}}(n,p,h)n^{\frac{1}{2}}\Gamma_i\cb_{f_i,k}e^{ik\cdot(-y_i)},0,\tlm),\label{def of oB_R}\\
	\overline{m}_{R,p}&=\sum_{i=1}^{6}\sum_{k\in \Z^3\setminus \{0\}}m_{p,k}(f_i,\frac{1}{8}\underline{M}^{\frac{1}{4}}(n,p,h)n^{\frac{1}{2}}\Gamma_i\cb_{f_i,k}e^{ik\cdot(-y_i)},0,\tlm,0,0)\notag\\
	&=\sum_{i=1}^{6}\sum_{k\in \Z^3\setminus \{0\}}\frac{1}{8}\underline{M}^{\frac{1}{4}}(n,p,h)n^{\frac{1}{2}}\Gamma_i\cb_{f_i,k}f_ie^{ik\cdot(\tlm x-y_i)}\notag\\
	&=\sum_{i=1}^{6}\frac{1}{8}\underline{M}^{\frac{1}{4}}(n,p,h)n^{\frac{1}{2}}\Gamma_i\psi_{f_i}(\tlm x-y)e_i.\label{def of om_Rp}
\end{align}
where $\psi_{f_i}(\cdot)=\psi_{f_i}(\cdot-y_i)$, which satisfy
$$
\int_{\T^3}\psi_{f_i}\rd x=\int_{\T^3}\psi_{f_i}^3\rd x=0,\qquad\int_{\T^3}\psi_{f_i}^2\rd x=1.
$$
$y_i$ will be chosen to ensure disjoint $\supp(\psi_i)$ and $\supp(\psi_{f_i})$. Immediately, we could get
\begin{align*}
	\Div\left(\frac{\overline{m}_{R,p}\otimes \overline{m}_{R,p}}{n}+nR_c-\frac{1}{64}\underline{M}^{\frac{1}{2}}(n,p,h)\Id\right)&=\Div\left(\sum_{i=1}^{6}\frac{1}{64}\underline{M}^{\frac{1}{2}}(n,p,h)\Gamma_i^2(\psi_{f_i}^2(\tilde{\lambda}x)-1)f_i\otimes f_i\right)\\
	&=\sum_{i=1}^{6}\frac{1}{64}\underline{M}^{\frac{1}{2}}(n,p,h)(f_i\cdot\nabla)(\Gamma_i^2)(\psi_{f_i}^2(\tilde{\lambda}x)-1)f_i.
\end{align*}
Up to now, we could construct the starting tuple as $(m_0=\overline{m}_{0}+\overline{m}_R,E_0=\overline{E}_c+\overline{E}_{t},B_0=\overline{B}_0+\overline{B}_R)$, and calculate
$$
\begin{aligned}
	\int_{\T^3}\left(\partial_tm_0+\Div\left(\frac{m_0\otimes m_0}{n}\right)+\nabla (p(n)+c_0n)+nE_0+m_0\times B_0\right)\rd x=0.
\end{aligned}
$$ 
Then, we have
$$
\begin{aligned}
\Div(nR_0)&=\Div\left(\frac{\overline{m}_0\otimes \overline{m}_0}{n}\right)+\nabla(p(n)+c_0n)+\Div\left(\frac{\overline{m}_R\otimes \overline{m}_R}{n}+\frac{|E_0|^2}{2}\Id-E_0\otimes E_0-\frac{1}{64}\underline{M}^{\frac{1}{2}}(n,p,h)\Id\right)\\
	&\quad+h(E_c+E_{t})+\Div\left(\frac{m_0\otimes m_0}{n}-\frac{\overline{m}_0\otimes \overline{m}_0}{n}-\frac{\overline{m}_R\otimes \overline{m}_R}{n}\right)+\Div\left(\frac{|B_0|^2}{2}\Id-B_0\otimes B_0\right).
\end{aligned}
$$
So $nR_0$ and $\kappa_0$ can be chosen as
\begin{align*}
	nR_0&=\cR\left(\Div\left(\frac{\overline{m}_{0,p}\otimes \overline{m}_{0,p}}{n}\right)+\nabla(p(n)+c_0n)\right)\\
	&\quad+\cR\left(\Div\left(\frac{\overline{m}_{R,p}\otimes \overline{m}_{R,p}}{n}+\frac{|E_0|^2}{2}\Id-E_0\otimes E_0+\cR(h(E_c+E_{t}))-\frac{1}{64}\underline{M}^{\frac{1}{2}}(n,p,h)\Id\right)\right)\\
	&\quad+\frac{m_0\otimes m_0}{n}-\frac{\overline{m}_{0,p}\otimes \overline{m}_{0,p}}{n}-\frac{\overline{m}_{R,p}\otimes \overline{m}_{R,p}}{n}+\frac{|B_0|^2}{2}\Id-B_0\otimes B_0-\frac{2}{3}H(t)\Id\\
	&=-\cR\left(\sum_{i=1}^{3}\partial_i(p(n)+c_0n)(\psi_i^2(\tilde{\lambda}x)-1)e_i\right)+\cR\left(\sum_{i=1}^{6}\frac{1}{64}\underline{M}^{\frac{1}{2}}(n,p,h)(f_i\cdot\nabla)(\Gamma_i^2)(\psi_{f_i}^2(\tilde{\lambda}x)-1)f_i\right)\\
	&\quad+\frac{m_0\otimes m_0}{n}-\frac{\overline{m}_{0,p}\otimes \overline{m}_{0,p}}{n}-\frac{\overline{m}_{R,p}\otimes \overline{m}_{R,p}}{n}+\frac{|B_0|^2}{2}\Id-B_0\otimes B_0-\frac{2}{3}H(t)\Id\\
	\kappa_0&:=\frac{1}{2}\tr R_0=\frac{|m_0|^2}{n}-\frac{|\overline{m}_{0,p}|^2}{n}-\frac{|\overline{m}_{R,p}|^2}{n}+\frac{|B_0|^2}{2n}-\frac{H(t)}{n}.
\end{align*}
where $H(t)$ is a global energy loss
depending only on time.
Finally, we will give the start current $\varphi_0$ so that
$$
\begin{aligned}
	\Div(n\varphi_0)&=\partial_{t}\left(\frac{|m_0|^2}{2n}+\frac{|E_0|^2+|B_0|^2}{2}+ne(n)\right)+\Div\left(\frac{|m_0|^2m_0}{2n^2}+m_0P^\prime(n)+E_0\times B_0\right)\\
	&\quad-n(\partial_{t}+\frac{m_0}{n}\cdot\nabla)\kappa_0-\Div(R_0m_0)-H^\prime\\
	&=\Div\left(\frac{|m_0|^2m_0}{2n^2}+m_0P^\prime(n)+E_0\times B_0-m_0\kappa_0-R_0m_0\right),
\end{aligned}
$$
where we use $n(\partial_{t}+\frac{m_0}{n}\cdot\nabla)\kappa_0=\partial_{t}(n\kappa_0)+\Div(m_0\kappa_0)$. Next, we can choose $n\varphi_0$ as
\begin{align*}
	n\varphi_0&=\cR\left(\Div\left(\frac{|\overline{m}_{0,p}|^2\overline{m}_{0,p}}{2n^2}\right)+\Div\left(\frac{|\overline{m}_{R,p}|^2\overline{m}_{R,p}}{2n^2}\right)\right)+\cR\left(\Div\left(m_0P^\prime(n)\right)\right)\\
	&\quad+\frac{|m_0|^2m_0}{2n^2}-\frac{|\overline{m}_{0,p}|^2\overline{m}_{0,p}}{2n^2}-\frac{|\overline{m}_{R,p}|^2\overline{m}_{R,p}}{2n^2}+E_0\times B_0-m_0\kappa_0-R_0m_0\\
	&=\cR\left(\sum_{i=1}^{3}\partial_i\left(\frac{(C_{n}-p(n)-c_0n)^{\frac{3}{2}}}{2n^{\frac{1}{2}}}\right)\psi_i^3(\tilde{\lambda}x)+\sum_{i=1}^{6}(f_i\cdot\nabla)\left(\frac{\underline{M}^{\frac{3}{4}}(n,p,h)}{1024n^{\frac{1}{2}}}\Gamma_i\right)\psi_{f_i}^3(\tilde{\lambda}x)\right)\\
	&\quad\frac{|m_0|^2m_0}{2n^2}-\frac{|\overline{m}_{0,p}|^2\overline{m}_{0,p}}{2n^2}-\frac{|\overline{m}_{R,p}|^2\overline{m}_{R,p}}{2n^2}+\cR\left(m_0\nabla
	P^\prime(n)\right)+E_0\times B_0-m_0\kappa_0-R_0m_0.
\end{align*}
Up to now, we have constructed a starting tuple $\left(m_0,E_0,B_0,R_0,\varphi_0\right)$ which solve \eqref{Euler-Maxwell-Reynolds System}. We choose the parameters in Lemma \ref{New building blocks} as $\tlm=\tlm$, $\mu=1$, $\tau=1$, $\ell=1$. So there exists a constant  $M_*(n,p,h)>1$ such that
$$
\begin{aligned}
	&\left\|m_0\right\|_0\leqslant\left\|m_p\right\|_0+\left\|m_R\right\|_0\leqslant\frac{1}{32} M_*^{\frac{1}{2}}(n,p,h)(2+6\tilde{\lambda}^{-1}+6\tilde{\lambda}^{-2}+2\tilde{\lambda}^{-3})\leqslant M_*(n,p,h)-\delta_{0}^\frac{1}{2},\\
	&\left\|E_0\right\|_1\leqslant\left\|E_c\right\|_1+\left\|E_t\right\|_1\leqslant\frac{1}{48} M_*^{\frac{1}{4}}(n,p,h)\leqslant M_*(n,p,h)-\delta_{0}^\frac{1}{2},\\	&\left\|B_0\right\|_1\leqslant\left\|B_p\right\|_1+\left\|B_R\right\|_1\leqslant\frac{1}{64} M_*^{\frac{1}{2}}(n,p,h)(2+6\tilde{\lambda}^{-1}+6\tilde{\lambda}^{-2}+2\tilde{\lambda}^{-3})\leqslant M_*(n,p,h)-\delta_{0}^\frac{1}{2},
\end{aligned}
$$
and we could get for $N=1,2$,
$$
\begin{aligned}
	&\left\|m_0\right\|_N\leqslant\left\|m_p\right\|_N+\left\|m_R\right\|_N\lesssim_{n,p,h}\tilde{\lambda}^{N},\quad\left\|E_0\right\|_N\leqslant\left\|E_c\right\|_N+\left\|E_t\right\|_N\lesssim_{n,h} 1,\quad\left\|B_0\right\|_N\leqslant\left\|B_p\right\|_N+\left\|B_R\right\|_N\lesssim_{n,p,h} \tilde{\lambda}^{N-1}.
\end{aligned}
$$
Combining it with \eqref{pp of um 1}--\eqref{pp of um 3}, we could choose $\underline{M}(n,p,h)$ satisfying
\begin{equation}\label{pp of um}
\underline{M}(n,p,h)>\max\left\{M_*(n,p,h),\sum_{i=1}^{6}\left\|\Gamma_{i}\right\|_1,C_R,20,\varepsilon_0^{-4},480(1+c_0)\sum_{N=1}^{3}\left(\left\|h\right\|_{N}+\left\|n\right\|_N+\left\|p(n)\right\|_N\right)\right\}.
\end{equation}
Moreover, we could obtain estimates on $R_0$ and $\varphi_0$ from their definition:
$$
\begin{aligned}
	&\left\|R_0\right\|_N\lesssim_{n,p,h}\tilde{\lambda}^{N-1}+\left\|H\right\|_0,\quad\left\|\varphi_0\right\|_N\lesssim_{n,p,h}\tilde{\lambda}^{N-1}+\left\|H\right\|_0\tilde{\lambda}^{N},&& N=0,1,2,\\
	&\left\|D_{t,q} R_0\right\|_{N} \leqslant\left\|\partial_t R_0\right\|_{N}+\left\|(m_0/n)\cdot \nabla R_0\right\|_{N} \lesssim_{n,p,h}\left\|H^{\prime}\right\|_0+\tilde{\lambda}^{N}\left(1+\left\|H\right\|_0\right),&& N=0,1, \\
	&\left\|D_{t, q} \varphi_0\right\|_{N} \leqslant\left\|\partial_t \varphi_0\right\|_{N}+\left\|(m_0/n) \cdot \nabla \varphi_0\right\|_{N} \lesssim_{n,p,h}\left\|H^{\prime}\right\|_0 \tilde{\lambda}^{N}+\left\|H\right\|_0 \tilde{\lambda}^{N+1}+\tilde{\lambda}^{N},&& N=0,1.
\end{aligned}
$$
Let $C(n,p,h)$ be the maximum of all implicit constants in the above inequalities. For $\tilde{b}(\alpha)$ sufficiently close to 1 and sufficiently
large $\lambda_0$, we can choose proper $\tilde{\lambda}$ to satisfy
$$
2 C(n,p,h) \lambda_0^{3 \gamma} \delta_1^{-\frac{3}{2}} \leqslant \tilde{\lambda} \leqslant(2 C(n,p,h))^{-1} \lambda_0 \delta_0^{\frac{1}{2}} .
$$
Moreover, we set the energy loss $H\equiv0$ or $H=H(t)$ which satisfies
$$
4 C(n,p,h)\left\|H\right\|_0 \leqslant \lambda_0^{-3 \gamma} \delta_1^{\frac{3}{2}}, \quad \text { and } \quad 4 C(n,p,h)\left\|H^{\prime}\right\|_0 \leqslant \lambda_0^{1-3 \gamma} \delta_0^{\frac{1}{2}} \delta_1^{\frac{3}{2}}.
$$
Up to now, we have constructed the starting tuple $(m_0,E_0,B_0,c_0,R_0,\varphi_0)$ for $H\equiv0$ and $H^\prime<0$.
\end{proof}
Based on the construction in Lemma \ref{Construction of a Starting Tuple stationary}. For the case of time-dependent density with small derivative, we can still construct a Starting tuple. Here we give a lemma.
\begin{lm}\label{Construction of a Starting Tuple time dependent}
	By perturbing the density $n_0\in C^{\infty}(\T^3)$ in Lemma \ref{Construction of a Starting Tuple stationary} slightly over time $t$, we could construct a starting tuple $\left(m_{0}, E_{0}, B_{0}, R_{0}, \varphi_{0}\right)$ which satisfies \eqref{est on m_q}--\eqref{est on current} with $n(t,x)\in C^{\infty}(\cI^{-1}\times\T^3)$ depending on time and satisfying $n(0,x)=n_0(x)$, and $ \int_{\mathbb{T}^3} n(t,x) \rd x=\int_{\mathbb{T}^3} h(x) \rd x$ for all $t$.
\end{lm}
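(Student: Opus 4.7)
The plan is to piggyback on Lemma \ref{Construction of a Starting Tuple stationary}. First I apply that lemma with the stationary density $n_0(x)$ to obtain a tuple $(m_0^{\mathrm{st}},E_0^{\mathrm{st}},B_0^{\mathrm{st}},c_0,R_0^{\mathrm{st}},\varphi_0^{\mathrm{st}})$ satisfying \eqref{est on m_q}--\eqref{est on current} at $q=0$. I then introduce a smooth perturbation $\eta(t,x)\in C^\infty(\cI^{-1}\times\T^3)$ with $\eta(0,\cdot)\equiv 0$ and $\int_{\T^3}\eta(t,x)\rd x=0$ for every $t$, set $n(t,x):=n_0(x)+\eta(t,x)$, and take $\eta$ sufficiently small in $C^N(\cI^{-1}\times\T^3)$ for the finitely many orders $N$ needed downstream, so that $n\geqslant\varepsilon_0$ is preserved and the constants $\underline M(n,p,h)$, $M(n,p,h)$ in \eqref{pp of um} stay within a factor $2$ of their values at $n_0$.

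Next I adjust the tuple so that the time-dependent density is compatible with the Euler--Maxwell--Reynolds system. To preserve the continuity equation I set $m_0:=m_0^{\mathrm{st}}+m_{\mathrm{cor}}$ with $m_{\mathrm{cor}}:=-\nabla\Delta^{-1}\partial_t\eta$, which is well-defined by the zero-mean property and satisfies $\Div m_{\mathrm{cor}}=-\partial_t\eta$ (note that $m_0^{\mathrm{st}}$, built from Mikado flows, is already divergence-free). To maintain $\Div E_0=h-n$, I set $E_0:=E_0^{\mathrm{st}}-\nabla\Delta^{-1}\eta$ and keep $B_0:=B_0^{\mathrm{st}}$; a direct computation shows $\partial_tE_0-\nabla\times B_0=m_0$ and $\partial_tB_0+\nabla\times E_0=0$, since the correction to $E_0$ is a gradient whose time derivative equals $m_{\mathrm{cor}}$. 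Finally I take $R_0=R_0^{\mathrm{st}}+R_{\mathrm{cor}}$ and $\varphi_0=\varphi_0^{\mathrm{st}}+\varphi_{\mathrm{cor}}$, absorbing into $R_{\mathrm{cor}}$ and $\varphi_{\mathrm{cor}}$ via the inverse divergence operator $\cR$ the new contributions to the momentum balance (from $\partial_tm_{\mathrm{cor}}$, $\nabla p(n)-\nabla p(n_0)$, $nE_{\mathrm{cor}}$, $m_{\mathrm{cor}}\times B_0^{\mathrm{st}}$, and the cross-quadratic plus $\eta$-dependent terms in $(m_0\otimes m_0)/n$), together with the analogous energy contributions and an update of the time-function $\zeta$ and the energy loss $H$.

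Since every term in $R_{\mathrm{cor}}$ and $\varphi_{\mathrm{cor}}$ is controlled by $\eta$ and its derivatives, choosing $\eta$ small enough yields $\|R_{\mathrm{cor}}\|_N\leqslant \tfrac12\lambda_0^{N-3\gamma}\delta_1$ and $\|\varphi_{\mathrm{cor}}\|_N\leqslant \tfrac12\lambda_0^{N-3\gamma}\delta_1^{3/2}$ for $N=0,1,2$, together with the corresponding advective-derivative bounds; combined with the stationary estimates from Lemma \ref{Construction of a Starting Tuple stationary} this produces \eqref{est on m_q}--\eqref{est on current} at $q=0$. The main obstacle is purely bookkeeping: arranging that the single smallness parameter on $\eta$ simultaneously enforces (i) positivity $n\geqslant\varepsilon_0$; (ii) stability of the universal constants $\underline M,M$ from \eqref{pp of um 1}--\eqref{pp of um 3}, which depend continuously on $\|n\|_{C^3}$ and $\|p(n)\|_{C^3}$; and (iii) the sign condition $H'\leqslant 0$ for the updated energy loss, while still producing a genuinely time-dependent density. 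Once this is arranged the rest of the construction is formally identical to the stationary case.
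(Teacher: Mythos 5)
There is a genuine gap: you have not accounted for the failure of the spatial-mean momentum balance.

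Your corrections $m_{\mathrm{cor}}=-\nabla\Delta^{-1}\partial_t\eta$ and $E_{\mathrm{cor}}=-\nabla\Delta^{-1}\eta$ correctly restore the continuity equation, the Gauss law $\Div E_0=h-n$, and the two evolution Maxwell equations. But to produce a Reynolds tensor $R_0$ with $\Div(n(R_0-c_0\Id))$ equal to the momentum residual, that residual must have zero spatial mean (the inverse-divergence operator $\cR$ only inverts $\Div$ on mean-zero fields). Compared with the stationary balance you inherit an extra contribution
\[
\int_{\T^3}\left(\partial_t m_{\mathrm{cor}}+\eta E_0^{\mathrm{st}}+nE_{\mathrm{cor}}+m_{\mathrm{cor}}\times B_0^{\mathrm{st}}\right)\rd x,
\]
and while $\int\partial_t m_{\mathrm{cor}}\,\rd x=0$ (it is a gradient), the terms $\int nE_{\mathrm{cor}}$, $\int\eta E_0^{\mathrm{st}}$, and $\int m_{\mathrm{cor}}\times B_0^{\mathrm{st}}$ are generically nonzero. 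Updating the time-function $\zeta$ does not help, because $\zeta(t)/n\,\Id$ was introduced precisely so that it does \emph{not} affect $\Div(nR)$, and the energy loss $H$ enters only the energy identity, not the momentum equation.

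What the paper does, and what your argument is missing, is a further time-dependent corrector $\hat E_t(t)$ with $\hat m_t:=\partial_t\hat E_t$, added to $E_0$ and $m_0$ respectively, chosen as the solution of a forced linear second-order ODE
\[
\partial_{tt}\hat E_t+\Bigl(\int_{\T^3}n_0\,\rd x\Bigr)\hat E_t+\int_{\T^3}\Bigl(\partial_t\bigl(\hat m_0+\hat E_0\times B_0\bigr)+h\hat E_0\Bigr)\rd x=0,
\]
with $\hat E_t(0)$ chosen so that the right-hand side vanishes at $t=0$ and $\partial_t\hat E_t(0)=0$. This is the same mechanism used for $\tm_t,\tE_t$ in the iteration step (cf.\ \eqref{Eq of tE_t} and \eqref{C Maxwell}): the spatially constant $\hat E_t$ does not disturb the Maxwell constraints, and the forcing is precisely what cancels the nonzero spatial mean so the residual becomes an exact divergence. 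Once this corrector is added (and you set $\eta=\varepsilon\hat n$ and shrink $\varepsilon$), the rest of your argument — absorbing the remaining $O(\varepsilon)$ terms into $R_{\mathrm{cor}}$, $\varphi_{\mathrm{cor}}$ via $\cR$ and taking the smallness parameter to control all required norms and constants — agrees with the paper's.
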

\begin{proof}
We write a time-dependent density $
n(t, x):=n_0(x)+\varepsilon \hat{n}(t, x)$, where $\hat{n}(t, x)$ can be any smooth function which satisfies $\int_{\T^3}\partial_{t}\hat{n}(t, x)\rd x=0$ on $\cI^{-1}\times\T^3$ and
$$
\begin{aligned}
	\left\|\partial_{t}^r\hat{n}\right\|_{C^0\left(\cI^{-1} ; C^N\left(\mathbb{T}^3\right)\right)} \leqslant\left\|n_0\right\|_{C^N\left(\mathbb{T}^3\right)}, \forall N\in\left[0, \tilde{n}_0+1\right],r=0,1,2,3,
\end{aligned}
$$
where $\varepsilon\in(0,\frac{1}{2})$ will be chosen later. By using Lemma \ref{Construction of a Starting Tuple stationary}, we could construct a starting tuple $(m_0,E_0,B_0,c_0,R_0,\varphi_0)$ for stationary density $n_0$. $\tilde{\lambda}$, $\tilde{b}(\alpha)$, $\underline{M}(n_0,p,h)$, and $C(n_0,p,h)$ is same as the ones in the stationary condition. Since $\int_{\T^3}\partial_{t}\hat{n}(t, x)\rd x=0$, we can set
$$
\begin{aligned}
	&\tm_0:=m_0+\hat{m}_0+\hat{m}_t=m_0-\varepsilon\cR(\partial_{t}\hat{n})+\partial_{t}\hat{E}_t,\\
	&\tE_0:=E_0+\hat{E}_0+\hat{E}_t=E_0-\varepsilon\cR(\hat{n})+\hat{E}_t,\\
	&\tB_0:=B_0+\hat{B}_0=B_0-\int_{-\tau_{-1}}^{t}\nabla\times \cR(\hat{n})(\tau,x)\rd \tau=B_0.\\
\end{aligned}
$$
where $\hat{E}_t$ satisfies 
$$
\begin{aligned}
	&\partial_{tt}\hat{E}_t+\int_{\T^3}n_0\rd x\hat{E}_t+\int_{\T^3}\left(\partial_{t}(\hat{m}_0+\hat{E}_0\times B_0)+h\hat{E}_0\right)\rd x=0,\\
	&\hat{E}_t(0)=-\frac{\int_{\T^3}\left(\partial_{t}(\hat{m}_0+\hat{E}_0\times B_0)+h\hat{E}_0\right)\rd x}{\int_{\T^3}n_0\rd x},\quad\hat{E}_t(0)=0.
\end{aligned}
$$
Then, we have
$$
\begin{aligned}
	&\left\|\partial_t^2\hat{E}_t\right\|_0\leqslant C(n_0)\left\|\partial_{t}(\hat{m}_0+\hat{E}_0\times B_0)+h\hat{E}_0\right\|_0\leqslant \frac{1}{48} \underline{M}\left(n_0,p,h\right) \varepsilon(2+2\tilde{\lambda}^{-1}+4\tilde{\lambda}^{-2}+2\tilde{\lambda}^{-3}),\\
	&\left\|\partial_t^3\hat{E}_t\right\|_0\leqslant C(n_0)\left\|\partial_{t}(\partial_{t}(\hat{m}_0+\hat{E}_0\times B_0)+h\hat{E}_0)\right\|_0\leqslant \frac{1}{48} \underline{M}\left(n_0,p,h\right) \varepsilon(2+2\tilde{\lambda}^{-1}+4\tilde{\lambda}^{-2}+2\tilde{\lambda}^{-3}).
\end{aligned}
$$
So we could choose sufficient small $\varepsilon$ such that
\begin{align*}
	\left\|\tm_0\right\|_0 &\leqslant\left\|m_0\right\|_0+\left\|\hat{m}_0\right\|_0+\left\|\hat{m}_t\right\|_0\\ &\leqslant\frac{1}{48} \underline{M}\left(n_0,p,h\right)(1+3\tilde{\lambda}^{-1}+3\tilde{\lambda}^{-2}+\tilde{\lambda}^{-3}+\varepsilon(2+2\tilde{\lambda}^{-1}+4\tilde{\lambda}^{-2}+2\tilde{\lambda}^{-3}))+\varepsilon\left\|\partial_t\hat{n}\right\|_0\leqslant \underline{M}\left(n,p,h\right)-\delta_0^{\frac{1}{2}}, \\
	\left\|\tE_0\right\|_1 &\leqslant\left\|E_0\right\|_1+\left\|\hat{E}_0\right\|_1+\left\|\hat{E}_t\right\|_1\leqslant \frac{1}{48} \underline{M}\left(n_0,p,h\right)(2+\varepsilon(2+2\tilde{\lambda}^{-1}+4\tilde{\lambda}^{-2}+2\tilde{\lambda}^{-3}))+\varepsilon\left\|\hat{n}\right\|_0 \leqslant \underline{M}\left(n,p,h\right)-\delta_0^{\frac{1}{2}}, \\
	\left\|\partial_{t}\tE_0\right\|_0&\leqslant\left\|\partial_{t}\hat{E}_0\right\|_0+\left\|\partial_{t}\hat{E}_t\right\|_0\leqslant\frac{1}{48} \underline{M}\left(n_0,p,h\right) \varepsilon(2+2\tilde{\lambda}^{-1}+4\tilde{\lambda}^{-2}+2\tilde{\lambda}^{-3})+\varepsilon\left\|\partial_{t}\hat{n}\right\|_0\leqslant \underline{M}\left(n,p,h\right)-\delta_0^{\frac{1}{2}},\\
	\left\|\tB_0\right\|_0 &\leqslant\left\|B_0\right\|_0\leqslant \underline{M}\left(n,p,h\right)-\delta_0^{\frac{1}{2}},
\end{align*}
for some constants $\underline{M}(n,p,h)$ and
\begin{align*}
	\left\|\partial_{t}^r\tm_0\right\|_N&\leqslant\left\|\partial_{t}^r\hat{m}_0\right\|_N+\left\|\partial_{t}^r\hat{m}_t\right\|_N\leqslant C(n_0,p,h)\tlm^{N+r}+\varepsilon\left\|\partial_{t}^{r+1}\hat{n}\right\|_N\leqslant M\lambda_0^{N+r}\delta_{0}^{\frac{1}{2}},&& 1\leqslant N+r\leqslant3,\\
	\left\|\partial_{t}^r\tE_0\right\|_N&\leqslant\left\|\partial_{t}^r\hat{E}_0\right\|_N+\left\|\partial_{t}^r\hat{E}_t\right\|_N\leqslant C(n_0,p,h)\tlm^{N+r-1}+\varepsilon\left\|\partial_{t}^r\hat{n}\right\|_N\leqslant M\lambda_0^{N+r-1}\delta_{0}^{\frac{1}{2}},&& 2\leqslant N+r\leqslant3,\\
	\left\|\partial_{t}^r\tB_0\right\|_N&\leqslant\left\|\partial_{t}^rB_0\right\|_N \leqslant C(n_0,p,h)\tlm^{N+r-1}\leqslant M\lambda_0^{N+r-1}\delta_{0}^{\frac{1}{2}},&& 2\leqslant N+r\leqslant3.
\end{align*}
for $M(n,p,h)$ defined in Section \ref{Proof of the Inductive Proposition}. Next, we could calculate 
$$
\begin{aligned}
	\Div(n\tilde{R}_0)&=\Div(n_0R_0)+\Div(\varepsilon c_0\hat{n}\Id)+\partial_{t}(\hat{m}_0+(\hat{E}_0+\hat{E}_t)\times B_0)+h(\hat{E}_0+\hat{E}_t)+\nabla (p(n)-p(n_0))\\
	&\quad+\Div\left(\frac{\tm_0\otimes\tm_0}{n}-\frac{m_0\otimes m_0}{n_0}+\frac{|\tE_0|^2}{2}\Id-\frac{|E_0|^2}{2}\Id-\tE_0\otimes\tE_0+E_0\otimes E_0\right)
\end{aligned}
$$
and choose the new Reynolds error  $n\tilde{R}_0$ and $\tilde{\kappa}_0$ as
$$
\begin{aligned}
	n\tilde{R}_0&=n_0R_0+\varepsilon c_0\hat{n}\Id+\cR\left(\partial_{t}(\hat{m}_0+(\hat{E}_0+\hat{E}_t)\times B_0)+h(\hat{E}_0+\hat{E}_t)+ \nabla(p(n)-p(n_0))\right)\\
	&\quad+\cR\left(\Div\left(\frac{\tm_0\otimes\tm_0}{n}-\frac{m_0\otimes m_0}{n}+\frac{|\tE_0|^2}{2}\Id-\frac{|E_0|^2}{2}\Id-\tE_0\otimes\tE_0+E_0\otimes E_0\right)\right)+\frac{2}{3}\zeta(t)\Id,\\
	\tilde{\kappa}_0&=\frac{1}{2}\tr\tilde{R}_0=\frac{1}{2}\tr\left(R_0+\frac{n_0-n}{n}R_0\right)+\frac{3}{2}\varepsilon c_0\frac{\hat{n}}{n}+\frac{\zeta(t)}{n},
\end{aligned}
$$
where $\zeta(t)=\int_{\T^3}\left(\frac{|\tm_0|^2-|m_0|^2}{2n}+\frac{|\tE_0|^2+|\tB_0|^2-|E_0|^2-|B_0|^2}{2}+n_0\kappa_0-n\tilde{\kappa}_0+ne(n)-n_0e(n_0)\right)\rd x$. Notice that $n\tilde{\kappa}_0-\zeta(t)$ doesn't depend on $\zeta(t)$.
Similarly, we know
$$
\begin{aligned}
	\Div(n\tilde{\varphi}_0)
	&=\Div(n_0\varphi_0)+\Div\left(\frac{|\tm_0|^2\tm_0}{2n^2}-\frac{|m_0|^2m_0}{2n_0^2}\right)+\Div\left(\tm_0P^\prime(n)-m_0P^\prime(n_0)\right)+\Div\left(\tE_0\times B_0-E_0\times B_0\right)\\
	&\quad+\partial_{t}\left(\frac{|\tm_0|^2-|m_0|^2}{2n}+\frac{|\tE_0|^2+|\tB_0|^2-|E_0|^2-|B_0|^2}{2}+n_0\kappa_0-n\tilde{\kappa}_0+ne(n)-n_0e(n_0)+\zeta(t)\right)\\
	&\quad+\Div\left(m_0\kappa_0-\tm_0\tilde{\kappa}_0\right)+\Div\left(R_0m_0-\tilde{R}_0\tm_0\right)
\end{aligned}
$$
and we can choose
$$
\begin{aligned}
	n\tilde{\varphi}_0&=n_0\varphi_0+\cR\left(\partial_{t}\left(\frac{|\tm_0|^2}{2n}+\frac{|\tE_0|^2+|\tB_0|^2}{2}+n_0\kappa_0-n\tilde{\kappa}_0+ne(n)-n_0e(n_0)\right)\right)\\
	&\quad+\frac{|\tm_0|^2\tm_0}{2n^2}-\frac{|m_0|^2m_0}{2n_0^2}+\tm_0P^\prime(n)-m_0P^\prime(n_0)+\tE_0\times B_0-E_0\times B_0\\
	&\quad+m_0\kappa_0-\tm_0\tilde{\kappa}_0+R_0m_0-\tilde{R}_0\tm_0.
\end{aligned}
$$
Until now, we have constructed a starting tuple $(\tm_0,\tE_0,\tB_0,c_0,\tilde{R}_0,\tilde{\varphi}_0)$ for time-dependent density $n(t,x)$. Notice that we can choose $\varepsilon$ small enough such that $|\zeta|$, $|\partial_t\zeta|$, $|\partial_{tt}\zeta|$, $|\tilde{R}_0-R_0|$, $|\tilde{\varphi}_0-\varphi_0|$, $|(\partial_{t}+\tm_0/n\cdot\nabla)\tilde{R}_0-(\partial_{t}+m_0/n\cdot\nabla)R_0|$ and $|(\partial_{t}+\tm_0/n\cdot\nabla)\tilde{\varphi}_0-(\partial_{t}+m_0/n\cdot\nabla)\varphi_0|$ are small enough such that $(\tm_0,\tE_0,\tB_0,c_0,\tilde{R}_0,\tilde{\varphi}_0)$ still satisfies \eqref{est on m_q}--\eqref{est on current}. 
\end{proof}
\section{Proof of the theorems }\label{Proof of the Theorems}
\subsection{Proof of Theorem \ref{thm 1}}
For convenience, we assume that $T \geqslant 20$ in this argument. We fix $\beta < \frac{1}{7}$ and $\alpha \in (\beta, \frac{1}{7})$, and set $n(t, \cdot) = n_0$ for all $t \in \mathbb{R}$. We choose $b$ and $\lambda_0$ based on Proposition \ref{Inductive proposition}. Then, we could use Lemma \ref{Construction of a Starting Tuple stationary} to construct an initial approximate solution $(m_0,E_0,B_0, c_0, R_0, \varphi_0)$ with $H\equiv 0$ so that it solves \eqref{Euler-Maxwell-Reynolds System} on $\cI^{-1}\times\T^3$ and satisfies \eqref{est on m_q}--\eqref{est on current}. 
	We could apply Proposition \ref{Inductive proposition} iteratively to produce a sequence of approximate solutions $(m_q,E_q,B_q,c_q,R_q, \varphi_q)$, which solves \eqref{Euler-Maxwell-Reynolds System} with $H\equiv0$, and satisfies \eqref{est on m_q}--\eqref{est on current} and \eqref{Proposition of induction}. 
	First, we prove that $m_q,E_q,B_q$ is Cauchy. For any $q\leqslant q'$, we have
	\begin{align*}
		\|m_{q'} - m_q\|_{C^0([0, T];C^\beta(\T^3))}
		&\leqslant \sum_{l=1}^{q'-q}\|m_{q+l} - m_{q+l-1}\|_{C^0([0, T];C^\beta(\T^3))}\leqslant \sum_{l=1}^{q'-q}
		\|m_{q+l} - m_{q+l-1}\|_{0}^{1-\beta}
		\|m_{q+l} - m_{q+l-1}\|_{1}^{\beta}= \sum_{l=1}^{q'-q}
		\lambda_{q+1}^{\beta-\alpha},\\
		\|E_{q'} - E_q\|_{C^0([0, T];C^{1,\beta}(\T^3))}
		&\leqslant \sum_{l=1}^{q'-q}\|E_{q+l} - E_{q+l-1}\|_{C^0([0, T];C^{1,\beta}(\T^3))}\lesssim \sum_{l=1}^{q'-q}
		\|E_{q+l} - E_{q+l-1}\|_{1}^{1-\beta}
		\|E_{q+l} - E_{q+l-1}\|_{2}^{\beta}\lesssim \sum_{l=1}^{q'-q}
		\lambda_{q+1}^{\beta-\alpha},\\
		\|B_{q'} - B_q\|_{C^0([0, T];C^{1,\beta}(\T^3))}
		&\leqslant \sum_{l=1}^{q'-q}\|B_{q+l} - B_{q+l-1}\|_{C^0([0, T];C^{1,\beta}(\T^3))}\lesssim \sum_{l=1}^{q'-q}
		\|E_{q+l} - B_{q+l-1}\|_{1}^{1-\beta}
		\|E_{q+l} - B_{q+l-1}\|_{2}^{\beta}\lesssim \sum_{l=1}^{q'-q}
		\lambda_{q+1}^{\beta-\alpha},
	\end{align*}
	which will converge to $0$, when $q$ goes to infinity. So we could get
	$(m_q,E_q,B_q)$ obtain a limit
	$$
	(m,E,B)\in C^0([0,T],C^\beta(\T^3))\times C^0([0,T],C^{1,\beta}(\T^3))\times C^0([0,T],C^{1,\beta}(\T^3)).
	$$
	Similarly, the time regularity follows from  \eqref{Proposition of induction} that
	$$
	(m,E,B)\in C^\beta([0,T],C^0(\T^3))\times C^{1,\beta}([0,T],C^0(\T^3))\times C^{1,\beta}([0,T],C^0(\T^3)).
	$$
	Hence,
	$$
	(m,E,B)\in C^{\beta'}([0,T]\times\T^3)\times C^{1,\beta'}([0,T]\times\T^3)\times C^{1,\beta'}([0,T]\times\T^3).
	$$
	for $\beta'<\beta<\alpha<\frac{1}{3}$. Moreover, $(c_q,R_q,\varphi_q)$ converges to $0$ in $C^0([0,T]\times\T^3)\times C^0([0,T]\times\T^3)\times C^0([0,T]\times\T^3)$. Now, we are ready to construct two distinct tuples by using Proposition \ref{Bifurcating inductive proposition}. Fix $\bar q\in \N\cup\{0\}$ satisfying $b^{\bar  q}\geqslant \bar q$. At the $\bar q$th step, we can produce two distinct tuples $(m_q,E_q,B_q, c_q, R_q,\varphi_q)$ and $(\overline{m}_q,\overline{E}_q,\overline{B}_q, c_q, \overline{R}_q, \overline{\varphi}_q)$ which satisfy Proposition \ref{Bifurcating inductive proposition}, and we have 
	\begin{align*}
		&\|\overline{m}_{\bar q} - m_{\bar q}\|_{C^0([0, T];L^2(\T^3))} \geqslant  {\varepsilon_0} \delta_q^\frac 12, && \supp_t (m_q-\overline{m}_q) \subset \cal I, \\
		&\|\overline{E}_{\bar q} - E_{\bar q}\|_{C^0([0, T];L^2(\T^3))} \geqslant  {\varepsilon_0} \lambda_{q+1}^{-1}\delta_q^\frac 12, && \supp_t (E_q-\overline{E}_q) \subset \cal I, \\
		&\|\overline{B}_{\bar q} - B_{\bar q}\|_{C^0([0, T];L^2(\T^3))} \geqslant  {\varepsilon_0}\lambda_{q+1}^{-1} \delta_q^\frac 12, && \supp_t (B_q-\overline{B}_q) \subset \cal I, 
	\end{align*}
	with $\cal I  = (10, 10+3\tau_{\bar q-1})$.
	Next, we apply Proposition \ref{Inductive proposition} iteratively to build a new sequence $(\overline{m}_q,\overline{E}_q,\overline{B}_q, c_q, \overline{R}_q, \overline{\varphi}_q)$  which satisfy \eqref{est on m_q}--\eqref{est on current} and \eqref{Proposition of induction}. In a similar way, this new sequence converges to a solution $(n, \overline{m},\overline{E},\overline{B})$ to the compressible Euler-Maxwell system and
	$$
	(\overline{m},\overline{E},\overline{B})\in C^{\beta'}([0,T]\times\T^3)\times C^{1,\beta'}([0,T]\times\T^3)\times C^{1,\beta'}([0,T]\times\T^3).
	$$ 
	Moreover, $\overline{m}_{ {q}}$ shares initial data with $m_{{q}}$  for all $q$, because for any $q\geqslant \bar q$,
	$$
	\supp_t(m_{q} -\overline{m}_{q}) \subset \cal I + \sum_{q=\bar q}^\infty (\lambda_{q}\delta_{q}^\frac12)^{-1} \subset[9,T],
	$$ 
	and then two solutions $\overline{m}_q$ and $m_q$ have the same initial data.  However, the new solution $\overline{m}$ differs from $m$ because
	\begin{align*}
		\|m - \overline{m}\|_{C^0([0, T];L^2(\T^3))}
		&\geqslant \|m_{\bar q} -\overline{m}_{\bar q}\|_{C^0([0,T];L^2(\T^3))}
		-\sum_{q=\bar q}^\infty \|m_{q+1}-m_q - (\overline{m}_{q+1} - \overline{m}_q)\|_{C^0([0,T];L^2(\T^3))} \\
		&\geqslant \|m_{\bar q} - \overline{m}_{\bar q}\|_{C^0([0,T];L^2(\T^3))}
		-(2\pi)^\frac32 \sum_{q=\bar q}^\infty (\|m_{q+1}-m_q\|_0 + \|\overline{m}_{q+1}-\overline{m}_q\|_0)\\
		&\geqslant  {\varepsilon_0} \delta_{\bar q}^\frac 12  - 2(2\pi)^\frac32M \sum_{q=\bar q}^\infty\delta_{q+1}^\frac12>0,\\
		\|E - \overline{E}\|_{C^0([0, T];L^2(\T^3))}
		&\geqslant \|E_{\bar q} -\overline{E}_{\bar q}\|_{C^0([0,T];L^2(\T^3))}
		-\sum_{q=\bar q}^\infty \|E_{q+1}-E_q - (\overline{E}_{q+1} - \overline{E}_q)\|_{C^0([0,T];L^2(\T^3))} \\
		&\geqslant \|E_{\bar q} - \overline{E}_{\bar q}\|_{C^0([0,T];L^2(\T^3))}
		-(2\pi)^\frac32 \sum_{q=\bar q}^\infty (\|E_{q+1}-E_q\|_0 + \|\overline{E}_{q+1}-\overline{E}_q\|_0)\\
		&\geqslant  {\varepsilon_0} \lambda_{q+1}^{-1}\delta_{\bar q}^\frac 12  - 2(2\pi)^\frac32M\lambda_{q+1}^{-1} \sum_{q=\bar q}^\infty\delta_{q+1}^\frac12>0,\\
		\|B - \overline{B}\|_{C^0([0, T];L^2(\T^3))}
		&\geqslant \|B_{\bar q} -\overline{B}_{\bar q}\|_{C^0([0,T];L^2(\T^3))}
		-\sum_{q=\bar q}^\infty \|B_{q+1}-B_q - (\overline{B}_{q+1} - \overline{B}_q)\|_{C^0([0,T];L^2(\T^3))} \\
		&\geqslant \|B_{\bar q} - \overline{B}_{\bar q}\|_{C^0([0,T];L^2(\T^3))}
		-(2\pi)^\frac32 \sum_{q=\bar q}^\infty (\|B_{q+1}-B_q\|_0 + \|\overline{B}_{q+1}-\overline{B}_q\|_0)\\
		&\geqslant  {\varepsilon_0} \lambda_{q+1}^{-1}\delta_{\bar q}^\frac 12  - 2(2\pi)^\frac32M\lambda_{q+1}^{-1} \sum_{q=\bar q}^\infty\delta_{q+1}^\frac12>0,
	\end{align*} 
	if we choose $\lambda_0$ large enough.
	By changing the choice of time interval $\cal I$ and the choice of $\bar{q}$, we can generate infinitely many solutions with $H\equiv 0$ in a similar way. 
	
	\subsection{Proof of Theorem \ref{thm 2}}
	Let $n(t, \cdot) = n_0$ for all $t\in \R$ and fix $\beta' < \beta < \frac{1}{7}$, we could use similar way as in the proof for theorem \ref{thm 1} to construct  tuples $(m_q,E_q,B_q, c_q, R_q,\varphi_q)$  which satisfy \eqref{est on m_q}--\eqref{est on current} , \eqref{Proposition of induction} and $H(0)=0,H'<0$. Moreover, $(m_q,E_q,B_q)$ obtain a limit
	$$
	(m,E,B)\in C^{\beta'}([0,T]\times\T^3)\times C^{1,\beta'}([0,T]\times\T^3)\times C^{1,\beta'}([0,T]\times\T^3),
	$$
	and $(c_q,R_q,\varphi_q)$ converges to $0$ in $C^0([0,T]\times\T^3)\times C^0([0,T]\times\T^3)\times C^0([0,T]\times\T^3)$. Consider the energy equation, we have
	\begin{align*}
		\partial_t \left( \frac{|m|^2}{2 n} +  n e( n)\right) +
		\Div\left(\frac{m}{ n} \left(
		\frac{|m|^2}{2 n} +  n e( n) + p( n)\right)+E\times B\right) = H',
	\end{align*}
	in the distributional distribution. Since $H'<0$ for all $t\in [0,T]$, the constructed solution satisfies the entropy inequality \eqref{entropy inequality 1} strictly. 
\appendix
\section{H\"{o}lder spaces}
	In this section, we intrduce the notations we would use for H{\"o}lder spaces. For some time interval $\cI\subset\R$, we denote the supremum norm as  $\|f\|_0=\left\|f\right\|_{(\cI;C^0(\T^3))}:=\underset{(t,x)\in\cI\times\T^3}{\sup}|f(t,x)|$. For $N\in\N$, a multi-index $k=(k_1,k_2,k_3)\in\N^3$ and $\alpha\in(0,1]$, we denote the H\"{o}lder seminorms as
	$$
	\begin{aligned}
	&[f]_{N}=\underset{|k|=N}{\max}\left\|D^k f\right\|_{0},&&
	[f]_{N+\alpha}=\underset{|k|=N}{\max}\underset{x\neq y,t}{\sup}\frac{|D^k f(t,x)-D^k f(t,x)|}{|x-y|^\alpha},
	\end{aligned}
	$$
	where $D^k$ are spatial derivatives. Then, we can denote the H\"{o}lder norms:
    $$
	\left\|f\right\|_{N}=\sum_{j=0}^{m}[f]_{j},\qquad\left\|f\right\|_{N+\alpha}=\left\|f\right\|_{N}+[f]_{N+\alpha}.
	$$
    If $f\in C^N(\T^3)$, with a little abuse of notations, we will use the same notations as before. We give the following classical lemma without proof.
    \begin{lm}
    Assuming $f$ is sufficiently smooth, we have
    \begin{equation}
    	[f]_{s}\leqslant C(\varepsilon^{r-s}[f]_{r}+\varepsilon^{-s}\left\|f\right\|_{0}), 
    \end{equation} 	 
where $r\geqslant s\geqslant 0;\varepsilon\geqslant 0$, and
\begin{equation}
	[fg]_{r}\leqslant C([f]_{r}\left\|g\right\|_{0}+\left\|f\right\|_{0}[g]_{r}),
\end{equation}
for any $1\geqslant r\geqslant 0$. Moreover, by setting $\varepsilon=\left\|f\right\|_{0}^{\frac{1}{r}}[f]_{r}^{-\frac{1}{r}}$, we could achieve
\begin{equation}
	[f]_{s}\leqslant C\left\|f\right\|_{0}^{1-\frac{s}{r}}[f]_{r}^{\frac{s}{r}}.
\end{equation}
    \end{lm}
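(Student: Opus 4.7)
I would establish the three inequalities in the order stated, taking the first as the core estimate and deducing the third from it by optimizing $\varepsilon$; the second is independent and follows from a direct Leibniz computation.

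\textbf{Step 1 (the interpolation $[f]_s\leqslant C(\varepsilon^{r-s}[f]_r+\varepsilon^{-s}\|f\|_0)$).} The plan is to reduce to the base case $0\leqslant s\leqslant r\leqslant 1$ by a scale-splitting argument. For $x\neq y$ I would use the two trivial bounds
\[
\frac{|f(x)-f(y)|}{|x-y|^s}\leqslant [f]_r\,|x-y|^{r-s},\qquad \frac{|f(x)-f(y)|}{|x-y|^s}\leqslant \frac{2\|f\|_0}{|x-y|^s},
\]
applying the first when $|x-y|\leqslant\varepsilon$ and the second when $|x-y|>\varepsilon$; taking the supremum gives $[f]_s\leqslant \varepsilon^{r-s}[f]_r+2\varepsilon^{-s}\|f\|_0$. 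The general case $r=N+\beta$, $s=M+\alpha$ with integers $N\geqslant M\geqslant 0$ and $\alpha,\beta\in[0,1]$ follows by induction on $N$: the definition $[f]_{M+\alpha}=\max_{|k|=M}[D^kf]_\alpha$ lets me apply the base-case estimate to $D^Mf$, while the $\|D^kf\|_0$ for intermediate $k$ are controlled by Landau--Kolmogorov bounds $\|D^kf\|_0\leqslant C(\varepsilon^{r-k}[f]_r+\varepsilon^{-k}\|f\|_0)$, which are themselves proved by iterating the base case on $f,Df,\ldots,D^{N-1}f$.

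\textbf{Step 2 (Leibniz rule).} For $r\in(0,1)$ I would use the identity
\[
f(x)g(x)-f(y)g(y)=(f(x)-f(y))g(x)+f(y)(g(x)-g(y)),
\]
divide by $|x-y|^r$, and take the supremum to get $[fg]_r\leqslant [f]_r\|g\|_0+\|f\|_0[g]_r$. The integer endpoints are immediate: $r=0$ gives $\|fg\|_0\leqslant \|f\|_0\|g\|_0$, and $r=1$ reduces to the classical product rule $\partial_i(fg)=(\partial_if)g+f(\partial_ig)$ followed by the $C^0$ bound.

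\textbf{Step 3 (multiplicative form).} Given the first inequality, I would simply substitute $\varepsilon=\|f\|_0^{1/r}[f]_r^{-1/r}$, which makes both terms on the right-hand side equal to $\|f\|_0^{1-s/r}[f]_r^{s/r}$, yielding the claimed bound. The degenerate cases $\|f\|_0=0$ (so $f\equiv 0$) and $[f]_r=0$ (so $f$ reduces to a constant when $r>0$) are handled by sending $\varepsilon\to 0$ or $\varepsilon\to\infty$ in the interpolation.

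\textbf{Main obstacle.} The chief technical point is Step 1: bridging the base case $r\leqslant 1$ to arbitrary $r,s\geqslant 0$ requires simultaneously controlling all intermediate derivative norms $\|D^kf\|_0$ for $0\leqslant k\leqslant \lfloor r\rfloor$ in terms of $\|f\|_0$ and $[f]_r$, with constants that remain bounded under iteration. A clean realization is to first establish, by induction on $\lfloor r\rfloor$, the full family of Landau--Kolmogorov inequalities, then deduce the fractional-order statement by applying the base case to the top-order derivative; the bookkeeping of constants across the induction is the only genuinely delicate piece.
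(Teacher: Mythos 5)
The paper states this lemma as a ``classical lemma without proof,'' so there is no in-paper argument to compare against; I can only assess your proof on its own merits.

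Your Steps 2 and 3 are correct and complete, and the base case of Step 1 (the dichotomy $|x-y|\leqslant\varepsilon$ versus $|x-y|>\varepsilon$ for $0\leqslant s\leqslant r\leqslant 1$) is exactly the standard argument. The genuine gap is in your claim that the Landau--Kolmogorov bounds $\|D^kf\|_0\leqslant C(\varepsilon^{N-k}[f]_N+\varepsilon^{-k}\|f\|_0)$ ``are themselves proved by iterating the base case on $f,Df,\ldots,D^{N-1}f$.'' The base case only yields inequalities whose right-hand side involves $[g]_r$ with $r\leqslant 1$; applied to $g\in\{f,Df,\ldots\}$ this never produces $\|D^2f\|_0$ or any higher-order norm on the right. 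Concretely, no sequence of applications of the base case can generate the two-derivative estimate $\|Df\|_0\leqslant C(\varepsilon\|D^2f\|_0+\varepsilon^{-1}\|f\|_0)$, which is the crucial first rung of the Landau ladder. That estimate requires a genuinely different input — e.g., the Taylor-with-integral-remainder identity
\begin{equation*}
D_if(x)=\frac{f(x+he_i)-f(x)}{h}-\frac{1}{h}\int_0^h(h-t)\,D_i^2 f(x+te_i)\,\mathrm{d}t,
\end{equation*}
which gives $\|Df\|_0\leqslant 2h^{-1}\|f\|_0+\tfrac{h}{2}\|D^2f\|_0$; the full family of integer-order Landau bounds then follows by a self-contained iteration of this estimate (not of the Hölder base case). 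Once you insert such an argument for the integer levels, the rest of your Step 1 — applying the fractional base case to $D^Mf$ and feeding in the Landau bounds for the intermediate $\|D^kf\|_0$ — closes the induction correctly, and the bookkeeping you flag as the ``only delicate piece'' does indeed go through.
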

	\section{Inverse divergence operator}\label{Inverse divergence operator}
	In this part, we introduce the inverse divergence operators which is originally defined in \cite{DS13}.
    \begin{df}[Leray projection]
	Let $\upsilon\in C^\infty(\T^3;\R^3)$ be a smooth vector field. Let 
	\begin{equation}
		\cQ\upsilon:=\nabla\psi+\fint_{\T^3}\upsilon\label{Qv},
	\end{equation}
	where $\psi\in C^\infty(\T^3)$ is the solution of
	$$\Delta\psi=\Div\ \upsilon,$$
	with $\fint_{\T^3}\psi=0$. Furthermore, let $\mathcal{P}=\cI-\cQ$ be the Leray projection onto divergence-free fields with zero average.
    \end{df}
    \begin{df}[Inverse divergence]\label{def of R}
	Let $\upsilon\in C^\infty(\T^3;\R^3)$ be a smooth vector field. We can define $\cR\upsilon$ to be the matrix-valued periodic function
	\begin{equation}
		\cR\upsilon:=\frac{1}{4}(\nabla\mathcal{P}u+(\nabla\mathcal{P}u)^{\top})+\frac{3}{4}(\nabla u+(\nabla u)^{\top})-\frac{1}{2}(\Div u)\Id,\label{def of R operator}
	\end{equation}
	where $u\in C^\infty(\T^3;\R^3)$ is the solution of
	$$\Delta u=\upsilon-\fint_{\T^3}\upsilon,$$
	with $\fint_{\T^3}u=0$.\label{inverse operator 1}
   \end{df}
   \begin{lm}\label{choose of gamma}
	For any $\upsilon\in C^\infty(\T^3;\R^3)$ we have\\
	$(1)\cR\upsilon(x)$ is a symmetric trace-free matrix for each $x\in\T^3$,\\
	$(2)\Div\cR\upsilon=\upsilon-\fint_{\T^3}\upsilon$.\label{prop of R operator}
   \end{lm}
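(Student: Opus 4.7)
\medskip

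\noindent\textbf{Proof proposal for Lemma~\ref{prop of R operator}.} The plan is to verify the two claimed properties by direct computation from Definition~\ref{def of R}, exploiting only two basic facts: that the Leray projection $\mathcal{P}u$ is divergence-free, and that $u$ is chosen so that $\Delta u = v - \fint_{\T^3} v$ with zero mean.

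For property (1), symmetry is immediate because each of the three summands in \eqref{def of R operator} is manifestly symmetric: the first two are of the form $\tfrac12(A+A^\top)$ and the third is a multiple of the identity. For the trace, I will compute each piece separately. Since $\tr(\nabla \mathcal{P}u + (\nabla \mathcal{P}u)^\top) = 2\,\Div(\mathcal{P}u) = 0$ by definition of the Leray projection, the first term contributes nothing. The remaining two terms contribute $\tfrac34 \cdot 2\Div u - \tfrac12 \cdot 3\Div u = 0$, giving $\tr(\mathcal{R}v)=0$.

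For property (2), I will take $\partial_j$ of each component of $(\mathcal{R}v)_{ij}$ with the convention $(\nabla w)_{ij} = \partial_j w_i$. A direct index calculation yields
\begin{align*}
\partial_j(\nabla \mathcal{P}u)_{ij} &= \Delta(\mathcal{P}u)_i, & \partial_j(\nabla \mathcal{P}u)^\top_{ij} &= \partial_i\Div(\mathcal{P}u) = 0, \\
\partial_j(\nabla u)_{ij} &= \Delta u_i, & \partial_j(\nabla u)^\top_{ij} &= \partial_i \Div u, \\
\partial_j\bigl((\Div u)\,\Id\bigr)_{ij} &= \partial_i \Div u. &&
\end{align*}
Combining these with the weights $\tfrac14, \tfrac34, -\tfrac12$ gives
\[
(\Div \mathcal{R}v)_i = \tfrac14 \Delta(\mathcal{P}u)_i + \tfrac34 \Delta u_i + \tfrac14 \partial_i \Div u.
\]
Using $\mathcal{P}u = u - \nabla \psi - \fint_{\T^3} u$ with $\Delta \psi = \Div u$, I have $\Delta(\mathcal{P}u)_i = \Delta u_i - \partial_i \Div u$. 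Substituting this, the $\partial_i \Div u$ contributions cancel and I obtain $(\Div \mathcal{R}v)_i = \Delta u_i = v_i - \fint_{\T^3} v_i$, as desired.

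The proof involves no real obstacle; the only subtle point is keeping track of transposes under the index convention and noticing that the precise coefficients $\tfrac14,\tfrac34,-\tfrac12$ in \eqref{def of R operator} are exactly what is needed both to kill the trace and to produce the cancellation of $\partial_i \Div u$ in the divergence computation. No estimates or analytic tools beyond the definitions of $\mathcal{Q}$, $\mathcal{P}$, and $\mathcal{R}$ are required.
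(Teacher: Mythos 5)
Your proposal is correct and follows the same route as the paper: both verify symmetry by inspection, use $\Div\cP u=0$ to kill the first term's trace, and compute $\Div\cR\upsilon$ directly, reducing it to $\Delta u = \upsilon - \fint\upsilon$. The paper compresses the divergence computation into one line and leaves the substitution $\Delta\cP u = \Delta u - \nabla\Div u$ implicit; you spell out the index bookkeeping and that substitution, but it is the same argument.
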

   \begin{proof}[Proof of lemma \ref{choose of gamma}]
	It is obvious that $\cR\upsilon$ is symmetric. Since $\cP\upsilon$ is divergence-free, we have
	\begin{align*}
	&\tr(\cR\upsilon)=\frac{3}{4}(2\Div u)-\frac{3}{2}\Div u=0,\\ &\Div(\cR\upsilon)=\frac{1}{4}\Delta(\cP u)+\frac{3}{4}(\nabla\Div u+\Delta u)-\frac{1}{2}\nabla\Div u=\Delta u=\upsilon-\fint_{\T^3}\upsilon.\qedhere
	\end{align*}
    \end{proof}
Combining it with the result that any $\upsilon\in C^0(\T^3;\R^3)$ with $\upsilon=\Div R(x)$ for some matrix function $R(x)$ always satisfies $\fint_{\T^3}\upsilon=0$, we can conclude that $\fint_{\T^3}\upsilon=0$ if and only if there exists a trace-free and symmetric matrix function $R(x)$ on $\T^3$ such that $\upsilon=\Div R$.
\begin{pp}
	For any $\alpha\in(0,1)$ and any $N\in\N$, there exists a constant $C(\alpha,N)$ with the following properties.
	For the operators $\cQ,\cP,\cR$ defined above, we have
	\begin{equation}
		\begin{aligned}
			&\left\|\cQ\upsilon\right\|_{N+\alpha}\leqslant C(N,\alpha)\left\|\upsilon\right\|_{N+\alpha},\\
			&\left\|\mathcal{P}\upsilon\right\|_{N+\alpha}\leqslant C(N,\alpha)\left\|\upsilon\right\|_{N+\alpha},\\
			&\left\|\cR\upsilon\right\|_{N+1+\alpha}\leqslant C(N,\alpha)\left\|\upsilon\right\|_{N+\alpha},\\
			&\left\|\cR(\Div A)\right\|_{N+\alpha}\leqslant C(N,\alpha)\left\|A\right\|_{N+\alpha},\\
			&\left\|\cR\cQ(\Div A)\right\|_{N+\alpha}\leqslant C(N,\alpha)\left\|A\right\|_{N+\alpha}.
		\end{aligned}\label{eest}
	\end{equation}
\end{pp}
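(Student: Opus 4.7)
\medskip

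\textbf{Proof proposal.} My plan is to reduce all five estimates to two classical facts on $\T^3$: (i) Schauder elliptic regularity for $\Delta$, which gives $\|u\|_{N+2+\alpha}\leqslant C(N,\alpha)\|f\|_{N+\alpha}$ whenever $\Delta u=f$ with $\fint u=0$; and (ii) boundedness of zeroth-order Fourier multipliers (Mikhlin/Calder\'on--Zygmund operators) on $C^{N+\alpha}(\T^3)$ for $\alpha\in(0,1)$, i.e.\ $\|R_iR_j f\|_{N+\alpha}\leqslant C(N,\alpha)\|f\|_{N+\alpha}$, where $R_i=\partial_i(-\Delta)^{-1/2}$ is the Riesz transform. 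Once these two inputs are in hand, the estimates are direct consequences of the explicit representations of $\cQ$, $\cP$, $\cR$.

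For the first two estimates, I would write $\cQ v-\fint v=\nabla\psi$ with $\Delta\psi=\Div v$, so that componentwise $\partial_i\cQ v = \partial_i\partial_j\Delta^{-1}v_j=-R_iR_j v_j$ (modulo the zero-mode). Thus $\nabla\cQ$ is a matrix of double Riesz transforms, which are bounded on $C^{N+\alpha}$ by (ii); together with the trivial bound on the mean this yields $\|\cQ v\|_{N+\alpha}\leqslant C\|v\|_{N+\alpha}$, and hence the same bound for $\cP=\mathrm{Id}-\cQ$.

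The third estimate is handled by applying (i) to $\Delta u=v-\fint v$ to get $\|u\|_{N+2+\alpha}\leqslant C\|v\|_{N+\alpha}$, then bounding the three pieces in \eqref{def of R operator} separately: $\|\nabla u\|_{N+1+\alpha}+\|\Div u\,\Id\|_{N+1+\alpha}\leqslant C\|u\|_{N+2+\alpha}$, while for the $\nabla\cP u$ term I use the first estimate to get $\|\cP u\|_{N+2+\alpha}\leqslant C\|u\|_{N+2+\alpha}$ and then differentiate. For the fourth estimate, I substitute $v=\Div A$ in Definition~\ref{def of R}; since $\fint\Div A=0$, we obtain $u=\Delta^{-1}\Div A$, so each entry of $\nabla u$ equals $\partial_i\partial_j\Delta^{-1}A_{jk}=-R_iR_j A_{jk}$, a double Riesz transform of $A$. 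Hence $\|\nabla u\|_{N+\alpha}+\|\Div u\|_{N+\alpha}\leqslant C\|A\|_{N+\alpha}$ by (ii), and the $\nabla\cP u$ piece is estimated by the same argument after composing with $\cP$. The fifth estimate follows by the same kind of symbol computation: $\cQ\Div A$ has components of the form $\partial_i\partial_j\partial_k\Delta^{-1}A_{jk}$, so $\cR\cQ\Div A$ can be written entirely as compositions of Riesz transforms acting on $A$, again a zeroth-order CZ operator, and (ii) closes the estimate.

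The main (and only genuine) technical input is fact (ii), the Schauder-type bound for zeroth-order Fourier multipliers on H\"older classes on $\T^3$. This is standard but requires $\alpha\in(0,1)$ to be non-integer; the usual proof goes through a Littlewood--Paley decomposition $f=\sum_j P_{2^j}f$, combined with the Bernstein-type inequality and the fact that CZ operators commute with $P_{2^j}$ and are uniformly bounded on each dyadic block. Since all five estimates are instances of this single principle applied to the explicit formulas defining $\cQ$, $\cP$, $\cR$, the proof amounts to carefully unpacking the symbols, which I would present as five short computations after stating (i) and (ii).
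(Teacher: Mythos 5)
Your proposal is correct and takes essentially the same route as the paper: the paper simply records the two Schauder bounds $\|\phi\|_{N+2+\alpha}\lesssim\|f\|_{N+\alpha}$ for $\Delta\phi=f$ and $\|\psi\|_{N+1+\alpha}\lesssim\|F\|_{N+\alpha}$ for $\Delta\psi=\Div F$ (the latter being exactly your fact (ii), the H\"older boundedness of the zeroth-order operator $\nabla\Delta^{-1}\Div$, i.e.\ double Riesz transforms) and then states that these yield \eqref{eest} without further unpacking. You supply that unpacking explicitly by reading off the symbols of $\cQ$, $\cP$, $\cR$; the only slip is a notational one in the first estimate, where you write $\partial_i\cQ v$ for $(\cQ v)_i-\big(\fint v\big)_i=\partial_i\Delta^{-1}\partial_j v_j$, but the intent is clear and the argument goes through.
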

\begin{proof}
By the standard Schauder estimates,  for any $\phi,\psi:\T^3\rightarrow\R$ with
\begin{align*}
	\left\{\begin{array} { l } 
		{ \Delta \phi = f , } \\
		{ \fint_{\T^3}\phi=0, }
	\end{array} \quad \text { and } \quad \left\{\begin{array}{l}
		\Delta \psi=\operatorname{div} F, \\
		\fint_{\T^3}\psi=0,
	\end{array}\right.\right.
\end{align*}
	we have $
	\left\|\phi\right\|_{N+2+\alpha}\leqslant C(N,\alpha)\left\|f\right\|_{N+\alpha},\left\|\psi\right\|_{N+1+\alpha}\leqslant C(N,\alpha)\left\|F\right\|_{N+\alpha},$ which yields \eqref{eest}
\end{proof} 
For $f\in C^\infty(\T^3,\R)$, we can also define the corresponding inverse divergence operator. For simplicity, with a little abuse of notations, we will use the same symbol $\cR$ as Definition \ref{def of R}.
\begin{df}
Let $f\in C^\infty(\T^3,\R)$ be a smooth vector field with $\fint_{\T^3}f=0$. We can define $\cR f$, with the property $\Div\cR f=f$, to be the vector-valued periodic function
	\begin{equation}
			\cR f:=u,
	\end{equation}
where  $u\in C^\infty(\T^3;\R^3)$ is the solution of 
$$\left\lbrace	\begin{aligned}
	&\Delta u=\nabla f,\\
	&\fint_{\T^3}u=0.
\end{aligned}\right.$$\label{inverse operator 2}
\end{df}
Similarly, we could get the estimates, that for any $\alpha\in(0,1)$ and any $N\in\N$, there exists a constant $C(N,\alpha)$ such that
	\begin{equation}
	\begin{aligned}
		&\left\|\cR f\right\|_{N+1+\alpha}\leqslant C(N,\alpha)\left\|f\right\|_{N+\alpha},\\
	\end{aligned}\label{eest2}
\end{equation}
for any $f\in C^\infty(\T^3,\R)$.
    \section{Estimates for transport equations}
In this section, we will recall some results on transport equations. The proof for the following estimates can be found in \cite{BDIS15}. 
\begin{lm}\cite[Proposition D.1]{BDIS15}.
	If f is the solution of the transport equation:
	\begin{equation}
		\left\lbrace 
		\begin{aligned}
		& \partial_tf+\upsilon\cdot\nabla f=g,\\
		&f|_{t_0}=f_0,
		\end{aligned}\right.
	\end{equation}
where $\upsilon=\upsilon(t,x)$ is a given smooth vector field. We have the following estimates
\begin{equation}
	\left\|f(t)\right\|_0\leqslant\left\|f_0\right\|_0+\int_{t_0}^{t}\left\|g(\tau,\cdot)\right\|_0\rd \tau,\label{transport equation 0}
\end{equation}
\begin{equation}
	[f(t)]_1\leqslant[f_0]_1e^{(t-t_0)[\upsilon]_1}+\int_{t_0}^{t}e^{(t-\tau)[\upsilon]_1}[g(\tau,\cdot)]_1\rd \tau,\label{transport equation 1}
\end{equation}
Moreover ,we can achieve that there exists a constant $C_N$
\begin{equation}
	\begin{aligned}
\left[f(t)\right]_{N}&\leqslant([f_0]_{N}+C_N(t-t_0)[\upsilon]_{N}[f_0]_1)e^{C_N(t-t_0)[\upsilon]_1}\\
&\quad+\int_{t_0}^{t}e^{C_N(t-t_0)[\upsilon]_1}([g(\tau,\cdot)]_{N}+C_N(t-\tau)[\upsilon]_{N}[g(\tau,\cdot)]_1)\rd \tau.
	\end{aligned}\label{transport equation N}
\end{equation}
for any $N\geqslant2$. Define $\Phi(t,\cdot)$ to be the inverse of the flux $X$ of $\upsilon$ starting at $t_0$ as identity (i.e.,$\frac{d}{dt}X=\upsilon(t,X)$ and $X(t_0,x)=x$). Under the same assumption as above,
\begin{equation}
	\left\|\nabla\Phi(t,\cdot)-\Id\right\|_0\leqslant e^{(t-t_0)[\upsilon]_1}-1,\label{DPhi-Id}
\end{equation}
\begin{equation}
	[\Phi(t,\cdot)]_N\leqslant C(t-t_0)[\upsilon]_Ne^{(t-t_0)[\upsilon]_1}.\label{PhiN}
\end{equation}
\end{lm}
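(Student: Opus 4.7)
My plan is to prove all five inequalities by the method of characteristics applied to the flow $X(t,x)$ of $\upsilon$ starting at time $t_0$, with $\Phi(t,\cdot)=X(t,\cdot)^{-1}$. The key identity is that $f(t,X(t,x))=f_0(x)+\int_{t_0}^{t}g(\tau,X(\tau,x))\rd\tau$, from which \eqref{transport equation 0} is immediate by taking the supremum. This is the base case for everything else.

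Next, I would attack \eqref{transport equation 1} by differentiating the equation in $x$. Setting $h=\nabla f$, the commutator $[\partial_i,\upsilon\cdot\nabla]=\partial_i\upsilon\cdot\nabla$ gives the transported equation $\partial_t h+\upsilon\cdot\nabla h=-(\nabla\upsilon)^\top h+\nabla g$. Evaluating along a characteristic and applying the sup-norm bound to the resulting ODE $\tfrac{d}{d\tau}|h(\tau,X(\tau,x))|\leqslant[\upsilon]_1\,|h|+[g(\tau,\cdot)]_1$, Gronwall's inequality delivers exactly \eqref{transport equation 1}. The flow bound \eqref{DPhi-Id} follows from the same argument applied to $w(t,x):=X(t,x)-x$, which satisfies $\partial_t w=\upsilon(t,x+w)$ with $w(t_0,\cdot)=0$, hence $\nabla X-\Id$ solves a linear equation with forcing of size $[\upsilon]_1$; inverting and using $\nabla\Phi\circ X=(\nabla X)^{-1}$ transfers the estimate to $\Phi$.

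For \eqref{transport equation N}, I would proceed by induction on $N$. Applying $D^k$ with $|k|=N$ to the equation produces $\partial_t(D^k f)+\upsilon\cdot\nabla(D^k f)=D^k g+\text{Comm}$, where the commutator $\text{Comm}=-[D^k,\upsilon\cdot\nabla]f$ is a sum of terms $D^{k_1}\upsilon\cdot D^{k_2}\nabla f$ with $|k_1|+|k_2|=N$, $|k_1|\geqslant 1$. The critical observation is to isolate the top-order term $D^k\upsilon\cdot\nabla f$ (which is why $[\upsilon]_N[f_0]_1$ appears with the $t-t_0$ factor in \eqref{transport equation N}) from the intermediate terms, which are handled by interpolation $[f]_j\lesssim[f]_1^{1-j/N}[f]_N^{j/N}$ and absorbed into the $e^{C_N(t-t_0)[\upsilon]_1}$ factor via a Gronwall-type argument. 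The estimate \eqref{PhiN} for the flow map is then derived by applying \eqref{transport equation N} to the components of $\Phi$ viewed as solutions of the backward transport equation, using the already-established \eqref{DPhi-Id} as the $N=1$ seed.

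The main technical obstacle will be bookkeeping the combinatorial explosion of commutator terms at level $N$: making sure that only one power of $[\upsilon]_N$ appears linearly in the final bound (rather than a product of high-order seminorms) requires the interpolation step to be applied with care, trading intermediate-order norms $[f]_j$ and $[\upsilon]_j$ for the endpoint seminorms $[f]_1$, $[\upsilon]_1$, $[f]_N$, $[\upsilon]_N$. Everything else is routine characteristic-method calculus, and the whole lemma is standard (indeed, it is cited verbatim from \cite[Proposition~D.1]{BDIS15}).
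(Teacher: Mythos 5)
The paper itself does not prove this lemma; it cites it verbatim as Proposition~D.1 of \cite{BDIS15}. Your sketch reproduces the standard argument from that reference: characteristics plus Gronwall for \eqref{transport equation 0}--\eqref{transport equation 1} and \eqref{DPhi-Id}, then differentiation of the equation, isolation of the top-order commutator term $D^k\upsilon\cdot\nabla f$, and convexity interpolation of the intermediate seminorms $[f]_j$, $[\upsilon]_j$ to obtain \eqref{transport equation N}, with \eqref{PhiN} following by feeding $\Phi$ (which solves the homogeneous transport equation with $f_0(x)=x$, so $[f_0]_1=1$ and $[f_0]_N=0$ for $N\geqslant 2$) back into \eqref{transport equation N}. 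This is exactly the approach of the cited proof, so there is nothing to compare beyond confirming your plan is correct.
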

	\section{Some technical lemmas}
	In this section, we introduce some lemmas given in \cite{BDSV19,DK22,GK22}. The proof for the following two lemmas can be found in \cite[Appendix]{BDSV19}.
	\begin{lm}\cite[Proposition A.1]{BDSV19}
	Suppose $F:\Omega\rightarrow\R$ and $\Psi:\R^n\rightarrow\Omega$ are smooth functions for some $\Omega\subset\R^m$. Then, for each $N\in\Z_+$, we have 
	\begin{equation}
		\begin{aligned}
			&\left\|\nabla^N(F\circ\Psi)\right\|_0\lesssim\left\|\nabla F\right\|_0\left\|\nabla\Psi\right\|_{N-1}+\left\|\nabla F\right\|_{N-1}\left\|\Psi\right\|_0^{N-1}\left\|\Psi\right\|_N,\\
			&\left\|\nabla^N(F\circ\Psi)\right\|_0\lesssim\left\|\nabla F\right\|_0\left\|\nabla\Psi\right\|_{N-1}+\left\|\nabla F\right\|_{N-1}\left\|\nabla\Psi\right\|_0^{N},
		\end{aligned}\label{est on Holider norm}
	\end{equation}
where the implicit constants in the inequalities depends only on $n, M,$ and $N$.\label{$Holider norm of composition}
	\end{lm}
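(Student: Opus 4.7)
The plan is to reduce the estimate to Faà di Bruno's formula for the iterated chain rule, and then to close up the term-by-term bounds by one-dimensional Gagliardo--Nirenberg interpolation followed by a single application of Young's inequality. Concretely, I would begin by writing
$$\nabla^N(F\circ\Psi)=\sum_{k_1+2k_2+\cdots+Nk_N=N} c_{k_1,\ldots,k_N}\,(\nabla^{k_1+\cdots+k_N}F)(\Psi)\prod_{j=1}^{N}(\nabla^j\Psi)^{\otimes k_j},$$
with combinatorial coefficients $c_{k_1,\ldots,k_N}$ coming from Faà di Bruno's formula, where the product is interpreted in the appropriate symmetric tensor sense. Setting $k:=k_1+\cdots+k_N$, the constraint forces $1\leqslant k\leqslant N$, and the two extreme indices already produce the right-hand side one is aiming for: $k=1$ forces $k_N=1$ and gives a term bounded by $\|\nabla F\|_0\|\nabla^N\Psi\|_0$, while $k=N$ forces $k_1=N$ and gives a term bounded by $\|\nabla^N F\|_0\|\nabla\Psi\|_0^N$.

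For the intermediate indices $1<k<N$, I would apply the standard interpolation $\|\nabla^j g\|_0\lesssim\|\nabla g\|_0^{(N-j)/(N-1)}\|\nabla^N g\|_0^{(j-1)/(N-1)}$ to both $g=F$ (with $j=k$) and $g=\Psi$ (for each $j$ appearing with $k_j\neq 0$). Using the two identities $\sum_j jk_j=N$ and $\sum_j k_j=k$, the $\Psi$-exponent bookkeeping collapses exactly: the total power of $\|\nabla\Psi\|_0$ is $\sum_j k_j(N-j)/(N-1)=N(k-1)/(N-1)$ and the total power of $\|\nabla^N\Psi\|_0$ is $\sum_j k_j(j-1)/(N-1)=(N-k)/(N-1)$. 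Combining with the $F$ factor,
$$\|\nabla^k F\|_0\prod_j\|\nabla^j\Psi\|_0^{k_j}\lesssim(\|\nabla F\|_0\|\nabla^N\Psi\|_0)^{(N-k)/(N-1)}(\|\nabla^N F\|_0\|\nabla\Psi\|_0^N)^{(k-1)/(N-1)},$$
and Young's inequality with conjugate exponents $(N-1)/(N-k)$ and $(N-1)/(k-1)$ produces $\|\nabla F\|_0\|\nabla^N\Psi\|_0+\|\nabla^N F\|_0\|\nabla\Psi\|_0^N$, which is controlled by the right-hand side of the second displayed inequality via $\|\nabla^N\Psi\|_0\leqslant\|\nabla\Psi\|_{N-1}$ and $\|\nabla^N F\|_0\leqslant\|\nabla F\|_{N-1}$.

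The first inequality would then be deduced from the second. Interpolating $\|\nabla\Psi\|_0\lesssim\|\Psi\|_0^{(N-1)/N}\|\nabla^N\Psi\|_0^{1/N}$ and raising to the $N$-th power gives $\|\nabla\Psi\|_0^N\lesssim\|\Psi\|_0^{N-1}\|\nabla^N\Psi\|_0\leqslant\|\Psi\|_0^{N-1}\|\Psi\|_N$, which converts the $\|\nabla F\|_{N-1}\|\nabla\Psi\|_0^N$ term into the desired $\|\nabla F\|_{N-1}\|\Psi\|_0^{N-1}\|\Psi\|_N$; the remaining $\|\nabla F\|_0\|\nabla\Psi\|_{N-1}$ term is the same in both inequalities.

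The one genuine obstacle in this argument is purely bookkeeping rather than analytic: I have to verify that the exponent arithmetic in the Gagliardo--Nirenberg step balances correctly, which hinges on the pair of identities $\sum_j jk_j=N$ and $\sum_j k_j=k$ together with the precise form of the interpolation exponent $(j-1)/(N-1)$. Once this algebraic check is performed, the tensor-valued structure in Faà di Bruno introduces only a dimension-dependent combinatorial factor that is absorbed into the implicit constant $C(n,m,N)$, and no further analytic subtlety arises.
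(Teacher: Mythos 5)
Your proposal is correct and is essentially the same argument as the one in \cite[Proposition~A.1]{BDSV19}, which the paper cites without reproducing: Faà di Bruno's formula, convexity/Gagliardo--Nirenberg interpolation $\|\nabla^j g\|_0\lesssim\|\nabla g\|_0^{(N-j)/(N-1)}\|\nabla^N g\|_0^{(j-1)/(N-1)}$ applied to both $F$ and $\Psi$, and Young's inequality to split the two extremes; your exponent bookkeeping ($N(k-1)/(N-1)$ and $(N-k)/(N-1)$) and the derivation of the first bound from the second via $\|\nabla\Psi\|_0^N\lesssim\|\Psi\|_0^{N-1}\|\Psi\|_N$ are all in order. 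The only point you gloss over, as does the cited reference, is that interpolating $F$ on $\Omega$ implicitly requires $\Omega$ to be a domain where the interpolation inequality holds (e.g.\ convex, or a neighborhood of the image of $\Psi$), which is how the result is applied in practice.
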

\begin{lm}\cite[Proposition C.2]{BDSV19}.\label{est on int operator}
	Let $N\geqslant1$. Suppose that $a\in C^\infty(\T^3)$ and $\xi\in C^{\infty}(\T^3;\R^3)$ satisfies
	\begin{equation}
		\frac{1}{C}\leqslant|\nabla\xi|\leqslant C
	\end{equation}
		for some constant $C>1$. Then, we have 
		\begin{equation}\label{est on Rae^ikxi}
			\Bigg|\int_{\T^3}a(x)e^{ik\cdot\xi}\rd x\Bigg|\lesssim\frac{\left\|a\right\|_N+\left\|a\right\|_0\left\|\nabla\xi\right\|_N}{|k|^N},
		\end{equation}
	and for the operator $\cR$ defined in Definition \ref{def of R operator}, we have
	\begin{equation}
		\left\|\cR\left(a(x) e^{i k \cdot \xi}\right)\right\|_{\alpha} \lesssim \frac{\left\|a\right\|_{0}}{|k|^{1-\alpha}}+\frac{\left\|a\right\|_{N+\alpha}+\left\|a\right\|_{0}\left\|\xi\right\|_{N+\alpha}}{|k|^{N-\alpha}},
	\end{equation}
	where the implicit constants in the inequality is depending on $C$ and $N$, but independent of $k$.
\end{lm}
To obtain the commutator estimate, we present the following lemmas, which generalize Lemmas A.3, A.4 and A.6 from \cite{DK22,GK22}. In the following lemmas, we denote $\left\|\cdot\right\|_N=\left\|\cdot\right\|_{C^0([c,d];C^N(\T^3))}$ and  use the notation $\cI_\ell$ to represent the interval $\cI_\ell=[c-\ell,d+\ell]$.
\begin{lm}
Let $f$ and $g$ be in $C^\infty(\cI_\ell\times\T^3)$. Then, for each $N,r\geqslant0$, the following holds,
	\begin{align}
	\left\|\PL f \PL g-\PL(fg)\right\|_N&\lesssim_N\ell^{2-N}\left\|f\right\|_1\left\|g\right\|_1,\label{est on commutator 0}\\
	\left\|\partial_{t}^r(\UL f \UL g-\UL(fg))\right\|_0&\lesssim_r\ell^{2-r}\left\|\partial_{t}f\right\|_{C^0(\cI_\ell;C^0(\T^3))}\left\|\partial_tg\right\|_{C^0(\cI_\ell;C^0(\T^3))}.\label{est on time commutator 0}
	\end{align} 
If we set $f_\ell=\UL\PL f,g_\ell=\UL\PL g$ and $(fg)_\ell=\UL\PL (fg)$ . Then, for each $N,r\geqslant 0$, the following holds,
\begin{equation}\label{est on mollification commutator 0}
	\begin{aligned}
		\left\|\partial_{t}^r(f_\ell g_\ell-(fg)_\ell)\right\|_{N}&\lesssim_{N,r} \ell^{2-N-r}\left\|\partial_{t}f\right\|_{C^0(\cI_\ell;C^0(\T^3))}\left\|\partial_{t}g\right\|_{C^0(\cI_\ell;C^0(\T^3))}\\
		&\quad+\ell^{2-N-r}\left\|f\right\|_{C^0(\cI_\ell;C^1(\T^3))}\left\|g\right\|_{C^0(\cI_\ell;C^1(\T^3))}.
	\end{aligned}
\end{equation} 
\end{lm}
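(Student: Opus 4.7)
\medskip

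\noindent\textbf{Proof proposal.} The plan is to prove the three estimates by the classical Constantin--E--Titi commutator identity, treating first the pure spatial case, then the pure temporal case, and finally combining them through a telescoping decomposition.

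For the spatial commutator \eqref{est on commutator 0}, I would start from the double convolution representation
\[
\PL f(x)\PL g(x) - \PL(fg)(x) = \iint \check\phi_\ell(y)\check\phi_\ell(z)\, f(x-y)\bigl[g(x-z)-g(x-y)\bigr]\,\rd y\,\rd z,
\]
which follows from $\int\check\phi_\ell = 1$. Because $\iint \check\phi_\ell(y)\check\phi_\ell(z)[g(x-z)-g(x-y)]\,\rd y\,\rd z$ vanishes by symmetry, I may replace $f(x-y)$ by $f(x-y)-f(x)$; then the two factors are bounded by $\|\nabla f\|_0|y|$ and $\|\nabla g\|_0(|y|+|z|)$ respectively, and the moment bound \eqref{est on space mollification function} yields the pointwise estimate $\lesssim \ell^2\|f\|_1\|g\|_1$. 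To promote this to an $\|\cdot\|_N$ bound, I use that both $\PL f\,\PL g$ and $\PL(fg)$ have spatial frequencies $\lesssim \ell^{-1}$, so Bernstein's inequality gives $\ell^{-N}$ extra.

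For the temporal commutator \eqref{est on time commutator 0}, the identical symmetrization trick, applied to the time variable and using \eqref{est on time mollification function}, yields the pointwise bound $\lesssim \ell^2\|\partial_t f\|_0\|\partial_t g\|_0$. Since $\UL f\,\UL g - \UL(fg)$ has temporal Fourier support in $|\zeta|\lesssim \ell^{-1}$, the temporal version of Bernstein converts this into the $\partial_t^r$ estimate with factor $\ell^{-r}$ (alternatively, one differentiates the kernels $\check\phi_\ell^t$ directly and uses $\int|\partial_t^r\check\phi_\ell^t|\lesssim\ell^{-r}$).

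For the combined estimate \eqref{est on mollification commutator 0}, I would telescope:
\[
f_\ell g_\ell - (fg)_\ell = \UL\bigl[\PL f\,\PL g - \PL(fg)\bigr] + \bigl[\UL(\PL f)\,\UL(\PL g) - \UL(\PL f\cdot \PL g)\bigr].
\]
The first bracket is handled by \eqref{est on commutator 0}, giving $\ell^{2-N}\|f\|_1\|g\|_1$, while $\UL$ and $\partial_t^r$ act on the time-frequency-localized output to give the factor $\ell^{-r}$. The second bracket is handled by \eqref{est on time commutator 0} applied to $\PL f,\PL g$, giving $\ell^{2-r}\|\partial_t f\|_0\|\partial_t g\|_0$, and Bernstein in space (the spatial frequency is $\lesssim \ell^{-1}$) provides the factor $\ell^{-N}$. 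Adding the two contributions recovers the stated bound.

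The argument is essentially routine; the only point of care is bookkeeping the domain of definition, namely verifying that when we take $\|\cdot\|_{C^0([c,d];C^N(\T^3))}$ of a mollified quantity, the unmollified norms that appear live on the enlarged interval $\cI_\ell$, which is exactly what the statement assumes. There is no substantial obstacle beyond this standard commutator calculation.
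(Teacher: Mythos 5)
Your proof is correct and takes essentially the same route as the paper's: establish the pure-space and pure-time commutator bounds, telescope the mixed case through $\UL(\PL f\,\PL g)$, and use Bernstein to lift $N=r=0$ to general $N,r$. The only cosmetic difference is in the identity used for the CET-type estimate: the paper writes the spatial commutator as the single-convolution double-difference $-\int (f(x)-f(x-y))(g(x)-g(x-y))\check\phi_\ell(y)\,\rd y$ plus the high-high correction $(f-\PL f)(g-\PL g)$, whereas you use a double-convolution representation and the symmetry trick to subtract $f(x)$ from $f(x-y)$; both yield the same $\ell^2$ bound from the second-moment estimate \eqref{est on space mollification function}.
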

\begin{proof}
	Since the expression that we need to estimate is localized in frequency, by
	Bernstein's inequality it suffices to prove the case $N = 0,r = 0$. Recall the definition of $\PL$ and $\UL$, we could calculate
	\begin{align*}
		(\PL f\PL g-\PL(fg))(t,x)&=-\int_{\R^3}(f(t,x)-f(t,x-y))(g(t,x)-g(t,x-y))\check{\phi_{\ell}}(y)\rd y\\
		&\quad+(f-\PL f)(g-\PL g),
	\end{align*}
	\eqref{est on commutator 0} follows from 
	$$
	\begin{aligned}
		&|f(t,x)-f(t,x-y)|\leqslant|y|\left\|f\right\|_1, \quad|g(t,x)-g(t,x-y)|\leqslant|y|\left\|g\right\|_1,
	\end{aligned}
	$$
	and \eqref{est on space mollification function} with $k=2$.
	Similarly, we could calculate
	\begin{align*}
		(\UL f\UL g-\UL(fg))(t,x)&=-\int_{\R}(f(t,x)-f(t-\tau,x))(g(t,x)-g(t-\tau,x))\check{\phi}^t_\ell(\tau)\rd\tau\\
		&\quad+(f-\UL f)(g-\UL g),
	\end{align*}
	\eqref{est on time commutator 0} follows from
	$$
	\begin{aligned}
		&|f(t,x)-f(t-\tau,x)|\leqslant|\tau|\left\|\partial_{t}f\right\|_{C^0(\cI_\ell;C^0(\T^3))}, \quad|g(t,x)-g(t-\tau,x)|\leqslant|\tau|\left\|\partial_{t}g\right\|_{C^0(\cI_\ell;C^0(\T^3))},
	\end{aligned}
	$$
	and \eqref{est on time mollification function} with $k=2$. Finally, we use \eqref{est on commutator 0} and \eqref{est on time commutator 0} to obtain
	\begin{align*}
		\left\|\partial_{t}^r(f_\ell g_\ell-(fg)_\ell)\right\|_N
		&\leqslant\left\|\partial_{t}^r(f_\ell g_\ell-\UL(\PL f\PL g))\right\|_N+\left\|\partial_{t}^r(\UL(\PL f\PL g)-\UL\PL(fg))\right\|_N\\
		&\lesssim_{N,r}\ell^{-N}\left\|\partial_{t}^r(f_\ell g_\ell-\UL(\PL f\PL g))\right\|_0+\ell^{-r}\left\|\UL(\PL f\PL g)-\UL\PL(fg)\right\|_N\\
		&\lesssim_{N,r} \ell^{2-N-r}\left\|\partial_{t}f\right\|_{C^0(\cI_\ell;C^0(\T^3))}\left\|\partial_{t}g\right\|_{C^0(\cI_\ell;C^0(\T^3))}+\ell^{2-N-r}\left\|f\right\|_{C^0(\cI_\ell;C^1(\T^3))}\left\|g\right\|_{C^0(\cI_\ell;C^1(\T^3))}.\qedhere
	\end{align*}
\end{proof}
\begin{lm}\label{lem of commutator 0}
	Let $f$ and $g$ be in $C^\infty(\cI_{\ell_1}\times\T^3) $ and set $f_{\ell_1,\ell_2}=\ULO\PLT f,g_{\ell_1,\ell_2}=\ULO\PLT g$ and $(fg)_{\ell_1,\ell_2}=\ULO\PLT (fg)$. Then, for each $N,r\geqslant0$, the following holds,
	\begin{align}
	\left\|[g,\ULO\PLT]f\right\|_0&\lesssim\left\|f\right\|_{C^0(\cI_{\ell_1};C^0(\T^3))}(\ell_1\left\|\partial_{t}g\right\|_{C^0(\cI_{\ell_1};C^0(\T^3))}+\ell_2\left\| g\right\|_{C^0(\cI_{\ell_1};C^1(\T^3))}),\label{est on two mollification commutator 1.1}\\
	\left\|\partial_{t}^r[g,\ULO\PLT]f\right\|_N
	&\lesssim_{N,r}\ell_2^{-N}\ell_1^{-r}(\ell_1+\ell_2)\left\|f\right\|_{C^0(\cI_{\ell_1};C^0(\T^3))}\left\|\partial_{t}^{\max\left\lbrace1,r \right\rbrace}g\right\|_{C^0(\cI_{\ell_1};C^{\max\left\lbrace1,N \right\rbrace} (\T^3))}.\label{est on two mollification commutator 1.2}
	\end{align}
In particular, for any smooth function $\upsilon,F\in C^\infty(\cI_{\ell_1}\times\T^3)$ and for each $N+r\geqslant1$, we have
\begin{align}
\left\|[\upsilon\cdot\nabla,\ULO\PLT]F\right\|_0&\lesssim\left\|\nabla F\right\|_{C^0(\cI_{\ell_1};C^0(\T^3))}(\ell_1\left\|\partial_{t} \upsilon\right\|_{C^0(\cI_{\ell_1};C^0(\T^3))}+\ell_2\left\|\upsilon\right\|_{C^0(\cI_{\ell_1};C^1(\T^3))}),\label{est on two mollification commutator 2.1}\\
\left\|\partial_{t}^r[\upsilon\cdot\nabla,\ULO\PLT]F\right\|_N
&\lesssim_{N,r}\ell_2^{-N}\ell_1^{-r}(\ell_1+\ell_2)\left\|\nabla F\right\|_{C^0(\cI_{\ell_1};C^0(\T^3))}\left\|\partial_{t}^{\max\left\lbrace1,r \right\rbrace}\upsilon\right\|_{C^0(\cI_{\ell_1};C^{\max\left\lbrace1,N \right\rbrace}(\T^3))}.\label{est on two mollification commutator 2.2}
\end{align}
Moreover, when $\upsilon$ has the space frequency localized to $\ell_2^{-1}$ and time frequency localized to $\ell_1^{-1}$, namely $\upsilon=\ULO\PLT\upsilon$,  using the Bernstein's inequality, we can get 
\begin{equation}
 \left\|\partial_{t}^r[\upsilon\cdot\nabla,\ULO\PLT]F\right\|_{N}\lesssim_{N,r}\ell_2^{-N}\ell_1^{-r}(\ell_1+\ell_2)\left\|\nabla F\right\|_{C^0(\cI_{\ell_1};C^0(\T^3))}(\left\|\partial_{t}\upsilon\right\|_{C^0(\cI_{\ell_1};C^0(\T^3))}+\left\| \upsilon\right\|_{C^0(\cI_{\ell_1};C^1(\T^3))}).\label{est on two mollification commutator 2.3}
\end{equation}
\end{lm}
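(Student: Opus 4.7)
\medskip

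\noindent\textbf{Proof proposal.} The plan is to reduce everything to the scalar commutator estimates \eqref{est on two mollification commutator 1.1}--\eqref{est on two mollification commutator 1.2}, which in turn are proved by an integral-kernel calculation patterned after \eqref{est on commutator 0}--\eqref{est on time commutator 0}. Writing
\begin{equation*}
\ULO\PLT f(t,x)=\int_{\R}\int_{\R^3} f(t-\tau,x-y)\,\check\phi_{\ell_2}(y)\,\check\phi^{\,t}_{\ell_1}(\tau)\,\rd y\,\rd\tau,
\end{equation*}
one sees immediately that
\begin{equation*}
[g,\ULO\PLT]f(t,x)=\int_{\R}\int_{\R^3}\bigl(g(t,x)-g(t-\tau,x-y)\bigr)\,f(t-\tau,x-y)\,\check\phi_{\ell_2}(y)\,\check\phi^{\,t}_{\ell_1}(\tau)\,\rd y\,\rd\tau.
\end{equation*}
Since $|g(t,x)-g(t-\tau,x-y)|\leqslant |\tau|\,\|\partial_t g\|_{C^0(\cI_{\ell_1};C^0(\T^3))}+|y|\,\|g\|_{C^0(\cI_{\ell_1};C^1(\T^3))}$, combining with the moment bounds \eqref{est on space mollification function} and \eqref{est on time mollification function} (with $k=1$) yields \eqref{est on two mollification commutator 1.1} after taking $\|\cdot\|_0$.

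For the higher derivative estimate \eqref{est on two mollification commutator 1.2}, I would first observe that the output $[g,\ULO\PLT]f$ has space frequency localized to $\lesssim\ell_2^{-1}$ and time frequency localized to $\lesssim\ell_1^{-1}$ (after commuting through a harmless derivative on $g$), so Bernstein's inequality reduces matters to the case $N=0,r=0$ applied to expressions involving $\partial_t g$ or $\nabla g$ rather than $g$ itself. More precisely, differentiate under the integral sign: spatial derivatives land on $\check\phi_{\ell_2}$ and can be integrated by parts onto $g$ (at cost $\ell_2^{-N}$ when they instead land on the kernel), and time derivatives similarly produce $\ell_1^{-r}$ factors, while at least one derivative must hit the difference $g(t,x)-g(t-\tau,x-y)$ to keep the smallness factor $(\ell_1+\ell_2)$. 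Absorbing everything and using $\max\{1,r\}$, $\max\{1,N\}$ to handle the degenerate endpoints produces \eqref{est on two mollification commutator 1.2}.

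The statements \eqref{est on two mollification commutator 2.1} and \eqref{est on two mollification commutator 2.2} then follow formally: commuting $\ULO\PLT$ with $\nabla$ gives
\begin{equation*}
[\upsilon\cdot\nabla,\ULO\PLT]F=\sum_{i=1}^{3}[\upsilon_i,\ULO\PLT]\partial_i F,
\end{equation*}
so one applies \eqref{est on two mollification commutator 1.1}--\eqref{est on two mollification commutator 1.2} with $g=\upsilon_i$ and $f=\partial_iF$. For \eqref{est on two mollification commutator 2.3}, the additional hypothesis $\upsilon=\ULO\PLT\upsilon$ allows Bernstein's inequality to bound $\|\partial_t^{\max\{1,r\}}\upsilon\|_{C^0;C^{\max\{1,N\}}}$ by $\ell_1^{1-\max\{1,r\}}\ell_2^{1-\max\{1,N\}}(\|\partial_t\upsilon\|_{C^0}+\|\upsilon\|_{C^0;C^1})$, which converts the mixed norm into the purely $C^0/C^1$ combination claimed.

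The only point that requires any care is keeping track of which term bears the smallness factor $(\ell_1+\ell_2)$ versus which terms are absorbed by the prefactor $\ell_1^{-r}\ell_2^{-N}$; in particular one has to be careful at the endpoints $N=0$ or $r=0$, where the $\max\{1,\cdot\}$ convention avoids a spurious loss. No other real obstacle is expected, as the computation is a direct kernel estimate.
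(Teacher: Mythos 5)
Your proposal follows the paper's argument in all essential respects: the kernel representation of $[g,\ULO\PLT]f$, the Lipschitz estimate $|g(t,x)-g(\tau,y)|\leqslant|t-\tau|\|\partial_t g\|_0+|x-y|\|g\|_{C^1}$ combined with the moment bounds \eqref{est on space mollification function}--\eqref{est on time mollification function}, direct differentiation under the integral sign for the higher-order case, the reduction of the divergence commutator to the scalar one via $g=\upsilon_i$, $f=\partial_iF$, and a Bernstein argument for \eqref{est on two mollification commutator 2.3}.

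Two points need repair. First, the opening remark that the commutator ``has space frequency localized to $\lesssim\ell_2^{-1}$ and time frequency localized to $\lesssim\ell_1^{-1}$'' so that Bernstein reduces matters to $N=r=0$ is not correct: in $g\cdot\ULO\PLT f-\ULO\PLT(gf)$ the outer factor $g(t,x)$ is unlocalized, and the kernel formula $\int(g(t,x)-g(\tau,y))f(\tau,y)\check\phi_{\ell_2}\check\phi^t_{\ell_1}\,\rd y\,\rd\tau$ is not a convolution. You do then switch to differentiating under the integral sign, which is the right move: the paper splits $\partial_t^r\nabla^N$ into the term where all derivatives land on the kernels (keeping the difference $g(t,x)-g(\tau,y)$ and hence the factor $\ell_1\|\partial_t g\|_0+\ell_2\|g\|_{C^1}$) plus a sum over $r_0+r_1=r$, $N_1+N_2=N$ where some derivatives hit $g(t,x)$, producing $\|\partial_t^{r_0}\nabla^{N_1}g\|_0\ell_2^{-N_2}\ell_1^{-r_1}$. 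The opening frequency-localization claim should simply be dropped. Second, the asserted Bernstein bound $\|\partial_t^{\max\{1,r\}}\upsilon\|_{C^{\max\{1,N\}}}\lesssim\ell_1^{1-\max\{1,r\}}\ell_2^{1-\max\{1,N\}}(\|\partial_t\upsilon\|_0+\|\upsilon\|_{C^1})$ is too strong as a single product: the prefactor gains one derivative in time \emph{and} one in space, while the right-hand side norm contains only a single derivative, so the claim is off by one derivative. The paper instead records the two separate inequalities $\|\partial_t^r\upsilon\|_{C^N}\lesssim\ell_2^{-N}\ell_1^{1-r}\|\partial_t\upsilon\|_0$ (for $r\geqslant1$) and $\|\partial_t^r\upsilon\|_{C^N}\lesssim\ell_2^{1-N}\ell_1^{-r}\|\upsilon\|_{C^1}$ (for $N\geqslant1$), and, in the sum over $(r_0,N_1)$ with $r_0+N_1\geqslant1$, applies whichever applies case by case. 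Both points are easily repaired and do not affect the overall correctness of the strategy.
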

	\begin{proof}
	We begin by  calculating
	\begin{align*}
	(f_{\ell_1,\ell_2} g-(fg)_{\ell_1,\ell_2})(t,x)&=\int_{\R}\int_{\R^3}f(\tau,y)(g(t,x)-g(\tau,y))\check{\phi_{\ell_2}}(x-y)\check{\phi}_{\ell_1}^t(t-\tau)\rd y\rd  \tau.
	\end{align*}	
	Then, \eqref{est on two mollification commutator 1.1} follows from
	$$
	\begin{aligned}
		&\quad|g(t,x)-g(\tau,y)|\leqslant|t-\tau|\left\|\partial_{t}g\right\|_{C^0(\cI_{\ell_1};C^0(\T^3))}+|x-y|\left\|g\right\|_{C^0(\cI_{\ell_1};C^1(\T^3))},\\
		&\int_{\R^3}|x-y| |\check{\phi_{\ell_2}}(x-y)| \rd  y \lesssim \ell_2,\qquad\int_{\R}|t-\tau|| \check{\phi}^t_{\ell_1}(t-\tau)| \rd  \tau \lesssim \ell_1.
	\end{aligned}
	$$ 
	Moreover, we could obtain
	\begin{equation}\notag
	\begin{aligned}
	&\quad|\partial_{t}^r\nabla^N(f_{\ell_1,\ell_2} g-(fg)_{\ell_1,\ell_2})(t,x)|\\
	&\leqslant\int_{\R}\int_{\R^3}|f(\tau,y)||g(t,x)-g(\tau,y)||\nabla^N\check{\phi_{\ell_2}}(x-y)\partial_{t}^r\check{\phi}^t_{\ell_1}(t-\tau)|\rd y \rd  \tau\\
	&\quad+\sum_{r_0+r_1=r}\sum_{N_1+N_2=N}C\int_{\R}\int_{\R^3}|f(\tau,y)||\partial_{t}^{r_0}\nabla^{N_1}g(t,x)||\nabla^{N_2}\check{\phi_{\ell_2}}(x-y)\partial_{t}^{r_1}\check{\phi}^t_{\ell_1}(t-\tau)|\rd y \rd  \tau,
	\end{aligned}
	\end{equation}
	for some constants $C=C_{N,r}>0$. We could immediately get \eqref{est on two mollification commutator 1.2},
	\begin{align*}
		|\partial_{t}^r\nabla^N(f_{\ell_1,\ell_2} g-(fg)_{\ell_1,\ell_2})(t,x)|&\lesssim_{N,r}\ell_2^{-N}\ell_1^{-r}\left\|f\right\|_{C^0(\cI_{\ell_1};C^0(\T^3))}(\ell_2\left\| g\right\|_{C^0(\cI_{\ell_1};C^1(\T^3))}+\ell_1\left\|\partial_{t}g\right\|_{C^0(\cI_{\ell_1};C^0(\T^3))})\\
		&\quad+\ell_2^{-N}\ell_1^{-r}(\ell_1+\ell_2)\left\|f\right\|_{C^0(\cI_{\ell_1};C^0(\T^3))}\left\|\partial_{t}^rg\right\|_{C^0(\cI_{\ell_1};C^N(\T^3))}\\
		&\lesssim_{N,r}\ell_2^{-N}\ell_1^{-r}(\ell_1+\ell_2)\left\|f\right\|_{C^0(\cI_{\ell_1};C^0(\T^3))}\left\|\partial_{t}^{\max\left\lbrace1,r \right\rbrace}g\right\|_{C^0(\cI_{\ell_1};C^{\max\left\lbrace1,N \right\rbrace} (\T^3))},
	\end{align*}
 	for $N+r\geqslant 1$. Finally, \eqref{est on two mollification commutator 2.1} and \eqref{est on two mollification commutator 2.2} can be proved, if we apply \eqref{est on two mollification commutator 1.1} and \eqref{est on two mollification commutator 1.2} to $g=\upsilon_i, f=\partial_iF$. Moreover, when $\upsilon$ has the space frequency localized to $\ell_2^{-1}$ and time frequency localized to $\ell_1^{-1}$, we have
 	\begin{align*}
 	\left\|\partial_{t}^{r}\upsilon\right\|_{C^0(\cI_{\ell_1};C^{N}(\T^3))}\lesssim_{N,r}\ell_2^{-N}\ell_1^{1-r}\left\|\partial_{t}\upsilon\right\|_{C^0(\cI_{\ell_1};C^{0}(\T^3))},\ \left\|\partial_{t}^{r}\upsilon\right\|_{C^0(\cI_{\ell_1};C^{N}(\T^3))}\lesssim_{N,r}\ell_2^{1-N}\ell_1^{-r}\left\|\upsilon\right\|_{C^0(\cI_{\ell_1};C^{1}(\T^3))},
 	\end{align*}
 	and
 	\begin{align*}
 		&\quad|\partial_{t}^r\nabla^N((\partial_iF)_{\ell_1,\ell_2} \upsilon_i-(\upsilon_i\partial_iF)_{\ell_1,\ell_2})(t,x)|\\
 		&\lesssim_{N,r}\ell_2^{-N}\ell_1^{-r}\left\|\nabla F\right\|_{C^0(\cI_{\ell_1};C^0(\T^3))}(\ell_2\left\| \upsilon\right\|_{C^0(\cI_{\ell_1};C^1(\T^3))}+\ell_1\left\|\partial_{t}\upsilon\right\|_{C^0(\cI_{\ell_1};C^0(\T^3))})\\
 		&\quad+\sum_{r_0+r_1=r}\sum_{N_1+N_2=N}\ell_2^{-N_2}\ell_1^{-r_1}\left\|\nabla F\right\|_{C^0(\cI_{\ell_1};C^0(\T^3))}\left\|\partial_{t}^{r_0}\upsilon\right\|_{C^0(\cI_{\ell_1};C^{N_1}(\T^3))}\\
 		&\lesssim_{N,r}\ell_2^{-N}\ell_1^{-r}\left\|\nabla F\right\|_{C^0(\cI_{\ell_1};C^0(\T^3))}(\ell_2\left\| \upsilon\right\|_{C^0(\cI_{\ell_1};C^1(\T^3))}+\ell_1\left\|\partial_{t}\upsilon\right\|_{C^0(\cI_{\ell_1};C^0(\T^3))})\\
 		&\quad+\sum_{r_0+r_1=r}\sum_{N_1+N_2=N}\ell_2^{-N}\ell_1^{-r}\left\|\nabla F\right\|_{C^0(\cI_{\ell_1};C^0(\T^3))}(\ell_2\left\| \upsilon\right\|_{C^0(\cI_{\ell_1};C^1(\T^3))}+\ell_1\left\|\partial_{t}\upsilon\right\|_{C^0(\cI_{\ell_1};C^0(\T^3))})\\
 		&\lesssim_{N,r}\ell^{-N}\ell_1^{-r}(\ell_1+\ell_2)\left\|\nabla F\right\|_{C^0(\cI_{\ell_1};C^0(\T^3))}(\left\| \upsilon\right\|_{C^0(\cI_{\ell_1};C^1(\T^3))}+\left\|\partial_{t}\upsilon\right\|_{C^0(\cI_{\ell_1};C^0(\T^3))}).\qedhere
 	\end{align*}
\end{proof}
\begin{lm}
	For a fixed $\overline{N}\in\N$, if $\upsilon$ and $g$ satisfy
	$$\left\|\partial_{t}^r\upsilon\right\|_{C^0(\cI_\ell;C^N(\T^3))}\lesssim_{\overline{N}}\ell^{-N-r}C_\upsilon,\qquad\left\| \partial_{t}^{r+1}g\right\|_{C^0(\cI_\ell;C^{N}(\T^3))}\lesssim_{\overline{N}}C_g,$$
	for all integer $N+r\in [1,\overline{N}]$ and for some positive constants $C_\upsilon$ and $C_g$, then we have
	\begin{align}
	&\quad\left\|\partial_{t}^r([\upsilon\cdot\nabla,\UL\PL](fg)-([\upsilon\cdot\nabla,\UL\PL]f)g)\right\|_N\nonumber\\
	&\lesssim_{\overline{N}}\ell^{1-N-r}\left\|\nabla f\right\|_{C^0(\cI_\ell;C^0(\T^3))}C_\upsilon C_g+\ell^{-N-r}\left\|f\right\|_{C^0(\cI_\ell;C^0(\T^3))}C_\upsilon C_g,\label{est on mollification commutator 3}
	\end{align}
	for all integer $N+r\in [0,\overline{N}]$.
\end{lm}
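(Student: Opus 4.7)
The plan is to realise the iterated commutator as an explicit space-time integral and to exploit a total-derivative cancellation that is the hallmark of a \emph{double} commutator. Writing $L := \UL\PL$ as the convolution
$$
Lh(t,x) = \iint h(t-\tau, x-y)\, K_\ell(\tau, y)\rd y\rd\tau, \qquad K_\ell(\tau, y) := \check{\phi}_\ell^t(\tau)\,\check{\phi}_\ell(y),
$$
one computes directly that
$$
[\upsilon\cdot\nabla, L]h(t,x) = \iint \bigl(\upsilon(t,x) - \upsilon(t-\tau, x-y)\bigr)\cdot \nabla h(t-\tau, x-y)\, K_\ell\rd y\rd\tau.
$$
Substituting $h = gf$ and subtracting $([\upsilon\cdot\nabla, L]f)g$, the two $\nabla f$ terms combine as $(g(t-\tau,x-y) - g(t,x))\,\nabla f$, and together with the leftover $f\nabla g$ piece the whole integrand reassembles into a $y$-divergence, giving the key identity
$$
D := [[\upsilon\cdot\nabla, L], g]f = \iint \bigl(\upsilon(t,x) - \upsilon(t-\tau, x-y)\bigr)\cdot \partial_y\bigl[(g(t,x) - g(t-\tau, x-y))\,f(t-\tau, x-y)\bigr]\, K_\ell\rd y\rd\tau.
$$

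I would then integrate by parts in $y$ against the factor $\check{\phi}_\ell(y)$, producing
$$
D = -\iint \bigl[\Div\upsilon(t-\tau, x-y)\,\check{\phi}_\ell(y) + (\upsilon(t,x)-\upsilon(t-\tau, x-y))\cdot \nabla\check{\phi}_\ell(y)\bigr]\,\bigl(g(t,x)-g(t-\tau, x-y)\bigr)\,f(t-\tau, x-y)\,\check{\phi}_\ell^t(\tau)\rd y\rd\tau.
$$
The base case $N = r = 0$ now follows from the bounds $|\upsilon(t,x) - \upsilon(t-\tau, x-y)| \lesssim \ell C_\upsilon$ and $|g(t,x) - g(t-\tau, x-y)| \lesssim \ell C_g$ (each obtained from the fundamental theorem of calculus along the segment joining $(t,x)$ to $(t-\tau, x-y)$ using $\|\partial_t \upsilon\|_0 + \|\nabla\upsilon\|_0 \lesssim \ell^{-1} C_\upsilon$ and the corresponding hypotheses on $g$) together with the standard kernel integrals $\int|\check{\phi}_\ell|\rd y \lesssim 1$, $\int|\nabla\check{\phi}_\ell|\rd y \lesssim \ell^{-1}$, yielding the $\|f\|_0\,C_\upsilon C_g$ contribution. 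The companion $\ell\|\nabla f\|_0\,C_\upsilon C_g$ term arises from a further integration by parts that transfers a derivative onto $f$ in one of the two pieces.

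For the general case $N + r \in [0, \overline{N}]$ I would distribute $\partial_t^r\nabla^N$ under the integral by the Leibniz rule. Each individual derivative either lands on a kernel factor ($\check{\phi}_\ell$ or $\check{\phi}_\ell^t$), producing $\ell^{-1}$ per spatial or temporal derivative via $\int|\nabla^k\check{\phi}_\ell|\rd y \lesssim \ell^{-k}$ and its time analogue, or onto $\upsilon$, $g$, or $f$, where the hypotheses $\|\partial_t^r\upsilon\|_N \lesssim \ell^{-N-r}C_\upsilon$ and $\|\partial_t^{r+1}g\|_N \lesssim C_g$ supply exactly the needed decay, with one base factor of $\|f\|_0$ or $\|\nabla f\|_0$ left over. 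The finite range $N+r \leqslant \overline{N}$ controls the number of Leibniz terms and the implicit constant. The main obstacle is the bookkeeping in this last step: one must check, term by term, that every difference $g(t,x) - g(t-\tau, x-y)$ and $\upsilon(t,x) - \upsilon(t-\tau,x-y)$ (and all of their derivatives produced by the Leibniz rule) is estimated using only the hypotheses encoded in $C_g$ and $C_\upsilon$, so that the right-hand side depends on $f$ solely through $\|f\|_0$ and $\|\nabla f\|_0$; carrying out this accounting carefully is the heart of the technical work.
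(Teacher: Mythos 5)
Your proposal takes a genuinely different route from the paper. The paper writes out the iterated commutator as the sum of the two integrals
$\int(\upsilon-\upsilon')\cdot\nabla f'\,(g'-G)\,K$ and $\int(\upsilon-\upsilon')\cdot\nabla g'\,f'\,K$, and then estimates them as they stand; it never records the total-derivative structure. You instead observe that these two integrands sum to $(\upsilon-\upsilon')\cdot\partial_y\bigl[(G-g')f'\bigr]$ and integrate by parts in $y$, which removes every derivative from $f$ and $g$ and replaces them by $\Div\upsilon$ and $\nabla\check\phi_\ell$. This is correct and in some respects cleaner: the paper's second piece contains $\nabla g(\tau,y)$ undifferenced, whose estimate quietly requires $\|\nabla g\|_0\lesssim C_g$, whereas your post--IBP formula only ever needs $g$ through the difference $g(t,x)-g(t-\tau,x-y)$. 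Also, since in your formula $f$ appears with no derivative, the $\ell^{-N-r}\|f\|_0\,C_\upsilon C_g$ bound drops out uniformly; because the right-hand side of \eqref{est on mollification commutator 3} is a sum, this alone already implies the stated estimate, and the ``further integration by parts'' you allude to is not actually needed to close the argument (it merely offers the alternative $\|\nabla f\|_0$ branch when a Leibniz derivative happens to land on $f$). Two points of care worth fixing in a final write-up: the displayed pointwise bounds $|\upsilon(t,x)-\upsilon(t-\tau,x-y)|\lesssim\ell\,C_\upsilon$ and $|g(t,x)-g(t-\tau,x-y)|\lesssim\ell\,C_g$ are not correct as pointwise statements; the fundamental theorem of calculus gives $|\upsilon-\upsilon'|\lesssim(|\tau|+|y|)\,\ell^{-1}C_\upsilon$ and $|g-g'|\lesssim(|\tau|+|y|)\,C_g$, and the powers of $\ell$ in the final estimate are produced by the moment bounds $\int|\tau|^k|\check\phi^t_\ell|\,\rd\tau\lesssim\ell^k$, $\int|y|^k|\check\phi_\ell|\,\rd y\lesssim\ell^k$, $\int|y|^k|\nabla\check\phi_\ell|\,\rd y\lesssim\ell^{k-1}$, so the $(|\tau|+|y|)$ factors must be retained until those integrals are taken. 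With that bookkeeping done carefully, your argument does prove the lemma.
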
 
	\begin{proof}
	We first write
	$$
	\begin{aligned}
	&\quad{\left[\upsilon \cdot \nabla, \PL\UL\right](f g)-\left(\left[\upsilon \cdot \nabla, \PL\UL\right] f\right) g } \\
	&=\int_{\mathbb{R}^3}\int_{\mathbb{R}}(\upsilon(t,x)-\upsilon(\tau,y)) \cdot \nabla(f g)(\tau,y) \check{\phi}^t_\ell(t-\tau)\check{\phi_{\ell}}(x-y) \rd y\rd  \tau \\
	&\quad-\int_{\mathbb{R}^3}\int_{\mathbb{R}}(\upsilon(t,x)-\upsilon(\tau,y)) \cdot \nabla f(\tau,y) g(t,x) \check{\phi}^t_\ell(t-\tau)\check{\phi_{\ell}}(x-y) \rd y\rd  \tau \\
	&=\int_{\mathbb{R}^3}\int_{\mathbb{R}}(\upsilon(t,x)-\upsilon(\tau,y)) \cdot \nabla f(\tau,y)(g(\tau,y)-g(t,x)) \check{\phi}^t_\ell(t-\tau)\check{\phi_{\ell}}(x-y) \rd y\rd  \tau\\
	&\quad+\int_{\mathbb{R}^3}\int_{\mathbb{R}}(\upsilon(t,x)-\upsilon(\tau,y)) \cdot \nabla g(\tau,y) f(\tau,y) \check{\phi}^t_\ell(t-\tau)\check{\phi_{\ell}}(x-y) \rd y\rd  \tau .
	\end{aligned}
	$$
	Then, the inequality follows from
	\begin{align*}
	&|\partial_{t}^r\nabla^N(\upsilon(t,x)-\upsilon(\tau,y))|\lesssim \ell^{-N-r} C_\upsilon, \quad|g(t,x)-g(\tau,y)| \lesssim|t-\tau| C_g+|x-y|C_g,\\
	&\int|\tau|| \check{\phi}^t_\ell(\tau)| \rd  \tau \lesssim \ell,\quad\int|y|| \check{\phi_{\ell}}(y)| \rd  y \lesssim \ell.\qedhere
	\end{align*}
	\end{proof}
Finally, we introduce a lemma which has been proved in \cite{GK22}.
\begin{lm}\cite[Lemma A.7]{GK22}.\label{est on R commutator}
	For vector-valued functions $H$ and $\upsilon$ in $C^\infty(\cI_\ell\times\T^3)$, the following commutator estimate holds,
	\begin{equation}\label{est on commutator 4}
		\left\|[\PL\upsilon\cdot\nabla,\cR]P_{\GL}H\right\|_{N-1}\lesssim\sum_{N_1+N_2=N-1}\ell\left\|\nabla\upsilon\right\|_{N_1}\left\|H\right\|_{N_2}.
	\end{equation}
	for N=1,2, where  $\cR=\Delta^{-1}\nabla$, as defined in Definition \ref{def of R}, and $\lambda_{q+1},\ell$ are defined as in \eqref{def of parameter} and \eqref{def of l and l_t}.
\end{lm}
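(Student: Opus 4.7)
The plan is to reduce the commutator to an integral against a kernel that encodes both the structure of $\cR$ and the frequency gap between $\PL\upsilon$ (frequency $\lesssim \ell^{-1}$) and $P_{\GL}H$ (frequency $\gtrsim \lambda_{q+1}\gg\ell^{-1}$, by \eqref{def of l and l_t}). The order $-1$ operator $\cR$ acts as convolution with a kernel $K_{\cR}$ on $\T^3$ (the periodisation of a Biot--Savart-type kernel, smooth away from the origin and homogeneous of degree $-2$ there). A direct manipulation of the two terms
$$\PL\upsilon(x)\cdot\nabla\cR P_{\GL}H(x)=\int K_{\cR}(z)\,\PL\upsilon(x)\cdot\nabla P_{\GL}H(x-z)\,dz,\qquad \cR(\PL\upsilon\cdot\nabla P_{\GL}H)(x)=\int K_{\cR}(z)\,\PL\upsilon(x-z)\cdot\nabla P_{\GL}H(x-z)\,dz$$
yields the representation
$$[\PL\upsilon\cdot\nabla,\cR]P_{\GL}H(x)=\int K_{\cR}(z)\bigl(\PL\upsilon(x)-\PL\upsilon(x-z)\bigr)\cdot\nabla P_{\GL}H(x-z)\,dz.$$

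The second step is to Taylor expand $\PL\upsilon(x)-\PL\upsilon(x-z)=\int_0^1 z\cdot\nabla\PL\upsilon(x-sz)\,ds$, giving
$$[\PL\upsilon\cdot\nabla,\cR]P_{\GL}H(x)=\int_0^1\!\!\int K_{\cR}(z)\,z_i\,\partial_i\PL\upsilon_j(x-sz)\,\partial_j P_{\GL}H(x-z)\,dz\,ds.$$
The heart of the argument is that the auxiliary operator
$T_sG(x):=\int K_{\cR}(z)\,z_i\,\partial_j G(x-z)\,dz$
has Fourier symbol $-\partial_{\xi_i}m(\xi)\cdot i\xi_j$, where $m(\xi)$ is the symbol of $\cR$. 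Since $m$ is order $-1$ and smooth off the origin, this new symbol is order $0$ and of Mikhlin type away from $\xi=0$. Composed with $P_{\GL}$, which excises a neighborhood of the origin, it becomes a bounded convolution against a Schwartz kernel with $L^1$ norm $\lesssim \lambda_{q+1}^{-1}\ll \ell$ when the extra decay of the symbol (one order lower than the original $\cR$) is weighed against the high-frequency cutoff scale $\lambda_{q+1}$. Consequently
$$\|T_sP_{\GL}H\|_0\lesssim \lambda_{q+1}^{-1}\|H\|_0\leqslant \ell\|H\|_0,$$
and pairing this with the uniform bound $\|\nabla\PL\upsilon\|_0\leqslant\|\nabla\upsilon\|_0$ produces exactly $\|[\PL\upsilon\cdot\nabla,\cR]P_{\GL}H\|_0\lesssim \ell\|\nabla\upsilon\|_0\|H\|_0$, which is the $N=1$ case.

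To obtain the $N=2$ case, I would commute one extra spatial derivative past the integral and distribute it by Leibniz either onto $\nabla\PL\upsilon(x-sz)$ or onto $\partial_j P_{\GL}H(x-z)$. The first choice introduces a factor $\|\nabla^2\upsilon\|_0$ while keeping the same high-frequency kernel analysis (yielding $\ell\|\nabla\upsilon\|_1\|H\|_0$), and the second one differentiates $H$ inside the bounded operator $T_s P_{\GL}$ (yielding $\ell\|\nabla\upsilon\|_0\|H\|_1$). Summing recovers the full bound $\sum_{N_1+N_2=1}\ell\|\nabla\upsilon\|_{N_1}\|H\|_{N_2}$.

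The main obstacle in this plan is the precise bookkeeping of the symbol $\partial_\xi m(\xi)\cdot\xi_j$ against the $P_{\GL}$ projection: one must verify that the resulting convolution kernel has $L^1$ norm of order $\lambda_{q+1}^{-1}$ uniformly in $s\in[0,1]$, despite the singularity of $\partial_\xi m$ at the origin. The cleanest way is to observe that the high-frequency cutoff makes the symbol of $T_s\circ P_{\GL}$ a Schwartz function rescaled to dyadic scale $\lambda_{q+1}$, whose inverse Fourier transform decays rapidly at the scale $\lambda_{q+1}^{-1}$ (in the spirit of Bernstein's inequality), so the $L^1$ norm of the kernel is $\lesssim \lambda_{q+1}^{-1}$. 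This geometric/harmonic-analytic step is really what converts the frequency gap into the quantitative $\ell$-gain, and any looseness there would destroy the estimate.
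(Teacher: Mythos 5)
Your representation of the commutator as
\begin{equation*}
[\PL\upsilon\cdot\nabla,\cR]P_{\GL}H(x)=\int K_{\cR}(z)\bigl(\PL\upsilon(x)-\PL\upsilon(x-z)\bigr)\cdot\nabla P_{\GL}H(x-z)\,dz
\end{equation*}
and the first-order Taylor expansion are both sound. The difficulty is in the crucial last step. You claim that bounding $\nabla\PL\upsilon(x-sz)$ by its supremum and then estimating the remaining $z$--integral as $\|T_s P_{\GL}H\|_0$ gives the result, but these two operations cannot be done in that order: the function $\nabla\PL\upsilon(x-sz)$ genuinely sits inside the $z$--integral, entangled with the kernel and with $\nabla P_{\GL}H(x-z)$. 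Once you replace $|\nabla\PL\upsilon(x-sz)|$ by its sup you are forced to take absolute values of the remaining integrand, which destroys the oscillatory cancellation your $T_s$ estimate relies on. The honest bound you then obtain is $\|\nabla\upsilon\|_0\int|K_{\cR}(z)||z|\,dz\cdot\|\nabla P_{\GL}H\|_0\sim\|\nabla\upsilon\|_0\|H\|_1$, since $K_{\cR}(z)z$ is only order $-1$ near the origin and $\int_{\T^3}|K_{\cR}(z)||z|\,dz=O(1)$; this is neither small nor expressed in $\|H\|_0$.

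Two ideas are missing. First, you must exploit the frequency gap before taking absolute values: since $\PL\upsilon$ lives at frequency $\lesssim\ell^{-1}$ and $P_{\GL}H$ at frequency $\gtrsim\lambda_{q+1}\gg\ell^{-1}$ (by \eqref{def of l and l_t}), both terms of the commutator are supported at frequency $\gtrsim\lambda_{q+1}$, so $\cR$ can be replaced throughout by $\cR\tilde P$ for a high-pass cutoff $\tilde P$ adapted to $\lambda_{q+1}$. The kernel $\tilde K$ of $\cR\tilde P$ is a Schwartz bump at scale $\lambda_{q+1}^{-1}$ (by the dyadic summation you described), so one has genuinely small weighted $L^1$ bounds such as $\int|\tilde K|\lesssim\lambda_{q+1}^{-1}$ and $\int|\nabla\tilde K(z)||z|\,dz\lesssim\lambda_{q+1}^{-1}\leqslant\ell$. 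Second, to reach $\|H\|_0$ rather than $\|H\|_1$ on the right-hand side, one more integration by parts in $z$ is needed: write $\partial_j P_{\GL}H(x-z)=-\partial_{z_j}[P_{\GL}H(x-z)]$ and move $\partial_{z_j}$ onto $\tilde K(z)(\PL\upsilon_j(x)-\PL\upsilon_j(x-z))$, producing a term with $\nabla\tilde K(z)|z|$ (controlled by $\|\nabla\upsilon\|_0\|H\|_0$ with the above $L^1$ gain) and a term with $\tilde K(z)\Div\PL\upsilon(x-z)$ (controlled similarly). After this, the crude absolute-value estimate on the remaining integral is legitimate and yields the claimed $\ell\|\nabla\upsilon\|_{N_1}\|H\|_{N_2}$ bound. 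Note also a small slip: $\partial_{\xi_i}m(\xi)\,\xi_j$ is of order $-1$ (since $m$ is order $-1$ and $\partial_\xi$ lowers the order by one), not of order $0$ as you wrote; the order $-1$ is in fact exactly what you need for the dyadic sum to converge to $\lambda_{q+1}^{-1}$, so this fortunately is not fatal, but the reasoning as stated is inconsistent.
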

\section{The commutator of the space derivative and the material derivative}
We replace the detailed proof of the estimates on the mixed derivatives used in Section \ref{Estimates on mixed derivatives} through the commutator analysis here.
\begin{lm}\label{Estimtates for mixed derivatives}
Let $F$ be in $C^\infty([c,d]\times\T^3)$, we have the following estimates, for $r\geqslant1,k\geqslant0,$ 
\begin{equation}\label{est on Dtl nabla^r F}
\begin{aligned}
	\left\|\DTL \nabla^{r}\DTL^k F\right\|_{N}
	&\lesssim_{N,r}\left\|\DTL^{k+1} F\right\|_{N+r}+
	\sum_{N_0+N_1=N+r-1}\left\|\DTL^kF\right\|_{N_0+1}\left\|m_\ell/n\right\|_{N_1+1},\\
	\left\|\DTL^2\nabla^{r}\DTL^k F\right\|_{N}
	&\lesssim_{N,r}\left\|\DTL^{k+2} F\right\|_{N+r}+
	\sum_{N_0+N_1=N+r-1}\left\| \DTL^{k+1} F\right\|_{N_0+1}\left\|m_\ell/n\right\|_{N_1+1}\\
	&\quad+\sum_{N_0+N_1=N+r-1}\left\|\DTL^kF\right\|_{N_0+1}\left\|\DTL (m_\ell/n)\right\|_{N_1+1}\\
	&\quad+\sum_{N_0+N_1+N_2=N+r-1}\left\|\DTL^kF\right\|_{N_0+1}\left\|m_\ell/n\right\|_{N_1+1}\left\|m_\ell/n\right\|_{N_2+1},
\end{aligned}
\end{equation}
where $\left\|\cdot\right\|=\left\|\cdot\right\|_{C^0([c,d];C^N(\T^3))}$, $D_{t,\ell}=\partial_t+m_\ell/n\cdot\nabla$.
\end{lm}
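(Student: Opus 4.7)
The plan is to rely on the single elementary commutator identity
\[
[D_{t,\ell},\nabla_i]F=-(\partial_i(m_\ell/n)_j)\,\partial_j F,
\]
obtained by direct differentiation, and then to iterate and combine it with the Leibniz rule. The advantage of this identity is that it trades a material derivative for a spatial derivative at the cost of \emph{one} extra power of $\nabla(m_\ell/n)$; so every time we commute $D_{t,\ell}$ past a $\nabla$, either we end up with a pure $\nabla^r D_{t,\ell}^{k+1}F$ term or we pick up a commutator in which a factor of $\nabla(m_\ell/n)$ multiplies a scalar expression with the same total order of derivatives as before, but with one less $D_{t,\ell}$.

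For the first estimate, I would prove by induction on $r\geqslant1$ that
\[
D_{t,\ell}\nabla^r(D_{t,\ell}^kF)=\nabla^r D_{t,\ell}^{k+1}F+\sum_{\substack{a+b=r-1\\a,b\geqslant0}}c_{a,b}\,\nabla^{a+1}(m_\ell/n)\ast\nabla^{b+1}(D_{t,\ell}^kF),
\]
where $\ast$ denotes a contraction and $c_{a,b}$ are absolute combinatorial constants. The base case $r=1$ is the identity above applied to $D_{t,\ell}^kF$. The inductive step follows by writing $\nabla^{r+1}=\nabla\,\nabla^r$, commuting $D_{t,\ell}$ past the outer $\nabla$ using the base identity, and then applying $\nabla$ to the inductive expansion and using the Leibniz rule. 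Taking the $C^0([c,d];C^N(\T^3))$ norm of both sides and applying the product rule gives the desired bound, since each resulting product $\nabla^{a+1}(m_\ell/n)\cdot\nabla^{b+1}(D_{t,\ell}^kF)$, after $N$ further spatial derivatives, decomposes as $\sum_{N_0+N_1=N+r-1}\|D_{t,\ell}^kF\|_{N_0+1}\|m_\ell/n\|_{N_1+1}$.

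For the second estimate, I would apply $D_{t,\ell}$ to the identity just proved. There are two contributions:
\[
D_{t,\ell}^2\nabla^r(D_{t,\ell}^kF)=D_{t,\ell}\nabla^r(D_{t,\ell}^{k+1}F)+\sum_{a+b=r-1}c_{a,b}\,D_{t,\ell}\big(\nabla^{a+1}(m_\ell/n)\ast\nabla^{b+1}(D_{t,\ell}^kF)\big).
\]
Applying the first estimate with $k$ replaced by $k+1$ handles the first summand and accounts for the first two terms in the claimed bound. For the second summand, I would distribute $D_{t,\ell}$ across the product by the Leibniz rule, and then commute $D_{t,\ell}$ past the various $\nabla$'s using the base identity once more. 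When $D_{t,\ell}$ lands on $\nabla^{b+1}(D_{t,\ell}^kF)$, one recovers terms estimable by the first identity, yielding contributions bounded by $\|D_{t,\ell}^{k+1}F\|_{\bullet+1}\|m_\ell/n\|_{\bullet+1}$ or $\|D_{t,\ell}^kF\|_{\bullet+1}\|m_\ell/n\|_{\bullet+1}\|m_\ell/n\|_{\bullet+1}$. When $D_{t,\ell}$ lands on $\nabla^{a+1}(m_\ell/n)$, commuting past the $\nabla^{a+1}$ produces $\nabla^{a+1}D_{t,\ell}(m_\ell/n)$ plus commutator terms of the $\nabla(m_\ell/n)\cdot\nabla(m_\ell/n)$ type; these account for the third and fourth terms in the claim. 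Applying $\nabla^N$ and the product rule then yields the stated estimate.

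The main obstacle is purely bookkeeping: one has to make sure that, after all commutations and after applying $N$ additional spatial derivatives, every resulting term can be absorbed into one of the four asserted forms without generating a derivative of order greater than allowed. This is addressed by noting that each commutation preserves the total number $N+r$ of derivatives distributed between the scalar $D_{t,\ell}^{k'}F$ ($k'\in\{k,k+1\}$) factor and the $m_\ell/n$ (or $D_{t,\ell}(m_\ell/n)$) factors, and that the induction furnishes uniformly bounded combinatorial coefficients depending only on $N$ and $r$.
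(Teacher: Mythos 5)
Your proof is correct and takes essentially the same approach as the paper's: you both start from the commutator identity $\DTL\nabla F=\nabla\DTL F-\nabla F\,\nabla(m_\ell/n)$ and iterate it $r$ times. The only cosmetic difference is that you write down an explicit expansion identity by induction on $r$ and then take norms, whereas the paper iterates directly at the level of the norm inequalities (applying the base estimate with $F$ replaced by $\nabla^j F$) and then substitutes $F\mapsto\DTL^k F$; the bookkeeping and the resulting bounds are the same.
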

\begin{proof}
We first calculate, for $\forall F\in  C^j([c,d]\times\T^3),j\in\N$,
\begin{align*}
	\DTL \nabla F&=\nabla \DTL F- \nabla F\nabla(m_\ell/n),\\
	\DTL^2\nabla F&=\nabla \DTL^2F-\nabla(\DTL F)\nabla(m_\ell/n)-\DTL (\nabla F\nabla(m_\ell/n)),
\end{align*}
and then we could get
	\begin{align}
		\left\|\DTL \nabla F\right\|_{N}&\lesssim_N\left\| \DTL F\right\|_{N+1}+
		\sum_{N_0+N_1=N}\left\|F\right\|_{N_0+1}\left\|m_\ell/n\right\|_{N_1+1},\label{est on Dtl nabla F}
	\end{align}
and 
	\begin{align}
		\left\|\DTL^2\nabla F\right\|_{N}
		&\lesssim_{N}\left\|\DTL^2F\right\|_{N+1}+
		\sum_{N_0+N_1=N}\left\| \DTL F\right\|_{N_0+1}\left\|m_\ell/n\right\|_{N_1+1}\nonumber\\
		&\quad+\sum_{N_0+N_1=N}\left\|\DTL \nabla F\right\|_{N_0}\left\|m_\ell/n\right\|_{N_1+1}+\sum_{N_0+N_1=N}\left\| F\right\|_{N_0+1}\left\|\DTL \nabla (m_\ell/n)\right\|_{N_1}\nonumber\\
		&\lesssim_{N}\left\|\DTL^2F\right\|_{N+1}+
		\sum_{N_0+N_1=N}\left\| \DTL F\right\|_{N_0+1}\left\|m_\ell/n\right\|_{N_1+1}+
		\sum_{N_0+N_1=N}\left\|  F\right\|_{N_0+1}\left\|\DTL (m_\ell/n)\right\|_{N_1+1}\nonumber\\
		&\quad+\sum_{N_0+N_1+N_2=N}\left\|F\right\|_{N_0+1}\left\|m_\ell/n\right\|_{N_1+1}\left\|m_\ell/n\right\|_{N_2+1}.\label{est on Dtl^2 nabla F}
	\end{align}
Then, we could use \eqref{est on Dtl nabla F} and \eqref{est on Dtl^2 nabla F} over and over again to achieve, for $r\geqslant 1$,
	\begin{align}
	\left\|\DTL \nabla^{r+1} F\right\|_{N}
	&\lesssim_{N}\left\|\DTL \nabla^r F\right\|_{N+1}+
	\sum_{N_0+N_1=N}\left\|\nabla^r F\right\|_{N_0+1}\left\|m_\ell/n\right\|_{N_1+1}\nonumber\\
	&\lesssim_{N}\left\|\DTL \nabla^{r-1} F\right\|_{N+2}+\sum_{N_0+N_1=N+1}\left\|\nabla^{r-1} F\right\|_{N_0+1}\left\|m_\ell/n\right\|_{N_1+1}+\sum_{N_0+N_1=N}\left\|\nabla^{r} F\right\|_{N_0+1}\left\|m_\ell/n\right\|_{N_1+1}\nonumber\nonumber\\
	&\lesssim_{N,r}\left\|\DTL F\right\|_{N+r+1}+\sum_{N_0+N_1=N+r}\left\|F\right\|_{N_0+1}\left\|m_\ell/n\right\|_{N_1+1},\label{est on Dtl nabla^r+1 F}
	\end{align}
and
\begin{align}
		\left\|\DTL^2\nabla^{r+1} F\right\|_{N}
		&\lesssim_{N}\left\|\DTL^2\nabla^r F\right\|_{N+1}+
		\sum_{N_0+N_1=N}\left\| \DTL \nabla^r F\right\|_{N_0+1}\left\|m_\ell/n\right\|_{N_1+1}\nonumber\\
		&\quad+\sum_{N_0+N_1=N}\left\| \nabla^r F\right\|_{N_0+1}\left\|\DTL (m_\ell/n)\right\|_{N_1+1}+\sum_{N_0+N_1+N_2=N}\left\|\nabla^r F\right\|_{N_0+1}\left\|m_\ell/n\right\|_{N_1+1}\left\|m_\ell/n\right\|_{N_2+1}\nonumber\\
		&\lesssim_{N}\left\|\DTL^2\nabla^r F\right\|_{N+1}+
		\sum_{N_0+N_1=N}\left\| \DTL F\right\|_{N_0+r+1}\left\|m_\ell/n\right\|_{N_1+1}\nonumber\\
		&\quad+\sum_{N_0+N_1=N}\left\| F\right\|_{N_0+r+1}\left\|\DTL (m_\ell/n)\right\|_{N_1+1}+\sum_{N_0+N_1+N_2=N+r}\left\|F\right\|_{N_0+1}\left\|m_\ell/n\right\|_{N_1+1}\left\|m_\ell/n\right\|_{N_2+1}\nonumber\\
		&\lesssim_{N}\left\|\DTL^2\nabla^{r-1} F\right\|_{N+2}+
		\sum_{N_0+N_1=N+1}\left\| \DTL F\right\|_{N_0+r}\left\|m_\ell/n\right\|_{N_1+1}\nonumber\\
		&\quad+\sum_{N_0+N_1=N+1}\left\| F\right\|_{N_0+r}\left\|\DTL (m_\ell/n)\right\|_{N_1+1}+\sum_{N_0+N_1+N_2=N+r}\left\|F\right\|_{N_0+1}\left\|m_\ell/n\right\|_{N_1+1}\left\|m_\ell/n\right\|_{N_2+1}\nonumber\\
		&\lesssim_{N,r}\left\|\DTL^2 F\right\|_{N+r+1}+
		\sum_{N_0+N_1=N+r}\left\| \DTL F\right\|_{N_0+1}\left\|m_\ell/n\right\|_{N_1+1}\nonumber\\
		&\quad+\sum_{N_0+N_1=N+r}\left\| F\right\|_{N_0+1}\left\|\DTL (m_\ell/n)\right\|_{N_1+1}+\sum_{N_0+N_1+N_2=N+r}\left\|F\right\|_{N_0+1}\left\|m_\ell/n\right\|_{N_1+1}\left\|m_\ell/n\right\|_{N_2+1},\label{est on Dtl^2 nabla^r+1 F}
	\end{align}
which leads directly to \eqref{est on Dtl nabla^r F}.
\end{proof}
\section{A microlocal lemma}
As in \cite{DK22,GK22}, the following microlocal lemma is important in our estimates for the error term, where we will use the notation
\begin{equation}
\mathscr{F}[f](k)=\fint_{\T^3}f(x)e^{-ix\cdot k}\rd x,\qquad f(x)=\sum_{k\in\Z^3}\mathscr{F}[f](k)e^{ik\cdot x}.
\end{equation}
\begin{lm}[Microlocal Lemma]\cite[Lemma 7.1]{GK22}\label{Microlocal Lemma}
	Let T be a Fourier multiplier defined on $C^\infty(\T^3)$ by 
	$$\mathscr{F}[Th](k)=\mathfrak{m}(k)\mathscr{F}[h](k),\qquad \forall k\in\Z^3 $$
	for some $\mathfrak{m}$ which has an extension in $S(\R^3)$ (which for convenience we keep denoting by $\mathfrak{m}$). Then, for any $n_0\in\N,\lambda>0$, and any scalar functions $a$ and $\xi$ in $C^\infty(\T^3)$, $T(ae^{i\lambda\xi})$ can be decomposed into
	$$T(ae^{i\lambda\xi})=\Big[a\mathfrak{m}(\lambda\nabla\xi)+\sum_{k=1}^{2n_0}C_k^\lambda(\xi,a):(\nabla^k\mathfrak{m})(\lambda\nabla\xi)+\varepsilon_{n_0}(\xi,a)\Big]e^{i\lambda\xi},  $$
	for some tensor valued coefficient $C_k^\lambda(\xi,a)$ and a remainder $\varepsilon_{n_0}(\xi,a)$ which is specified in the following formula:
	\begin{equation}
		\begin{aligned}
		&\varepsilon_{n_0}(\xi,a)(x)=\sum_{n_1+n_2=n_0}\frac{(-1)^{n_1}c_{n_1,n_2}}{n_0!}\\
		&\quad\cdot\int_{0}^{1}\int_{\R^3}\check{\mathfrak{m}}(y)e^{-i\lambda\nabla\xi(x)\cdot y}((y\cdot\nabla)^{n_1}a)(x-ry)e^{i\lambda Z[\xi](r)}\beta_{n_2}[\xi](r)(1-r)^{n_0}\rd ydr,
		\end{aligned}
	\end{equation}
where $c_{n_1,n_2}$ is a constant depending only on $n_1$ and $n_2$, and the function $\beta_n[\xi]$ is 
$$
\begin{aligned}
	&\alpha_n[\xi](r)=B_n(i\lambda Z^\prime(r),i\lambda^{\prime\prime},\cdots,i\lambda Z^{(n)}(r)),\\
	&Z[\xi](r)=Z[\xi]_{x,y}(r)=r\int_{0}^{1}(1-s)(y\cdot\nabla)^2\xi(x-rsy) \rd  s,
\end{aligned}
$$
with $B_n$ denoting the nth complete exponential Bell polynomial
\begin{equation}
	B_n(x_1,\cdot,x_n)=\sum_{k=1}^{n}B_{n,k}(x_1,x_2,\cdots,x_{n-k+1}),
\end{equation}
where 
$$
B_{n,k}(x_1,x_2,\cdots,x_{n-k+1})=\sum\frac{n!}{j_1!j_2!\cdots j_{n-k+1}!}\left(\frac{x_1}{1!}\right)^{j_1}\left(\frac{x_2}{2!}\right)^{j_2}\cdots\left(\frac{x_{n-k+1}}{(n-k+1)!}\right)^{j_{n-k+1}},
$$
and the summation is taken over $\{j_k\}\subset\N\cup\left\lbrace0\right\rbrace $ satisfying
\begin{equation}
j_1+j_2+\cdots+j_{n-k+1}=k,\qquad j_1+2j_2+3j_3+\cdots+(n-k+1)j_{n-k+1}=n.
\end{equation}
\end{lm}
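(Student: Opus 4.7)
\smallskip

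\noindent\textbf{Proof proposal.} The plan is to combine the convolution representation of $T$ with a double Taylor expansion in the amplitude and phase, then collect terms according to the order of derivative falling on $\mathfrak{m}$. Since $\mathfrak{m}\in\mathcal{S}(\R^3)$, writing $ae^{i\lambda\xi}$ as a (periodic) tempered distribution gives
\begin{align*}
T(ae^{i\lambda\xi})(x)=\int_{\R^3}\check{\mathfrak{m}}(y)\,a(x-y)\,e^{i\lambda\xi(x-y)}\rd y.
\end{align*}
First I would perform the second-order Taylor expansion of the phase about $x$, $\xi(x-y)=\xi(x)-y\cdot\nabla\xi(x)+Z[\xi]_{x,y}(1)$, using the definition of $Z$ given in the statement, and factor out $e^{i\lambda\xi(x)}$:
\begin{align*}
T(ae^{i\lambda\xi})(x)=e^{i\lambda\xi(x)}\int_{\R^3}\check{\mathfrak{m}}(y)\,e^{-i\lambda y\cdot\nabla\xi(x)}\,G_{x,y}(1)\rd y,\qquad G_{x,y}(r):=a(x-ry)\,e^{i\lambda Z[\xi]_{x,y}(r)}.
\end{align*}

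Next I would apply Taylor's formula with integral remainder to $G_{x,y}(r)$ as a function of $r\in[0,1]$, expanded about $r=0$, computing the $r$-derivatives by the Leibniz rule together with Fa\`a di Bruno (which yields $\frac{d^{n_2}}{dr^{n_2}}e^{i\lambda Z(r)}=e^{i\lambda Z(r)}\beta_{n_2}[\xi](r)$ for the Bell polynomial $\beta_{n_2}$):
\begin{align*}
G_{x,y}^{(n)}(r)=e^{i\lambda Z(r)}\sum_{n_1+n_2=n}\binom{n}{n_1}(-1)^{n_1}((y\cdot\nabla)^{n_1}a)(x-ry)\,\beta_{n_2}[\xi](r).
\end{align*}
The leading Taylor coefficient at $r=0$ is $G(0)=a(x)$ (since $Z(0)=0$), and higher Taylor coefficients $G^{(k)}(0)/k!$ are polynomials in $y$ of degree at most $2k$. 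This upper bound $2k$ follows from a direct computation showing that $Z^{(j)}(0)=\tfrac{(-1)^{j-1}}{j+1}(y\cdot\nabla)^{j+1}\xi(x)$ is a polynomial of degree $j+1$ in $y$, so by the defining weight of Bell polynomials a term indexed by $(j_1,\dots,j_{n_2-k+1})$ with $\sum l\,j_l=n_2$ and $\sum j_l=k$ is of $y$-degree $n_2+k\leq 2n_2$.

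Third, I would substitute the Taylor expansion back into the integral and use the identity $\int_{\R^3}y^{\alpha}\check{\mathfrak{m}}(y)e^{-iy\cdot\eta}\rd y=i^{|\alpha|}(\partial^{\alpha}\mathfrak{m})(\eta)$ to convert each factor of $y$ in the integrand into a derivative of $\mathfrak{m}$ evaluated at $\lambda\nabla\xi(x)$. The $k=0$ term produces the leading $a(x)\mathfrak{m}(\lambda\nabla\xi(x))$, while each $k\geq 1$ term contributes to $C^{\lambda}_j(\xi,a):(\nabla^j\mathfrak{m})(\lambda\nabla\xi)$ for $j$ between $k$ and $2k$; collecting all these by the order $j$ of derivative on $\mathfrak{m}$ and summing over $k=1,\dots,n_0$ yields the stated sum from $k=1$ to $2n_0$. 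The integral remainder, after substituting the Leibniz--Fa\`a di Bruno expression for $G^{(\cdot)}$ and bundling binomial coefficients with $1/n_0!$ into the constants $c_{n_1,n_2}$, gives precisely the formula for $\varepsilon_{n_0}(\xi,a)$ stated in the lemma.

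The main obstacle is the careful combinatorial bookkeeping: verifying that the degree-counting argument really sharply caps the expansion at $2n_0$ (rather than producing an error that leaks into higher-order $\mathfrak{m}$ derivatives), and confirming that the Bell polynomial factors $\beta_{n_2}[\xi](r)$ appearing at general $r\in[0,1]$ in the remainder have exactly the form claimed, with constants $c_{n_1,n_2}$ absorbing the products of binomial coefficients. Justifying the interchanges of summation, differentiation, and the $y$-integral is routine since $\check{\mathfrak{m}}$ is Schwartz and all derivatives of $a,\xi$ are bounded on $\T^3$, so every integrand decays rapidly in $y$ uniformly in $r$.
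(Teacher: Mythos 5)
The paper does not prove this lemma; it is quoted verbatim from Lemma 7.1 of \cite{GK22}, so there is no in-text argument for me to compare you against. That said, your proposal is correct and is almost certainly the argument underlying the cited result, since every structural element of the stated formula (the quadratic phase remainder $Z[\xi]_{x,y}(r)$, the Bell polynomials $\beta_{n_2}$, the weight $(1-r)^{n_0}$ and the pairing $(y\cdot\nabla)^{n_1}a$ with $\beta_{n_2}$ over $n_1+n_2=n_0$) falls out of precisely the Taylor-in-$r$ plus Fa\`a di Bruno expansion you describe. Your degree count $Z^{(j)}(0)=\tfrac{(-1)^{j-1}}{j+1}(y\cdot\nabla)^{j+1}\xi(x)$ checks out by writing $Z(r)=r\Phi(r)$ with $\Phi(r)=\int_0^1(1-s)(y\cdot\nabla)^2\xi(x-rsy)\,\mathrm{d}s$, so $Z^{(j)}(0)=j\Phi^{(j-1)}(0)$ with $\Phi^{(j-1)}(0)=\tfrac{(-1)^{j-1}}{j(j+1)}(y\cdot\nabla)^{j+1}\xi(x)$; this is exactly what caps the $\mathfrak m$-derivative order at $2n_0$.

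Two small points you should tighten. First, with the paper's Fourier conventions from Section~3 ($\check f(x)=\int f(\xi)e^{ix\cdot\xi}\,\mathrm{d}\xi$), one has $\int\check{\mathfrak m}(y)e^{-iy\cdot\eta}\,\mathrm{d}y=(2\pi)^3\mathfrak m(\eta)$ rather than $\mathfrak m(\eta)$, so the multiplier acts as $Th=(2\pi)^{-3}\check{\mathfrak m}*h$ and your moment identity should carry a factor of $(2\pi)^3$; these normalizations cancel in the leading term $a\,\mathfrak m(\lambda\nabla\xi)$ but must be tracked consistently (the cited lemma is presumably using a $(2\pi)^{-3}$-normalized inverse transform). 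Second, you expand $G$ in $r$ to order $n_0$, which produces a remainder containing $G^{(n_0+1)}$, i.e.\ indices with $n_1+n_2=n_0+1$, whereas the stated $\varepsilon_{n_0}$ has $n_1+n_2=n_0$ together with $(1-r)^{n_0}/n_0!$; these two pieces of the statement are mutually inconsistent as written (one expects either $n_1+n_2=n_0$ with $(1-r)^{n_0-1}/(n_0-1)!$, or $n_1+n_2=n_0+1$ with $(1-r)^{n_0}/n_0!$). That is a typographical glitch in the quoted lemma rather than a flaw in your reasoning, but you should note which order of expansion you are using so the bookkeeping is internally consistent.
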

This lemma leads to the following consequence on the anti-divergence operator $\cR$  as in \cite{DK22} and \cite{GK22}
\begin{co}\cite[Corollary 7.2]{GK22}\label{est on R operator}
Let $N=0,1,2$ and $F=\sum_{k\in\Z^3\setminus\left\lbrace 0\right\rbrace }\sum_{u\in\Z}a_{u,k}e^{i\lambda_{q+1}k\cdot\xi_u}$. Assume that a function $a_{u,k}$ fulfills the following requirements.\\
(i)The support of $a_{u,k}$ satisfies $\supp(a_{u,k})\subset(t_u-\frac{1}{2}\tau_q,t_u+\frac{3}{2}\tau_q)\times\R^3$. In particular, for $u$ and $u^\prime$ neither same nor adjacent, we have
\begin{equation}
\supp(a_{u,k})\bigcap	\supp(a_{u^\prime,k^\prime})=\emptyset,\qquad\forall k,k^\prime\in\Z^3\setminus\left\lbrace 0\right\rbrace. 
\end{equation}
(ii)For any $0\leqslant j\leqslant n_0+1$ and $(u,k)\in\Z\times\Z^3$,
\begin{equation}
\left\|a_{u,k}\right\|_j+(\lambda_{q+1}\delta_{q+1}^{\frac{1}{2}})^{-1}\left\|\DTL a_{u,k}\right\|_j\lesssim_j\mu_q^{-j}|\overset{\circ}{a}_k|\leqslant a_F,\quad \sum_k|k|^{n_0+2}|\overset{\circ}{a}_k|\leqslant a_F,
\end{equation}
for some $\overset{\circ}{a}_k$ and $a_F>0$, where $n_0=\left\lceil\frac{2b(2+\alpha)}{(b-1)(1-\alpha)}\right\rceil$ and $\left\|\cdot\right\|_j=\left\|\cdot\right\|_{C(\cI;C^j(\T^3))}$ on some time interval $\cI\subset \R$.\\
Then, for any $b\in(1,3)$, we can find $\Lambda_0=\Lambda_0(b,n)$ such that for any $\lambda_0\geqslant\Lambda_0$, $\cR F$ satisfies the following inequalities:
\begin{equation}
\left\|\cR F\right\|_N\lesssim\lambda_{q+1}^{N-1}a_F,\qquad\left\|D_{t,q+1}\cR F\right\|_{N-1}\lesssim\lambda_{q+1}^{N-1}\delta_{q+1}^{\frac{1}{2}}a_F,
\end{equation}
upon setting $D_{t,q+1}=\partial_t+\frac{m_{q+1}}{n}\cdot\nabla$.
\end{co}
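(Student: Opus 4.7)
\medskip
\noindent\textbf{Proof proposal for Corollary \ref{est on R operator}.} The plan is to apply the Microlocal Lemma (Lemma \ref{Microlocal Lemma}) term by term to each $a_{u,k}e^{i\lambda_{q+1}k\cdot\xi_u}$ with the Fourier multiplier $T=\cR$, whose symbol $\mathfrak m(k)$ is a smooth extension of the symbol of $\Delta^{-1}\nabla$ and therefore satisfies $|\nabla^j\mathfrak m(\lambda_{q+1}k\cdot\nabla\xi_u)|\lesssim_j (\lambda_{q+1}|k|)^{-1-j}$ on the region where $\nabla\xi_u$ is close to the identity (which is guaranteed by \eqref{est on nabla xi-Id} for $\lambda_0$ large). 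Expanding to order $n_0$ yields
\[
\cR(a_{u,k}e^{i\lambda_{q+1}k\cdot\xi_u})
=\Big[a_{u,k}\mathfrak m(\lambda_{q+1}k\cdot\nabla\xi_u)+\sum_{j=1}^{2n_0}C^{\lambda_{q+1}}_j(\xi_u,a_{u,k})\!:\!(\nabla^j\mathfrak m)(\lambda_{q+1}k\cdot\nabla\xi_u)+\varepsilon_{n_0}(\xi_u,a_{u,k})\Big]e^{i\lambda_{q+1}k\cdot\xi_u}.
\]
The principal and tail terms are each bounded in $C^N$ by $(\lambda_{q+1}|k|)^{N-1}\mu_q^{-j}\|a_{u,k}\|_{j'}$ for appropriate $j,j'$; using hypothesis (ii) to control the $C^j$ norms of $a_{u,k}$ and choosing $\lambda_0$ large enough that $\mu_q^{-1}\ll \lambda_{q+1}$, the sum over $j$ converges and gives $(\lambda_{q+1}|k|)^{N-1}|\overset{\circ}{a}_k|$. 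The choice $n_0=\lceil\tfrac{2b(2+\alpha)}{(b-1)(1-\alpha)}\rceil$ is exactly what is needed to make the remainder $\varepsilon_{n_0}$ gain enough powers of $(\lambda_{q+1}|k|)^{-1}$ to be absorbed into the main term.

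Next, summing over $k\in\Z^3\setminus\{0\}$ and using $\sum_k |k|^{n_0+2}|\overset{\circ}{a}_k|\leqslant a_F$, we obtain $\|\cR\sum_k a_{u,k}e^{i\lambda_{q+1}k\cdot\xi_u}\|_N \lesssim \lambda_{q+1}^{N-1} a_F$ for each $u$. The summation over $u$ is then harmless because of the temporal support condition (i): for any fixed $t$, only $O(1)$ indices $u$ contribute (the support widths $2\tau_q$ overlap at most with nearest neighbors in $u$), so the spatial $C^N$ norm in $t$ is bounded uniformly by the per-$u$ bound. This yields the first inequality $\|\cR F\|_N\lesssim \lambda_{q+1}^{N-1} a_F$.

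For the advective derivative, the key identity is that $D_{t,\ell}\xi_u=0$ by \eqref{backward flow}, so $D_{t,\ell}$ commutes with the phase $e^{i\lambda_{q+1}k\cdot\xi_u}$ and one may freely interchange $D_{t,\ell}$ with the Microlocal expansion by applying it to $D_{t,\ell}a_{u,k}$ and to $D_{t,\ell}\nabla\xi_u$ inside the amplitudes. Writing $D_{t,q+1}=D_{t,\ell}+((m_{q+1}-m_\ell)/n)\cdot\nabla$, the first piece is estimated by applying the same Microlocal analysis with $a_{u,k}$ replaced by $D_{t,\ell}a_{u,k}$ (using the extra $\lambda_{q+1}\delta_{q+1}^{1/2}$ factor in hypothesis (ii)) plus lower-order contributions from $D_{t,\ell}\nabla\xi_u$ which are controlled by Proposition \ref{est on backward flow}; the second piece uses $\|(m_{q+1}-m_\ell)/n\|_0\lesssim \delta_{q+1}^{1/2}$ together with the $C^N$-bound already established for $\cR F$. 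Both contributions combine to give the second inequality $\|D_{t,q+1}\cR F\|_{N-1}\lesssim \lambda_{q+1}^{N-1}\delta_{q+1}^{1/2}a_F$.

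The main obstacle, I expect, is purely bookkeeping: carefully tracking the multilinear dependence of $C^{\lambda_{q+1}}_j(\xi_u,a_{u,k})$ and $\varepsilon_{n_0}(\xi_u,a_{u,k})$ on derivatives of $\xi_u$ and $a_{u,k}$, and verifying that with $n_0=\lceil\tfrac{2b(2+\alpha)}{(b-1)(1-\alpha)}\rceil$ the remainder indeed satisfies $\|\varepsilon_{n_0}(\xi_u,a_{u,k})\|_N\lesssim \lambda_{q+1}^{N-1}\mu_q^{-n_0-1}(\lambda_{q+1}|k|)^{-n_0-1}|\overset{\circ}{a}_k||k|^{n_0+2}$, which after summing in $k$ is absorbed into $\lambda_{q+1}^{N-1}a_F$ using \eqref{prop of tn_0}. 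Since this follows the template of \cite[Corollary 7.2]{GK22}, I expect no conceptually new ingredient — the only adaptation is that the amplitudes $a_{u,k}$ here carry the new-building-block structure of the compressible Euler--Maxwell setting, but all the needed estimates on $\xi_u$, $m_\ell/n$, and the coefficients have already been established in the preceding sections.
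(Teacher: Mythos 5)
\medskip
\noindent\textbf{On your proposed proof of Corollary \ref{est on R operator}.}
Your plan of applying the Microlocal Lemma (Lemma \ref{Microlocal Lemma}) with $T=\cR$ does not meet the lemma's hypotheses: the lemma requires the multiplier $\mathfrak m$ to have an extension in $\mathcal S(\R^3)$, but the symbol of $\cR=\Delta^{-1}\nabla$ is of order $-i\zeta/|\zeta|^2$, which is singular at the origin and decays only like $|\zeta|^{-1}$ at infinity, so it is not Schwartz and no smooth extension of it can be. This is a structural gap, not a bookkeeping issue. The paper avoids it by applying the Microlocal Lemma to the Littlewood--Paley projectors $P_{2^j}$ (whose annulus-supported symbols \emph{are} Schwartz), not to $\cR$. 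Concretely, the paper expands each $P_{2^j}(a_{u,k}e^{i\lambda_{q+1}k\cdot\xi_u})$; because $\|\nabla\xi_u-\Id\|_0\lesssim(\lambda_{q+1}\mu_q)^{-1}$ forces the phase to have effective frequency $\gtrsim\lambda_{q+1}$, the principal and tail terms of the expansion vanish for $2^j<\tfrac38\lambda_{q+1}$, so the low-frequency projection $\cP_{\LL}F$ of $F$ consists \emph{only} of remainders and is $O((\lambda_{q+1}\mu_q)^{-(n_0+1)}a_F)\lesssim\lambda_{q+1}^{-2}a_F$ by \eqref{est on epsilon}. One then writes $\cR F=\cR\,\cP_{\GL}F+\cR\,\cP_{\LL}F$: the first factor gains $\lambda_{q+1}^{-1}$ from the genuine frequency localization of $\cP_{\GL}F$, and the second is controlled because $\cP_{\LL}F$ itself is small. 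If you want to keep your multiplier-expansion strategy, you would need a version of the microlocal lemma adapted to homogeneous degree-$(-1)$ symbols, which is not what Lemma \ref{Microlocal Lemma} provides.

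On the advective derivative, your observation $\DTL\xi_u=0$ and the split $D_{t,q+1}=\DTL+((m_{q+1}-m_\ell)/n)\cdot\nabla$ are fine, but your description of ``interchange $\DTL$ with the Microlocal expansion'' silently drops the commutator $[(m_\ell/n)\cdot\nabla,\cR]F$, which is the genuinely nontrivial term. The paper isolates it explicitly, and handles it by splitting $m_\ell/n$ into $m_\ell\PL n^{-1}$ (frequency-localized, treated via Lemma \ref{est on R commutator}) plus $m_\ell\PG n^{-1}$ (of size $O(\ell^2)$, treated by brute force), which is where the $\delta_{q+1}^{1/2}$ gain is actually produced.
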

\begin{proof}[Sketch of the proof.]
	The proof is relying on the decomposition   
	\begin{align}\label{dec of F}
	F = \cP_{\GL} \left(\sum_{u,k} a_{u,k} e^{i\lambda_{q+1} k\cdot \xi_u} \right) - \sum_{u,k} \varepsilon_{n_0}^{\lambda_{q+1}}(k\cdot \xi_u, a_{u,k}) e^{i\lambda_{q+1} k\cdot \xi_u},
	\end{align}
	where $\cP_{\GL}$ is defined by
	\[
	\cP_{\GL} = \sum_{2^j \geqslant \frac 38\lambda_{q+1}} P_{2^j}
	\]
	and
	\[
	\varepsilon_{n_0}^{\lambda_{q+1}} (k\cdot \xi_u, a_{u,k})
	= \sum_{2^j\geqslant \frac 38\lambda_{q+1}} \varepsilon_{n_0, j} (k\cdot \xi_u, a_{u,k}).
	\]
	The remainder $ \varepsilon_{n_0, j}(\xi,a)$ is obtained by applying Lemma \eqref{Microlocal Lemma} to $P_{2^j}$ and $n_0= \left\lceil\frac{2b(2+\alpha)}{(b-1)(1-\alpha)}\right\rceil$. In particular, the remainder part of $F$ has frequency localization 
	\begin{align}\label{dec of PL F}
		\cP_{\LL} F := F - \cP_{\GL}F
		=- \sum_{k,u}\varepsilon_{n_0}^{\lambda_{q+1}}(k\cdot \xi_u, a_{u,k}) e^{i\lambda_{q+1} k\cdot \xi_u},
	\end{align}
	and satisfies
	\begin{align}
		\left\|\sum_{u,k}\varepsilon_{n_0}^{\lambda_{q+1}}(k\cdot \xi_u, a_{u,k})\right\|_0 
		&\lesssim_{n_0} (\lambda_{q+1}\mu_q)^{-(n_0+1)} a_F\lesssim \lambda_{q+1}^{-2}a_F,\label{est on epsilon}
		\\
		\left\|\sum_{u,k}D_{t,\ell} \varepsilon_{n_0}^{\lambda_{q+1}}(k\cdot \xi_u, a_{u,k})\right\|_0 
		&\lesssim_{n_0}   \lambda_{q+1}\delta_{q+1}^\frac12 (\lambda_{q+1}\mu_q)^{-(n_0+1)} a_F 
		\lesssim \lambda_{q+1}\delta_{q+1}^\frac12\cdot\lambda_{q+1}^{-2} a_F. \label{est on Dtl epsilon}
	\end{align}
	Using this, one can easily obtain $\left\|\cR F\right\|_{N} \lesssim \lambda_{q+1}^{N-1} a_F$. To estimate the material derivative of $\mathcal{R}F$, we use the following decomposition,
	\begin{align*}
		D_{t, q+1} \mathcal{R}F
		&= \mathcal{R} D_{t,\ell} F
		+ \left[\frac{m_\ell} {n} \cdot \nabla, \mathcal{R} \right]F
		+ \left(\frac{\tm+ m_q-m_\ell}{n} \right)\cdot \nabla \mathcal{R}F.
	\end{align*}
	The first and the last terms on the right hand side can be estimated as in \cite[Corollary 8.2]{DK22}. There is a little difference in the estimation on the last term, because we add a time correction. From \eqref{est on tm_t}, we know the time correction is too small to affect the results. To estimate the second term, we further decompose it into 
	\begin{align*}
		\left[m_\ell P_{\leqslant \ell^{-1}}n^{-1} \cdot \nabla, \mathcal{R} \right]F
		+ \left[m_\ell P_{> \ell^{-1}}n^{-1} \cdot \nabla, \mathcal{R} \right]F.   
	\end{align*}
	Since $m_{\ell} P_{\leqslant \ell^{-1}}n^{-1} = P_{\lesssim \ell^{-1}}(m_{\ell} P_{\leqslant \ell^{-1}}n^{-1})$, we can estimate it as in \cite[Corollary 8.2]{DK22}. Therefore, it suffices to estimate the remaining term;
	\begin{align*}
		\left\|\left[m_\ell P_{> \ell^{-1}}n^{-1} \cdot \nabla, \mathcal{R} \right]F\right\|_{N-1}
		&\lesssim \sum_{N_1+N_2=N-1}
		\left\|m_\ell P_{> \ell^{-1}}n^{-1}\right\|_{N_1}\left\|\nabla F\right\|_{N_2}\\
		&\lesssim \lambda_{q+1}^N \ell^2 a_F
		\lesssim \lambda_{q+1}^{N-1}\delta_{q+1}^\frac12 a_F,
	\end{align*}
	where we used $\left\|m_\ell P_{>\ell^{-1}}n^{-1}\right\|_{N_1}\lesssim_{n,N_1}\ell^{2}$ and the choice of $b<3$ and sufficiently large $\Lambda_0$.	
\end{proof}
\section{Estimate for nonautonomous linear differential systems }
Here, we give an estimate for nonautonomous linear differential systems which is given in \cite{SM85}. Consider a linear system of ordinary differential equations
\begin{equation}
	\frac{\rd y}{dt}=A(t)y+g(t),\qquad y\in\R^m .\label{ode}
\end{equation}
We could get estimates on the solution to (\ref{ode}).
\begin{align*}
	\frac{d}{dt}\left\|y\right\|_\infty\leqslant\left\|A(t)\right\|_\infty\left\|y\right\|_\infty+\left\|g\right\|_\infty.
\end{align*}
where $\left\|a\right\|_\infty=\underset{i,j}{\sup}|a_{ij}|$. If there exists $K$ such that $\left\|A(t)\right\|_\infty\leqslant K$,
\begin{equation}
	\left\|y(t)\right\|_\infty\leqslant \left\|y(0)\right\|_\infty\exp(Kt)+\int_{0}^{t}\left\|g(\tau)\right\|_\infty\exp(K(t-\tau))\rd \tau.
\end{equation}
Especially, if we consider the second order ordinary differential equation:
\begin{equation}
	\left\lbrace 
	\begin{aligned}
	&\frac{d^2y}{dt^2}+a(t)\frac{\rd y}{dt}+b(t)y=R(t),\\ 
	&y(0)=0,\quad \frac{\rd y}{dt}(0)=0,\label{second order ode}
	\end{aligned}
\right.
\end{equation}
where $b(0)> \varepsilon_0>0$ for some constant $\varepsilon_0$, it can be transformed into a nonautonomous linear differential system:
\begin{align*}
	\left\lbrace
	\begin{aligned}
	&\frac{dU}{dt}=A(t)U+g(t), \\
	&U(0)=U_0,
	\end{aligned}
	\right.
\end{align*}
where 
$$
U=\left(\begin{array}{cc}
	y(t)  \\
	\frac{\rd y}{dt}(t) 
\end{array}\right),\quad
A(t)=\left(\begin{array}{cc}
	0 & 1 \\
	-b(t) & -a(t) 
\end{array}\right), \quad g(t)=\left(\begin{array}{ll}
	0 \\
	R(t)
\end{array}\right),\quad
U_0=\left(\begin{array}{cc}
	0   \\
	0
\end{array}\right).
$$
So we could get for $0\leqslant t\leqslant T$, there exists $C(T,K,\varepsilon_0)$ such that
\begin{align*}
	\left\|y(t)\right\|_\infty,\left\|\frac{\rd y}{dt}(t)\right\|_\infty\leqslant\left\|U_0\right\|_\infty\exp(Kt)+\int_{0}^{t}\left\|g(\tau)\right\|_\infty\exp(K(t-\tau))\rd \tau\leqslant C(T,K)\underset{t\in[0,T]}{\sup}|R(t)|.\label{ode y yp}
\end{align*}
Combining it with \eqref{second order ode} , we could obtain
\begin{equation}
\left\|\frac{d^2y}{dt^2}(t)\right\|_\infty\leqslant C(T,K)\underset{t\in[0,T]}{\sup}|R(t)|,\label{ode ypp}
\end{equation}
where $K=\underset{t\in[0,T]}{\sup}\max\left\lbrace |a(t)|,|b(t)|\right\rbrace $.              
\bibliographystyle{plain}

\end{document}